\newsavebox{\pullback}
\sbox\pullback{%
\begin{tikzpicture}%
\draw (0,0) -- (1ex,0ex);%
\draw (1ex,0ex) -- (1ex,1ex);%
\end{tikzpicture}}
\DeclareRobustCommand{\notsquare}{\mathord{\mathpalette\generic@not\square}}
\newcommand{\generic@not}[2]{%
  \sbox\z@{$\m@th#1/$}%
  \sbox\tw@{$\m@th#1#2$}%
  \sbox\z@{\raisebox{\dimexpr(\ht\tw@-\dp\tw@-\ht\z@+\dp\z@)/2\relax}{$\m@th#1/$}}%
  \vphantom{\usebox{\z@}}%
  \ooalign{\hidewidth\usebox{\z@}\hidewidth\cr$\m@th#1#2$\cr}%
}
\theoremstyle{definition}
\newtheorem{definition}{Definition}[section]
\newtheorem{remark}[definition]{Remark}
\newtheorem{notation}[definition]{Notation}
\newtheorem{convention}[definition]{Convention}
\theoremstyle{theorem}
\newtheorem{theorem}[definition]{Theorem}
\newtheorem*{theorem*}{Theorem}
\newtheorem{lemma}[definition]{Lemma}
\newtheorem{corollary}[definition]{Corollary}
\newtheorem*{corollary*}{Corollary}
\theoremstyle{remark}
\DeclareFontFamily{U}{MnSymbolC}{}
\DeclareFontShape{U}{MnSymbolC}{m}{n}{
    <-6>  MnSymbolC5
   <6-7>  MnSymbolC6
   <7-8>  MnSymbolC7
   <8-9>  MnSymbolC8
   <9-10> MnSymbolC9
  <10-12> MnSymbolC10
  <12->   MnSymbolC12}{}
\DeclareSymbolFont{MnSyC}{U}{MnSymbolC}{m}{n}
\DeclareMathSymbol{\dotminus}{\mathbin}{MnSyC}{24}
\begin{document}
\title{\Huge \bfseries Sequent calculi for a unity of logic \\ \huge \bfseries Classicality is symmetric to non-linearity}
\author{\Large \bfseries Norihiro Yamada \\ \\
{\tt yamad041@umn.edu} \\
University of Minnesota
}

\maketitle

\begin{abstract}
We present a novel \emph{unity of logic}, viz., a single sequent calculus that embodies classical, intuitionistic and linear logics. 
Concretely, we define \emph{classical linear logic negative (CLL$^-$)}, a new logic that is \emph{classical} and \emph{linear} yet discards the \emph{polarities} and the \emph{strict De Morgan laws} in classical linear logic (CLL).
Then, we define \emph{unlinearisation} and \emph{classicalisation} on sequent calculi such that unlinearisation maps CLL$^-$ (resp. intuitionistic linear logic (ILL)) to classical logic (CL) (resp. intuitionistic logic (IL)), and classicalisation maps IL (resp. ILL) to CL (resp. CLL$^-$) modulo conservative extensions. 
By these two maps, only a sequent calculus for a conservative extension of ILL suffices for ILL, IL, CLL$^-$ and CL.
This result achieves a \emph{simple}, highly \emph{systematic} unity of logic by discarding the polarities and the strict De Morgan laws, which (arguably) \emph{only CLL has}, and consisting of the \emph{uniform} classicalisation and unlinearisation, which \emph{commute}.
Previous methods do not satisfy these points.
Our unity also clarifies the dichotomies between \emph{intuitionisity} and \emph{classicality}, and between \emph{linearity} and \emph{non-linearity} of logic, which are \emph{symmetric}. 
\end{abstract}

\tableofcontents

\if0
\section*{Decralations}
\paragraph{Funding}
The author acknowledges a financial support from the Funai Foundation for Information Technology.

\paragraph{Conflicts of interest/competing interests}
Not applicable.

\paragraph{Availability of data and material}
Not applicable.

\paragraph{Code availability}
Not applicable.
\fi

\section{Introduction}
\label{Introduction}
\subsection{Foreword}
\label{Foreword}
The present work \emph{unifies} classical, intuitionistic and linear logics into a single, most primitive logic, viz., a conservative extension of intuitionistic linear logic, in terms of sequent calculi. 
Although many mathematicians are only concerned with classical logic, this \emph{systematic} unity of logic provides a deeper analysis on their favourite logic too: Classical logic is the logic that permits unrestricted \emph{premise consumptions} and \emph{reasoning do-overs}, and is \emph{unaware} of either.

More generally, this unity of logic clarifies the dichotomies between \emph{intuitionisity} and \emph{classicality}, and between \emph{linearity} and \emph{non-linearity} of logic in a \emph{uniform} sense.
Informally, classicality is the unrestraint and the unawareness to reasoning do-overs, and non-linearity is those to resource consumptions; conversely, intuitionisity and linearity are sensitivities to the former and the latter, respectively. 
For instance, classical logic is classical and non-linear, intuitionistic logic is intuitionistic and non-linear, and so on. 
The algebraic beauty of this approach is the complete \emph{symmetry} between the two dichotomies.


In addition to this systematic nature, another advantage of our approach is its \emph{simplicity}. 
For instance, Girard's pioneering work on a unity of logic \cite{girard1993unity} employs the gigantic system \emph{\textsf{LU}}, which splits each of the logical constants and connectives into several cases, and contexts into several zones, depending on \emph{polarities}.
In contrast, our unified sequent calculus is a modest variant of the one for linear logic, which does not split constants, connectives or contexts.

\subsection{Background}
\label{Background}
\emph{(Mathematical) logic} refers to a method or law of `reasonings' or \emph{proofs} in mathematics, and it is traditionally formalised by \emph{formal systems} \cite{troelstra2000basic}.
Today, \emph{classical logic (CL)} \cite{frege1879begriffsschrift,troelstra2000basic}, \emph{intuitionistic logic (IL)} \cite{heyting1930formalen,troelstra1988constructivism,troelstra2000basic} and \emph{linear logic (LL)} \cite{girard1987linear} are arguably three among the most established logics.
In the present article, we focus on the \emph{propositional} fragments of these logics \cite{troelstra2000basic}.

CL is the most traditional and best-known logic, and even today it is still the (implicitly) official logic for most working mathematicians.
The view of CL regards the truth value (i.e., \emph{true} or \emph{false}) of every mathematical statement or \emph{formula} as a priori determined, and CL aims to find all and only `true' formulas in this predetermined sense \emph{regardless of whether we can `witness' or `construct' their truths}.
Thus, CL permits \emph{the law of excluded middle (LEM)}:
\begin{quote}
\textsc{(LEM)} The disjunction of any formula $A$ and its negation is true,
\end{quote}
where CL defines that the negation $\mathbin{\sim} A$ of $A$ is true if and only if $A$ is false, and the disjunction $A \vee B$ of formulas $A$ and $B$ is true if and only if at least one of them is true.
That is, CL allows LEM since the disjunction $A \vee \mathbin{\sim} A$ is true regardless of the truth value of $A$ \emph{even though in general we cannot compute it} \cite{rogers1987theory} (i.e., one does not have to compute the truth value for justifying LEM). 

In contrast, some mathematicians and logicians regard a formula as `true' \emph{exactly when we can `witness' or `construct' its truth} \cite{troelstra1988constructivism}.
For instance, they claim that the disjunction $A \vee B$ is `true' only if we can decide \emph{which} (of $A$ and $B$) is `true.'
IL is motivated by this computational or \emph{constructive} viewpoint on logic, and hence it refutes computationally infeasible laws such as LEM.

Finally, LL regards formulas as \emph{resources} and introduces the \emph{linearity} constraint on proofs: Every proof of a formula $A$ must consume each premise of $A$ exactly once to produce the conclusion of $A$ \cite{lafont1988introduction}. 
Strictly speaking, LL is divided into \emph{classical linear logic (CLL)} and \emph{intuitionistic linear logic (ILL)}. 
As their names indicate, CLL and ILL are intended to serve as the `classical' and the `intuitionistic' variants of LL, respectively.


Today, CL, IL, CLL and ILL are established precisely by respective formal systems \cite{troelstra2000basic,girard1995linear} that embody the aforementioned intuitions on them.
However, fundamental questions on these logics remain unsolved. 
For instance, there has been no precise formalisation of \emph{classicality} and \emph{intuitionisity} (resp. \emph{linearity} and \emph{non-linearity}) of logic applied \emph{uniformly} to the linear and non-linear (resp. classical and intuitionistic) cases; see \S\ref{PolaritiesAndConcurrencyInLogicAndGames} and \S\ref{LinearLogics}.
In other words,
\begin{quote}
What is the dichotomy between \emph{classicality} and \emph{intuitionisity}, (resp. \emph{linearity} and \emph{non-linearity}) of logic in a precise, \emph{uniform} sense?
\end{quote}
More generally, the interrelations between the four logics are not fully clarified. 

Also, one may find it frustrating to have \emph{more than one} logic since logic formalises mathematics, and mathematical truths should be `absolute.'
Hence,
\begin{quote}
Can we reduce CL, IL, CLL and ILL to a \emph{single} logic?
\end{quote}
Moreover, we want such a single logic to be relatively \emph{finer} or \emph{more primitive} so that it would deepen our understanding of the other logics. 
For instance, \emph{Girard's translation} \cite{girard1987linear} achieves such an analysis on IL by reducing it to ILL.  

\if0
Although some answers to these questions have been proposed in the literature \cite{troelstra2000basic,girard1987linear,girard1993unity}, neither has become \emph{the} (canonical) answer. 
See the last paragraph of \S\ref{SequentCalculi} on problems in those existing methods. 
\fi

The present work is motivated by these fundamental questions.
We work on \emph{sequent calculi}, a particular class of formal systems invented by Gentzen \cite{gentzen1935untersuchungen}, as they are suited to our aim.
We assume that the reader is familiar with the formal languages and the sequent calculi for CL, IL, CLL and ILL \cite{troelstra2000basic,girard1995linear}.

\if0
For the rest of this introduction, however, it mostly suffices to recall that 
\begin{enumerate}

\item A \emph{sequent} is an expression of the form $\Delta \vdash \Gamma$, where $\Delta$ and $\Gamma$ range over finite sequences of formulas, and intended to mean that the conjunction of all elements in $\Delta$ implies the disjunction of all elements in $\Gamma$;

\item A sequent calculus is what inductively generates \emph{(formal) proofs}, which are a certain class of directed rooted trees whose roots are sequents, from basic building blocks, called \emph{axioms} and \emph{rules}, and a sequent that is the root of some formal proof is called a \emph{(formal) theorem} of the sequent calculus.

\end{enumerate}
\fi

\subsection{Main results}
\label{MainResults}
Towards answering the aforementioned questions, our first result is:
\begin{theorem*}[Commutativity, informally]
There are two maps on sequents, unlinearisation $(\_)_\oc : \Delta \vdash \Gamma \mapsto \oc \Delta \vdash \Gamma$ and classicalisation $(\_)_\wn : \Theta \vdash \Xi \mapsto \Theta \vdash \wn \Xi$, extended to sequent calculi in the evident way, such that the diagram
\begin{mathpar}
\begin{tikzcd}
\arrow[d, hook, "\text{conservative extension}"'] \text{ILL} \arrow[rrrr, "\text{Girard's translation}"] &&&& \text{IL} \arrow[d, hook, "\text{conservative extension}"] \\
\arrow[d, "\text{classicalisation $(\_)_\wn$}"'] \text{ILL$^{\text{e}}_\iota$} \arrow[rrrr, "\text{unlinearisation $(\_)_\oc$}"'] &&&& \arrow[d, "\text{classicalisation $(\_)_\wn$}"] \text{IL$^{\text{e}}$} \\
\text{CLL$^-$} \arrow[rrrr, "\text{unlinearisation $(\_)_\oc$}"'] &&&& \text{CL}
\end{tikzcd}
\end{mathpar}
commutes modulo permuting axioms and rules in formal proofs, where $\oc$ and $\wn$ are of-course and why-not in LL, respectively \cite{girard1987linear}, $\oc \Delta \colonequals \oc D_1, \oc D_2, \dots, \oc D_n$ if $\Delta = D_1, D_2, \dots, D_n$, and similarly for $\wn \Xi$, ILL$^{\text{e}}_\iota$ and IL$^{\text{e}}$ are conservative extensions of ILL and IL, respectively, CLL$^-$ is the logic dual to IL$^{\text{e}}$, and the logics in the diagram are embodied by respective sequent calculi. 
\end{theorem*}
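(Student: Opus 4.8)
The plan is to split the argument into a trivial computation on sequents and a delicate comparison of derivations, and then to reduce the latter to a rule-permutation principle. On sequents, commutativity is immediate: unlinearisation decorates only the antecedent, $\Delta \vdash \Gamma \mapsto \oc\Delta \vdash \Gamma$, and classicalisation only the succedent, $\Theta \vdash \Xi \mapsto \Theta \vdash \wn\Xi$, so that for the lower square both composites send $\Delta \vdash \Gamma$ to $\oc\Delta \vdash \wn\Gamma$ on the nose. All the content therefore lives at the level of proofs, where each map is extended ``in the evident way'' by transforming every sequent occurring in a derivation and inserting the structural rules that the added modality licenses: $\oc$-weakening, $\oc$-contraction and dereliction on the left for $(\_)_\oc$, and dually $\wn$-weakening, $\wn$-contraction and the $\wn$-introduction on the right for $(\_)_\wn$.

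First I would record, rule by rule for the calculus of $\mathrm{ILL}^{\mathrm{e}}_\iota$, the derivation fragment that each map produces from a single rule instance, and then argue by induction on the height of a derivation. The inductive step rests on a single observation: every structural rule inserted by unlinearisation acts on antecedent ($\oc$-marked) occurrences, whereas every one inserted by classicalisation acts on succedent ($\wn$-marked) occurrences. Since a rule touching only the antecedent and a rule touching only the succedent act on disjoint formula occurrences, they permute past each other freely in a sequent-calculus derivation. Consequently the two composite transformations of a fixed derivation yield proofs differing only by such permutations, which is exactly ``commutes modulo permuting axioms and rules in formal proofs,'' and this establishes the lower square.

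For the upper square the task is to identify unlinearisation, transported along the conservative extension $\mathrm{ILL} \hookrightarrow \mathrm{ILL}^{\mathrm{e}}_\iota$, with Girard's translation followed by $\mathrm{IL} \hookrightarrow \mathrm{IL}^{\mathrm{e}}$. The key point is that both passages realise the non-linear behaviour of the antecedent through precisely the $\oc$-structural rules that $(\_)_\oc$ inserts, so the two derivations should again coincide up to permutation of these inserted rules. Here the conservative extensions contribute only the syntactic reindexing that makes the source and target calculi match, and I would check that this reindexing commutes with the permutation moves used in the lower square, so that the two squares can be pasted together.

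I expect the main obstacle to be the bookkeeping for the rules governing the modalities themselves and for the two-premise multiplicative rules. For promotion in particular, the side condition that the entire context be $\oc$-ed (resp. $\wn$-ed) interacts with the structural rules being floated, so one must verify that the inserted $\oc$- and $\wn$-rules can be moved to a canonical, disjoint position without either violating or manufacturing any promotion side condition; the context-splitting of the multiplicative rules similarly forces careful tracking of which inserted contractions land in which premise. Confirming that these permutations remain compatible with the conservative-extension passages is the final point requiring care, but beyond disciplined case analysis I anticipate no genuinely new difficulty.
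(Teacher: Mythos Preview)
Your reading of the maps is too literal, and this sends the whole plan in the wrong direction. In the paper, $(\_)_\oc$ and $(\_)_\wn$ are not ``decorate the antecedent/succedent with a modality and sprinkle in structural rules.'' They are realised by concrete translations $\mathscr{T}_\oc$ and $\mathscr{T}_\wn$ between four \emph{different} sequent calculi (\textsf{LK}, \textsf{INC}, \textsf{CLC}, \textsf{ILC}$_\iota$) with \emph{different} formal languages. Each translation has a nontrivial formula component --- e.g.\ $\mathscr{T}_\wn(A \wedge B) = \wn\mathscr{T}_\wn(A)\,\&\,\wn\mathscr{T}_\wn(B)$, $\mathscr{T}_\oc(A \vee B) = \oc\mathscr{T}_\oc(A)\oplus\oc\mathscr{T}_\oc(B)$, $\mathscr{T}_{\oc\wn}(A \Rrightarrow B) = \oc\mathscr{T}_{\oc\wn}(A)\rightarrowtriangle\wn\mathscr{T}_{\oc\wn}(B)$ --- and a proof component that sends each source rule to a \emph{specific derived rule} in the target, sometimes involving Cut and the weakly distributive rule $\oc\wn$L$^{\oc\wn}$. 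None of this is ``insert the structural rules the modality licenses.''

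Consequently your induction is set up on the wrong calculus and in the wrong direction. The commutativity claim is that the two composites $\mathscr{T}_{\oc\wn}=\mathscr{T}_\oc\circ\mathscr{T}_\wn$ and $\mathscr{T}_{\wn\oc}=\mathscr{T}_\wn\circ\mathscr{T}_\oc$, both going \emph{from} \textsf{LK} \emph{into} \textsf{ILC}$_\iota$, agree on formulas exactly (Corollaries~3.10 and~3.15 show both land on the same five clauses for $\mathrm{tt},\mathrm{ff},\wedge,\vee,\Rrightarrow$) and on proofs up to rule permutation. The induction is on \textsf{LK} proofs, not \textsf{ILC}$_\iota$ proofs; and the substantive work is not the induction itself (the paper dispatches it in one line) but the design of the intermediate calculi \textsf{INC}, \textsf{CLC} and of the four translations (Lemmata~3.8, 3.9, 3.13, 3.14) so that the two formula translations literally coincide. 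Your intuition that ``left-$\oc$-rules and right-$\wn$-rules permute because they touch disjoint occurrences'' is indeed why the final induction goes through, but it presupposes all of that definitional work, which your proposal does not carry out and whose shape it misidentifies.
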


Precisely, the maps $(\_)_\oc$ and $(\_)_\wn$ on sequent calculi are defined by: 
\begin{definition}[Unlinearisation and classicalisation]
\label{Defunlinearisation}
Given a sequent calculus \textsf{C} that has of-course $\oc$ (resp. why-not $\wn$) \cite{girard1987linear}, the \emph{unlinearisation} (resp. \emph{classicalisation}) of \textsf{C} is its substructural sequent calculus $\textsf{C}_\oc$ (resp. $\textsf{C}_\wn$), whose sequents are those in \textsf{C}, and formal proofs are those in \textsf{C} whose roots (or conclusions) are sequents of the form $\oc \Delta \vdash \Gamma$ (resp. $\Delta \vdash \wn \Gamma$).
\end{definition}
\if0
\begin{definition}[Classicalisation]
\label{Defclassicalisation}
Given a sequent calculus \textsf{C} that has why-not $\wn$ \cite{girard1987linear}, the \emph{classicalisation} of \textsf{C} is its substructural sequent calculus $\textsf{C}_\wn$, whose sequents are those in \textsf{C}, and formal proofs are those in \textsf{C} whose roots are sequents of the form $\Delta \vdash \wn \Gamma$.
\end{definition}
\fi

Let us make a few remarks on this theorem. 
First, the extensions of ILL and IL to \emph{intuitionistic linear logic $\iota$-extended (ILL$^{\text{e}}_\iota$)} and \emph{intuitionistic logic extended (IL$^{\text{e}}$)}, respectively, are necessary for classicalisation $(\_)_\wn$ since neither ILL nor IL embraces why-not $\wn$.
The subscript $(\_)_\iota$ is to distinguish ILL$^{\text{e}}_\iota$ from another extension of ILL introduced below.
Next, we replace CLL with \emph{classical linear logic negative (CLL$^-$)}, which is \emph{dual} to IL$^{\text{e}}$. 
Note that CLL$^-$, not CLL, is exactly \emph{classical} and \emph{linear} in the sense of the commutative diagram. 
As explained below, CLL is what prohibits us from answering the aforementioned questions, and thus we replace it with CLL$^-$, which is more suited to our aim.
(Accordingly, we slightly modify the second question for this replacement.)
Finally, the compromise of the commutativity \emph{modulo permuting axioms and rules} is harmless since the permutations are inessential details of formal proofs as we shall see; they should be ignored in the \emph{category-theoretic} view \cite{lambek1988introduction}.

The main breakthrough made by the theorem is the \emph{simple}, highly \emph{systematic} unity of logic in the sense that it dispenses with the \emph{polarities} and the \emph{(strict) De Morgan laws}, which arguably \emph{only CLL has} (\S\ref{PolaritiesAndConcurrencyInLogicAndGames} and \S\ref{LinearLogics}), and achieves unlinearisation (resp. classicalisation) applicable \emph{uniformly} to the intuitionistic and classical (resp. linear and non-linear) cases. 
In addition, these maps \emph{commute}.
We explain these points in the rest of this introduction. 

Such a unity of logic was not achieved before.
For instance, Girard's translation works as the unlinearisation $\text{ILL} \mapsto \text{IL}$, but not $\text{CLL} \mapsto \text{CL}$.
Also, the \emph{negative translation} \cite{godel1933intuitionistischen,gentzen1969widerspruchsfreiheit,kolmogorov1925principe} translates CL into IL, but it does not work for linear logics (for the \emph{involution} of linear negation $(\_)^\bot$).
Finally, existing unities of logic \cite{girard1993unity,mellies2010resource,laurent2003translations} employ the polarities and/or the De Morgan laws, and their unlinearisation and classicalisation are not uniform or commutative.

In addition, our theorem shows that sequents $\Delta \vdash \Gamma$ in IL$^{\text{e}}$, CLL$^-$ and CL are representable by those in ILL$^{\text{e}}_\iota$ of the forms $\oc \Delta \vdash \Gamma$, $\Delta \vdash \wn \Gamma$ and $\oc \Delta \vdash \wn \Gamma$, respectively, modulo translations of logical constants and connectives.
Hence, 
\begin{quote}
\emph{Non-linearity} (resp. \emph{classicality}) is the \emph{implicit} placement of of-course $\oc$ (resp. why-not $\wn$) on elements on the left- (resp. right-) hand side of sequents; \emph{linearity} (resp. \emph{intuitionisity}) is the absence of this placement. 
\end{quote}

\begin{remark}
\label{RemarkOnLinearity}
The original \emph{linearity} of logic \cite{girard1987linear} is on the structural constraint not only on the left- but also the right-hand sides of sequents.
Our proposal is a departure from this tradition: We regard the constraint on the right-hand side instead as responsible for the dichotomy between \emph{classicality} and \emph{intuitionisity}.
Nevertheless, the conservativity of ILL$^{\text{e}}_\iota$ (resp. IL$^{\text{e}}$) over ILL (resp. IL) means that our proposal is \emph{compatible} with the traditional dichotomies except CLL.
See Remark~\ref{RemarkOnBacktracks} on why we oppose the view of CLL and regard the structural rules on the right-hand side as \emph{classicality} rather than non-linearity.
\end{remark}

We can explain our proposal in terms of the \emph{game semantics} of of-course $\oc$ and why-not $\wn$ \cite{laurent2002polarized} as follows. 
It interprets formulas $\oc A$ and $\wn A$ as \emph{an arbitrary number of copies of the formula $A$} and \emph{$A$ itself except that a formal proof of $\wn A$ consists of an arbitrary number of do-overs of formal proofs of $A$}, respectively.
Our proposed non-linearity (resp. classicality) is to regard $\oc A$ (resp. $\wn A$) as $A$.
Thus, \emph{classicality is the unrestraint and the unawareness to reasoning do-overs}, and \emph{non-linearity is those to resource consumptions}; conversely, intuitionisity and linearity are sensitivities to the former and the latter, respectively. 

\begin{remark}
\label{RemarkOnBacktracks}
\emph{Only one} of the do-overs of formal proofs of $A$ in a formal proof of $\wn A$ completes in the game semantics \cite{laurent2002polarized}, and par $\invamp$ is the binary version of why-not $\wn$. 
Hence, from this game-semantic viewpoint, the implicit placement of why-not $\wn$ on the right-hand side of sequents does \emph{not} destroy the linearity of resources; rather, it enables reasoning do-overs that \emph{preserve} the linearity.
This game-semantic analysis leads us to cast doubt on the perspective of CLL that the implicit placement of why-not $\wn$ on the right-hand side makes logic non-linear (Remark~\ref{RemarkOnLinearity}) and instead propose that the placement of why-not $\wn$ actually makes logic \emph{classical} without turning it into non-linear.
This semantic justification of our proposal is made precise in a forthcoming article (\S\ref{OurContributionsAndRelatedWork}).
\end{remark}

The novelty here is the dichotomy between intuitionisity and classicality by why-not $\wn$ (in contrast, the other dichotomy by of-course $\oc$ is already visible in Girard's translation).
In comparison to the negative translation, our method is more direct.  
For instance, LEM, $A \vee \mathbin{\sim} A$, in CL is translated roughly by $\wn (A \vee \mathbin{\sim} A)$ in IL$^{\text{e}}$, which is constructively valid \cite{laurent2002polarized} as we can have do-overs between $A$ and $\mathbin{\sim} A$. 
Note that LEM itself is invalid in IL$^{\text{e}}$.
In this way, IL$^{\text{e}}$ is aware of reasoning do-overs by the \emph{explicit} use of why-not $\wn$ (i.e., intuitionistic), but CL is not by the \emph{implicit} use (i.e., classical).
This classicality via reasoning do-overs intuitively matches Coquand's semantics of \emph{classical} arithmetic by games with \emph{backtracks} \cite{coquand1995semantics}.
Thus, although our linearity and classicality depart from those proposed by CLL (Remark~\ref{RemarkOnLinearity}), they make sense in terms of game semantics.
In particular, this game-semantic view suggests that the right structural rules are responsible for classicality, rather than linearity, of logic (Remark~\ref{RemarkOnBacktracks}).

Again, the proposed dichotomy between linearity and non-linearity (resp. intuitionisity and classicality) is applicable \emph{uniformly} to the intuitionistic and classical (resp. linear and non-linear) cases. 
Algebraically, the two dichotomies are completely \emph{symmetric} thanks to the symmetry between of-course $\oc$ and why-not $\wn$.
These dichotomies do not work if we did not replace CLL with CLL$^-$ (\S\ref{PolaritiesAndConcurrencyInLogicAndGames}). 

However, the theorem is not completely satisfactory since the maps $(\_)_\oc : \text{ILL}^{\text{e}}_\iota \mapsto \text{IL}^{\text{e}}$ and $(\_)_\wn : \text{ILL}^{\text{e}}_\iota \mapsto \text{CLL}^-$ are \emph{not conservative}, i.e., $(\text{ILL}^{\text{e}}_\iota)_\oc$ (resp. $(\text{ILL}^{\text{e}}_\iota)_\wn$) proves more formal theorems than $\text{IL}^{\text{e}}$ (resp. $\text{CLL}^-$).
Another, related problem is that our sequent calculus for ILL$^{\text{e}}_\iota$ \emph{does not enjoy cut-elimination}. 
We overcome these problems by introducing a substructural logic of ILL$^{\text{e}}_\iota$, \emph{intuitionistic linear logic $\rho$-extended (ILL$^{\text{e}}_\rho)$} and its sequent calculus, and proving:
\begin{corollary*}[Conservativity, informally]
By restricting ILL$^{\text{e}}_\iota$ into ILL$^{\text{e}}_\rho$ in the diagram of the theorem, we obtain another commutative diagram
\begin{mathpar}
\begin{tikzcd}
\arrow[d, hook, "\text{conservative extension}"'] \text{ILL} \arrow[rrrr, "\text{Girard's translation}"] &&&& \text{IL} \arrow[d, hook, "\text{conservative extension}"] \\
\arrow[d, "\text{classicalisation $(\_)_\wn$}"'] \text{ILL$^{\text{e}}_\rho$} \arrow[rrrr, "\text{unlinearisation $(\_)_\oc$}"'] &&&& \arrow[d, "\text{classicalisation $(\_)_\wn$}"] \text{IL$^{\text{e}}$} \\
\text{CLL$^-$} \arrow[rrrr, "\text{unlinearisation $(\_)_\oc$}"'] &&&& \text{CL}
\end{tikzcd}
\end{mathpar}
where unlinearisation $(\_)_\wn$ and classicalisation $(\_)_\wn$ are all conservative. 
\end{corollary*}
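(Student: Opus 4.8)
The plan is to deduce the corollary from the Commutativity Theorem by exhibiting $\text{ILL}^{\text{e}}_\rho$ as a subcalculus of $\text{ILL}^{\text{e}}_\iota$ whose defining $\rho$-restriction on the structural rules is exactly what is needed to (i) make the sequent calculus enjoy cut-elimination and (ii) collapse the spurious theorems that broke conservativity of the two flagged maps out of $\text{ILL}^{\text{e}}_\iota$. Accordingly the argument splits into a commutativity part, which is essentially inherited from the Theorem, and a conservativity part, which rests on a cut-elimination theorem for $\text{ILL}^{\text{e}}_\rho$.

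For commutativity I would first check that each arrow of the square restricts to $\text{ILL}^{\text{e}}_\rho$: that the image under Girard's translation of an $\text{ILL}$-proof lands in the $\rho$-restricted proofs, and that applying unlinearisation $(\_)_\oc$ or classicalisation $(\_)_\wn$ to an $\text{ILL}^{\text{e}}_\rho$-proof again yields a proof that respects the $\rho$-restriction and has a conclusion of the form $\oc\Delta \vdash \Gamma$ (resp. $\Delta \vdash \wn\Gamma$). Since the formal proofs of $\text{ILL}^{\text{e}}_\rho$ form a subclass of those of $\text{ILL}^{\text{e}}_\iota$ closed under these operations, the commutativity modulo permuting axioms and rules is then a direct restriction of the diagram already established in the Theorem, and no new permutation analysis is required.

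The conservativity of $(\_)_\oc : \text{ILL}^{\text{e}}_\rho \to \text{IL}^{\text{e}}$ (and dually $(\_)_\wn : \text{ILL}^{\text{e}}_\rho \to \text{CLL}^-$) is the substance of the corollary, and I would prove the two inclusions separately. Completeness (the image proves at least the target theorems) follows from the established commutativity together with the embedding of $\text{IL}^{\text{e}}$ into $\text{ILL}^{\text{e}}_\rho$ underlying Girard's translation: every $\text{IL}^{\text{e}}$-theorem $\Delta \vdash \Gamma$ is witnessed by an $\text{ILL}^{\text{e}}_\rho$-proof of $\oc\Delta \vdash \Gamma$, once one checks such witnesses use no structural step forbidden by $\rho$. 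Soundness (the image proves no more) is the direction that failed for $\text{ILL}^{\text{e}}_\iota$; here I would invoke cut-elimination for $\text{ILL}^{\text{e}}_\rho$ and the subformula property, so that a cut-free proof of $\oc\Delta \vdash \Gamma$ has only a restricted repertoire of last rules and can be read off inductively as an $\text{IL}^{\text{e}}$-proof of $\Delta \vdash \Gamma$. By the $\oc$/$\wn$ symmetry, the same read-off with left and right exchanged gives soundness of $(\_)_\wn$ into $\text{CLL}^-$, while the already-conservative maps $\text{IL}^{\text{e}} \to \text{CL}$ and $(\text{CLL}^-)_\oc \to \text{CL}$ carry over unchanged. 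As a by-product, cut-elimination also secures the left conservative-extension arrow $\text{ILL} \hookrightarrow \text{ILL}^{\text{e}}_\rho$, since a cut-free proof of a pure $\text{ILL}$-sequent can, by the subformula property, use only $\text{ILL}$-rules.

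The main obstacle is the cut-elimination theorem for $\text{ILL}^{\text{e}}_\rho$. The delicate point is to verify that the $\rho$-restriction is stable under cut-reduction: the principal cases on $\oc$ and $\wn$ (promotion against dereliction, contraction, and weakening) and the commutative cases that push a cut past a structural rule must all return proofs that are still $\rho$-restricted, so that the rewriting terminates inside $\text{ILL}^{\text{e}}_\rho$ rather than escaping into $\text{ILL}^{\text{e}}_\iota$. A secondary obstacle is the soundness read-off itself: one must confirm that the subformula property of cut-free $\text{ILL}^{\text{e}}_\rho$-proofs of $\oc\Delta \vdash \Gamma$ genuinely forbids every rule pattern absent from $\text{IL}^{\text{e}}$, i.e. that $\rho$ has been tuned to exactly the strength of intuitionistic (resp., via $\wn$, classical) reasoning and no more.
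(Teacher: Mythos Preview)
Your proposal is essentially correct and follows the same architecture as the paper: commutativity is inherited by restriction from the $\iota$-diagram, and conservativity is obtained from cut-elimination for $\text{ILL}^{\text{e}}_\rho$ plus a subformula-property read-off, with the $\wn$-side handled by symmetry. One detail you could not have guessed: the $\rho$-restriction is not on structural rules but is a \emph{purity} condition on Cut (one of the two cut-formula occurrences must be hereditarily free of weakening/contraction) together with the removal of one of the two weakly-distributive rules; the paper also introduces parallel $\rho$-restricted versions of \textsf{LK}, \textsf{INC} and \textsf{CLC} so that the translations land in tractable proofs, a bookkeeping step your outline absorbs into ``check that each arrow restricts.''
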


The sequent calculus for ILL$^{\text{e}}_\rho$ enjoys cut-elimination, from which the conservativity of the maps $(\_)_\oc$ and $(\_)_\wn$ follows. 
By this conservativity, only (the sequent calculus of) ILL$^{\text{e}}_\rho$ suffices to capture all the logics in the diagram \emph{precisely} at the level of provability, overcoming the deficiency of the theorem. 

\if0
Finally, we introduce another variant of unlinearisation $(\_)_{\oc\oc}$ and that of classicalisation $(\_)_{\wn\wn}$, and prove
\begin{proposition*}
The diagrams
\begin{mathpar}
\begin{tikzcd}
\arrow[d, "\text{classicalisation $(\_)_{\wn\wn}$}"'] \text{ILL$^{\text{e}}_\ell$} \arrow[drrr, "\text{Q-translation}"] &&& & \arrow[drrr, "\text{T-translation}"'] \text{ILL$^{\text{e}}_\delta$} \arrow[rrr, "\text{unlinearisation $(\_)_{\oc\oc}$}"] &&& \arrow[d, "\text{classicalisation $(\_)_\wn$}"] \text{IL$^{\text{e}}$} \\
\text{CLL$^-$} \arrow[rrr, "\text{unlinearisation $(\_)_\oc$}"'] &&& \text{CL} & &&& \text{CL}
\end{tikzcd}
\end{mathpar}
commute, respectively, where T-translation and Q-translation are as given in \cite{danos1995lkq}.
\end{proposition*}

In this way, we apply our new logics to the decompositions of the well-known \emph{T-} and \emph{Q-translations} of CL introduced by Danos et al. \cite{danos1995lkq}.
\fi

 \if0
\subsection{Linear Logic}
LL is often said to be \emph{resource-conscious} (or \emph{resource-sensitive}) because it requires proofs to consume each premise exactly once in order to produce a conclusion. 
One of the striking achievements of LL is the following: Like CL it has an \emph{involutive} negation, called \emph{linear negation}, and more generally the \emph{De Morgan laws} \cite{girard1987linear,girard1995linear}, while like IL it has \emph{constructivity} in the sense of nontrivial semantics \cite{girard1989proofs}, where note that neither CL nor IL (in the form of the standard sequent calculi \emph{\textsf{LK}} and \emph{\textsf{LJ}} \cite{gentzen1935untersuchungen,troelstra2000basic}, respectively) achieves both of the dualities and the contructivity \cite{girard1989proofs,troelstra2000basic}.

Strictly speaking, LL has both classical and intuitionistic variants, \emph{CLL} and \emph{ILL}, respectively, and LL usually refers to CLL \cite{abramsky1993computational,mellies2009categorical}.
We call the standard, two-sided sequent calculi for CLL and ILL \cite{girard1987linear,abramsky1993computational,troelstra2000basic,mellies2009categorical} \emph{\textsf{LLK}} and \emph{\textsf{LLJ}}, respectively.
\fi

\subsection{Polarities and De Morgan laws in classical linear logic}
\label{PolaritiesAndConcurrencyInLogicAndGames}
As mentioned above, we dispense with the \emph{polarities} and the \emph{De Morgan laws} in CLL. 
This point makes our unity of logic \emph{simple} and \emph{systematic} as follows. 

Girard introduced the \emph{polarities} of formulas, \emph{positive} and \emph{negative} ones, in his pioneering work on a unity of logic \cite{girard1993unity}.
However, it is possible to see that ILL, IL and CL have only negative formulas, and so these logics do not have the dimension of polarities (\S\ref{LinearLogics}).
Also, polarities are foreign to the standard categorical semantics, e.g., see \cite[p.~256]{girard2011blind} and \cite[Remark~34 on p.~19]{laurent2002polarized}.

In addition, recall that the De Morgan laws in CL refer to the following dualities between conjunction $\wedge$ and disjunction $\vee$ through negation $\mathbin{\sim}$:
\begin{mathpar}
\mathbin{\sim} (A \wedge B) \Leftrightarrow \mathbin{\sim} A \vee \mathbin{\sim} B
\and
\mathbin{\sim} (A \vee B) \Leftrightarrow \mathbin{\sim} A \wedge \mathbin{\sim} B.
\end{mathpar}
Note that these laws are \emph{derived logical equivalences}. 
Similarly, CLL has certain De Morgan laws, but in contrast they are rather \emph{defined in terms of equalities} between formulas \cite{girard1987linear}, which are completely exotic to ILL, IL and CL.

Hence, the polarities and the De Morgan laws in CLL seem to be irrelevant to ILL, IL and CL, and so have nothing to do with classicality or linearity of logic.
In other words, the polarities and the De Morgan laws prohibit us from understanding the two dichotomies in logic in a uniform, systematic way.

From this observation, our key idea is to replace CLL with CLL$^-$ that dispenses with the polarities and the De Morgan laws, so that we attain a simpler, more systematic unity of logic (as given by the corollary) than otherwise. 
Also, our method is compatible with the standard mathematical semantics (\S\ref{OurContributionsAndRelatedWork}). 

\begin{remark}
Arguments on polarities have not been resolved yet.
For instance, Girard himself confesses that polarisation is rather of a \emph{pragmatic} nature, and he personally \emph{hesitates} to give it a status \cite[p.~251]{girard2011blind}.
Our method does not use polarities, but we do not claim that it resolves the arguments on polarities; our aim is to just present another, \emph{simpler} unity than existing ones with polarities.
\end{remark}

\if0
\begin{remark}
For the present work, it suffices to employ only negative formulas (\S\ref{LinearLogics}), and \emph{in this sense} we can dispense with (the dimension of) polarities.
\end{remark}
\fi

\subsection{Our contributions and related work}
\label{OurContributionsAndRelatedWork}
Our main contribution is the unity of logic given by the corollary. 
This unity reduces CL, IL, CLL$^-$ and ILL to the most primitive one, viz., ILL$^{\text{e}}_\rho$, in a systematic way by the \emph{uniform} unlinearisation and classicalisation.
Also, we propose the \emph{uniform} dichotomies between intuitionisity and classicality, and between linearity and non-linearity.
Our proposal is compatible with ILL, IL and CL, but not CLL (Remark~\ref{RemarkOnLinearity}); however, the proposal makes sense from the game-semantic view (Remark~\ref{RemarkOnBacktracks}).
This method also uncovers the \emph{symmetry} between the two dichotomies. 
Such a \emph{simple}, highly \emph{systematic} unity of logic with a formulation of the dichotomies has been missing in the literature. 

Our result answers the questions proposed in \S\ref{Background}, except that we replace CLL with CLL$^-$, and more generally advances our understanding of logic.
For instance, our classicalisation is intuitively and uniformly understandable as \emph{reasoning do-overs}, which is impossible by the negative translation.
Also, we identify the general mechanism underlying the \emph{ad-hoc} intuitionistic restriction on CL to obtain IL (Definition~\ref{DefLJ}); see Remark~\ref{RemarkOnIntuitionisity}.
Moreover, the symmetry and the commutativity between our unlinearisation and classicalisation enable us to see the interrelations between logics in a simple, highly systematic way.

\if0
An immediate implication of this result is that the polarities and the De Morgan laws in CLL have nothing to do with the linearity or the classicality of logic in our sense.
This observation is important since today CLL is established as `the logic that is linear and classical' almost unquestionably. 
\fi

Similarly to other computational interpretations of CL \cite{filinski1989declarative,griffin1989formulae,murthy1991evaluation,girard1991new,parigot1992lambdamu,danos1997new,herbelin2000duality,wadler2003call}, our method enables us to understand CL in a \emph{proof-relevant} way like IL: A formula is true in CL \emph{if and only if it has a computational proof} (which may use reasoning do-overs and/or non-linear resource consumptions). 
This view on CL stands in contrast to the traditional one by truth values (\S\ref{Background}).

Our technical breakthrough is to define the substructural logic ILL$^{\text{e}}_\rho$ of ILL$^{\text{e}}_\iota$ and its sequent calculus \textsf{ILC}$_\rho$ in such a way that it \emph{enjoys cut-elimination}, and our translations become \emph{conservative}.
In particular, the cut-elimination poses a technical challenge, for which we invent a novel technique (Theorem~\ref{ThmCutEliminationForILC_Delta}). 


\if0
The origin of the present work is the idea to translate the classical implication $A \Rrightarrow B$ into the linear implication $\oc A \multimap \wn B$, which is inspired by the categorical reformulation of game semantics \cite{harmer2007categorical}.
In hindsight, this translation is already present at least implicitly in the pioneering work on a unity of logic \cite{girard1993unity}.
The latter is perhaps closest in spirit to the present work as well.
\fi 

The work closest in spirit is the aforementioned work on a unity of logic \cite{girard1993unity} by Girard.
We can, to some extent, relate this work with our approach by translating sequents $\Delta ; \Delta' \vdash \Gamma' ; \Gamma$ in his unified formal system \emph{\textsf{LU}} into those $\Delta, \oc \Delta' \vdash \wn \Gamma', \Gamma$ in our unified sequent calculus \textsf{ILC}$_\rho$.
His approach splits contexts into several zones by the use of semicolon, while ours does not. 

Our approach stands in sharp contrast to polarised approaches to a unity of logic \cite{girard1993unity,mellies2010resource,laurent2003translations}.
For instance, the gigantic system \textsf{LU} embodies CL, IL and LL as its \emph{fragments}, for which polarities play crucial roles.
In contrast, we \emph{reduce} those logics, except that we replace CLL with CLL$^-$, into the most primitive one (ILL$^{\text{e}}_\rho$) without referring to the notion of polarities.
Also, \textsf{LU} has to split logical constants and connectives into several cases, and contexts into several zones as mentioned above, depending on the polarities of formulas. 
In contrast, our method does not have to split constants, connectives or contexts. 
In these respects, our method is much simpler. 
For another example, Laurent and Regnier \cite{laurent2003translations} achieve a commutative diagram that unifies CL, IL and certain fragments of LL, but they use \emph{polarised linear logic} instead of CLL$^-$ and the \emph{CPS-translation} for classicalisation between non-linear logics, which differ from ours.
They employ yet another translation for classicalisation between linear logics.
Similarly, another commutative diagram \cite[Figure~7]{danos1997new} given by Danos et al. does not achieve uniform unlinearisation or classicalisation. 
In contrast, our classicalisation (resp. unlinearisation) is \emph{uniformly} applicable to both of the linear and non-linear (resp. intuitionistic and classical) cases. 

Last but not least, there are categorical and game-semantic counterparts of the present work (the drafts are in preparation).
They extend the standard categorical and game semantics of Girard's translation \cite{seely1987linear,mccusker1998games} to the classical case, giving semantics to unlinearisation $(\_)_\oc$.
Categorically, it is a \emph{comonad}, and its dual, a \emph{monad}, gives semantics to classicalisation $(\_)_\wn$.
Also, this game semantics is a modest variant of the polarised one \cite{laurent2002polarized}, but a crucial difference is that the former only uses \emph{negative} games, i.e., the game-semantic counterpart of negative formulas. 
This implies that there are only negative formulas in our unity of logic, i.e., there is no dimension of polarities, which seems to explain why our approach does not have to handle polarities.
We leave it as future work to extend the present work and the semantics to \emph{predicate logics}.

\subsection{Structure of the present article}
The rest of the present article is structured as follows. 
We first review sequent calculi for CL, IL and LL in \S\ref{SequentCalculi}. 
We then introduce our new logics and their sequent calculi, and prove the theorem and the corollary mentioned above in \S\ref{CommutativeUnityOfLogic}. 
Appendices~\ref{ProofOnMu}--\ref{CutElimination} collect some simple yet lengthy details in our proofs.


%
%


\section{Review: sequent calculi for existing logics}
\label{SequentCalculi}
We first recall existing logics: CL and IL in \S\ref{SequentCalculiForClassicalAndIntuitionisticLogics}, and ILL and CLL in \S\ref{LinearLogics}.

\begin{notation}
Capital letters $A$, $B$, $C$, etc. range over formulas, and Greek capital letters $\Delta$, $\Sigma$, $\Theta$, etc. over finite sequences of formulas. 
For each $n \in \mathbb{N}$, we define $\overline{n} \colonequals \{ \, 1, 2, \dots, n \, \}$ (n.b., $\overline{0} = \emptyset$).
In formulas, every unary operation precedes any binary operation; every binary operation except (any kind of) implication is left associative, while implication is right associative. 
Given a finite sequence $\boldsymbol{s}$, we write $|\boldsymbol{s}| \in \mathbb{N}$ for its \emph{length}, i.e., the number of elements of $\boldsymbol{s}$.
We often use parentheses to clear ambiguity in formulas.
\end{notation}

\if0
Recall that formulas $A$ of CL /IL (resp. $B$ of LL, $C$ of ILL) are constructed by the following grammar: 
\begin{itemize}

\item $A \colonequals \top \mid \bot \mid A \wedge A' \mid A \vee A' \mid A \Rightarrow A'$;

\item $B \colonequals \top \mid \bot \mid 1 \mid 0 \mid B \otimes B' \mid B \invamp B' \mid B \& B' \mid B \oplus B' \mid B \multimap B' \mid \oc B \mid \wn B$;

\item $C \colonequals \top \mid \bot \mid 1 \mid C \otimes C' \mid C \& C' \mid C \oplus C' \mid C \multimap C' \mid \oc C$.

\end{itemize}
\fi

\subsection{Sequent calculi for classical and intuitionistic logics}
\label{SequentCalculiForClassicalAndIntuitionisticLogics}
Let us begin with recalling the standard sequent calculi \emph{\textsf{LK}} and \emph{\textsf{LJ}} for CL and IL, respectively \cite{gentzen1935untersuchungen,troelstra2000basic}.
For our systematic approach, we define negation by implication and falsity, and include truth and the right-rule on falsity though they are only minor points.
In addition, we notationally distinguish between classical and intuitionistic truths, classical and intuitionistic conjunctions, classical and intuitionistic implications, and classical and intuitionistic negations, respectively, whose convenience will be clear shortly. 

\begin{definition}[Formulas of CL \cite{troelstra2000basic}]
\label{DefFormulasOfCLAndIL}
Formulas $A, B$ of \emph{classical logic (CL)} are defined by (the grammar of the \emph{Backus–Naur form} \cite{backus1959syntax})
\begin{mathpar}
A, B \colonequals X \mid \mathrm{tt} \mid \mathrm{ff} \mid A \wedge B \mid A \vee B \mid A \Rrightarrow B
\end{mathpar}
where $X$ ranges over propositional variables \cite{troelstra2000basic}, and we define $\mathbin{\sim} A \colonequals A \Rrightarrow \mathrm{ff}$.
We call $\mathrm{tt}$ \emph{(classical) truth}, $\mathrm{ff}$ \emph{(non-linear) falsity}, $\wedge$ \emph{(classical) conjunction}, $\vee$ \emph{(non-linear) disjunction}, $\Rrightarrow$ \emph{(classical) implication}, and $\sim$ \emph{(classical) negation}.
\end{definition}

\begin{remark}
The adjectives (in the parentheses) on truth $\mathrm{tt}$, falsity $\mathrm{ff}$, conjunction $\wedge$, disjunction $\vee$, implication $\Rrightarrow$ and negation $\sim$ make sense by the unity of logic given in \S\ref{CommutativeUnityOfLogic}, but for now it is better to simply ignore them.
\end{remark}

\begin{definition}[\textsf{LK} for CL \cite{gentzen1935untersuchungen,troelstra2000basic}]
\label{DefLK}
The sequent calculus \emph{\textsf{LK}} for CL consists of the axioms and the rules displayed in Figure~\ref{FigLK}.
\begin{figure}
\begin{mathpar}
\AxiomC{$\Delta, A, A', \Delta' \vdash \Gamma$}
\LeftLabel{\textsc{(XL)}}
\UnaryInfC{$\Delta, A', A, \Delta' \vdash \Gamma$}
\DisplayProof \and
\AxiomC{$\Delta \vdash \Gamma, B, B', \Gamma'$}
\LeftLabel{\textsc{(XR)}}
\UnaryInfC{$\Delta \vdash \Gamma, B', B, \Gamma'$}
\DisplayProof \\
\AxiomC{$\Delta \vdash \Gamma$} 
\LeftLabel{\textsc{(WL)}}
\UnaryInfC{$\Delta, A \vdash \Gamma$}
\DisplayProof \and
\AxiomC{$\Delta \vdash \Gamma$}
\LeftLabel{\textsc{(WR)}}
\UnaryInfC{$\Delta \vdash B, \Gamma$}
\DisplayProof \\
\AxiomC{$\Delta, A, A \vdash \Gamma$}
\LeftLabel{\textsc{(CL)}}
\UnaryInfC{$\Delta, A \vdash \Gamma$}
\DisplayProof \and
\AxiomC{$\Delta \vdash B, B, \Gamma$}
\LeftLabel{\textsc{(CR)}}
\UnaryInfC{$\Delta \vdash B, \Gamma$}
\DisplayProof \\
\AxiomC{}
\LeftLabel{\textsc{(Id)}}
\UnaryInfC{$A \vdash A$}
\DisplayProof \and
\AxiomC{$\Delta \vdash B, \Gamma$}
	\AxiomC{$\Delta', B \vdash \Gamma'$}
	\LeftLabel{\textsc{(Cut)}}
\BinaryInfC{$\Delta, \Delta' \vdash \Gamma, \Gamma'$}
\DisplayProof \\
\AxiomC{$\Delta \vdash \Gamma$}
\LeftLabel{\textsc{($\mathrm{tt}$L)}}
\UnaryInfC{$\Delta, \mathrm{tt} \vdash \Gamma$}
\DisplayProof \and
\AxiomC{}
\LeftLabel{\textsc{($\mathrm{tt}$R)}}
\UnaryInfC{$\vdash \mathrm{tt}$} 
\DisplayProof \and
\AxiomC{}
\LeftLabel{\textsc{($\mathrm{ff}$L)}}
\UnaryInfC{$\mathrm{ff} \vdash$}
\DisplayProof \and
\AxiomC{$\Delta \vdash \Gamma$}
\LeftLabel{\textsc{($\mathrm{ff}$R)}}
\UnaryInfC{$\Delta \vdash \mathrm{ff}, \Gamma$}
\DisplayProof \\
\AxiomC{$\Delta, A_i \vdash \Gamma$}
\LeftLabel{\textsc{($\wedge$L)}}
\RightLabel{\textsc{($i \in \overline{2}$)}}
\UnaryInfC{$\Delta, A_1 \wedge A_2 \vdash \Gamma$}
\DisplayProof \and
\AxiomC{$\Delta \vdash B_1, \Gamma$}
		\AxiomC{$\Delta \vdash B_2, \Gamma$}
	\LeftLabel{\textsc{($\wedge$R)}}
\BinaryInfC{$\Delta \vdash B_1 \wedge B_2, \Gamma$} 
\DisplayProof \\
\AxiomC{$\Delta, A_1 \vdash \Gamma$}
		\AxiomC{$\Delta, A_2 \vdash \Gamma$}
	\LeftLabel{\textsc{($\vee$L)}}
\BinaryInfC{$\Delta, A_1 \vee A_2 \vdash \Gamma$}
\DisplayProof \and
\AxiomC{$\Delta \vdash B_i, \Gamma$}
\LeftLabel{\textsc{($\vee$R)}}
\RightLabel{\textsc{($i \in \overline{2}$)}}
\UnaryInfC{$\Delta \vdash B_1 \vee B_2, \Gamma$}
\DisplayProof \\
\AxiomC{$\Delta \vdash A, \Gamma$}
	\AxiomC{$\Delta, B \vdash \Gamma$}
	\LeftLabel{\textsc{($\Rrightarrow$L)}}
\BinaryInfC{$\Delta, A \Rrightarrow B \vdash \Gamma$}
\DisplayProof \and
\AxiomC{$\Delta, A \vdash B, \Gamma$}
\LeftLabel{\textsc{($\Rrightarrow$R)}}
\UnaryInfC{$\Delta \vdash A \Rrightarrow B, \Gamma$}
\DisplayProof
\end{mathpar}
\caption{Sequent calculus \textsf{LK} for CL}
\label{FigLK}
\end{figure}
\end{definition}

\begin{definition}[Formulas of IL \cite{troelstra2000basic}]
\label{DefFormulasOfIL}
Formulas $A, B$ of \emph{intuitionistic logic (IL)} are defined by
\begin{mathpar}
A, B \colonequals X \mid \top \mid \mathrm{ff} \mid A \mathbin{\&} B \mid A \vee B \mid A \Rightarrow B
\end{mathpar}
where $X$ ranges over propositional variables, and we define $A^\star \colonequals A \Rightarrow \mathrm{ff}$.
We call $\top$ \emph{(intuitionistic) truth} or \emph{top}, $\&$ \emph{(intuitionistic) conjunction} or \emph{with}, $\Rightarrow$ \emph{(intuitionistic) implication}, and $(\_)^\star$ \emph{(intuitionistic) negation}.
\end{definition}

\begin{remark}
Again, it is better to ignore the adjectives (in the parentheses) on truth $\top$, conjunction $\&$, implication $\Rightarrow$ and negation $(\_)^\star$ until \S\ref{CommutativeUnityOfLogic}. 
\end{remark}

\begin{definition}[\textsf{LJ} for IL \cite{gentzen1935untersuchungen,troelstra2000basic}]
\label{DefLJ}
The sequent calculus \emph{\textsf{LJ}} for IL consists of the axioms and the rules of \textsf{LK} that contain only \emph{intuitionistic} (i.e., the length of the right-hand side is at most one) sequents, where truth $\mathrm{tt}$, conjunction $\wedge$ and implication $\Rrightarrow$ are replaced with the ones $\top$, $\&$ and $\Rightarrow$, respectively.
\if0
\begin{figure}
\begin{center}
\begin{mathpar}
\AxiomC{$\Delta, A, A', \Delta' \vdash B$}
\LeftLabel{\textsc{(XL)}}
\UnaryInfC{$\Delta, A', A, \Delta' \vdash B$}
\DisplayProof \and
\AxiomC{$\Delta \vdash B$} 
\LeftLabel{\textsc{(WL)}}
\UnaryInfC{$\Delta, A \vdash B$}
\DisplayProof
\and
\AxiomC{$\Delta, A, A \vdash B$}
\LeftLabel{\textsc{(CL)}}
\UnaryInfC{$\Delta, A \vdash B$}
\DisplayProof \\
\AxiomC{}
\LeftLabel{\textsc{(Id)}}
\UnaryInfC{$A \vdash A$}
\DisplayProof \and
\AxiomC{$\Delta \vdash B$}
	\AxiomC{$\Delta', B \vdash C$}
	\LeftLabel{\textsc{(Cut)}}
\BinaryInfC{$\Delta, \Delta' \vdash C$}
\DisplayProof \\
\AxiomC{$\Delta \vdash B$}
\LeftLabel{\textsc{($\top$L)}}
\UnaryInfC{$\Delta, \top \vdash B$}
\DisplayProof \and
\AxiomC{}
\LeftLabel{\textsc{($\top$R)}}
\UnaryInfC{$\vdash \top$} 
\DisplayProof \and
\AxiomC{}
\LeftLabel{\textsc{($\bot$L)}}
\UnaryInfC{$\bot \vdash$}
\DisplayProof \and
\AxiomC{$\Delta \vdash$}
\LeftLabel{\textsc{($\bot$R)}}
\UnaryInfC{$\Delta \vdash \bot$}
\DisplayProof \\
\AxiomC{$\Delta, A_i \vdash B$}
\LeftLabel{\textsc{($\&$L)}}
\RightLabel{\textsc{($i \in \overline{2}$)}}
\UnaryInfC{$\Delta, A_1 \mathbin{\&} A_2 \vdash B$}
\DisplayProof \and
\AxiomC{$\Delta \vdash B_1$}
		\AxiomC{$\Delta \vdash B_2$}
	\LeftLabel{\textsc{($\&$R)}}
\BinaryInfC{$\Delta \vdash B_1 \mathbin{\&} B_2$} 
\DisplayProof \\
\AxiomC{$\Delta, A_1 \vdash B$}
		\AxiomC{$\Delta, A_2 \vdash B$}
	\LeftLabel{\textsc{($\vee$L)}}
\BinaryInfC{$\Delta, A_1 \vee A_2 \vdash B$}
\DisplayProof \and
\AxiomC{$\Delta \vdash B_i$}
\LeftLabel{\textsc{($\vee$R)}}
\RightLabel{\textsc{($i \in \overline{2}$)}}
\UnaryInfC{$\Delta \vdash B_1 \vee B_2$}
\DisplayProof \\
\AxiomC{$\Delta \vdash A$}
	\AxiomC{$\Delta, B \vdash C$}
	\LeftLabel{\textsc{($\Rightarrow$L)}}
\BinaryInfC{$\Delta, A \Rightarrow B \vdash C$}
\DisplayProof \and
\AxiomC{$\Delta, A \vdash B$}
\LeftLabel{\textsc{($\Rightarrow$R)}}
\UnaryInfC{$\Delta \vdash A \Rightarrow B$}
\DisplayProof
\end{mathpar}
\end{center}
\caption{Sequent calculus \textsf{LJ} for IL}
\label{FigLJ}
\end{figure}
\fi
\end{definition}

For instance, there is a formal proof of the \emph{law of excluded middle (LEM)} in \textsf{LK}, i.e., $\vdash \mathbin{\sim} A \vee A$ for each formula $A$ is provable in CL:
\begin{mathpar}
\AxiomC{}
\LeftLabel{\textsc{(Id)}}
\UnaryInfC{$A \vdash A$}
\LeftLabel{\textsc{($\mathrm{ff}$R)}}
\UnaryInfC{$A \vdash \mathrm{ff}, A$}
\LeftLabel{\textsc{($\Rrightarrow$R)}}
\UnaryInfC{$\vdash \mathbin{\sim} A, A$}
\LeftLabel{\textsc{($\vee$R)}}
\UnaryInfC{$\vdash \mathbin{\sim} A \vee A, A$}
\LeftLabel{\textsc{(XR)}}
\UnaryInfC{$\vdash A, \mathbin{\sim} A \vee A$}
\LeftLabel{\textsc{($\vee$R)}}
\UnaryInfC{$\vdash \mathbin{\sim} A \vee A, \mathbin{\sim} A \vee A$}
\LeftLabel{\textsc{(CR)}}
\UnaryInfC{$\vdash \mathbin{\sim} A \vee A$}
\DisplayProof
\end{mathpar}
Note that some sequents in this formal proof have two elements on the right-hand side. 
Thus, this formal proof is invalid in \textsf{LJ} (even if we replace classical implication $\Rrightarrow$ and negation $\sim$ with the intuitionistic counterparts $\Rightarrow$ and $(\_)^\star$, respectively).
In this way, sequent calculi \emph{unify} CL and IL.

However, this unity of CL and IL is \emph{not} applicable to the linear case: The sequent calculus for ILL (Definition~\ref{DefLLJ}) is not obtained out of that for CLL (Definition~\ref{DefLLK}) by the same restriction since we must discard certain logical constants and connectives of CLL as well, e.g., \emph{linear negation} $(\_)^\bot$ (\S\ref{LinearLogics}).\footnote{We can certainly use linear negation $(\_)^\bot$ in a way that keeps the intuitionistic restriction on sequents, but (without a justification) it is forcibly prohibited in ILL. Therefore, the intuitionistic restriction per se does not map CLL to ILL unlike the non-linear case $\text{CL} \mapsto \text{IL}$.} 

\if0
In \S\ref{CutElimSoundnessAndCompleteness}, we provide (deterministic) \emph{cut-elimination} procedures \cite{troelstra2000basic} on LK and LJ by \emph{normalization-by-evaluation (NBE)} \cite{berger1991inverse}, exploiting the \emph{fully complete} game semantics given in \S\ref{GameSemantics}, which are simpler than the syntactic, step-by-step ones.
\fi

\if0
If one identifies sequents \emph{up to currying}, which is assumed by the \emph{one-sided} calculus for CL \cite{troelstra2000basic} and justified by game semantics (for it interprets currying by the trivial operation), then the change of $\Rightarrow$R into $\Rightarrow$R$^-$ is not a real restriction. 
In this sense, LK$^-$ is equivalent to LK.
\fi

Finally, let us recall the following fundamental theorem in proof theory, which was originally established by Gentzen: 
\begin{theorem}[Cut-elimination for \textsf{LK} and \textsf{LJ} \cite{gentzen1935untersuchungen,troelstra2000basic}]
\label{ThmCutEliminationForLKAndLJ}
Given a formal proof of a sequent in \textsf{LK} (resp. \textsf{LJ}), there is a formal proof of this sequent in the sequent calculus that does not use Cut.
\end{theorem}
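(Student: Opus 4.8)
The plan is to follow Gentzen's original strategy and eliminate cuts one at a time by a double induction, arguing uniformly for \textsf{LK} and then specialising to \textsf{LJ}. First I would reduce the task to removing a single \emph{topmost} cut, i.e. an instance of \textsc{(Cut)} both of whose premises are already derived cut-free; once such a cut can be deleted or replaced by cuts that are smaller in a precise sense, iterating the reduction from the leaves downward clears the whole proof. To each cut I attach two measures: its \emph{degree}, the number of logical connectives in the cut formula $B$, and its \emph{rank}, recording how far above each premise $B$ was last introduced as a principal formula. The outer induction runs on the degree and the inner induction on the rank.

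The heart of the argument is a case analysis on whether $B$ is principal in the last rule of each premise. If $B$ is \emph{not} principal on at least one side, so the last rule there acts on a different formula, I permute the cut upward past that rule; this strictly lowers the rank on that side without raising the degree, so the inner induction hypothesis applies. The delicate permutations are those past the structural rules and past the two-premise logical rules \textsc{($\wedge$R)}, \textsc{($\vee$L)} and \textsc{($\Rrightarrow$L)}, where the cut may have to be copied into both premises. If instead $B$ is principal on \emph{both} sides, it has just been introduced by its right-rule above the left premise and by its left-rule above the right premise; this is the key reduction, where I replace the single cut on $B$ by one or two cuts on the immediate subformulas of $B$, each of strictly smaller degree, and discharge them by the outer induction hypothesis. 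For instance, a cut on $B = B_1 \Rrightarrow B_2$ formed by \textsc{($\Rrightarrow$R)} against \textsc{($\Rrightarrow$L)} unfolds into a cut on $B_1$ and a cut on $B_2$.

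The main obstacle will be the interaction of cut with contraction, \textsc{(CL)} and \textsc{(CR)}, which is exactly where the naive induction fails: permuting a cut upward past a contraction on the cut formula would force two copies of the entire opposite subproof, and this need decrease no obvious measure. The standard remedy, which I would adopt, is to carry out the whole induction with \textsc{(Cut)} generalised to Gentzen's \emph{mix} rule, which deletes \emph{all} occurrences of the cut formula from both contexts at once; permuting a mix past a contraction of that formula then costs nothing, since the mix already absorbs every copy, and ordinary cut is recovered as the special case of a mix on a single occurrence.

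Finally, for \textsf{LJ} the same induction applies essentially verbatim, with the extra bookkeeping that every sequent occurring carries at most one formula on the right. Here I would check that each reduction step preserves this intuitionistic restriction: right contraction is vacuous, the principal-formula reductions introduce no second succedent, and the permutation cases never widen the right-hand side. Hence cut-elimination for \textsf{LJ} falls out of the restriction of the \textsf{LK} argument to intuitionistic sequents.
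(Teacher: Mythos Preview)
Your proposal is correct and follows Gentzen's original strategy with the mix rule, which is precisely what the paper defers to: its proof consists solely of a reference to \cite{gentzen1935untersuchungen} and \cite[\S4.1]{troelstra2000basic}. Your sketch is a faithful summary of the standard argument found in those sources.
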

\begin{proof}
See the original article \cite{gentzen1935untersuchungen} or a standard textbook \cite[\S4.1]{troelstra2000basic}.
\end{proof}


\subsection{Sequent calculi for classical and intuitionistic linear logics}
\label{LinearLogics}
Let us call the standard, two-sided sequent calculi for CLL and ILL \cite{girard1987linear,troelstra2000basic} \emph{\textsf{LLK}} and \emph{\textsf{LLJ}}, respectively.
For completeness of this article, we recall them too:

\begin{notation}
We write $\top$ and $1$ for the units of tensor $\otimes$ and with $\&$, respectively, i.e., we swap the traditional notations for the units \cite{girard1987linear} (similarly to \cite[\S 2.7]{troelstra1991lectures}), because we find it notationally more systematic. 
We define $e (A_1, A_2, \dots, A_k) \colonequals e A_1, e A_2, \dots, e A_k$ for each exponential $e \in \{ \oc, \wn \}$.
\end{notation}

\begin{definition}[Formulas of linear logics \cite{girard1987linear,troelstra2000basic}]
\label{DefFormulasOfLinearLogics}
Formulas $A, B$ of \emph{classical linear logic (CLL)} are defined by
\begin{mathpar}
A, B \colonequals X \mid X^\bot \mid \top \mid \bot \mid 1 \mid 0 \mid A \otimes B \mid A \invamp B \mid A \mathbin{\&} B \mid A \oplus B \mid \oc A \mid \wn A
\end{mathpar}
where $X$ ranges over propositional variables. 
We call $\top$ \emph{top}, $\bot$ \emph{bottom}, $1$ \emph{one}, $0$ \emph{zero}, $\otimes$ \emph{tensor}, $\invamp$ \emph{par}, $\&$ \emph{with}, $\oplus$ \emph{plus}, $\oc$ \emph{of-course}, and $\wn$ \emph{why-not}, and extend \emph{linear negation} $(\_)^\bot$ on propositional variables to all formulas by $(X^\bot)^\bot \colonequals X$, $\top^\bot \colonequals \bot$, $\bot^\bot \colonequals \top$, $1^\bot \colonequals 0$, $0^\bot \colonequals 1$, $(A \otimes B)^\bot \colonequals A^\bot \invamp B^\bot$, $(A \invamp B)^\bot \colonequals A^\bot \otimes B^\bot$, $(A \mathbin{\&} B)^\bot \colonequals A^\bot \oplus B^\bot$, $(A \oplus B)^\bot \colonequals A^\bot \mathbin{\&} B^\bot$, $(\oc A)^\bot \colonequals \wn A^\bot$, and $(\wn A)^\bot \colonequals \oc A^\bot$.
We call these equations the \emph{(strict) De Morgan laws} in CLL.
We define \emph{linear implication} $\multimap$ by $A \multimap B \colonequals A^\bot \invamp B$.

Formulas $A, B$ of \emph{intuitionistic linear logic (ILL)} are defined by
\begin{mathpar}
A, B \colonequals X \mid \top \mid A \otimes B \mid A \mathbin{\&} B \mid A \oplus B \mid A \rightarrowtriangle B \mid \oc A
\end{mathpar}
where again $X$ ranges over propositional variables. 
We call $\rightarrowtriangle$ \emph{unpolarised (up-) linear implication}.
\end{definition}

\begin{remark}
Unlike the standard convention, we write $\rightarrowtriangle$ and call it \emph{up-linear implication} in order to distinguish it from linear implication $\multimap$ since they are a priori unrelated.
Let us also remark that it slightly varies among authors \cite{abramsky1993computational,mellies2009categorical} which logical constants and connectives to include in ILL. 
\end{remark}

\begin{definition}[\textsf{LLK} for CLL \cite{girard1987linear}]
\label{DefLLK}
The sequent calculus \emph{\textsf{LLK}} for CLL consists of the axioms and the rules displayed in Figure~\ref{FigLLK}.
\begin{figure}
\begin{mathpar}
\AxiomC{$\Delta, A, A', \Delta' \vdash \Gamma$}
\LeftLabel{\textsc{(XL)}}
\UnaryInfC{$\Delta, A', A, \Delta' \vdash \Gamma$}
\DisplayProof \and
\AxiomC{$\Delta \vdash \Gamma, B, B', \Gamma'$}
\LeftLabel{\textsc{(XR)}}
\UnaryInfC{$\Delta \vdash \Gamma, B', B, \Gamma'$}
\DisplayProof \\
\AxiomC{$\Delta \vdash \Gamma$}
\LeftLabel{\textsc{($\oc$W)}}
\UnaryInfC{$\Delta, \oc A \vdash \Gamma$}
\DisplayProof \and
\AxiomC{$\Delta \vdash \Gamma$}
\LeftLabel{\textsc{($\wn$W)}}
\UnaryInfC{$\Delta \vdash \wn B, \Gamma$}
\DisplayProof \\
\AxiomC{$\Delta, \oc A, \oc A \vdash \Gamma$}
\LeftLabel{\textsc{($\oc$C)}}
\UnaryInfC{$\Delta, \oc A \vdash \Gamma$}
\DisplayProof \and
\AxiomC{$\Delta \vdash \wn B, \wn B, \Gamma$}
\LeftLabel{\textsc{($\wn$C)}}
\UnaryInfC{$\Delta \vdash \wn B, \Gamma$}
\DisplayProof \\
\AxiomC{$\Delta, A \vdash \Gamma$}
\LeftLabel{\textsc{($\oc$D)}}
\UnaryInfC{$\Delta, \oc A \vdash \Gamma$}
\DisplayProof \and
\AxiomC{$\Delta \vdash B, \Gamma$}
\LeftLabel{\textsc{($\wn$D)}}
\UnaryInfC{$\Delta \vdash \wn B, \Gamma$}
\DisplayProof 
\\
\AxiomC{$\oc \Delta, A \vdash \wn \Gamma$}
\LeftLabel{\textsc{($\wn$L$^{\oc\wn}$)}}
\UnaryInfC{$\oc \Delta, \wn A \vdash \wn \Gamma$}
\DisplayProof \and
\AxiomC{$\oc \Delta \vdash B, \wn \Gamma$}
\LeftLabel{\textsc{($\oc$R$^{\oc\wn}$)}}
\UnaryInfC{$\oc \Delta \vdash \oc B, \wn \Gamma$}
\DisplayProof \\
\AxiomC{}
\LeftLabel{\textsc{(Id)}}
\UnaryInfC{$A \vdash A$}
\DisplayProof \and
\AxiomC{$\Delta \vdash B, \Gamma$}
	\AxiomC{$\Delta', B \vdash \Gamma'$}
	\LeftLabel{\textsc{(Cut)}}
\BinaryInfC{$\Delta, \Delta' \vdash \Gamma, \Gamma'$}
\DisplayProof 
\\
\AxiomC{}
\LeftLabel{\textsc{($1$R)}}
\UnaryInfC{$\Delta \vdash 1, \Gamma$}
\DisplayProof 
\and
\AxiomC{}
\LeftLabel{\textsc{($0$L)}}
\UnaryInfC{$\Delta, 0 \vdash \Gamma$}
\DisplayProof  
\\
\AxiomC{$\Delta \vdash \Gamma$}
\LeftLabel{\textsc{($\top$L)}}
\UnaryInfC{$\Delta, \top \vdash \Gamma$}
\DisplayProof \and
\AxiomC{}
\LeftLabel{\textsc{($\top$R)}}
\UnaryInfC{$\vdash \top$}
\DisplayProof \and
\AxiomC{}
\LeftLabel{\textsc{($\bot$L)}}
\UnaryInfC{$\bot \vdash$}
\DisplayProof \and
\AxiomC{$\Delta \vdash \Gamma$}
\LeftLabel{\textsc{($\bot$R)}}
\UnaryInfC{$\Delta \vdash \bot, \Gamma$}
\DisplayProof \\
\AxiomC{$\Delta, A_1, A_2 \vdash \Gamma$}
\LeftLabel{\textsc{($\otimes$L)}}
\UnaryInfC{$\Delta, A_1 \otimes A_2 \vdash \Gamma$}
\DisplayProof \and
\AxiomC{$\Delta_1 \vdash B_1, \Gamma_1$}
		\AxiomC{$\Delta_2 \vdash B_2, \Gamma_2$}
	\LeftLabel{\textsc{($\otimes$R)}}
\BinaryInfC{$\Delta_1, \Delta_2 \vdash B_1 \otimes B_2, \Gamma_1, \Gamma_2$}
\DisplayProof \\ 
\AxiomC{$\Delta, A_i \vdash \Gamma$}
	\LeftLabel{\textsc{($\&$L)}}
	\RightLabel{($i \in \overline{2}$)}
\UnaryInfC{$\Delta, A_1 \mathbin{\&} A_2 \vdash \Gamma$}
\DisplayProof \and
\AxiomC{$\Delta \vdash B_1, \Gamma$}
		\AxiomC{$\Delta \vdash B_2, \Gamma$}
	\LeftLabel{\textsc{($\&$R)}}
\BinaryInfC{$\Delta \vdash B_1 \mathbin{\&} B_2, \Gamma$}
\DisplayProof \\
\AxiomC{$\Delta_1, A_1 \vdash \Gamma_1$}
		\AxiomC{$\Delta_2, A_2 \vdash \Gamma_2$}
	\LeftLabel{\textsc{($\invamp$L)}}
\BinaryInfC{$\Delta_1, \Delta_2, A_1 \invamp A_2 \vdash \Gamma_1, \Gamma_2$}
\DisplayProof \and
\AxiomC{$\Delta \vdash B_1, B_2, \Gamma$}
\LeftLabel{\textsc{($\invamp$R)}}
\UnaryInfC{$\Delta \vdash B_1 \invamp B_2, \Gamma$}
\DisplayProof \\
\AxiomC{$\Delta, A_1 \vdash \Gamma$}
		\AxiomC{$\Delta, A_2 \vdash \Gamma$}
	\LeftLabel{\textsc{($\oplus$L)}}
\BinaryInfC{$\Delta, A_1 \oplus A_2 \vdash \Gamma$}
\DisplayProof \and
\AxiomC{$\Delta \vdash B_i, \Gamma$}
	\LeftLabel{\textsc{($\oplus$R)}}
	\RightLabel{($i \in \overline{2}$)}
\UnaryInfC{$\Delta \vdash B_1 \oplus B_2, \Gamma$}
\DisplayProof \\
\AxiomC{$\Delta \vdash B, \Gamma$}
\LeftLabel{\textsc{($(\_)^\bot$L)}}
\UnaryInfC{$\Delta, B^\bot \vdash \Gamma$}
\DisplayProof \and
\AxiomC{$\Delta, A \vdash \Gamma$}
\LeftLabel{\textsc{($(\_)^\bot$R)}}
\UnaryInfC{$\Delta \vdash A^\bot, \Gamma$}
\DisplayProof
\end{mathpar}
\caption{Sequent calculus \textsf{LLK} for CLL}
\label{FigLLK}
\end{figure}
\end{definition}

\begin{definition}[\textsf{LLJ} for ILL \cite{girard1987linear,abramsky1993computational,mellies2009categorical}]
\label{DefLLJ}
The sequent calculus \emph{\textsf{LLJ}} for ILL consists of the axioms and the rules of \textsf{LLK} on exchange, identity, cut, top $\top$, tensor $\otimes$, with $\&$, plus $\oplus$ and of-course $\oc$ that contain only intuitionistic sequents, as well as the following rules on up-linear implication:
\begin{mathpar}
\AxiomC{$\Delta \vdash A$}
\AxiomC{$\Gamma, B \vdash C$}
\LeftLabel{\textsc{($\rightarrowtriangle$L)}}
\BinaryInfC{$\Delta, \Gamma, A \rightarrowtriangle B \vdash C$}
\DisplayProof \and
\AxiomC{$\Delta, A \vdash B$}
\LeftLabel{\textsc{($\rightarrowtriangle$R)}}
\UnaryInfC{$\Delta \vdash A \rightarrowtriangle B$}
\DisplayProof
\end{mathpar}
\end{definition}

CLL embodies the De Morgan laws \emph{by the definition} of linear negation $(\_)^\bot$ and \emph{in terms of equalities} between formulas, which stands in contrast to the De Morgan laws in CL (\S\ref{PolaritiesAndConcurrencyInLogicAndGames}).
These equalities do not hold in CL, IL or ILL. 

Another peculiarity of linear negation $(\_)^\bot$ is that it switches the \emph{polarities} of formulas, which consist of \emph{positive} and \emph{negative} ones \cite{girard1993unity}, while (classical) negation $\mathbin{\sim} (\_)$ in CL does not.
Furthermore, it is possible to see that ILL, IL and CL have only negative formulas \cite{girard1993unity,laurent2003translations}.
Even \emph{game semantics} \cite{hyland1997game}, which interprets positive (resp. negative) formulas by \emph{positive} (resp. \emph{negative}) \emph{games} \cite{laurent2002polarized}, supports this viewpoint: The game semantics of ILL, IL and CL \cite{mccusker1998games,laird1999semantic,herbelin1997games,blot2017realizability,laurent2002polarized} employs only negative games.
For these points, we replace CLL with a logic (viz., CLL$^-$) without linear negation $(\_)^\bot$ in \S\ref{ClassicalLinearLogicNegative}.
This replacement enables us to obtain a simple, highly systematic unity of logic, which dispenses with the polarities and the De Morgan laws, in \S\ref{CommutativeUnityOfLogic}.
It is also worth noting that the game semantics of this unity mentioned in \S\ref{OurContributionsAndRelatedWork} uses only negative games, i.e., semantically our unity of logic does not have the dimension of polarities. 


Finally, recall a well-known translation of sequents $\Delta \vdash B$ in \textsf{LJ} into those $\oc \Delta \vdash B$ in \textsf{LLJ}, called \emph{Girard's translation} \cite{girard1987linear}, which is abstracted by category theory too \cite{seely1987linear}.
Hence, one may wonder if Girard's translation is to be called \emph{unlinearisation}.
However, it does not seem to work for the classical case since existing translations of CL into (variants of) CLL \cite{girard1987linear,girard1993unity,laurent2003translations} are all different from Girard's translation. 
This is another motivation of ours to replace CLL. 

As a summary of this section, we restate that universal operations on logic to be called \emph{unlinearisation} and \emph{classicalisation} in the sense that the diagram
\begin{mathpar}
\begin{tikzcd}
\arrow[d, "\text{classicalisation}"'] \text{ILL} \arrow[rrrr, "\text{unlinearisation}"] &&&& \arrow[d, "\text{classicalisation}"] \text{IL} \\
\text{CLL} \arrow[rrrr, "\text{unlinearisation}"'] &&&& \text{CL}
\end{tikzcd}
\end{mathpar}
commutes have not been established.
For example, Girard's translation works as the unlinearisation $\text{ILL} \mapsto \text{IL}$, but not $\text{CLL} \mapsto \text{CL}$; the \emph{negative translation} \cite{troelstra2000basic} works for the classicalisation $\text{IL} \mapsto \text{CL}$, but not $\text{ILL} \mapsto \text{CLL}$ since linear negation $(\_)^\bot$ is involutive.
As the converse of classicalisation, the intuitionistic restriction works for obtaining IL out of CL, but not ILL out of CLL; thus, the manipulation of the number of elements on sequents does not work either.
The lack of such \emph{uniform} unlinearisation and classicalisation on logic is a main problem we address in the next section. 
For this task, our key idea is to modify CLL by discarding its rather separate polarities and De Morgan laws. 

\if0
\subsection{Categorical restriction}
\label{CategoricalRestriction}
The \emph{intuitionistic restriction}, i.e., the restriction of the number of elements on the right-hand side of sequents to at most one, plays a major role for obtaining the intuitionistic variants of the existing logics in \S\ref{SequentCalculiForClassicalAndIntuitionisticLogics}--\ref{LinearLogics}.

Nevertheless, we shall not employ the intuitionistic restriction since it is a \emph{noise} for our approach to a unity of logic. 
Hence, we alternatively propose: 

\begin{definition}[Categorical restriction]
\label{DefCategoricalRestriction}
The \emph{categorical restriction} on a sequent calculus is the restriction of its formal proofs to those whose roots have exactly one element on the left- (resp. right-) hand side.  
\end{definition}

\begin{convention}
We henceforth impose the categorical restriction on all sequent calculi by default. 
\end{convention}

We call this restriction \emph{categorical} because each morphism in a category has exactly one domain and one codomain, as opposed to a multicategory. 

The categorical restriction virtually has \emph{no effects} on \textsf{LK} or \textsf{LLK}.
Indeed, a sequent $A_1, A_2, \dots, A_k \vdash B_1, B_2, \dots, B_l$ is provable in \textsf{LK} if and only if so is the one $A_1 \wedge A_2 \wedge \dots \wedge A_k \vdash B_1 \vee B_2 \vee \dots \vee B_l$, and similarly a sequent $C_1, C_2, \dots, C_m \vdash D_1, D_2, \dots, D_n$ is provable in \textsf{LLK} if and only if so is the one $C_1 \otimes C_2 \otimes \dots \otimes C_m \vdash D_1 \invamp D_2 \invamp \dots \invamp D_n$.
In the same vein, the categorical restriction has no effects on \textsf{LJ} or \textsf{LLJ}.
We leave the details to the reader.

On the other hand, the categorical restriction enables us to reformulate \textsf{LJ} and \textsf{LLJ} as follows:

\begin{definition}[\textsf{LI} for IL]
\label{DefLI}
The sequent calculus \emph{\textsf{LI}} for IL consists of the axioms and the rules of \textsf{LK} except CR, where classical conjunction $\wedge$ (resp. implication $\Rrightarrow$) is replaced with the intuitionistic one $\&$ (resp. $\Rightarrow$).
\end{definition}

\begin{definition}[\textsf{LLI} for ILL]
\label{DefLLI}
The sequent calculus \emph{\textsf{LLI}} for ILL consists of the axioms and the rules of \textsf{LLK} on top $\top$, tensor $\otimes$, with $\&$, plus $\oplus$ and of-course $\oc$, and the rules
\begin{mathpar}
\AxiomC{$\Delta \vdash A, \Theta$}
\AxiomC{$\Gamma, B \vdash C, \Xi$}
\LeftLabel{\textsc{($\rightarrowtriangle$L)}}
\BinaryInfC{$\Delta, \Gamma, A \rightarrowtriangle B \vdash C, \Theta, \Xi$}
\DisplayProof \and
\AxiomC{$\Delta, A \vdash B, \Theta$}
\LeftLabel{\textsc{($\rightarrowtriangle$R)}}
\UnaryInfC{$\Delta \vdash A \rightarrowtriangle B, \Theta$}
\DisplayProof
\end{mathpar}
\end{definition}

We leave it to the reader to show that under the categorical restriction \textsf{LJ} and \textsf{LI} (resp. \textsf{LJ} and \textsf{LI}) coincide at the level of provability.

In this way, we dispense with the traditional intuitionistic restriction on sequent calculi.
Although this point is trivial for the existing logics, it plays an important role for our unity of logic, as we shall see in the next section.
\fi

\section{Commutative unity of logic and conservative translations}
\label{CommutativeUnityOfLogic}
Having reviewed the existing logics and their sequent calculi, we are now ready to present the main contributions of the present work.

We first introduce a conservative extension of ILL, called \emph{intuitionistic linear logic $\iota$-extended (ILL$^{\text{e}}_{\iota}$)}, to which other logics in this article are reduced, and a sequent calculus \emph{\textsf{ILC$_{\iota}$}} for ILL$^{\text{e}}_{\iota}$ in \S\ref{ConservativeExtensionOfIntuitionisticLinearLogic}.
Similarly, we introduce a conservative extension of IL, called \emph{intuitionistic logic extended (IL$^{\text{e}}$)}, and a sequent calculus \emph{\textsf{INC}} for IL$^{\text{e}}$ in \S\ref{ConservativeExtensionOfIntuitionisticLogic}. 
Further, we define \emph{classical linear logic negative (CLL$^-$)} and a sequent calculus \emph{\textsf{CLC}} for CLL$^-$ in \S\ref{ClassicalLinearLogicNegative}; CLL$^-$ and \textsf{CLC} are \emph{dual} to IL$^{\text{e}}$ and \textsf{INC}, respectively. 
We then prove the theorem stated in \S\ref{MainResults} in terms of these sequent calculi in \S\ref{SubsectionCommutativeUnityOfLogic}.
Finally, we refine this theorem into the corollary formulated in \S\ref{MainResults} by replacing ILL$^{\text{e}}_{\iota}$ and \textsf{ILC$_{\iota}$} with their substructural alternatives, ILL$^{\text{e}}_{\rho}$ and \textsf{ILC$_{\rho}$} in \S\ref{ConservativeTranslations}, respectively, so that the translations given in the theorem become all \emph{conservative}. 

\begin{notation}
The three digits in the naming of our sequent calculi signify whether the logic embodied by a calculus is \underline{I}ntuitionistic (\textsf{I}) or \underline{C}lassical (\textsf{C}), \underline{L}inear (\textsf{L}) or \underline{N}on-linear (\textsf{N}), and the point that it is a certain \underline{C}alculus (\textsf{C}), respectively (e.g., \textsf{ILC$_\iota$} is an \underline{I}ntuitionistic \underline{L}inear \underline{C}alculus). 
Moreover, the subscript on \textsf{ILC} signifies whether cut-elimination for the calculus is \underline{I}mpossible ($\iota$) or \underline{P}ossible ($\rho$) (e.g., cut-elimination for \textsf{ILC$_\iota$} is \underline{I}mpossible). 
\end{notation}

\if0
In these three subsections, we define two operations on sequent calculi, \emph{unlinearisation} $(\_)_{\oc} : \Delta \vdash \Gamma \mapsto \oc \Delta \vdash \Gamma$ and \emph{classicalisation} $(\_)_{\wn} : \Theta \vdash \Xi \mapsto \Theta \vdash \wn \Xi$, and show that the diagram in Figure~\ref{FigCommutativeUnityOfLogic} commutes modulo permuting rules in a proof tree and exchanging two pairs of rules.
This is the main result of the present section.
\begin{figure}[h]
\begin{tikzcd}
\arrow[d, "\text{conservative extension}"', hook] \textsf{LLJ} \ \text{(ILL)} \arrow[rrrr, "\text{Girard's translation}"] &&&& \textsf{LJ} \ \text{(IL)} \arrow[d, "\text{conservative extension}", hook] \\
\arrow[d, "\text{classicalisation} \ (\_)_{\wn}"'] \textsf{ILC$_\mu$} \ \text{(ILL$^{\text{e}}_\mu$)} \arrow[rrrr, "\text{unlinearisation} \ (\_)_{\oc}"'] &&&& \arrow[d, "\text{classicalisation} \ (\_)_{\wn}"] \textsf{INC} \ \text{(IL$^{\text{e}}$)} \\
\textsf{CLC} \ \text{(CLL$^-$)} \arrow[rrrr, "\text{unlinearisation} \ (\_)_{\oc}"'] &&&& \textsf{LK} \ \text{(CL)}
\end{tikzcd}
\caption{The commutative unity of logic}
\label{FigCommutativeUnityOfLogic}
\end{figure}

Neither ILL$^{\text{e}}_{\mu}$, IL$^{\text{e}}$ nor CLL$^-$ has the polarities or the definitional De Morgan laws of CLL, and the extensions ILL$^{\text{e}}_{\mu}$ and IL$^{\text{e}}$ are made to define classicalisation $(\_)_\wn$ on them.
In this way, we give a novel unity of logic in such a way that it establishes the \emph{uniform} operations of unlinearisation and classicalisation on logic. 


However, the maps $(\_)_\oc : \text{ILL}^{\text{e}}_\mu \rightarrow \text{IL}^{\text{e}}$ and $(\_)_\wn : \text{ILL}^{\text{e}}_\mu \rightarrow \text{CLL}^-$ are not conservative.
We fix this problem in \S\ref{CBN}--\ref{CBV} by carving out substructural logics of ILL$^{\text{e}}_\mu$.
\fi

\subsection{Conservative extension of intuitionistic linear logic}
\label{ConservativeExtensionOfIntuitionisticLinearLogic}
Let us begin with defining the formal language of ILL$^{\text{e}}_{\iota}$. 
The idea is as follows. 
As explained in \S\ref{LinearLogics}, neither the polarities nor the strict De Morgan laws of linear negation $(\_)^\bot$ in CLL holds in ILL, IL or CL.
Hence, linear negation $(\_)^\bot$ seems to prohibit us from obtaining the unity of logic articulated in \S\ref{Introduction}. 

Then, by discarding linear negation $(\_)^\bot$, more precisely its polarities and strict De Morgan laws, out of the formal language of CLL, we obtain:
\begin{definition}[Formulas of ILL$^{\text{e}}_{\boldsymbol{(\iota)}}$]
The formal language of \emph{intuitionistic linear logic extended (ILL$^{\text{e}}$)} and \emph{intuitionistic linear logic $\iota$-extended (ILL$^{\text{e}}_{\iota}$)} is obtained out of that of CLL by replacing linear negation $(\_)^\bot$ with \emph{unpolarised (up-) linear negation} $\neg (\_)$, i.e., formulas $A, B$ of ILL$^{\text{e}}_{(\iota)}$ are defined by
\begin{mathpar}
A, B \colonequals X \mid \top \mid \bot \mid 1 \mid 0 \mid A \otimes B \mid A \invamp B \mid A \mathbin{\&} B \mid A \oplus B \mid \neg A \mid \oc A \mid \wn A
\end{mathpar}
where $X$ ranges over propositional variables, and we define $A \rightarrowtriangle B \colonequals \neg A \invamp B$.

The naming of the logical constants and connectives other than up-linear negation follows that for CLL and ILL (Definition~\ref{DefFormulasOfLinearLogics}).
\end{definition}

ILL$^{\text{e}}$ is just an auxiliary concept as explained below.
Next, let us introduce a sequent calculus \emph{\textsf{ILC}} for ILL$^{\text{e}}$, which is the corresponding fragment of \textsf{LLK}: 
\begin{definition}[\textsf{ILC} for ILL$^{\text{e}}$]
\label{DefILC}
The sequent calculus \emph{\textsf{ILC}} for ILL$^{\text{e}}$ consists of the axioms and the rules displayed in Figure~\ref{FigILLK}. 
\begin{figure}
\begin{mathpar}
\AxiomC{$\Delta, A, A', \Delta' \vdash \Gamma$}
\LeftLabel{\textsc{(XL)}}
\UnaryInfC{$\Delta, A', A, \Delta' \vdash \Gamma$}
\DisplayProof \and
\AxiomC{$\Delta \vdash \Gamma, B, B', \Gamma'$}
\LeftLabel{\textsc{(XR)}}
\UnaryInfC{$\Delta \vdash \Gamma, B', B, \Gamma'$}
\DisplayProof \\
\AxiomC{$\Delta \vdash \Gamma$}
\LeftLabel{\textsc{($\oc$W)}}
\UnaryInfC{$\Delta, \oc A \vdash \Gamma$}
\DisplayProof \and
\AxiomC{$\Delta \vdash \Gamma$}
\LeftLabel{\textsc{($\wn$W)}}
\UnaryInfC{$\Delta \vdash \wn B, \Gamma$}
\DisplayProof \\
\AxiomC{$\Delta, \oc A, \oc A \vdash \Gamma$}
\LeftLabel{\textsc{($\oc$C)}}
\UnaryInfC{$\Delta, \oc A \vdash \Gamma$}
\DisplayProof 
\and
\AxiomC{$\Delta \vdash \wn B, \wn B, \Gamma$}
\LeftLabel{\textsc{($\wn$C)}}
\UnaryInfC{$\Delta \vdash \wn B, \Gamma$}
\DisplayProof \\
\AxiomC{$\Delta, A \vdash \Gamma$}
\LeftLabel{\textsc{($\oc$D)}}
\UnaryInfC{$\Delta, \oc A \vdash \Gamma$}
\DisplayProof \and
\AxiomC{$\Delta \vdash B, \Gamma$}
\LeftLabel{\textsc{($\wn$D)}}
\UnaryInfC{$\Delta \vdash \wn B, \Gamma$}
\DisplayProof 
\\
\AxiomC{$\oc \Delta, A \vdash \wn \Gamma$}
\LeftLabel{\textsc{($\wn$L$^{\oc\wn}$)}}
\UnaryInfC{$\oc \Delta, \wn A \vdash \wn \Gamma$}
\DisplayProof \and
\AxiomC{$\oc \Delta \vdash B, \wn \Gamma$}
\LeftLabel{\textsc{($\oc$R$^{\oc\wn}$)}}
\UnaryInfC{$\oc \Delta \vdash \oc B, \wn \Gamma$}
\DisplayProof 
\\
\AxiomC{}
\LeftLabel{\textsc{(Id)}}
\UnaryInfC{$A \vdash A$}
\DisplayProof 
\and
\AxiomC{$\Delta \vdash B, \Gamma$}
	\AxiomC{$\Delta', B \vdash \Gamma'$}
	\LeftLabel{\textsc{(Cut)}}
\BinaryInfC{$\Delta, \Delta' \vdash \Gamma, \Gamma'$}
\DisplayProof 
\\
\AxiomC{}
\LeftLabel{\textsc{($1$R)}}
\UnaryInfC{$\Delta \vdash 1, \Gamma$}
\DisplayProof 
\and
\AxiomC{}
\LeftLabel{\textsc{($0$L)}}
\UnaryInfC{$\Delta, 0 \vdash \Gamma$}
\DisplayProof  
\\
\AxiomC{$\Delta \vdash \Gamma$}
\LeftLabel{\textsc{($\top$L)}}
\UnaryInfC{$\Delta, \top \vdash \Gamma$}
\DisplayProof \and
\AxiomC{}
\LeftLabel{\textsc{($\top$R)}}
\UnaryInfC{$\vdash \top$}
\DisplayProof \and
\AxiomC{}
\LeftLabel{\textsc{($\bot$L)}}
\UnaryInfC{$\bot \vdash$}
\DisplayProof \and
\AxiomC{$\Delta \vdash \Gamma$}
\LeftLabel{\textsc{($\bot$R)}}
\UnaryInfC{$\Delta \vdash \bot, \Gamma$}
\DisplayProof \\
\AxiomC{$\Delta, A_1, A_2 \vdash \Gamma$}
\LeftLabel{\textsc{($\otimes$L)}}
\UnaryInfC{$\Delta, A_1 \otimes A_2 \vdash \Gamma$}
\DisplayProof \and
\AxiomC{$\Delta_1 \vdash B_1, \Gamma_1$}
		\AxiomC{$\Delta_2 \vdash B_2, \Gamma_2$}
	\LeftLabel{\textsc{($\otimes$R)}}
\BinaryInfC{$\Delta_1, \Delta_2 \vdash B_1 \otimes B_2, \Gamma_1, \Gamma_2$} 
\DisplayProof \\ 
\AxiomC{$\Delta, A_i \vdash \Gamma$}
	\LeftLabel{\textsc{($\&$L)}}
	\RightLabel{($i \in \overline{2}$)}
\UnaryInfC{$\Delta, A_1 \mathbin{\&} A_2 \vdash \Gamma$}
\DisplayProof \and
\AxiomC{$\Delta \vdash B_1, \Gamma$}
		\AxiomC{$\Delta \vdash B_2, \Gamma$}
	\LeftLabel{\textsc{($\&$R)}}
\BinaryInfC{$\Delta \vdash B_1 \mathbin{\&} B_2, \Gamma$}
\DisplayProof \\
\AxiomC{$\Delta_1, A_1 \vdash \Gamma_1$}
		\AxiomC{$\Delta_2, A_2 \vdash \Gamma_2$}
	\LeftLabel{\textsc{($\invamp$L)}}
\BinaryInfC{$\Delta_1, \Delta_2, A_1 \invamp A_2 \vdash \Gamma_1, \Gamma_2$}
\DisplayProof \and
\AxiomC{$\Delta \vdash B_1, B_2, \Gamma$}
\LeftLabel{\textsc{($\invamp$R)}}
\UnaryInfC{$\Delta \vdash B_1 \invamp B_2, \Gamma$}
\DisplayProof \\
\AxiomC{$\Delta, A_1 \vdash \Gamma$}
		\AxiomC{$\Delta, A_2 \vdash \Gamma$}
	\LeftLabel{\textsc{($\oplus$L)}}
\BinaryInfC{$\Delta, A_1 \oplus A_2 \vdash \Gamma$}
\DisplayProof \and
\AxiomC{$\Delta \vdash B_i, \Gamma$}
	\LeftLabel{\textsc{($\oplus$R)}}
	\RightLabel{($i \in \overline{2}$)}
\UnaryInfC{$\Delta \vdash B_1 \oplus B_2, \Gamma$}
\DisplayProof \\
\AxiomC{$\Delta \vdash B, \Gamma$}
\LeftLabel{\textsc{($\neg$L)}}
\UnaryInfC{$\Delta, \neg B \vdash \Gamma$}
\DisplayProof \and
\AxiomC{$\Delta, A \vdash \Gamma$}
\LeftLabel{\textsc{($\neg$R)}}
\UnaryInfC{$\Delta \vdash \neg A, \Gamma$}
\DisplayProof
\end{mathpar}
\caption{Sequent calculus \textsf{ILC} for ILL$^{\text{e}}$}
\label{FigILLK}
\end{figure}
\end{definition}

\if0
We modify the rules $1$R and $0$L in \textsf{LLK} into those $1$R$^{\oc\wn}$ and $0$L$^{\oc\wn}$ in \textsf{ILC} from the game semantics of one $1$ and zero $0$ \cite{laurent2002polarized}: The formal proofs $\overline{\Delta \vdash 1, \Gamma}$ and $\overline{\Delta, 0 \vdash \Gamma}$ are interpreted by \emph{trivial} morphisms in the game semantics, which do not compute on $\Delta$ or $\Gamma$.
In other words, the proofs do not satisfy the linearity constraint on $\Delta$ or $\Gamma$ from the game-semantic perspective, and it is why we place of-course $\oc$ on $\Delta$, and why-not $\wn$ on $\Gamma$.
In addition, the new rules $1$R$^{\oc\wn}$ and $0$L$^{\oc\wn}$ explain why we may dispense with zero $0$ in \S\ref{ConservativeExtensionOfIntuitionisticLogic}, and one $1$ in \S\ref{ClassicalLinearLogicNegative}. 
\fi

We then obtain our sequent calculus \emph{\textsf{ILC$_\iota$}} for ILL$^{\text{e}}_\iota$ out of  \emph{\textsf{ILC}} as follows:
\begin{definition}[\textsf{ILC$_{\boldsymbol{\iota}}$} for ILL$^{\text{e}}_{\boldsymbol{\iota}}$]
\label{DefILC_Mu}
The sequent calculus \emph{\textsf{ILC$_{\iota}$}} for ILL$^{\text{e}}_\iota$ consists of the axioms and the rules of \textsf{ILC}, and the following \emph{weakly distributive rules}:
\begin{mathpar}
\AxiomC{$\oc \Delta, \oc A \vdash \wn \Gamma$}
\LeftLabel{\textsc{($\oc\wn$L$^{\oc\wn}$)}}
\UnaryInfC{$\oc \Delta, \oc \wn A \vdash \wn \Gamma$}
\DisplayProof \and
\AxiomC{$\oc \Delta \vdash \wn B, \wn \Gamma$}
\LeftLabel{\textsc{($\wn\oc$R$^{\oc\wn}$)}}
\UnaryInfC{$\oc \Delta \vdash \wn \oc B, \wn \Gamma$}
\DisplayProof 
\end{mathpar}
\end{definition}

\if0
For convenience, we write \emph{\bfseries $\boldsymbol{\oc\wn}$L} and \emph{\bfseries $\boldsymbol{\wn\oc}$R}, respectively, for the following derived rules in \textsf{ILC}:
\begin{mathpar}
\AxiomC{}
\LeftLabel{\textsc{(Dist)}}
\UnaryInfC{$\oc \wn A \vdash \wn \oc A$}
\AxiomC{$\Delta, \wn \oc A \vdash \Gamma$}
\LeftLabel{\textsc{(Cut)}}
\BinaryInfC{$\Delta, \oc \wn A \vdash \Gamma$}
\DisplayProof
\and
\AxiomC{$\Delta \vdash \oc \wn B, \Gamma$}
\LeftLabel{\textsc{(Cut)}}
\AxiomC{}
\LeftLabel{\textsc{(Dist)}}
\UnaryInfC{$\oc \wn B \vdash \wn \oc B$}
\LeftLabel{\textsc{(Cut)}}
\BinaryInfC{$\Delta \vdash \wn \oc B, \Gamma$}
\DisplayProof
\end{mathpar}
\fi

\if0
Let us announce beforehand that there are two mutually symmetric, non-canonical choices in designing a elimination procedure on the new cut rule Cut$^{\oc\wn}_\mu$, which correspond to \emph{call-by-name} and \emph{call-by-value} computations, while there is no such non-canonical choice for the standard rule Cut of \textsf{ILC}.
In the next section, the difference between Cut and Cut$^{\oc\wn}_\mu$ also explains why cut-elimination on \textsf{LK} is problematic, but that on \textsf{LLK} is not \cite{girard1989proofs}.
\fi

Hence, CLL (resp. \textsf{LLK}) and ILL$^{\text{e}}$ (resp. \textsf{ILC}) differ only in their negations: linear negation $(\_)^\bot$ and up-linear negation $\neg$. 
These negations differ only in the point that the former a priori satisfies the De Morgan laws by definition, but the latter does not.
Also, we show that up-linear negation $\neg$ does not bring polarities as follows.\footnote{A point more crucial than this argument is that by replacing linear negation $(\_)^\bot$ with up-linear negation $\neg$ our unity of logic does not have to handle polarities as we shall see.}
First, we can alternatively define up-linear negation $\neg$ by $\neg A \colonequals A \rightarrowtriangle \bot$ and replace the rules $\neg$L and $\neg$R of \textsf{ILC} with those $\rightarrowtriangle$L and $\rightarrowtriangle$R of \textsf{LLJ} if we allow any finite number of elements on the right-hand side of sequents in the latter two rules. 
The converse translation is also possible by $A \rightarrowtriangle B \colonequals \neg A \invamp B$.
We leave the details to the reader.
In this way, up-linear negation $\neg$ and up-linear implication $\rightarrowtriangle$ are derivable by each other inside \textsf{ILC}.
Since ILL contains only negative formulas, including up-linear implication $\rightarrowtriangle$ (\S\ref{LinearLogics}), and bottom $\bot$ is negative \cite{girard1993unity}, it follows from the equation $\neg A = A \rightarrowtriangle \bot$ that up-linear negation $\neg$ does not generate positive formulas.

On the other hand, at least one of the weakly distributive rules $\oc\wn$L$^{\oc\wn}$ and $\wn\oc$R$^{\oc\wn}$ is necessary for our translations of logics given later, and it is why our interests are in ILL$^{\text{e}}_\iota$ rather than ILL$^{\text{e}}$.
Our idea on these rules comes from the morphisms $\oc \wn A \rightarrow \wn \oc A$ used in the categorical reformulation of game semantics \cite{harmer2007categorical}, on which our game semantics (\S\ref{OurContributionsAndRelatedWork}) is based. 
In fact, the rules $\oc\wn$L$^{\oc\wn}$ and $\wn\oc$R$^{\oc\wn}$ provide the sequent $\oc \wn A \vdash \wn \oc A$ in \textsf{ILC} with trivially different formal proofs.

Although the formal languages of ILL$^{\text{e}}$ and ILL$^{\text{e}}_\iota$ coincide, ILL$^{\text{e}}_\iota$ is stronger than ILL$^{\text{e}}$ because, e.g., the sequent $\oc \wn A \vdash \wn \oc A$ for any formula $A$ is provable in \textsf{ILC$_\iota$} but not in \textsf{ILC}, where we leave the details to the reader. 

\begin{remark}
For the only slight modification of CLL, one may wonder if ILL$^{\text{e}}_{(\iota)}$ is \emph{intuitionistic} especially because we allow more than one formula to occur on the right-hand side of sequents in \textsf{ILC$_{(\iota)}$}.
We give a \emph{positive} answer to this question by Corollary~\ref{ILCAsConservativeExtensionOfLLJ}. 
Also, we regard LEM on par $\invamp$ as \emph{intuitionistically valid}; \textsf{ILC$_{(\iota)}$} clearly has a proof of the sequent $\vdash \neg A \invamp A$ for any formula $A$.
\end{remark}

\if0
\begin{small}
\begin{equation*}
\AxiomC{}
\LeftLabel{\textsc{(Id)}}
\UnaryInfC{$A \vdash A$}
\LeftLabel{\textsc{($\neg$R)}}
\UnaryInfC{$\vdash \neg A, A$}
\LeftLabel{\textsc{($\invamp$R)}}
\UnaryInfC{$\vdash \neg A \invamp A$}
\DisplayProof
\end{equation*}
\end{small}
\fi

Let us proceed to prove the following \emph{cut-elimination theorem} for \textsf{ILC}:
\begin{theorem}[Cut-elimination for \textsf{ILC}]
\label{ThmCutEliminationForILC}
Given a formal proof of a sequent in \textsf{ILC}, there is a formal proof of this sequent in \textsf{ILC} without Cut.
\end{theorem}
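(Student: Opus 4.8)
The plan is to follow the standard Gentzen--Girard cut-elimination strategy for linear logic, exploiting the fact that the axioms and rules of \textsf{ILC} coincide with those of \textsf{LLK} (Definition~\ref{DefLLK}) except that up-linear negation $\neg$ replaces linear negation $(\_)^\bot$. Since the rules $\neg$L and $\neg$R have exactly the same shape as $(\_)^\bot$L and $(\_)^\bot$R, and since $\neg$ is treated as a \emph{primitive} connective (so that no formula ever needs to be rewritten via the strict De Morgan laws during a reduction), Girard's argument for \textsf{LLK} transfers essentially verbatim. Concretely, I would first define the \emph{degree} $\partial(B) \in \mathbb{N}$ of a cut formula $B$ as its number of logical constants and connectives, and reduce the theorem to the elimination of a single \emph{topmost} Cut (one with no Cut above it), after which topmost Cuts are removed repeatedly until none remain. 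The removal of a topmost Cut is proved by a lexicographic induction on the pair $(\partial(B), h(\pi_1)+h(\pi_2))$, where $\pi_1, \pi_2$ are the cut-free derivations of the premises $\Delta \vdash B, \Gamma$ and $\Delta', B \vdash \Gamma'$ and $h(\_)$ denotes height.

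The induction step splits into the familiar three families. The \emph{axiom cases} (one premise is an instance of Id) delete the Cut outright by identifying the two occurrences of $B$. The \emph{principal (logical) cases}, where $B$ is the principal formula of the last rule of both $\pi_1$ and $\pi_2$, replace the Cut on $B$ by Cut(s) on the immediate subformulas of $B$, strictly decreasing $\partial$: a Cut on $\neg A$ becomes a Cut on $A$; a Cut on $A_1 \otimes A_2$ (against $\invamp$L) becomes two Cuts on $A_1$ and $A_2$; the cases for $\&/\oplus$, the units $1,0,\top,\bot$, and so on are the standard ones. The \emph{commutative cases}, where $B$ is not principal in at least one premise, permute the Cut upward past the last rule of that premise, keeping $\partial(B)$ fixed while strictly decreasing $h(\pi_1)+h(\pi_2)$. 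All of these are routine once the context bookkeeping for exchange and for the multiplicative splittings in $\otimes$R, $\invamp$L and similar rules is carried out.

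The genuine obstacle is the \emph{exponential} case, where $B = \oc C$ (dually $B = \wn C$), $\pi_1$ ends in the promotion rule $\oc$R$^{\oc\wn}$ deriving $\oc\Delta_0 \vdash \oc C, \wn\Gamma_0$, and $\pi_2$ ends in a structural rule acting on this $\oc C$ on its left. Weakening ($\oc$W) and dereliction ($\oc$D) are handled by erasing the Cut after reinstating the context with weakenings, and by reducing to a Cut on $C$ of smaller degree, respectively. The difficult subcase is contraction ($\oc$C): to push the Cut above $\oc$C one must duplicate the entire promotion derivation $\pi_1$, cut each copy against one occurrence of $\oc C$ in the premise of $\oc$C, and then re-contract the duplicated $\oc$-context $\oc\Delta_0$ (by $\oc$C) and $\wn$-context $\wn\Gamma_0$ (by $\wn$C). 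Both resulting Cuts are again on $\oc C$, so $\partial$ does not drop, and $\pi_1$ is duplicated rather than shrunk, so the plain height measure fails. I would resolve this in the usual way by strengthening the statement: introduce an auxiliary generalized cut that cuts a single promotion $\oc\Delta_0 \vdash \oc C, \wn\Gamma_0$ against an arbitrary number $n$ of copies of $\oc C$ at once, and eliminate it by induction on $h(\pi_2)$ (appealing to the outer degree-induction only in the dereliction subcase, which drops to a Cut on $C$). Each commutation past $\oc$C, $\oc$W or $\oc$D then faces the strictly shorter premise of that structural rule, so the height of the right derivation genuinely decreases; the re-contraction is legal precisely because the promotion rule $\oc$R$^{\oc\wn}$ of \textsf{ILC} carries the $\oc\Delta_0 \vdash \oc C, \wn\Gamma_0$ context shape. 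This is exactly the point where the \textsf{LLK} argument becomes non-trivial, and the same device settles it here. Since \textsf{ILC} contains \emph{none} of the weakly distributive rules of \textsf{ILC$_\iota$} (Definition~\ref{DefILC_Mu}), no further interaction arises and the induction closes.
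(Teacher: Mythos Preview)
Your proposal is correct and follows essentially the same strategy as the paper: a rank/degree-first induction combined with a generalized (multiple) cut to absorb the contraction case for the exponentials, with the remaining cases split into identity, principal, and commutative reductions exactly as you describe. The only organizational difference is that the paper (following Bierman) defines the multiple-cut rules \textsc{CutL$^n$}/\textsc{CutR$^n$} uniformly for all cut formulas from the outset rather than introducing a generalized cut specifically for the promotion--contraction interaction, but this is a cosmetic variation and your argument goes through.
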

\begin{proof}
Based on a standard cut-elimination procedure given in \cite{bierman1994intuitionistic}; see \S\ref{CutElimination}.
\end{proof}

In contrast, \textsf{ILC$_\iota$} does \emph{not} enjoy cut-elimination since, e.g., the sequent $\oc (X \invamp X) \vdash \wn (X \otimes X)$ is derivable in \textsf{ILC$_\iota$}, as shown by the formal proof
\begin{mathpar}
\AxiomC{}
\LeftLabel{\textsc{(Id)}}
\UnaryInfC{$X \vdash X$}
\AxiomC{}
\LeftLabel{\textsc{(Id)}}
\UnaryInfC{$X \vdash X$}
\LeftLabel{\textsc{($\invamp$L)}}
\BinaryInfC{$X \invamp X \vdash X, X$}
\LeftLabel{\textsc{($\oc$D)}}
\UnaryInfC{$\oc (X \invamp X) \vdash X, X$}
\doubleLine
\LeftLabel{\textsc{($\wn$D$^\ast$, XR)}}
\UnaryInfC{$\oc (X \invamp X) \vdash \wn X, \wn X$}
\LeftLabel{\textsc{($\wn$C)}}
\UnaryInfC{$\oc (X \invamp X) \vdash \wn X$}
\LeftLabel{\textsc{($\oc$R$^{\oc\wn}$)}}
\UnaryInfC{$\oc (X \invamp X) \vdash \oc \wn X$}
\AxiomC{}
\LeftLabel{\textsc{(Id)}}
\UnaryInfC{$X \vdash X$}
\AxiomC{}
\LeftLabel{\textsc{(Id)}}
\UnaryInfC{$X \vdash X$}
\LeftLabel{\textsc{($\otimes$R)}}
\BinaryInfC{$X, X \vdash X \otimes X$}
\LeftLabel{\textsc{($\wn$D)}}
\UnaryInfC{$X, X \vdash \wn (X \otimes X)$}
\LeftLabel{\textsc{($\oc$D$^\ast$, XL)}}
\doubleLine
\UnaryInfC{$\oc X, \oc X \vdash \wn (X \otimes X)$}
\LeftLabel{\textsc{($\oc$C)}}
\UnaryInfC{$\oc X \vdash \wn (X \otimes X)$}
\LeftLabel{\textsc{($\oc \wn$L$^{\oc\wn}$)}}
\UnaryInfC{$\oc \wn X \vdash \wn (X \otimes X)$}
\LeftLabel{\textsc{(Cut)}}
\BinaryInfC{$\oc (X \invamp X) \vdash \wn (X \otimes X)$}
\DisplayProof
\end{mathpar}
where the superscript $(\_)^\ast$ denotes a finite number of applications of a rule, and the double line abbreviates multiple applications of the rules indicated on the left-hand side; however, it is not without Cut.
Note that the rule $\oc \wn$L$^{\oc\wn}$ plays a key role in this formal proof.
Note also that there is another formal proof of the sequent $\oc (X \invamp X) \vdash \wn (X \otimes X)$, which uses the rules $\wn\oc$R$^{\oc\wn}$ and $\wn$L$^{\oc\wn}$ instead of those $\oc$R$^{\oc\wn}$ and $\oc\wn$L$^{\oc\wn}$, respectively.
This problem on cut-elimination is technically challenging, but we overcome it by restricting the use of the rules $\wn \oc$R$^{\oc\wn}$ and $\oc \wn$L$^{\oc\wn}$ in \textsf{ILC}$_\iota$ in \S\ref{ConservativeTranslations}. 
To arrive at this solution, however, we first need to see in \S\ref{ConservativeExtensionOfIntuitionisticLogic}--\ref{SubsectionCommutativeUnityOfLogic} how \textsf{ILC}$_\iota$ achieves the theorem articulated in \S\ref{MainResults}.

\if0
\begin{remark}
Interestingly, this phenomenon seems closely related to Wadler's \emph{dual calculus} \cite{wadler2003call,tzevelekos2006investigations}, and our ongoing work aims to establish a \emph{Curry-Howard isomorphism} \cite{sorensen2006lectures} between \textsf{ILC}$_\mu$ and the dual calculus. 
Recall that the \emph{call-by-name} and the \emph{call-by-value} reduction strategies of the dual calculus are \emph{restrictions} of the full set of reduction rules, and so they do not reduce all the possible redexes or cuts.
We then conjecture that the two substructural calculi of \textsf{ILC}$_\mu$ correspond to the call-by-name and the call-by-value substructural calculi of the dual calculus, respectively.
If this conjecture is proven, then it would clearly explain why cut-elimination fails for \textsf{ILC}$_\mu$: It is impossible to normalize every term of the dual calculus with respect to the full set of the reduction rules. 
We leave it as future work to verify this conjecture.
\end{remark}
\fi

\if0
We conjecture that the order of applying unlinearisation and classicalisation corresponds to the dichotomy between call-by-name and call-by-value computations in the dual calculus.
If this conjecture is proven, then it would explain why \textsf{ILC$_\mu$} is crucial for the commutativity: In terms of the dual calculus, \textsf{ILC$_\mu$} needs terms for call-by-name reduction as well as those for call-by-value reduction for the applications of unlinearisation and classicalisation in both orders (i.e., commutativity), but then it is impossible to reduce all the terms (i.e., \textsf{ILC$_\mu$} does not enjoy cut-elimination). 
This point also explains why we call \textsf{ILC$_\mu$} \emph{undirected} and place the subscript $(\_)_\mu$ on it.
In contrast, for each order of the applications, only terms for either call-by-name or call-by-value reduction are necessary, which would explain why \textsf{ILC} enjoys cut-elimination and suffices for the directed unities of logic.
We leave these analyses as future work too.
\fi

\if0
\begin{remark}
Even if a cut-elimination procedure for \textsf{ILC$_\mu$} had been given, it would be \emph{nondeterministic} because like \textsf{LK} we may apply Cut on a \emph{critical pair} \cite{girard1989proofs}:
\begin{mathpar}
\AxiomC{$\oc \Delta \vdash \wn \Gamma$}
\LeftLabel{\textsc{($\wn$W)}}
\UnaryInfC{$\oc \Delta \vdash \wn A, \wn \Gamma$}
\LeftLabel{\textsc{($\wn\oc$R)}}
\UnaryInfC{$\oc \Delta \vdash \wn \oc A, \wn \Gamma$}
\AxiomC{$\oc \Delta' \vdash \wn \Gamma'$}
\LeftLabel{\textsc{($\oc$W)}}
\UnaryInfC{$\oc \Delta', \oc A \vdash \wn \Gamma'$}
\LeftLabel{\textsc{($\wn$L)}}
\UnaryInfC{$\oc \Delta', \wn \oc A \vdash \wn \Gamma'$}
\LeftLabel{\textsc{(Cut)}}
\BinaryInfC{$\oc \Delta, \oc \Delta' \vdash \wn \Gamma, \wn \Gamma'$}
\DisplayProof
\end{mathpar}
For this reason, we call \textsf{ILC$_\mu$} \emph{undirected} and place the subscript $(\_)_\mu$ on it.

More generally, we have the following derived rule in \textsf{ILC$_\mu$}:
\begin{mathpar}
\AxiomC{$\oc \Delta \vdash \wn A, \wn \Gamma$}
\LeftLabel{\textsc{($\wn\oc$R)}}
\UnaryInfC{$\oc \Delta \vdash \wn \oc A, \wn \Gamma$}
\AxiomC{$\oc \Delta', \oc A \vdash \wn \Gamma'$}
\LeftLabel{\textsc{($\wn$L)}}
\UnaryInfC{$\oc \Delta', \wn \oc A \vdash \wn \Gamma'$}
\LeftLabel{\textsc{(Cut)}}
\BinaryInfC{$\oc \Delta, \oc \Delta' \vdash \wn \Gamma, \wn \Gamma'$}
\DisplayProof
\end{mathpar}
for which the standard cut-elimination procedure does not work, and this is why it is complex to prove the cut-elimination theorem for \textsf{ILC$_\mu$} as mentioned above.

Let us also note that there is another, symmetric case:
\begin{small}
\begin{mathpar}
\AxiomC{$\oc \Delta \vdash \wn A, \wn \Gamma$}
\LeftLabel{\textsc{($\oc$R)}}
\UnaryInfC{$\oc \Delta \vdash \oc \wn A, \wn \Gamma$}
\AxiomC{$\oc \Delta', \oc A \vdash \wn \Gamma'$}
\LeftLabel{\textsc{($\oc\wn$L)}}
\UnaryInfC{$\oc \Delta', \oc \wn A \vdash \wn \Gamma'$}
\LeftLabel{\textsc{(Cut)}}
\BinaryInfC{$\oc \Delta, \oc \Delta' \vdash \wn \Gamma, \wn \Gamma'$}
\DisplayProof
\end{mathpar}
\end{small}which gives a \emph{duality} to the translation given by Theorem~\ref{LemTranslationOfILK_muIntoILLK_mu} (as remarked there).
\end{remark}
\fi

An important corollary of Theorem~\ref{ThmCutEliminationForILC} is, as announced previously, that \textsf{ILC$_{(\iota)}$} is indeed a \emph{conservative} extension of \textsf{LLJ}:
\begin{corollary}[\textsf{ILC$_{\boldsymbol{(\iota)}}$} as a conservative extension of \textsf{LLJ}]
\label{ILCAsConservativeExtensionOfLLJ}
\textsf{ILC} (resp. \textsf{ILC$_\iota$}) is a conservative extension of \textsf{LLJ}, i.e., every provable sequent in \textsf{LLJ} is also provable in \textsf{ILC} (resp. \textsf{ILC$_\iota$}), and if a sequent $\Delta \vdash \Gamma$ is provable in \textsf{ILC} (resp. \textsf{ILC$_\iota$}), where only formulas in ILL occur in $\Delta, \Gamma$, then $\Gamma = \boldsymbol{\epsilon}$ or $\Gamma = B$ for some formula $B$, and the sequent $\Delta \vdash \Gamma$ is also provable in \textsf{LLJ}.
\end{corollary}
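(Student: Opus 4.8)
The plan is to prove conservativity in two directions, using the cut-elimination theorem for \textsf{ILC} (Theorem~\ref{ThmCutEliminationForILC}) as the main engine for the hard direction. For the easy direction—that every sequent provable in \textsf{LLJ} remains provable in \textsf{ILC} (resp.\ \textsf{ILC}$_\iota$)—I would first fix a syntactic embedding of ILL-formulas into ILL$^{\text{e}}$-formulas, translating up-linear implication $A \rightarrowtriangle B$ by $\neg A \invamp B$ as the definitions prescribe. Under this embedding I would check rule-by-rule that each axiom and rule of \textsf{LLJ} is \emph{simulated} in \textsf{ILC}. The only nontrivial rules are $\rightarrowtriangle$L and $\rightarrowtriangle$R, and the text has essentially already observed (``we leave the details to the reader'') that these are derivable from $\neg$L, $\neg$R and $\invamp$L, $\invamp$R; I would make that derivation explicit, noting that the intuitionistic right-hand sides of \textsf{LLJ} sequents are a special case of the unrestricted right-hand sides allowed in \textsf{ILC}. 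Since \textsf{ILC}$_\iota$ contains all rules of \textsf{ILC}, the same simulation works there.

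For the hard direction I would start from a provable sequent $\Delta \vdash \Gamma$ in \textsf{ILC} in which every formula occurring in $\Delta,\Gamma$ is (the image of) an ILL-formula, i.e.\ contains neither $\bot$, $\invamp$, $\wn$, nor $\neg$ except as it arises from $\rightarrowtriangle$. By Theorem~\ref{ThmCutEliminationForILC} I may assume a \emph{cut-free} \textsf{ILC}-proof $\pi$ of this sequent. The key structural fact to establish is the \emph{subformula property}: in a cut-free proof every formula occurring anywhere in $\pi$ is a subformula of a formula in the conclusion $\Delta \vdash \Gamma$. I would verify this by inspecting the rules of \textsf{ILC} (Figure~\ref{FigILLK}) and confirming that each non-Cut rule only introduces or permutes subformulas of its own conclusion. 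The payoff is that none of the ``forbidden'' connectives $\bot,\invamp,\wn,\neg$-qua-negation can appear in $\pi$, because they are absent from the conclusion and cannot be conjured by the cut-free rules; in particular no rule among $\bot$R, $\invamp$L/R, $\wn$W/C/D, $\wn$L$^{\oc\wn}$, $\oc$R$^{\oc\wn}$ can fire. I would then argue that the right-hand side of \emph{every} sequent in $\pi$ has length at most one: the conclusion has $|\Gamma|\le 1$ (this is part of what I am proving, so I would first show $\Gamma=\boldsymbol\epsilon$ or $\Gamma=B$ by noting that the only rules that can build a right-hand side of length $\ge 2$ are XR, $\invamp$R/L, $\wn$-rules, and $\otimes$R with nonempty $\Gamma_1,\Gamma_2$, and then checking that in the ILL-fragment these are all either unavailable or forced to keep length $\le 1$). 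Propagating this constraint upward through $\pi$ shows every sequent is \emph{intuitionistic}, and with the forbidden connectives excluded, every rule instance in $\pi$ is literally an instance of an \textsf{LLJ}-rule (after translating the derived $\neg$-rules back into $\rightarrowtriangle$-rules). Hence $\pi$ \emph{is}, up to this translation, an \textsf{LLJ}-proof, giving conservativity.

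The main obstacle, which I would treat most carefully, is controlling the length of the right-hand sides throughout the proof tree rather than merely at the conclusion. The rules $\otimes$R and $\invamp$L split or merge right-hand contexts, so a naive downward argument does not immediately bound intermediate sequents; I would handle this by the cut-free subformula property, which forbids $\invamp$ entirely in the ILL-fragment and forces the $\Gamma_i$ in any $\otimes$R instance to stay within the one-formula budget inherited from below. A secondary subtlety is the bookkeeping of the $\neg \leftrightarrow \rightarrowtriangle$ translation: I must ensure that an instance of $\neg$L or $\neg$R appearing in $\pi$ always sits in a context that lets it be read back as a genuine $\rightarrowtriangle$L or $\rightarrowtriangle$R instance of \textsf{LLJ}, which again follows once the subformula property guarantees that the only negations present are those arising from up-linear implications in the conclusion. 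Extending the whole argument from \textsf{ILC} to \textsf{ILC}$_\iota$ requires the analogous remark that the extra weakly distributive rules $\oc\wn$L$^{\oc\wn}$ and $\wn\oc$R$^{\oc\wn}$ both introduce $\wn$, hence are likewise excluded from any cut-free proof whose conclusion lies in the ILL-fragment—so conservativity of \textsf{ILC}$_\iota$ reduces to that of \textsf{ILC} once cut-elimination (via the restricted calculus of \S\ref{ConservativeTranslations}) is in hand.
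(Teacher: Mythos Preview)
Your proposal contains a genuine gap in the hard direction. You claim that in the cut-free \textsf{ILC} proof $\pi$ of an ILL sequent, the rules $\invamp$L and $\invamp$R cannot fire and that every sequent in $\pi$ has at most one formula on the right. Both claims are false. Since $A \rightarrowtriangle B$ is \emph{defined} in ILL$^{\text e}$ as $\neg A \invamp B$, the subformula property does not exclude $\invamp$ or $\neg$: they are genuinely present as subformulas of the conclusion whenever an up-linear implication occurs there. Concretely, the ILL sequent $\vdash A \rightarrowtriangle A$ becomes $\vdash \neg A \invamp A$ in \textsf{ILC}, and its unique cut-free proof passes through the sequent $\vdash \neg A, A$, which has two formulas on the right. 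So your argument that ``the right-hand side of every sequent in $\pi$ has length at most one'' breaks down exactly here, and with it the reduction of $\pi$ to an \textsf{LLJ} proof. You partially acknowledge this at the end (``a secondary subtlety is the bookkeeping of the $\neg \leftrightarrow \rightarrowtriangle$ translation''), but this is not secondary: it is the entire difficulty, and it contradicts your earlier claim that $\invamp$L/R are excluded.

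The paper handles this by proving a sharper structural invariant: each sequent in the cut-free proof has the form $\Delta', \neg A \vdash \neg\Gamma'$ or $\Delta' \vdash A, \neg\Gamma'$ (modulo exchange) with $\Delta', A, \Gamma'$ in ILL. This says the ``extra'' right-hand formulas are always negations, and it is proved using the fact that only $\invamp$L and $\invamp$R can delete a $\neg$. The paper then shows that each application of $\invamp$R (resp.\ $\invamp$L) can be paired with the $\neg$R (resp.\ $\neg$L) that created the relevant $\neg$-formula, by delaying the latter until it is adjacent to the former; the pair then becomes a genuine $\rightarrowtriangle$R (resp.\ $\rightarrowtriangle$L) in \textsf{LLJ}. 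This permutation argument is the missing idea in your sketch. For \textsf{ILC}$_\iota$, note also that you cannot appeal to cut-elimination directly (it fails for \textsf{ILC}$_\iota$); the paper instead passes to the equivalent system \textsf{ILC} plus the axiom $\oc\wn A \vdash \wn\oc A$ and argues that any use of this axiom forces a $\wn$ to survive on the right of the conclusion, which is impossible for ILL formulas.
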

\begin{remark}
The conservativity of \textsf{ILC}$_{(\iota)}$ over \textsf{LLJ} does not merely follow from \cite[Proposition~3.8]{schellinx1991some} because \textsf{ILC}$_{(\iota)}$ allows up-linear negation $\neg$ to occur in a formal proof of the sequent $\Delta \vdash \Gamma$ in the corollary except the root $\Delta \vdash \Gamma$. 
Also, the conservativity would fail if we included one $1$ or zero $0$ in ILL.
\end{remark}
\begin{proof}
We show the corollary only for \textsf{ILC} since why-not $\wn$ does not occur in formulas in ILL, and thus it is not hard to extend the argument to \textsf{ILC$_\iota$}; see \S\ref{ProofOnMu}.
First, every provable sequent in \textsf{LLJ} is clearly provable in \textsf{ILC}, where $\rightarrowtriangle$L is replaced with the pair of $\neg$L and $\invamp$L, and $\rightarrowtriangle$R with that of $\neg$R and $\invamp$R.

Next, to show the conservativity, assume that a sequent $\Delta \vdash \Gamma$ has a formal proof $p$ in \textsf{ILC}, and only formulas in ILL occur in $\Delta, \Gamma$.
By Theorem~\ref{ThmCutEliminationForILC}, there is a formal proof $p'$ of the sequent $\Delta \vdash \Gamma$ in \textsf{ILC} without Cut.
We have to show that $\Gamma$ is $\boldsymbol{\epsilon}$ or $B$ for some formula $B$.
For this task, we claim that
\begin{quote}
Each sequent occurring in $p'$ is of the form $\Delta', \neg A \vdash \neg \Gamma'$ or $\Delta' \vdash A, \neg \Gamma'$ modulo XL and XR, where only formulas in ILL occur in $\Delta', A, \Gamma'$.
\end{quote}

In fact, because \emph{only $\invamp$L and $\invamp$R} can delete $\neg$ in $p'$ thanks to the equation $\neg A \invamp B = A \rightarrowtriangle B$, the claim is easily proved by induction on cut-free proofs of the sequent $\Delta \vdash \Gamma$ (n.b., it is crucial here that why-not $\wn$ does not occur in formulas in ILL), for which the \emph{subformula property} \cite{troelstra2000basic} of \textsf{ILC} implied by Theorem~\ref{ThmCutEliminationForILC} plays a crucial role. 
\if0
Then, note that, among non-cut rules of \textsf{ILC}, only $\wn$W, $\bot$R, $\otimes$R, $\invamp$L and $\neg$R may increase the number of formulas  on the right-hand side of sequents. 
However, $\wn$W, $\bot$R and $\invamp$L generate formulas that do not occur in ILL, and thus we may ignore them since they are not used in $p'$.
On the other hand, as shown in the proof of the above claim, neither $\otimes$R nor $\invamp$L can increase the number of non-up-linear negation formulas on the right-hand side of sequents.
\fi
Then, because no $\neg$ occurs in the sequent $\Delta \vdash \Gamma$, the claim particularly implies that $\Gamma$ is $\boldsymbol{\epsilon}$ or $B$ for some formula $B$.

It remains to show that $\Delta \vdash \Gamma$ is provable in \textsf{LLJ}.
For this task, we have to handle non-cut rules that may decrease the number of elements on the right-hand side of sequents as they allow $p'$ to have sequents not provable in \textsf{LLJ} in the middle of the formal proof.
Among non-cut rules of \textsf{ILC}, only $\invamp$R and $\neg$L may decrease the number of elements on the right-hand side of sequents. 

Now, consider the \emph{last} application of $\invamp$R or $\neg$L in $p'$.
Let us replace this last application in $p'$ with a derived rule in \textsf{LLJ} as follows.
First, by the above claim, the last application of $\invamp$R or $\neg$L in $p'$ must be either of the following:
\begin{mathpar}
\AxiomC{$\vdots$}
\LeftLabel{\textsc{($\neg$R)}}
\UnaryInfC{$\vdots$}
\UnaryInfC{$\Theta \vdash \neg D, E$}
\LeftLabel{\textsc{($\invamp$R)}}
\UnaryInfC{$\Theta \vdash D \rightarrowtriangle E$}
\DisplayProof
\and
\AxiomC{$\vdots$}
\LeftLabel{\textsc{($\neg$L)}}
\UnaryInfC{$\vdots$}
\UnaryInfC{$\Theta, \neg A \vdash$}
\AxiomC{$\Xi, B \vdash [C]$}
\LeftLabel{\textsc{($\invamp$L)}}
\BinaryInfC{$\Theta, \Xi, A \rightarrowtriangle B \vdash [C]$}
\DisplayProof
\end{mathpar}
where $[C]$ denotes the empty sequence $\boldsymbol{\epsilon}$ or a singleton sequence $C$, and the last application of $\neg$R (resp. $\neg$L) in $p'$ generates the element $\neg$D (resp. $\neg A$).

For the left case, we delay the last application of $\neg$R that generates $\neg D$ until right before the application of $\invamp$R, which slightly modifies $p'$.
Similarly for the right case, we delay the last application of $\neg$L that generates $\neg A$ until right before the application of $\invamp$L.
As a result, we can focus on the following derived rules in $p'$ (where as mentioned above $p'$ may be slightly modified):
\begin{mathpar}
\AxiomC{$\Theta, D \vdash E$}
\LeftLabel{\textsc{($\neg$R)}}
\UnaryInfC{$\Theta \vdash \neg D, E$}
\LeftLabel{\textsc{($\invamp$R)}}
\UnaryInfC{$\Theta \vdash D \rightarrowtriangle E$}
\DisplayProof
\and
\AxiomC{$\Theta \vdash A$}
\LeftLabel{\textsc{($\neg$L)}}
\UnaryInfC{$\Theta, \neg A \vdash$}
\AxiomC{$\Xi, B \vdash [C]$}
\LeftLabel{\textsc{($\invamp$L)}}
\BinaryInfC{$\Theta, \Xi, A \rightarrowtriangle B \vdash [C]$}
\DisplayProof
\end{mathpar}
Then, the left (resp. right) one can be replaced with $\rightarrowtriangle$R (resp. $\rightarrowtriangle$L) of \textsf{LLJ}.

In this way, we inductively replace each application of $\invamp$R and $\neg$L in $p'$ (in the order from the last to the first) with a rule in \textsf{LLJ}, obtaining a formal proof $p''$ of $\Delta \vdash \Gamma$ out of $p'$.
It is easy to see that the sequents in $p''$ are all intuitionistic (i.e., the length of the right-hand side is at most one) and $\neg$-free. 
Thus, each rule in $p''$ is clearly derivable in \textsf{LLJ}, proving the conservativity. 
\end{proof}

\if0
\begin{remark}
The corollary would not hold if we included one $1$ or zero $0$ in ILL.
\end{remark}
\fi

Hence, ILL$^{\text{e}}_{(\iota)}$ is \emph{intuitionistic} in the conventional sense even though \textsf{ILC$_{(\iota)}$} allows more than one formula to occur on the right-hand side of sequents.



\subsection{Conservative extension of intuitionistic logic}
\label{ConservativeExtensionOfIntuitionisticLogic}
In this section, we present a translation $\mathscr{T}_{\oc\wn}$ of \textsf{LK} into $(\textsf{ILC$_\iota$})_{\oc\wn}$ (Definition~\ref{Defunlinearisation}) out of more primitive ones $\mathscr{T}_{\oc}$ and $\mathscr{T}_{\wn}$ by $\mathscr{T}_{\oc\wn} \colonequals \mathscr{T}_{\oc} \circ \mathscr{T}_{\wn}$.
For this decomposition of $\mathscr{T}_{\oc\wn}$, we need an intermediate logic between ILL$^{\text{e}}_\iota$ and CL.
For our unity of logic, this intermediate logic has to be a conservative extension of IL.\footnote{What follows in the next few pages is on how we have arrived at Definitions~\ref{DefFormulasOfILE}--\ref{DefINC_Mu}, and the reader who is not interested in this point can jump to these definitions immediately.}

For convenience, let us call such an extension of IL and a sequent calculus for it respectively \emph{intuitionistic logic extended (IL$^{\text{e}}$)} and \emph{\textsf{INC}} even before defining them.
Our initial idea on defining them is to apply unlinearisation $(\_)_\oc$ to \textsf{ILC$_\iota$} and regard sequents $\oc \Delta \vdash \Gamma$ in \textsf{ILC$_\iota$} as those $\Delta \vdash \Gamma$ in \textsf{INC} under the translation $\mathrm{ff} \colonequals \oc \bot$, $A \vee B \colonequals \oc A \oplus \oc B$ and $A \Rightarrow B \colonequals \oc A \rightarrowtriangle B$. 
This translation of non-linear disjunction $\vee$ and intuitionistic implication $\Rightarrow$ follows Girard's translation of IL into ILL \cite[\S 5.1]{girard1987linear}, and our choice on the translation of false $\mathrm{ff}$ is for the \emph{intuitionisity} of \textsf{INC} in the conventional sense as explained below. 

The weakening $\wn$W (resp. introduction of false $\mathrm{ff}$ on the right-hand side of sequents by the rules $\bot$R and $\oc$R$^{\oc\wn}$) in $(\textsf{ILC$_\iota$})_\oc$ necessitate why-not $\wn$ on the main (resp. side) formula(s) on the right-hand side of sequents.
It then follows, under the aforementioned translation of formulas, that $(\textsf{ILC$_\iota$})_\oc$ is \emph{intuitionistic} in the conventional sense: If $\oc \Delta \vdash \Gamma$ is derivable in $(\textsf{ILC$_\iota$})_\oc$, and only formulas in IL occur in $\Delta, \Gamma$, then $\Gamma$ consists of \emph{at most one} element.\footnote{The same argument plays a crucial role in the proof of Corollary~\ref{CorMainCorollary} as well.}

Therefore, so far $(\textsf{ILC$_\iota$})_\oc$ seems to be a good candidate for \textsf{INC}.
However, $(\textsf{ILC$_\iota$})_\oc$ is in some sense \emph{too unrestricted}.
For instance, the rule
\begin{mathpar}
\AxiomC{$\oc \Delta \vdash B_i, \Gamma$}
\LeftLabel{\textsc{($\vee$R)}}
\RightLabel{\textsc{($i \in \overline{2}$)}}
\UnaryInfC{$\oc \Delta \vdash B_1 \vee B_2, \Gamma$}
\DisplayProof
\end{mathpar}
is derivable in $(\textsf{ILC$_\iota$})_\oc$ only when $\Gamma$ is of the form $\Gamma = \wn \Gamma'$ so that
\begin{mathpar}
\AxiomC{$\oc \Delta \vdash B_i, \wn \Gamma'$}
\RightLabel{\textsc{($i \in \overline{2}$)}}
\LeftLabel{\textsc{($\oc$R$^{\oc\wn}$)}}
\UnaryInfC{$\oc \Delta \vdash \oc B_i, \wn \Gamma'$}
\LeftLabel{\textsc{($\oplus$R)}}
\UnaryInfC{$\oc \Delta \vdash \oc B_1 \oplus \oc B_2, \wn \Gamma'$}
\DisplayProof
\end{mathpar}
where as long as IL is concerned we can assume of-course $\oc$ on the left-hand side $\Delta$ of sequents in $(\textsf{ILC$_\iota$})_\oc$ since we can advance the applications of the rule $\oc$D in formal proofs (for which the insertion of $\oc$ in the translation of $\vee$ and $\Rightarrow$ given above is crucial).
We cannot simply require $\Gamma$ to be empty since, for the translation of CL into IL$^{\text{e}}$ by classicalisation $(\_)_\wn$ given later, we must allow the side formulas of the rule $\vee$R and other rules of \textsf{INC} to be \emph{nonempty}. 

\if0
Next, we equip \textsf{INC} with why-not $\wn$ in such a way that
\begin{enumerate}

\item \textsf{INC} extends \textsf{LJ} in a \emph{conservative} manner;

\item classicalisation $(\_)_\wn$ \emph{maps \textsf{INC} to \textsf{LK}}.

\end{enumerate}
Note that these two points are in the opposite directions because the first one wants \textsf{INC} to be weak (enough to be conservative), while the second one wants it to be strong (enough for the classicalisation). 
Thus, it is nontrivial to find a suitable way of equipping \textsf{INC} with why-not $\wn$ that satisfies the both points.
\fi

This problem suggests us to restrict the right-hand side of sequents in $(\textsf{ILC$_\iota$})_\oc$ to those of the form $[B], \wn \Gamma$, where $[B]$ is the empty sequence $\boldsymbol{\epsilon}$ or a singleton sequence $B$, and require that the active formula(s) of each logical right rule in $(\textsf{ILC$_\iota$})_\oc$ must be the distinguished one(s) $B$.\footnote{Additives of this form are called \emph{tamed} in \cite[\S 2.3]{danos1997new}. Also, the sequence $[B], \wn \Gamma$ on the right-hand side of a sequent in (\textsf{ILC}$_\iota$)$_\oc$ corresponds to the one $\Gamma ; [B]$ in Girard's \textsf{LU} \cite{girard1993unity}.} 

\begin{remark}
\label{RemarkOnIntuitionisity}
The sequent calculus $(\textsf{ILC$_\iota$})_\oc$ is already intuitionistic as articulated above, and hence this restriction on the right-hand side of sequents is \emph{not} for obtaining intuitionisity.
In other words, the restriction is \emph{automatically satisfied} by $(\textsf{ILC$_\iota$})_\oc$ as long as we focus on the formulas of IL$^{\text{e}}$. 
Eventually, our main result (Corollary~\ref{CorMainCorollary}), specifically the \emph{conservativity} of the translation $\mathscr{T}_\oc : \textsf{INC}_\rho \rightarrow (\textsf{ILC}_\rho)_\oc$, makes this point explicit. 
This result also explains the general mechanism underlying the \emph{ad-hoc} intuitionistic restriction for obtaining IL out of CL (Definition~\ref{DefLJ}); see the paragraph right after Corollary~\ref{CorMainCorollary}.
\end{remark}

\begin{convention}
We write $\Gamma$ in place of $[B], \wn \Gamma$ for most of the left rules of $(\textsf{ILC$_\iota$})_\oc$ as they do not break down the required form of the right-hand side of sequents.
\end{convention}

Moreover, as a result of this restriction on the right-hand side of sequents in $(\textsf{ILC$_\iota$})_\oc$, some logical constants and connectives in ILL$^{\text{e}}_{\iota}$ become \emph{redundant}, which explains why those constants and connectives do not appear in IL$^{\text{(e)}}$.

For instance, we may substitute tensor $\otimes$ with with $\&$ inside $(\textsf{ILC$_\iota$})_\oc$ because the derived rule 
\begin{mathpar}
\AxiomC{$\oc \Delta, \oc A_1, \oc A_2 \vdash \Gamma$}
\LeftLabel{\textsc{($\otimes$L)}}
\UnaryInfC{$\oc \Delta, \oc A_1 \otimes \oc A_2 \vdash \Gamma$}
\LeftLabel{\textsc{($\oc$D)}}
\UnaryInfC{$\oc \Delta, \oc (\oc A_1 \otimes \oc A_2) \vdash \Gamma$}
\DisplayProof
\end{mathpar}
can be simulated by the derived rule
\begin{mathpar}
\AxiomC{}
\LeftLabel{\textsc{(Id)}}
\UnaryInfC{$A_1 \vdash A_1$}
\LeftLabel{\textsc{($\oc$D)}}
\UnaryInfC{$\oc A_1 \vdash A_1$}
\LeftLabel{\textsc{($\&$L)}}
\UnaryInfC{$\oc A_1 \mathbin{\&} \oc A_2 \vdash A_1$}
\LeftLabel{\textsc{($\oc$D)}}
\UnaryInfC{$\oc (\oc A_1 \mathbin{\&} \oc A_2) \vdash A_1$}
\LeftLabel{\textsc{($\oc$R$^{\oc\wn}$)}}
\UnaryInfC{$\oc (\oc A_1 \mathbin{\&} \oc A_2) \vdash \oc A_1$}
\AxiomC{}
\LeftLabel{\textsc{(Id)}}
\UnaryInfC{$A_2 \vdash A_2$}
\LeftLabel{\textsc{($\oc$D)}}
\UnaryInfC{$\oc A_2 \vdash A_2$}
\LeftLabel{\textsc{($\&$L)}}
\UnaryInfC{$\oc A_1 \mathbin{\&} \oc A_2 \vdash A_2$}
\LeftLabel{\textsc{($\oc$D)}}
\UnaryInfC{$\oc (\oc A_1 \mathbin{\&} \oc A_2) \vdash A_2$}
\LeftLabel{\textsc{($\oc$R$^{\oc\wn}$)}}
\UnaryInfC{$\oc (\oc A_1 \mathbin{\&} \oc A_2) \vdash \oc A_2$}
\AxiomC{$\oc \Delta, \oc A_1, \oc A_2 \vdash \Gamma$}
\LeftLabel{\textsc{(Cut)}}
\BinaryInfC{$\oc (\oc A_1 \mathbin{\&} \oc A_2), \oc \Delta, \oc A_1 \vdash \Gamma$}
\LeftLabel{\textsc{(Cut)}}
\BinaryInfC{$\oc (\oc A_1 \mathbin{\&} \oc A_2), \oc (\oc A_1 \mathbin{\&} \oc A_2), \oc \Delta \vdash \Gamma$}
\LeftLabel{\textsc{(XL$^\ast$)}}
\doubleLine
\UnaryInfC{$\oc \Delta, \oc (\oc A_1 \mathbin{\&} \oc A_2), \oc (\oc A_1 \mathbin{\&} \oc A_2) \vdash \Gamma$}
\LeftLabel{\textsc{($\oc$C)}}
\UnaryInfC{$\oc \Delta, \oc (\oc A_1 \mathbin{\&} \oc A_2) \vdash \Gamma$}
\DisplayProof
\end{mathpar}
and the rule 
\begin{mathpar}
\AxiomC{$\oc \Delta_1 \vdash B_1, \wn \Gamma_1$}
\AxiomC{$\oc \Delta_2 \vdash B_2, \wn \Gamma_2$}
\LeftLabel{\textsc{($\otimes$R)}}
\BinaryInfC{$\oc \Delta_1, \oc \Delta_2 \vdash B_1 \otimes B_2, \wn \Gamma_1, \wn \Gamma_2$}
\DisplayProof
\end{mathpar}
can be simulated by the derived rule
\begin{mathpar}
\AxiomC{$\oc \Delta_1 \vdash B_1, \wn \Gamma_1$}
\LeftLabel{\textsc{($\oc$W$^\ast$)}}
\doubleLine
\UnaryInfC{$\oc \Delta_1, \oc \Delta_2 \vdash B_1, \wn \Gamma_1$}
\LeftLabel{\textsc{($\wn$W$^\ast$)}}
\doubleLine
\UnaryInfC{$\oc \Delta_1, \oc \Delta_2 \vdash \wn \Gamma_2, B_1, \wn \Gamma_1$}
\LeftLabel{\textsc{(XR$^\ast$)}}
\doubleLine
\UnaryInfC{$\oc \Delta_1, \oc \Delta_2 \vdash B_1, \wn \Gamma_1, \wn \Gamma_2$}
\AxiomC{$\oc \Delta_2 \vdash B_2, \wn \Gamma_2$}
\LeftLabel{\textsc{($\oc$W$^\ast$)}}
\doubleLine
\UnaryInfC{$\oc \Delta_2, \oc \Delta_1 \vdash B_1, \wn \Gamma_2$}
\LeftLabel{\textsc{(XL$^\ast$)}}
\doubleLine
\UnaryInfC{$\oc \Delta_1, \oc \Delta_2 \vdash B_2, \wn \Gamma_2$}
\LeftLabel{\textsc{($\wn$W$^\ast$)}}
\doubleLine
\UnaryInfC{$\oc \Delta_1, \oc \Delta_2 \vdash \wn \Gamma_1, B_2, \wn \Gamma_2$}
\LeftLabel{\textsc{(XR$^\ast$)}}
\doubleLine
\UnaryInfC{$\oc \Delta_1, \oc \Delta_2 \vdash B_2, \wn \Gamma_1, \wn \Gamma_2$}
\LeftLabel{\textsc{($\&$R)}}
\BinaryInfC{$\oc \Delta_1, \oc \Delta_2 \vdash B_1 \mathbin{\&} B_2, \wn \Gamma_1, \wn \Gamma_2$}
\DisplayProof
\end{mathpar}

On the other hand, the matter on par $\invamp$ is slightly involved.
First, since of-course $\oc$ is placed on all elements on the left-hand side of the conclusion of each formal proof in $(\textsf{ILC$_\iota$})_\oc$, we have to restrict par $\invamp$ to the form $\oc A \invamp \oc B$ for the left rule as in the case of the modification of plus $\oplus$ into disjunction $\vee$.
In addition, the restriction on the right-hand side of sequents to those $[B], \wn \Gamma$ requires us to place why-not $\wn$ at least one of the two components of $\invamp$ (i.e., $\wn A \invamp B$ or $A \invamp \wn B$) for the right rule.
Then, one reasonable solution for these two problems is to restrict par $\invamp$ to the form $\wn \oc A \invamp \wn \oc B$, so that we have
\begin{mathpar}
\begin{small}
\AxiomC{$\oc \Delta_1, \oc A_1 \vdash \wn \Gamma_1$}
\LeftLabel{\textsc{($\wn$L$^{\oc\wn}$)}}
\UnaryInfC{$\oc \Delta_1, \wn \oc A_1 \vdash \wn \Gamma_1$}
		\AxiomC{$\oc \Delta_2, \oc A_2 \vdash \wn \Gamma_2$}
		\LeftLabel{\textsc{($\wn$L$^{\oc\wn}$)}}
		\UnaryInfC{$\oc \Delta_2, \wn \oc A_2 \vdash \wn \Gamma_2$}
	\LeftLabel{\textsc{($\invamp$L)}}
\BinaryInfC{$\oc \Delta_1, \oc \Delta_2, \wn \oc A_1 \invamp \wn \oc A_2 \vdash \wn \Gamma_1, \wn \Gamma_2$}
\LeftLabel{\textsc{($\oc$D)}}
\UnaryInfC{$\oc \Delta_1, \oc \Delta_2, \oc (\wn \oc A_1 \invamp \wn \oc A_2) \vdash \wn \Gamma_1, \wn \Gamma_2$}
\DisplayProof
\and
\AxiomC{$\oc \Delta \vdash \wn B_1, \wn B_2, \wn \Gamma$}
\LeftLabel{\textsc{($\wn\oc$R$^{\oc\wn}$)}}
\UnaryInfC{$\oc \Delta \vdash \wn \oc B_1, \wn B_2, \wn \Gamma$}
\LeftLabel{\textsc{(XR)}}
\UnaryInfC{$\oc \Delta \vdash \wn B_2, \wn \oc B_1, \wn \Gamma$}
\LeftLabel{\textsc{($\wn\oc$R$^{\oc\wn}$)}}
\UnaryInfC{$\oc \Delta \vdash \wn \oc B_2, \wn \oc B_1, \wn \Gamma$}
\LeftLabel{\textsc{(XR)}}
\UnaryInfC{$\oc \Delta \vdash \wn \oc B_1, \wn \oc B_2, \wn \Gamma$}
	\LeftLabel{\textsc{($\invamp$R)}}
\UnaryInfC{$\oc \Delta \vdash \wn \oc B_1 \invamp \wn \oc B_2, \wn \Gamma$}
\DisplayProof
\end{small}
\end{mathpar}
where why-not $\wn$ on $\Gamma_1$, $\Gamma_2$ and $\Gamma$ is crucial.
We can substitute this restricted par $\wn \oc A \invamp \wn \oc B$ with the formula $\wn (A \vee B) = \wn (\oc A \oplus \oc B)$ and recover these restricted rules on par $\invamp$ inside $(\textsf{ILC$_\iota$})_\oc$ similarly to the substitution of tensor $\otimes$ with with $\&$ demonstrated above; we leave the details to the reader.

\if0
Also, the matter on zero $0$ is not simple either since the rule $0$L does not fit the restriction on the right-hand side of sequents.
One solution is to simply discard zero $0$ since in any way it does not occur in IL.
Another, slightly better solution is to modify zero $0$ into the formula $\wn 0$ so that the introduction of this formula $\wn 0$ by the rules $0$L and $\wn$L$^{\oc \wn}$ must result in a sequent of the form $\oc \Delta, \wn 0 \vdash \wn \Gamma$ that keeps the restriction.
Then, we may replace the formula $\wn 0$ with bottom $\bot$ (or false $\mathrm{ff}$) and substitute this derived rule with the rule $\bot$L.
\fi

Next, we consider up-linear implication $\rightarrowtriangle$.
Again, since of-course $\oc$ is placed on all elements on the left-hand side of conclusions in $(\textsf{ILC$_\iota$})_\oc$, we have to replace it with intuitionistic implication $\Rightarrow$.
Accordingly, we also have to replace up-linear negation $\neg A = A \rightarrowtriangle \bot$ with \emph{intuitionistic negation} $A^\star \colonequals A \Rightarrow \mathrm{ff}$.

At this point, let us consider one $1$.
However, the rule $1$R does not keep our restriction on the right-hand side of sequents.
To keep the restriction, we modify one $1$ into the formula $\oc 1$ so that its introduction by the rules $1$R and $\oc$R$^{\oc\wn}$ necessitates why-not $\wn$ on the side formulas on the right-hand side.
Then, we may substitute this formula $\oc 1$ with top $\top$ inside $(\textsf{ILC$_\iota$})_\oc$ as the derived rule 
\begin{mathpar}
\AxiomC{}
\LeftLabel{\textsc{($1$R)}}
\UnaryInfC{$\oc \Delta \vdash 1, \wn \Gamma$}
\LeftLabel{\textsc{($\oc$R$^{\oc\wn}$)}}
\UnaryInfC{$\oc \Delta \vdash \oc 1, \wn \Gamma$}
\DisplayProof
\end{mathpar}
can be simulated by the derived rule
\begin{mathpar}
\AxiomC{}
\LeftLabel{\textsc{($\top$R)}}
\UnaryInfC{$\vdash \top$}
\LeftLabel{\textsc{($\oc$W$^\ast$)}}
\doubleLine
\UnaryInfC{$\oc \Delta \vdash \top$}
\LeftLabel{\textsc{($\wn$W$^\ast$, XR$^\ast$)}}
\doubleLine
\UnaryInfC{$\oc \Delta \vdash \top, \wn \Gamma$}
\DisplayProof
\end{mathpar}
\if0
and the rule 
\begin{mathpar}
\AxiomC{$\oc \Delta \vdash \Gamma$}
\LeftLabel{\textsc{($\oc$W)}}
\UnaryInfC{$\oc \Delta, \oc \oc 1 \vdash \Gamma$}
\DisplayProof
\end{mathpar}
by the derived rule
\begin{mathpar}
\AxiomC{$\oc \Delta \vdash \Gamma$}
\LeftLabel{\textsc{($\top$L)}}
\UnaryInfC{$\oc \Delta, \top \vdash \Gamma$}
\LeftLabel{\textsc{($\oc$D)}}
\UnaryInfC{$\oc \Delta, \oc \top \vdash \Gamma$}
\DisplayProof
\end{mathpar}
\fi
\if0
there is a formal proof of the sequent $\oc \Delta \vdash \top, \wn \Gamma$ in $(\textsf{ILC$_\iota$})_\oc$ for any $\Delta$ and $\Gamma$:
\begin{mathpar}
\AxiomC{}
\LeftLabel{\textsc{($\top$R)}}
\UnaryInfC{$\vdash \top$}
\LeftLabel{\textsc{($\oc$W$^\ast$)}}
\doubleLine
\UnaryInfC{$\oc \Delta \vdash \top$}
\LeftLabel{\textsc{($\wn$W$^\ast$, XR$^\ast$)}}
\doubleLine
\UnaryInfC{$\oc \Delta \vdash \top, \wn \Gamma$}
\DisplayProof
\end{mathpar}
This formal proof simulates the rule in $(\textsf{ILC$_\mu$})_\oc$:
\begin{mathpar}
\AxiomC{}
\LeftLabel{\textsc{($\top$R)}}
\UnaryInfC{$\oc \Delta \vdash 1, \wn \Gamma$}
\DisplayProof
\end{mathpar}
Why-not $\wn$ on $\Gamma$ is crucial here; it is the case for some of the following too.
\fi

Similarly, the rule $0$L on zero $0$ does not follow our restriction on the right-hand side of sequents.
Thus, to keep the restriction again, we modify zero $0$ into the formula $\wn 0$ so that its introduction by the rules $0$L and $\wn$L$^{\oc\wn}$ necessitates why-not $\wn$ on the right-hand side of sequents.
Then, we may substitute this formula $\wn 0$ with false $\mathrm{ff} \colonequals \oc \bot$ inside $(\textsf{ILC$_\iota$})_\oc$ because the derived rule 
\begin{mathpar}
\AxiomC{}
\LeftLabel{\textsc{($0$L)}}
\UnaryInfC{$\oc \Delta, 0 \vdash \wn \Gamma$}
\LeftLabel{\textsc{($\wn$L$^{\oc\wn}$)}}
\UnaryInfC{$\oc \Delta, \wn 0 \vdash \wn \Gamma$}
\LeftLabel{\textsc{($\oc$D)}}
\UnaryInfC{$\oc \Delta, \oc \wn 0 \vdash \wn \Gamma$}
\DisplayProof
\end{mathpar}
can be simulated by the derived rule
\begin{mathpar}
\AxiomC{}
\LeftLabel{\textsc{($\bot$L)}}
\UnaryInfC{$\bot \vdash $}
\LeftLabel{\textsc{($\oc$D$^\ast$)}}
\doubleLine
\UnaryInfC{$\oc \mathrm{ff} \vdash $}
\LeftLabel{\textsc{($\oc$W$^\ast$, XL$^\ast$)}}
\doubleLine
\UnaryInfC{$\oc \Delta, \oc \mathrm{ff} \vdash$}
\LeftLabel{\textsc{($\wn$W$^\ast$)}}
\doubleLine
\UnaryInfC{$\oc \Delta, \oc \mathrm{ff} \vdash \wn \Gamma$}
\DisplayProof
\end{mathpar}
\if0
and the rule $\bot$R by the derived rule
\begin{mathpar}
\AxiomC{$\oc \Delta \vdash \wn \Gamma$}
\LeftLabel{\textsc{($\wn$W)}}
\UnaryInfC{$\oc \Delta \vdash \wn 0, \wn \Gamma$}
\LeftLabel{\textsc{($\oc$R$^{\oc\wn}$)}}
\UnaryInfC{$\oc \Delta \vdash \mathrm{ff}, \wn \Gamma$}
\DisplayProof
\end{mathpar}
where note that why-not $\wn$ on $\Gamma$ is necessary for the rule $\bot$R to satisfy the aforementioned restriction on the right-hand side of sequents. 
\fi

Further, we may dispense with of-course $\oc$ as long as IL is concerned since we can advance the applications of the rule $\oc$D in formal proofs in $(\textsf{ILC$_\iota$})_\oc$ so that of-course $\oc$ is on the left-hand side of sequents in $(\textsf{ILC$_\iota$})_\oc$ almost by default.
That is, we can focus on sequents of the form $\oc \Delta \vdash [B], \wn \Gamma$ in $(\textsf{ILC$_\iota$})_\oc$ and regard them as those $\Delta \vdash [B], \wn \Gamma$ in \textsf{INC}.
This \emph{implicit} placement of of-course $\oc$ on the left-hand side of sequents is what we propose as the definition of \emph{non-linearity} (\S\ref{MainResults}).
Dually, the \emph{implicit} placement of why-not $\wn$ on the right-hand side of sequents is what we propose as the definition of \emph{classicality} (\S\ref{MainResults}); see \S\ref{ClassicalLinearLogicNegative}.

The last missing piece is a \emph{cut-rule} for \textsf{INC}. 
For this task, we adopt the rule Cut of \textsf{LK} yet translated appropriately as in the proof of Theorem~\ref{LemTranslationOfLKIntoILK_mu}. 

Consequently, we define IL$^{\text{e}}$ and \textsf{INC} as follows:

\if0
Dually, it seems possible to substitute par $\invamp$ with plus $\oplus$.
It is indeed the case except that we must replace plus $\oplus$ with \emph{(non-linear) disjunction} $\vee$ given by $A \vee B \colonequals \oc A \oplus \oc B$ \cite{girard1987linear} as we cannot peel off of-course $\oc$ once placed on components of plus on the left-hand side of sequents in \textsf{ILC$_{(\mu)}$} (n.b., in contrast, we may keep with $\&$).

The sequent calculus $(\textsf{ILC$_\mu$})_\oc$ dispenses with of-course $\oc$ as it is implicitly placed on all elements on the left-hand side of sequents, which we take as the definition of \emph{non-linearity} of logic, and we no longer have to manipulate it.
Accordingly, we replace plus $\oplus$ with \emph{(non-linear) disjunction} $\vee$ given by $A \vee B \colonequals \oc A \oplus \oc B$ \cite{girard1987linear} as we cannot peel off of-course $\oc$ once placed on components of plus on the left-hand side of sequents in \textsf{ILC$_{\mu}$}.
Similarly, we replace up-linear implication $\rightarrowtriangle$ with \emph{intuitionistic implication} $\Rightarrow$ given by $A \Rightarrow B \colonequals \oc A \rightarrowtriangle B$, which follows Girard's translation $A \Rightarrow B \colonequals \oc A \multimap B$ \cite{girard1987linear}.
\fi

\begin{definition}[Formulas of IL$^{\text{e}}$]
\label{DefFormulasOfILE}
Formulas $A, B$ of \emph{intuitionistic logic extended (IL$^{\text{e}}$)} are defined by
\begin{mathpar}
A, B \colonequals X \mid \top \mid \mathrm{ff} \mid A \mathbin{\&} B \mid A \vee B \mid A \Rightarrow B \mid \wn A
\end{mathpar}
where $X$ ranges over propositional variables, and we define $A^\star \colonequals A \Rightarrow \mathrm{ff}$.
\end{definition}

\begin{definition}[\textsf{INC} for IL$^{\text{e}}$]
\label{DefINC_Mu}
The sequent calculus \emph{\textsf{INC}} for IL$^{\text{e}}$ consists of the axioms and the rules displayed in Figure~\ref{FigILK_mu}. 
\begin{figure}
\begin{center}
\begin{mathpar}
\AxiomC{$\Delta, A, A', \Delta' \vdash \Gamma$}
\LeftLabel{\textsc{(XL)}}
\UnaryInfC{$\Delta, A', A, \Delta' \vdash \Gamma$}
\DisplayProof \and
\AxiomC{$\Delta \vdash \Gamma, B, B', \Gamma'$}
\LeftLabel{\textsc{(XR)}}
\UnaryInfC{$\Delta \vdash \Gamma, B', B, \Gamma'$}
\DisplayProof \\
\AxiomC{$\Delta \vdash \Gamma$}
\LeftLabel{\textsc{(WL)}}
\UnaryInfC{$\Delta, A \vdash \Gamma$}
\DisplayProof \and
\AxiomC{$\Delta\vdash \Gamma$}
\LeftLabel{\textsc{($\wn$W)}}
\UnaryInfC{$\Delta \vdash \wn B, \Gamma$}
\DisplayProof \\
\AxiomC{$\Delta, A, A \vdash \Gamma$}
\LeftLabel{\textsc{(CL)}}
\UnaryInfC{$\Delta, A \vdash \Gamma$}
\DisplayProof \and
\AxiomC{$\Delta \vdash \wn B, \wn B, \Gamma$}
\LeftLabel{\textsc{($\wn$C)}}
\UnaryInfC{$\Delta \vdash \wn B, \Gamma$}
\DisplayProof \\
\if0
\AxiomC{$[\Delta^{\boldsymbol{\delta}}, A^\alpha]_d \vdash \Gamma$}
\LeftLabel{\textsc{($\oc$D)}}
\UnaryInfC{$[\Delta^{\boldsymbol{\delta}}, \oc A^\alpha]_d \vdash \Gamma$}
\DisplayProof \and
\fi
\AxiomC{$\Delta \vdash B, \Gamma$}
\LeftLabel{\textsc{($\wn$D)}}
\UnaryInfC{$\Delta \vdash \wn B, \Gamma$}
\DisplayProof 
\and
\AxiomC{$\Delta, A \vdash \wn \Gamma$}
\LeftLabel{\textsc{($\wn$L$^\wn$)}}
\UnaryInfC{$\Delta, \wn A \vdash \wn \Gamma$}
\DisplayProof \\
\if0
\AxiomC{$[\oc \Delta^{\boldsymbol{\delta}}]_d \vdash B, \wn \Gamma$}
\LeftLabel{\textsc{($\oc$R)}}
\UnaryInfC{$[\oc \Delta^{\boldsymbol{\delta}}]_d \vdash \oc B, \wn \Gamma$}
\DisplayProof \\
\fi
\AxiomC{}
\LeftLabel{\textsc{(Id)}}
\UnaryInfC{$A \vdash A$}
\DisplayProof \and
\AxiomC{$\Delta \vdash \wn B, \wn \Gamma$}
	\AxiomC{$\Delta', B \vdash \wn \Gamma'$}
	\LeftLabel{\textsc{(Cut$^\wn$)}}
\BinaryInfC{$\Delta, \Delta' \vdash \wn \Gamma, \wn \Gamma'$}
\DisplayProof 
\\
\if0
\AxiomC{}
\LeftLabel{\textsc{($1$R$^\wn$)}}
\UnaryInfC{$\Delta \vdash 1, \wn \Gamma$}
\DisplayProof 
\AxiomC{}
\LeftLabel{\textsc{($0$L)}}
\UnaryInfC{$\Delta, 0 \vdash \Gamma$}
\DisplayProof  
\\
\fi
\AxiomC{$\Delta \vdash \Gamma$}
\LeftLabel{\textsc{($\top$L)}}
\UnaryInfC{$\Delta, \top \vdash \Gamma$}
\DisplayProof \and
\AxiomC{}
\LeftLabel{\textsc{($\top$R)}}
\UnaryInfC{$\vdash \top$}
\DisplayProof 
\and
\AxiomC{}
\LeftLabel{\textsc{($\mathrm{ff}$L)}}
\UnaryInfC{$\mathrm{ff} \vdash$}
\DisplayProof \and
\AxiomC{$\Delta \vdash \wn \Gamma$}
\LeftLabel{\textsc{($\mathrm{ff}$R$^\wn$)}}
\UnaryInfC{$\Delta \vdash \mathrm{ff}, \wn \Gamma$}
\DisplayProof \\
\if0
\AxiomC{$\Delta, A_1, A_2 \vdash \Gamma$}
\LeftLabel{\textsc{($\otimes$L)}}
\UnaryInfC{$\Delta, A_1 \otimes A_2 \vdash \Gamma$}
\DisplayProof \and
\AxiomC{$\Delta_1 \vdash B_1, \Gamma_1$}
		\AxiomC{$\Delta_2 \vdash B_2, \Gamma_2$}
	\LeftLabel{\textsc{($\otimes$R)}}
\BinaryInfC{$\Delta_1, \Delta_2 \vdash B_1 \otimes B_2, \Gamma_1, \Gamma_2$}
\DisplayProof \\ 
\fi
\AxiomC{$\Delta, A_i \vdash \Gamma$}
	\LeftLabel{\textsc{($\&$L)}}
	\RightLabel{($i \in \overline{2}$)}
\UnaryInfC{$\Delta, A_1 \mathbin{\&} A_2 \vdash \Gamma$}
\DisplayProof \and
\AxiomC{$\Delta \vdash B_1, \wn \Gamma$}
		\AxiomC{$\Delta \vdash B_2, \wn \Gamma$}
	\LeftLabel{\textsc{($\&$R$^\wn$)}}
\BinaryInfC{$\Delta \vdash B_1 \mathbin{\&} B_2, \wn \Gamma$}
\DisplayProof \\
\if0
\AxiomC{$\Delta_1, A_1 \vdash \Gamma_1$}
		\AxiomC{$\Delta_2, A_2 \vdash \Gamma_2$}
	\LeftLabel{\textsc{($\invamp$L)}}
\BinaryInfC{$\Delta_1, \Delta_2, A_1 \invamp A_2 \vdash \Gamma_1, \Gamma_2$}
\DisplayProof \and
\AxiomC{$\Delta \vdash B_1, B_2, \Gamma$}
\LeftLabel{\textsc{($\invamp$R)}}
\UnaryInfC{$\Delta \vdash B_1 \invamp B_2, \Gamma$}
\DisplayProof \\
\fi
\AxiomC{$\Delta, A_1 \vdash \Gamma$}
		\AxiomC{$\Delta, A_2 \vdash \Gamma$}
	\LeftLabel{\textsc{($\vee$L)}}
\BinaryInfC{$\Delta, A_1 \vee A_2 \vdash \Gamma$}
\DisplayProof \and
\AxiomC{$\Delta \vdash B_i, \wn \Gamma$}
	\LeftLabel{\textsc{($\vee$R$^\wn$)}}
	\RightLabel{($i \in \overline{2}$)}
\UnaryInfC{$\Delta \vdash B_1 \vee B_2, \wn \Gamma$}
\DisplayProof \\
\AxiomC{$\Delta, B \vdash \Gamma$}
\AxiomC{$\Theta \vdash A, \wn \Xi$}
\LeftLabel{\textsc{($\Rightarrow$L$^\wn$)}}
\BinaryInfC{$\Delta, \Theta, A \Rightarrow B \vdash \Gamma, \wn \Xi$}
\DisplayProof \and
\AxiomC{$\Delta, A \vdash B, \wn \Gamma$}
\LeftLabel{\textsc{($\Rightarrow$R$^\wn$)}}
\UnaryInfC{$\Delta \vdash A \Rightarrow B, \wn \Gamma$}
\DisplayProof
\end{mathpar}
\end{center}
\caption{Sequent calculus \textsf{INC} for IL$^{\text{e}}$}
\label{FigILK_mu}
\end{figure}
\end{definition}

As expected, the cut-elimination theorem holds for \textsf{INC}:
\begin{lemma}[Cut-elimination for \textsf{INC}]
\label{LemCutEliminationForINC_mu}
Given a formal proof of a sequent in \textsf{INC}, there is a formal proof of this sequent in \textsf{INC} without Cut$^\wn$.
\end{lemma}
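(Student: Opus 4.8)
The plan is to prove the lemma by a direct Gentzen-style cut-elimination argument, paralleling the proof of Theorem~\ref{ThmCutEliminationForILC} for \textsf{ILC} but adapted to the modified rule Cut$^\wn$. I would show that any \emph{topmost} application of Cut$^\wn$ (one whose two immediate subderivations are already cut-free) can be removed, and then iterate from the leaves downward. Removing a single such cut proceeds by a lexicographic double induction: the \emph{primary} measure is the complexity $|B|$ of the cut formula $B$, which occurs as $\wn B$ on the right of the left premise $\Delta \vdash \wn B, \wn\Gamma$ and as $B$ on the left of the right premise $\Delta', B \vdash \wn\Gamma'$; the \emph{secondary} measure is the cut height, i.e.\ the sum of the heights of the two subderivations. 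Throughout, one must preserve the structural invariant of \textsf{INC}: the right context of the right premise of Cut$^\wn$, and of every $(\cdot)^\wn$-rule, consists solely of why-not formulas.

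First I would dispose of the \emph{axiom cases}: if the left premise is an instance of Id it has the form $\wn B \vdash \wn B$, and the conclusion is recovered from the right premise by one application of $\wn$L$^\wn$ (legitimate since $\wn\Gamma'$ is all why-not) followed by exchange; the right-premise-Id case forces $B$ itself to be a why-not formula by the all-why-not side condition, and is absorbed into the left-premise analysis below. Next come the \emph{commutative cases}, where the cut formula is a side formula of the last rule of one premise; here I permute Cut$^\wn$ above that rule, checking in each instance that the displayed why-not context survives so that both the permuted cut and the permuted rule remain valid instances. Two deserve attention: if $\wn B$ is introduced on the left by $\wn$W, or $B$ is introduced on the right by WL, the cut is \emph{deleted} and its conclusion is reconstituted from the surviving premise by WL and $\wn$W; if $\wn B$ is introduced by $\wn$C (or $B$ by CL), the cut is \emph{duplicated} against two copies of the other premise and the conclusion is restored by CL and $\wn$C. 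Atomic cut formulas never create a principal case and are handled entirely by these weakening/contraction permutations.

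The heart of the proof is the \emph{principal cases}, where both last rules act on the cut formula. Since the outermost connective of $\wn B$ is why-not, the only principal left-premise rule that pairs with a logical right-premise rule is dereliction $\wn$D, exposing $B$ from $\Delta \vdash B, \wn\Gamma$, and I reduce to cuts on the immediate subformulas of $B$. The subtlety, caused by the asymmetry of Cut$^\wn$, is that these subcuts must be performed in an order keeping the right premise's right context purely why-not, with a fresh $\wn$D reinserted to restore the wrapper whenever a subformula is moved to the left premise of a new cut. For $B = B_1 \Rightarrow B_2$, with the right premise ending in $\Rightarrow$L$^\wn$ from $\Delta'_1, B_2 \vdash \wn\Gamma'_1$ and $\Theta \vdash B_1, \wn\Xi$ and the $\wn$D-premise ending in $\Rightarrow$R$^\wn$ from $\Delta, B_1 \vdash B_2, \wn\Gamma$, I would first apply $\wn$D to $\Delta, B_1 \vdash B_2, \wn\Gamma$ and cut on $B_2$, and only then apply $\wn$D to $\Theta \vdash B_1, \wn\Xi$ and cut on $B_1$; the conclusion is exactly $\Delta, \Delta' \vdash \wn\Gamma, \wn\Gamma'$, and both new cuts have strictly smaller cut formulas, so the primary induction hypothesis applies. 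The cases $B = B_1 \mathbin{\&} B_2$, $B_1 \vee B_2$, $\mathrm{ff}$ and $\top$ are analogous or immediate, and for $B = \wn A$ the right premise ends in $\wn$L$^\wn$ from $\Delta', A \vdash \wn\Gamma'$ and the cut reduces to a cut on $A$.

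I expect the main obstacle to be the contraction cases, $\wn$C on the left and CL on the right, where the reduced configuration still contains a cut on the \emph{same} formula $\wn B$, so termination rests entirely on the secondary induction on cut height. Care is needed because one of the two duplicated cuts takes as a premise the output of the other, and the reductions must be arranged (as in the Bierman-style argument underlying Theorem~\ref{ThmCutEliminationForILC}) so that each newly created cut is strictly smaller in the lexicographic order $(|B|, \text{height})$. Verifying that the all-why-not side conditions survive every permutation, while tedious, is what makes the bookkeeping lengthy rather than deep, and it is this bookkeeping that I would relegate to the appendix.
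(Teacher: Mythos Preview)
Your proposal is correct and takes essentially the same approach as the paper: the paper's proof is literally the one sentence ``Similar to the case of Theorem~\ref{ThmCutEliminationForILC}'', i.e.\ adapt the Bierman-style cut-elimination for \textsf{ILC} (ranks and depths, multiple cuts for contraction) to the rules of \textsf{INC}. You have correctly identified the only nontrivial adaptations---the asymmetry of Cut$^\wn$ forcing a fresh $\wn$D before each reduced cut, and the need to check that the all-why-not side condition on the right context survives every permutation---and your sketch of the $\Rightarrow$ principal case and the contraction cases matches what a detailed write-up along the lines of Appendix~\ref{CutElimination} would contain.
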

\begin{proof}
Similar to the case of Theorem~\ref{ThmCutEliminationForILC}.
\end{proof}

\if0
There is a non-canonical, symmetric choice in designing a cut-elimination procedure for \textsf{INC} since weakening is possible on both of the left- and the right-hand sides of sequents.
For this reason, we call \textsf{INC} \emph{undirected} and place the subscript $(\_)_\mu$ on it.
The proof of Lemma~\ref{LemCutEliminationForINC_mu} selects a \emph{rightward} cut-elimination procedure for \textsf{INC}, based on the rightward one for \textsf{ILC$_\mu$} in \S\ref{CutElimination}, without any particular reason. 
\fi

The following corollary of Lemma~\ref{LemCutEliminationForINC_mu} shows that \textsf{INC} is a \emph{conservative} extension of \textsf{LJ}, and therefore IL$^{\text{e}}$ is \emph{intuitionistic} in the conventional sense:
\begin{corollary}[\textsf{INC} as a conservative extension of \textsf{LJ}]
\label{CorINCAsAConservativeExtensionOfLJ}
\textsf{INC} is a conservative extension of \textsf{LJ}, i.e., each provable sequent in \textsf{LJ} is provable in \textsf{INC}, and if a sequent $\Delta \vdash \Gamma$ is provable in \textsf{INC}, where only formulas in IL occur in $\Delta, \Gamma$, then $\Gamma = \boldsymbol{\epsilon}$ or $\Gamma = B$ for some formula $B$, and the sequent $\Delta \vdash [B]$ is provable in \textsf{LJ}, where $[B]$ is the empty sequence $\boldsymbol{\epsilon}$ or a singleton sequence $B$.
\end{corollary}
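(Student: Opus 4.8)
The plan is to prove the two inclusions separately, the forward one (every $\textsf{LJ}$-provable sequent is $\textsf{INC}$-provable) being routine and the conservativity (the backward one) carrying the real content. For the forward direction I would mirror the corresponding part of Corollary~\ref{ILCAsConservativeExtensionOfLLJ}: this amounts to a rule-by-rule check that each axiom and rule of $\textsf{LJ}$ is admissible in $\textsf{INC}$, where one reads $\textsf{LJ}$'s right rules $\mathrm{ff}$R, $\&$R, $\vee$R, $\Rightarrow$R as the instances of $\mathrm{ff}$R$^{\wn}$, $\&$R$^{\wn}$, $\vee$R$^{\wn}$, $\Rightarrow$R$^{\wn}$ with empty why-not context $\Gamma = \boldsymbol{\epsilon}$, and the identity axiom together with the left and structural rules transfer directly. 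The one genuine adjustment is that $\textsf{LJ}$'s implication-left rule is additive whereas $\Rightarrow$L$^{\wn}$ is multiplicative; I would recover the former from the latter by first weakening the two premises to a common left context with WL and then contracting the duplicated context with CL, both available in $\textsf{INC}$.

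For conservativity, suppose $\Delta \vdash \Gamma$ is provable in $\textsf{INC}$ with only formulas of $\textsf{IL}$ occurring in $\Delta, \Gamma$; in particular no why-not $\wn$ occurs in the endsequent. By Lemma~\ref{LemCutEliminationForINC_mu} I fix a cut-free proof $p'$. The key observation I would isolate as a lemma is a \emph{why-not monotonicity}: every rule of $\textsf{INC}$ other than Cut$^{\wn}$ has the property that, if some formula in a premise contains an occurrence of $\wn$, then so does some formula in its conclusion. This is verified rule by rule: the logical rules bury their active formulas inside the principal formula, so any $\wn$ they carry survives; $\Rightarrow$L$^{\wn}$ moves its right-active formula $A$ into $A \Rightarrow B$ on the left; and contraction merely identifies two equal copies, so it can never delete the last occurrence of a $\wn$-bearing formula. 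Since $p'$ is cut-free, propagating this observation downward shows that any $\wn$ occurring anywhere in $p'$ would have to occur already in the root $\Delta \vdash \Gamma$; as the root is $\wn$-free, the whole of $p'$ is $\wn$-free.

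Once $p'$ is why-not free, the rules $\wn$W, $\wn$C, $\wn$D and $\wn$L$^{\wn}$ are unused and the remaining right rules can fire only with an empty why-not context; a short induction then shows that every sequent in $p'$ has a right-hand side of length at most one, which already gives $\Gamma = \boldsymbol{\epsilon}$ or $\Gamma = B$. Moreover each surviving rule instance is now visibly an $\textsf{LJ}$ rule: the right rules are the $\Gamma = \boldsymbol{\epsilon}$ instances of $\mathrm{ff}$R, $\&$R, $\vee$R, $\Rightarrow$R, and the only non-$\textsf{LJ}$ shape, namely $\Rightarrow$L$^{\wn}$ with empty $\wn$-context, is turned into $\textsf{LJ}$'s additive $\Rightarrow$L by the same weakening-then-contraction manoeuvre as in the forward direction. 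Reading $p'$ through this translation yields an $\textsf{LJ}$ proof of $\Delta \vdash [B]$.

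I expect the main obstacle to be the why-not monotonicity step, specifically pinning down that \emph{no} rule other than Cut$^{\wn}$ can destroy a why-not occurrence: contraction only identifies copies, and crucially there is no rule dual to $\wn$D that would turn a right-hand $\wn B$ back into $B$. This is exactly where cut-elimination is indispensable, since Cut$^{\wn}$ alone violates monotonicity — its left premise $\Delta \vdash \wn B, \wn \Gamma$ always contains a $\wn$ that the conclusion may drop. A secondary, purely bookkeeping difficulty is performing the multiplicative-to-additive reconciliation of implication-left uniformly enough that it never reintroduces a why-not context into the reconstructed $\textsf{LJ}$ proof.
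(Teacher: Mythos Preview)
Your proof is correct and follows essentially the same line as the paper's: obtain a cut-free derivation via Lemma~\ref{LemCutEliminationForINC_mu}, then argue that because the endsequent is $\wn$-free the whole proof is $\wn$-free and hence an \textsf{LJ} proof. Your ``why-not monotonicity'' lemma is precisely the subformula property specialised to $\wn$ (which the paper invokes directly rather than reproving), and your explicit treatment of the multiplicative-to-additive mismatch for $\Rightarrow$L is a detail the paper's terse proof leaves implicit.
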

\begin{proof}
First, every non-cut rule of \textsf{LJ} is also a rule of \textsf{INC}.
Therefore, by Theorem~\ref{ThmCutEliminationForLKAndLJ}, every sequent provable in \textsf{LJ} is clearly provable in \textsf{INC}.

Next, assume that there is a formal proof $p$ of a sequent $\Delta \vdash \Gamma$ in \textsf{INC}, and only formulas of IL occur in $\Delta, \Gamma$.
By Lemma~\ref{LemCutEliminationForINC_mu}, there is a cut-free formal proof $p'$ of $\Delta \vdash \Gamma$ in \textsf{INC}.
By induction on $p'$, we see that the right-hand side of each sequent in $p'$ is of the form $[B], \wn \Gamma'$.
Hence, in particular, $\Gamma = [B]$.
Moreover, by this form $[B], \wn \Gamma'$ on $p'$ and the subformula property of \textsf{INC} (by Lemma~\ref{LemCutEliminationForINC_mu}), we see by induction on $p'$ that $p'$ is a formal proof in \textsf{LJ}. 
\end{proof}

Let us then propose the translations $\textsf{LK} \stackrel{\mathscr{T}_{\wn}}{\rightarrow} \textsf{INC} \stackrel{\mathscr{T}_{\oc}}{\rightarrow} \textsf{ILC$_\iota$}$:

\begin{lemma}[Translation $\mathscr{T}_{\wn}$ of \textsf{LK} into $\textsf{INC}$]
\label{LemTranslationOfLKIntoILK_mu}
There is a map $\mathscr{T}_{\wn}$ that assigns, to each formal proof $p$ of a sequent $\Delta \vdash \Gamma$ in \textsf{LK}, a formal proof $\mathscr{T}_{\wn}(p)$ of the sequent $\mathscr{T}_{\wn}^\ast(\Delta) \vdash \wn \mathscr{T}_{\wn}^\ast(\Gamma)$ in \textsf{INC}, where 
\begin{mathpar}
\mathscr{T}_\wn(\mathrm{tt}) \colonequals \wn \top
\and
\mathscr{T}_\wn(\mathrm{ff}) \colonequals \mathrm{ff}
\and
\mathscr{T}_\wn(A \wedge B) \colonequals \wn \mathscr{T}_\wn(A) \mathbin{\&} \wn \mathscr{T}_\wn(B)
\and
\mathscr{T}_\wn (A \vee B) \colonequals \mathscr{T}_\wn (A) \vee \mathscr{T}_\wn(B)
\and
\mathscr{T}_\wn(A \Rrightarrow B) \colonequals \mathscr{T}_\wn(A) \Rightarrow \wn \mathscr{T}_\wn(B).
\end{mathpar}

Moreover, it is conservative: If $\Delta, \Gamma$ has only formulas of CL, and $\mathscr{T}_\wn^\ast(\Delta) \vdash \wn \mathscr{T}_\wn^\ast(\Gamma)$ is provable in \textsf{INC}, then $\Delta \vdash \Gamma$ is provable in \textsf{LK}.
\end{lemma}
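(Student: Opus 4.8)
The plan is to prove the two assertions separately, each by induction. For the \textbf{existence of $\mathscr{T}_\wn$} I would induct on the formal proof $p$ in \textsf{LK}, keeping the invariant that every sequent produced has the form $\Sigma \vdash \wn\Pi$, i.e.\ its whole right-hand side is why-not'ed; this invariant is exactly what makes the right rules of \textsf{INC} (all bearing the superscript $(\_)^\wn$) and the left rule $\wn\mathrm{L}^\wn$ applicable. Putting $\mathscr{T}_\wn(X)\colonequals X$ on variables, most cases are direct: Id becomes Id followed by $\wn\mathrm{D}$; Cut is simulated verbatim by $\mathrm{Cut}^\wn$ with cut formula $\mathscr{T}_\wn(B)$; XL, XR, WL, CL carry over to the identically named rules of \textsf{INC}, while WR and CR become $\wn\mathrm{W}$ and $\wn\mathrm{C}$. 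The unit rules are routine: $\mathrm{tt}$R is $\top$R followed by two $\wn\mathrm{D}$'s, $\mathrm{tt}$L is one WL adding $\wn\top$, $\mathrm{ff}$L is $\mathrm{ff}$L, and $\mathrm{ff}$R is $\wn\mathrm{W}$. Among the logical rules, $\vee$L maps straight to $\vee$L (here $\mathscr{T}_\wn$ leaves no $\wn$ on the disjuncts), $\wedge$L maps to $\&$L after one $\wn\mathrm{L}^\wn$ turning the premise formula $\mathscr{T}_\wn(A_i)$ into $\wn\mathscr{T}_\wn(A_i)$, and $\wedge$R, $\Rrightarrow$R map to $\&\mathrm{R}^\wn$, $\Rightarrow\mathrm{R}^\wn$ (reading the already why-not'ed subformulas $\wn\mathscr{T}_\wn(B_i)$, resp.\ $\wn\mathscr{T}_\wn(B)$, as the active formula) followed by a single $\wn\mathrm{D}$ restoring the outer $\wn$ on the new connective.

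The genuinely non-trivial cases are $\vee$R and $\Rrightarrow$L, and this is where I expect the \textbf{main obstacle}. The difficulty is a systematic mismatch: classicalisation places a $\wn$ on \emph{every} right formula, so the disjunct supplied by the induction arrives as $\wn\mathscr{T}_\wn(B_i)$, whereas $\vee\mathrm{R}^\wn$ wants a \emph{bare} active formula $\mathscr{T}_\wn(B_i)$, and one cannot strip a $\wn$ off the right in general. I would bridge the gap with $\mathrm{Cut}^\wn$: from the easily derived $\mathscr{T}_\wn(B_i)\vdash \wn(\mathscr{T}_\wn(B_1)\vee\mathscr{T}_\wn(B_2))$ (Id, then $\vee\mathrm{R}^\wn$ with empty side context, then $\wn\mathrm{D}$) and the translated premise $\mathscr{T}_\wn^\ast(\Delta)\vdash \wn\mathscr{T}_\wn(B_i),\wn\mathscr{T}_\wn^\ast(\Gamma)$, a single $\mathrm{Cut}^\wn$ on $\mathscr{T}_\wn(B_i)$ yields the goal modulo XR. The rule $\Rrightarrow$L is handled in the same spirit: apply $\wn\mathrm{L}^\wn$ to the second translated premise, combine it with the identity $\mathscr{T}_\wn(A)\vdash\mathscr{T}_\wn(A)$ through $\Rightarrow\mathrm{L}^\wn$ to introduce $\mathscr{T}_\wn(A)\Rightarrow\wn\mathscr{T}_\wn(B)$ at the cost of a spurious extra $\mathscr{T}_\wn(A)$ on the left, then cut that copy away against the first translated premise $\mathscr{T}_\wn^\ast(\Delta)\vdash\wn\mathscr{T}_\wn(A),\wn\mathscr{T}_\wn^\ast(\Gamma)$ via $\mathrm{Cut}^\wn$, finally contracting the doubled $\mathscr{T}_\wn^\ast(\Delta)$ and $\wn\mathscr{T}_\wn^\ast(\Gamma)$ with CL and $\wn\mathrm{C}$. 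This reliance on cut is the conceptual heart of classicalisation (the ``reasoning do-overs''), and since the lemma asserts only provability there is no need to keep the simulation cut-free.

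For \textbf{conservativity} I would argue by an erasure map rather than re-analysing \textsf{INC} directly, since the target formulas $\mathscr{T}_\wn(C)$ are not IL-formulas and hence Corollary~\ref{CorINCAsAConservativeExtensionOfLJ} does not apply. Define $U$ from IL$^{\text{e}}$-formulas to CL-formulas by $U(\wn A)\colonequals U(A)$, $U(\top)\colonequals\mathrm{tt}$, $U(\mathrm{ff})\colonequals\mathrm{ff}$, $U(A\mathbin{\&}B)\colonequals U(A)\wedge U(B)$, $U(A\vee B)\colonequals U(A)\vee U(B)$, $U(A\Rightarrow B)\colonequals U(A)\Rrightarrow U(B)$, and $U(X)\colonequals X$, extended elementwise to $U^\ast$ on sequences. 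A trivial induction on CL-formulas gives $U(\mathscr{T}_\wn(C))=C$, whence $U^\ast(\mathscr{T}_\wn^\ast(\Delta))=\Delta$ and $U^\ast(\wn\mathscr{T}_\wn^\ast(\Gamma))=\Gamma$. The key step is then a second induction showing that $U$ turns any \textsf{INC}-derivation of $\Sigma\vdash\Pi$ into an \textsf{LK}-derivation of $U^\ast(\Sigma)\vdash U^\ast(\Pi)$: checking the rules one by one, $\wn\mathrm{W},\wn\mathrm{C}$ become WR, CR, the rules $\wn\mathrm{D}$ and $\wn\mathrm{L}^\wn$ become no-ops (their $U$-images have identical premise and conclusion), the superscripted logical right rules and $\Rightarrow\mathrm{R}^\wn$ become their plain \textsf{LK} counterparts, $\mathrm{Cut}^\wn$ becomes Cut, and the only rule needing real work is $\Rightarrow\mathrm{L}^\wn$, whose multiplicative two-context shape is recovered from \textsf{LK}'s additive $\Rrightarrow$L by weakening (and, if needed, contraction), both available in \textsf{LK}. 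Applying $U$ to the hypothesised \textsf{INC}-proof of $\mathscr{T}_\wn^\ast(\Delta)\vdash\wn\mathscr{T}_\wn^\ast(\Gamma)$ and using the two identities above then produces an \textsf{LK}-proof of $\Delta\vdash\Gamma$, which completes the argument.
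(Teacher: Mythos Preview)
Your construction of $\mathscr{T}_\wn$ is correct and matches the paper's proof closely; the paper handles each \textsf{LK} rule by the same derived rules in \textsf{INC}, including the use of $\mathrm{Cut}^\wn$ for $\vee$R and for $\Rrightarrow$L (the paper's derivation for $\Rrightarrow$L is organised slightly differently---it first derives $A\Rightarrow\wn B\vdash\wn(\wn A\Rightarrow\wn B)$ and cuts against a $\Rightarrow$L$^\wn$ instance built from both translated premises---but your version with the identity on $\mathscr{T}_\wn(A)$ is equally valid and arguably tidier).

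For conservativity you take a genuinely different route. The paper invokes cut-elimination for \textsf{INC} (Lemma~\ref{LemCutEliminationForINC_mu}), then massages the cut-free proof by advancing $\wn$D and delaying $\wn$L$^\wn$, and finally uses the subformula property to read off an \textsf{LK} proof by induction on the normalised derivation. Your erasure map $U$ bypasses all of this: it sends every \textsf{INC} rule directly to an \textsf{LK}-admissible rule (with $\wn$D and $\wn$L$^\wn$ collapsing to no-ops and $\Rightarrow$L$^\wn$ recovered from the additive $\Rrightarrow$L via weakening and contraction), and the identity $U\circ\mathscr{T}_\wn=\mathrm{id}$ finishes immediately. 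Your argument is more elementary---it needs neither cut-elimination nor the subformula property---and it works uniformly on arbitrary \textsf{INC} proofs, not just those whose endsequent lies in the image of $\mathscr{T}_\wn$. The paper's approach, on the other hand, yields finer information (a cut-free correspondence) that it reuses later when establishing the analogous conservativity results through \textsf{ILC}$_\rho$, where the subformula-property route is the template.
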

\begin{proof}
We first translate the axioms and the rules of \textsf{LK} into derived ones in \textsf{INC}. 
Let us leave it to the reader to translate XL, XR, WL, WR, CL, CR, Id, $\mathrm{tt}$L, $\mathrm{tt}$R, $\mathrm{ff}$L and $\mathrm{ff}$R in \textsf{LK} into \textsf{INC} because it is straightforward. 

Cut in \textsf{LK} is translated into \textsf{INC} simply by
\begin{mathpar}
\AxiomC{$\Delta \vdash \wn B, \wn \Gamma$}
\AxiomC{$\Delta', B \vdash \wn \Gamma'$}
\LeftLabel{\textsc{(Cut$^\wn$)}}
\BinaryInfC{$\Delta, \Delta' \vdash \wn \Gamma, \wn \Gamma'$}
\DisplayProof
\end{mathpar}

$\wedge$L in \textsf{LK} is translated into \textsf{INC} by
\begin{mathpar}
\AxiomC{$\Delta, A_i \vdash \wn \Gamma$}
\RightLabel{($i \in \overline{2}$)}
\LeftLabel{\textsc{($\wn$L$^\wn$)}}
\UnaryInfC{$\Delta, \wn A_i \vdash \wn \Gamma$}
\LeftLabel{\textsc{($\&$L)}}
\UnaryInfC{$\Delta, \wn A_1 \mathbin{\&} \wn A_2 \vdash \wn \Gamma$}
\DisplayProof
\end{mathpar}
and $\wedge$R by
\begin{mathpar}
\AxiomC{$\Delta \vdash \wn B_1, \wn \Gamma$}
\AxiomC{$\Delta \vdash \wn B_2, \wn \Gamma$}
\LeftLabel{\textsc{($\&$R$^\wn$)}}
\BinaryInfC{$\Delta \vdash \wn B_1 \mathbin{\&} \wn B_2, \wn \Gamma$}
\LeftLabel{\textsc{($\wn$D)}}
\UnaryInfC{$\Delta \vdash \wn (\wn B_1 \mathbin{\&} \wn B_2), \wn \Gamma$}
\DisplayProof
\end{mathpar}

Dually, $\vee$L in \textsf{LK} is translated into \textsf{INC} simply by
\begin{mathpar}
\AxiomC{$\Delta, A_1 \vdash \wn \Gamma$}
\AxiomC{$\Delta, A_2 \vdash \wn \Gamma$}
\LeftLabel{\textsc{($\vee$L)}}
\BinaryInfC{$\Delta, A_1 \vee A_2 \vdash \wn \Gamma$}
\DisplayProof
\end{mathpar}
and $\vee$R by
\begin{mathpar}
\AxiomC{$\Delta \vdash \wn B_i, \wn \Gamma$}
\AxiomC{}
\LeftLabel{\textsc{(Id)}}
\RightLabel{($i \in \overline{2}$)}
\UnaryInfC{$B_i \vdash B_i$}
\LeftLabel{\textsc{($\vee$R$^\wn$)}}
\UnaryInfC{$B_i \vdash B_1 \vee B_2$}
\LeftLabel{\textsc{($\wn$D)}}
\UnaryInfC{$B_i \vdash \wn (B_1 \vee B_2)$}
\LeftLabel{\textsc{(Cut$^\wn$)}}
\BinaryInfC{$\Delta \vdash \wn \Gamma, \wn (B_1 \vee B_2)$}
\LeftLabel{\textsc{(XR$^\ast$)}}
\doubleLine
\UnaryInfC{$\Delta \vdash \wn (B_1 \vee B_2), \wn \Gamma$}
\DisplayProof
\end{mathpar}

Next, $\Rrightarrow$L in \textsf{LK} is translated into \textsf{INC} by
\begin{mathpar}
\AxiomC{}
\LeftLabel{\textsc{(Id)}}
\UnaryInfC{$\wn B \vdash \wn B$}
\AxiomC{}
\LeftLabel{\textsc{(Id)}}
\UnaryInfC{$A \vdash A$}
\LeftLabel{\textsc{($\Rightarrow$L$^\wn$)}}
\BinaryInfC{$A, A \Rightarrow \wn B \vdash \wn B$}
\LeftLabel{\textsc{(XL)}}
\UnaryInfC{$A \Rightarrow \wn B, A \vdash \wn B$}
\LeftLabel{\textsc{($\wn$L$^\wn$)}}
\UnaryInfC{$A \Rightarrow \wn B, \wn A \vdash \wn B$}
\LeftLabel{\textsc{($\Rightarrow$R$^\wn$)}}
\UnaryInfC{$A \Rightarrow \wn B \vdash \wn A \Rightarrow \wn B$}
\LeftLabel{\textsc{($\wn$D)}}
\UnaryInfC{$A \Rightarrow \wn B \vdash \wn (\wn A \Rightarrow \wn B)$}
\AxiomC{$\Delta, B \vdash \wn \Gamma$}
\LeftLabel{\textsc{($\wn$L$^\wn$)}}
\UnaryInfC{$\Delta, \wn B \vdash \wn \Gamma$}
\AxiomC{$\Delta \vdash \wn A, \wn \Gamma$}
\LeftLabel{\textsc{($\Rightarrow$L$^\wn$)}}
\BinaryInfC{$\Delta, \Delta, \wn A \Rightarrow \wn B \vdash \wn \Gamma, \wn \Gamma$}
\LeftLabel{\textsc{(CL$^\ast$)}}
\doubleLine
\UnaryInfC{$\Delta, \wn A \Rightarrow \wn B \vdash \wn \Gamma, \wn \Gamma$}
\LeftLabel{\textsc{($\wn$C$^\ast$)}}
\doubleLine
\UnaryInfC{$\Delta, \wn A \Rightarrow \wn B \vdash \wn \Gamma$}
\LeftLabel{\textsc{(Cut$^\wn$)}}
\BinaryInfC{$A \Rightarrow \wn B, \Delta \vdash \wn \Gamma$}
\LeftLabel{\textsc{(XL$^\ast$)}}
\doubleLine
\UnaryInfC{$\Delta, A \Rightarrow \wn B \vdash \wn \Gamma$}
\DisplayProof
\end{mathpar}
and $\Rrightarrow$R as:
\begin{mathpar}
\AxiomC{$\Delta, A \vdash \wn B, \wn \Gamma$}
\LeftLabel{\textsc{($\Rightarrow$R$^\wn$)}}
\UnaryInfC{$\Delta \vdash A \Rightarrow \wn B, \wn \Gamma$}
\LeftLabel{\textsc{($\wn$D)}}
\UnaryInfC{$\Delta \vdash \wn (A \Rightarrow \wn B), \wn \Gamma$}
\DisplayProof
\end{mathpar}

Given a formal proof $p$ in \textsf{LK}, we obtain the required formal proof $\mathscr{T}_{\wn}(p)$ in \textsf{INC} out of $p$ by applying these translations to the axioms and the rules occurring in $p$.
We see by induction on $p$ that $\mathscr{T}_{\wn}(p)$ is well-defined. 

Next, given a formal proof $q$ of a sequent $\mathscr{T}_{\wn}^\ast(\Delta) \vdash [\mathscr{T}_{\wn}(B)], \wn \mathscr{T}_{\wn}^\ast(\Gamma)$ in \textsf{INC}, we obtain by Lemma~\ref{LemCutEliminationForINC_mu} a formal proof $q'$ without Cut$^\wn$ of this sequent in \textsf{INC}. 
Further, we obtain out of $q'$ another formal proof $q''$ without Cut$^\wn$ of the same sequent in \textsf{INC} by advancing (resp. delaying) applications of $\wn$D (resp. $\wn$L$^\wn$) in $q'$ as much as possible except that we restrict an introduction of the formula $\wn \top$ on the left-hand side of sequents in $q'$ to consecutive applications of $\top$L and $\wn$L$^\wn$.
Thanks to the subformula property of \textsf{INC} (by Lemma~\ref{LemCutEliminationForINC_mu}), we see by induction on formal proofs, in which applications of $\wn$D (resp. $\wn$L$^\wn$) are advanced (resp. delayed) as much as possible except that $\top$L is immediately followed by $\wn$L$^\wn$, that $q''$ has a corresponding formal proof of the sequent $\Delta \vdash [B], \Gamma$ in \textsf{LK}.\footnote{For $\vee$R$^\wn$ and $\Rightarrow$L$^\wn$, which are the only nontrivial cases here, we need to take care of not only the sequents $\mathscr{T}_{\wn}^\ast(\Delta) \vdash \wn \mathscr{T}_{\wn}^\ast(\Gamma)$ but also the ones $\mathscr{T}_{\wn}^\ast(\Delta) \vdash \mathscr{T}_{\wn}(B), \wn \mathscr{T}_{\wn}^\ast(\Gamma)$ in \textsf{INC}.} 
Hence, in particular, the translation $\mathscr{T}_{\wn}$ is conservative.
\end{proof}

\if0
\begin{remark}
The translation $\mathscr{T}_{\wn}$ (and the target sequent calculus \textsf{INC}) is, to the best of our knowledge, a novel one.
In particular, it stands in sharp contrast with well-known transformations of CL by IL such as the negative translation \cite{troelstra2000basic}. 
\end{remark}
\fi

\begin{lemma}[Translation $\mathscr{T}_{\oc}$ of \textsf{INC} into $\textsf{ILC$_{\boldsymbol{\iota}}$}$]
\label{LemTranslationOfILK_muIntoILLK_mu}
There is a map $\mathscr{T}_{\oc}$ that assigns, to each formal proof $p$ of a sequent $\Delta \vdash \Gamma$ in \textsf{INC}, a formal proof $\mathscr{T}_{\oc}(p)$ of the sequent $\oc \mathscr{T}_{\oc}^\ast(\Delta) \vdash \mathscr{T}_{\oc}^\ast(\Gamma)$ in \textsf{ILC$_\iota$}, where 
\begin{mathpar}
\mathscr{T}_\oc(\top) \colonequals \top
\and
\mathscr{T}_\oc(\mathrm{ff}) \colonequals \oc \bot
\\
\mathscr{T}_\oc(A \mathbin{\&} B) \colonequals \mathscr{T}_\oc(A) \mathbin{\&} \mathscr{T}_\oc(B)
\and 
\mathscr{T}_\oc (A \vee B) \colonequals \oc \mathscr{T}_\oc (A) \oplus \oc \mathscr{T}_\oc(B)
\and
\mathscr{T}_\oc(A \Rightarrow B) \colonequals \oc \mathscr{T}_\oc(A) \rightarrowtriangle \mathscr{T}_\oc(B)
\and
\mathscr{T}_\oc (\wn A) \colonequals \wn \mathscr{T}_\oc(A).
\end{mathpar}
\end{lemma}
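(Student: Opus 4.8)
The plan is to construct $\mathscr{T}_\oc$ by first fixing its action on formulas as displayed, extending it element-wise to a map $\mathscr{T}_\oc^\ast$ on finite sequences, and then defining $\mathscr{T}_\oc$ on proofs by induction on the structure of the given proof $p$ in \textsf{INC}. The heart of the argument is to exhibit, for every axiom and rule of \textsf{INC}, a block of axioms and rules of \textsf{ILC}$_\iota$ that derives the $\mathscr{T}_\oc$-image of its conclusion from the $\mathscr{T}_\oc$-images of its premises; composing these blocks along $p$ yields $\mathscr{T}_\oc(p)$. Since the statement asks only for the existence of the translation (not its conservativity), it suffices to produce these simulating derivations, and I am free to use \textsc{Cut}, which \textsf{ILC}$_\iota$ retains.

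The structural and right-rule cases are immediate. Exchange, \textsc{WL} and \textsc{CL} become $\textsc{XL}/\textsc{XR}$, $\oc\mathrm{W}$ and $\oc\mathrm{C}$ acting on the banged antecedents; $\wn\mathrm{D}$, $\wn\mathrm{W}$, $\wn\mathrm{C}$ are copied verbatim; \textsc{Id} is obtained from \textsf{ILC}'s \textsc{Id} by one $\oc\mathrm{D}$; $\top\mathrm{R}$ is $\top\mathrm{R}$, and $\top\mathrm{L}$ is $\oc\mathrm{W}$ on $\oc\top$. Because every antecedent carries a $\oc$, the promotion rule $\oc\mathrm{R}^{\oc\wn}$ is applicable throughout, and this is what powers the right rules: $\mathrm{ffR}^\wn$ is $\bot\mathrm{R}$ followed by $\oc\mathrm{R}^{\oc\wn}$ (turning $\bot$ into $\oc\bot=\mathscr{T}_\oc(\mathrm{ff})$), $\mathbin{\&}\mathrm{R}^\wn$ is just $\mathbin{\&}\mathrm{R}$, $\vee\mathrm{R}^\wn$ is $\oc\mathrm{R}^{\oc\wn}$ followed by $\oplus\mathrm{R}$, and $\Rightarrow\mathrm{R}^\wn$ is $\neg\mathrm{R}$ followed by $\invamp\mathrm{R}$ (recall $\mathscr{T}_\oc(A\Rightarrow B)=\neg\oc\mathscr{T}_\oc(A)\invamp\mathscr{T}_\oc(B)$). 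The $\wn$-decorations on the side formulas are exactly what the side conditions of $\oc\mathrm{R}^{\oc\wn}$ demand.

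The left rules require the Girard-style manoeuvre of re-banging, since the translated principal formula carries an outer $\oc$ whereas the corresponding \textsf{ILC} rule wants its unbanged version; I interpose a helper entailment and a \textsc{Cut}. For $\mathbin{\&}\mathrm{L}$ I derive $\oc(\mathscr{T}_\oc(A_1)\mathbin{\&}\mathscr{T}_\oc(A_2))\vdash\oc\mathscr{T}_\oc(A_i)$ from \textsc{Id} by $\mathbin{\&}\mathrm{L}$, $\oc\mathrm{D}$ and $\oc\mathrm{R}^{\oc\wn}$, and cut it against the translated premise (precisely the device the paper already uses to replace $\otimes$ by $\mathbin{\&}$); $\vee\mathrm{L}$ is more direct, being $\oplus\mathrm{L}$ on the two premises followed by a single $\oc\mathrm{D}$. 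The implication-left rule $\Rightarrow\mathrm{L}^\wn$ is the most laborious: from the translated premises I first promote the antecedent proof by $\oc\mathrm{R}^{\oc\wn}$ to obtain $\oc\mathscr{T}_\oc(A)$ on the right, derive the modus-ponens helper $\oc\mathscr{T}_\oc(A),\oc(\oc\mathscr{T}_\oc(A)\rightarrowtriangle\mathscr{T}_\oc(B))\vdash\mathscr{T}_\oc(B)$ inside \textsf{ILC} (via \textsc{Id}, $\neg\mathrm{L}$, $\invamp\mathrm{L}$ and $\oc\mathrm{D}$), and then splice everything with two cuts, re-promoting $\mathscr{T}_\oc(B)$ to $\oc\mathscr{T}_\oc(B)$ by $\oc\mathrm{R}^{\oc\wn}$ so that the second cut matches the $\oc\mathscr{T}_\oc(B)$ supplied by the other premise.

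The genuinely delicate cases, and where I expect the real work to lie, are the two rules that manipulate $\wn$ on the active formula, $\wn\mathrm{L}^\wn$ and $\mathrm{Cut}^\wn$, since these are exactly the places where \textsf{ILC} alone does not suffice and the weakly distributive rules of \textsf{ILC}$_\iota$ become indispensable. The rule $\wn\mathrm{L}^\wn$ translates directly into $\oc\wn\mathrm{L}^{\oc\wn}$: its premise $\oc\mathscr{T}_\oc^\ast(\Delta),\oc\mathscr{T}_\oc(A)\vdash\wn\mathscr{T}_\oc^\ast(\Gamma)$ is turned into $\oc\mathscr{T}_\oc^\ast(\Delta),\oc\wn\mathscr{T}_\oc(A)\vdash\wn\mathscr{T}_\oc^\ast(\Gamma)$, the $\mathscr{T}_\oc$-image of the conclusion. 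For $\mathrm{Cut}^\wn$ the cut formula $B$ appears banged on one premise and why-notted on the other after translation, so a single \textsc{Cut} does not apply; I resolve this by applying $\wn\oc\mathrm{R}^{\oc\wn}$ to the first translated premise (converting $\wn\mathscr{T}_\oc(B)$ into $\wn\oc\mathscr{T}_\oc(B)$) and $\wn\mathrm{L}^{\oc\wn}$ to the second (converting $\oc\mathscr{T}_\oc(B)$ into $\wn\oc\mathscr{T}_\oc(B)$), after which a single \textsc{Cut} on the common formula $\wn\oc\mathscr{T}_\oc(B)$ yields the desired conclusion. With every rule simulated, a routine induction on $p$ assembles $\mathscr{T}_\oc(p)$, and verifying that the $\wn$-side conditions are preserved along the induction completes the construction.
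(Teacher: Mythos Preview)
Your proposal is correct and follows essentially the same rule-by-rule simulation as the paper, with one noteworthy divergence in the treatment of $\mathrm{Cut}^\wn$. You bridge the mismatch between $\wn\mathscr{T}_\oc(B)$ and $\oc\mathscr{T}_\oc(B)$ by applying $\wn\oc\mathrm{R}^{\oc\wn}$ to the first premise and $\wn\mathrm{L}^{\oc\wn}$ to the second, cutting on $\wn\oc\mathscr{T}_\oc(B)$; the paper instead applies $\oc\mathrm{R}^{\oc\wn}$ to the first premise and $\oc\wn\mathrm{L}^{\oc\wn}$ to the second, cutting on $\oc\wn\mathscr{T}_\oc(B)$. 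Both routes are valid for the present lemma (the paper itself records your variant in a remark immediately after the proof), but the paper's choice is deliberate: in \S\ref{ConservativeTranslations} the restricted calculus \textsf{ILC}$_\rho$ is defined to \emph{exclude} the rule $\wn\oc\mathrm{R}^{\oc\wn}$ entirely, so that only the $\oc\wn\mathrm{L}^{\oc\wn}$-based translation survives the restriction needed for the conservativity corollary. Your version would therefore require reworking before it could be reused in that later argument.
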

\begin{proof}
We first translate the axioms and the rules of \textsf{INC} into derived ones in \textsf{ILC$_\iota$}. 
Let us leave it to the reader to translate XL, XR, WL, $\wn$W, CL, $\wn$C, $\wn$D, Id, $\top$L, $\top$R, $\mathrm{ff}$L, $\mathrm{ff}$R$^\wn$ and $\&$R$^\wn$ in \textsf{INC} into \textsf{ILC$_\iota$} since it is straightforward. 

\if0
Cut$^\wn$ of \textsf{INC} is translated in \textsf{ILC$_\iota$} as:
\begin{equation*}
\AxiomC{$\oc \Delta \vdash \wn B, \wn \Gamma$}
\AxiomC{$\oc \Delta', \oc B \vdash \wn \Gamma'$}
\LeftLabel{\textsc{(Cut$^{\oc\wn}_\iota$)}}
\BinaryInfC{$\oc \Delta, \oc \Delta' \vdash \wn \Gamma, \wn \Gamma'$}
\DisplayProof
\end{equation*}
\fi

$\&$L in \textsf{INC} is translated into \textsf{ILC$_\iota$} by
\begin{mathpar}
\AxiomC{}
\LeftLabel{\textsc{(Id)}}
\RightLabel{\textsc{($i \in \overline{2}$)}}
\UnaryInfC{$A_i \vdash A_i$}
\LeftLabel{\textsc{($\&$L)}}
\UnaryInfC{$A_1 \mathbin{\&} A_2 \vdash A_i$}
\LeftLabel{\textsc{($\oc$D)}}
\UnaryInfC{$\oc (A_1 \mathbin{\&} A_2) \vdash A_i$}
\LeftLabel{\textsc{($\oc$R$^{\oc\wn}$)}}
\UnaryInfC{$\oc (A_1 \mathbin{\&} A_2) \vdash \oc A_i$}
\AxiomC{$\oc \Delta, \oc A_i \vdash \Gamma$}
\LeftLabel{\textsc{(Cut)}}
\BinaryInfC{$\oc (A_1 \mathbin{\&} A_2), \oc \Delta \vdash \Gamma$}
\LeftLabel{\textsc{(XL$^\ast$)}}
\doubleLine
\UnaryInfC{$\oc \Delta, \oc (A_1 \mathbin{\&} A_2) \vdash \Gamma$}
\DisplayProof
\end{mathpar}
\if0
and $\&$R by
\begin{mathpar}
\AxiomC{$\oc \Delta \vdash B_1, \Gamma$}
\AxiomC{$\oc \Delta \vdash B_2, \Gamma$}
\LeftLabel{\textsc{($\&$R)}}
\BinaryInfC{$\oc \Delta \vdash B_1 \mathbin{\&} B_2, \Gamma$}
\DisplayProof
\end{mathpar}
\fi

$\vee$L in \textsf{INC} is translated into \textsf{ILC$_\iota$} by
\begin{mathpar}
\AxiomC{$\oc \Delta, \oc A_1 \vdash \Gamma$}
\AxiomC{$\oc \Delta, \oc A_2 \vdash \Gamma$}
\LeftLabel{\textsc{($\oplus$L)}}
\BinaryInfC{$\oc \Delta, \oc A_1 \oplus \oc A_2 \vdash \Gamma$}
\LeftLabel{\textsc{($\oc$D)}}
\UnaryInfC{$\oc \Delta, \oc (\oc A_1 \oplus \oc A_2) \vdash \Gamma$}
\DisplayProof
\end{mathpar}
and $\vee$R$^\wn$ by
\begin{mathpar}
\AxiomC{$\oc \Delta \vdash B_i, \wn \Gamma$}
\RightLabel{\textsc{($i \in \overline{2}$)}}
\LeftLabel{\textsc{($\oc$R$^{\oc\wn}$)}}
\UnaryInfC{$\oc \Delta \vdash \oc B_i, \wn \Gamma$}
\LeftLabel{\textsc{($\oplus$R)}}
\UnaryInfC{$\oc \Delta \vdash \oc B_1 \oplus \oc B_2, \wn \Gamma$}
\DisplayProof
\end{mathpar}
\if0
\begin{equation*}
\AxiomC{$\oc \Delta \vdash \wn B_i, \wn \Gamma$}
\LeftLabel{\textsc{($\wn$W)}}
\UnaryInfC{$\oc \Delta \vdash \wn B_1, \wn B_2, \wn \Gamma$}
\LeftLabel{\textsc{($\oc$R)}}
\UnaryInfC{$\oc \Delta \vdash \oc \wn B_1, \wn B_2, \wn \Gamma$}
\LeftLabel{\textsc{($\wn\oc$R)}}
\UnaryInfC{$\oc \Delta \vdash \wn \oc B_1, \wn B_2, \wn \Gamma$}
\LeftLabel{\textsc{(XR)}}
\UnaryInfC{$\oc \Delta \vdash \wn B_2, \wn \oc B_1, \wn \Gamma$}
\LeftLabel{\textsc{($\oc$R)}}
\UnaryInfC{$\oc \Delta \vdash \oc \wn B_2, \wn \oc B_1, \wn \Gamma$}
\LeftLabel{\textsc{($\wn\oc$R)}}
\UnaryInfC{$\oc \Delta \vdash \wn \oc B_2, \wn \oc B_1, \wn \Gamma$}
\LeftLabel{\textsc{(XR)}}
\UnaryInfC{$\oc \Delta \vdash \wn \oc B_1, \wn \oc B_2, \wn \Gamma$}
\LeftLabel{\textsc{($\invamp$R)}}
\UnaryInfC{$\oc \Delta \vdash \wn \oc B_1 \invamp \wn \oc B_2, \wn \Gamma$}
\AxiomC{}
\LeftLabel{\textsc{(Id)}}
\UnaryInfC{$\oc B_1 \vdash \oc B_1$}
\LeftLabel{\textsc{($\oplus$R)}}
\UnaryInfC{$\oc B_1 \vdash \oc B_1 \oplus \oc B_2$}
\LeftLabel{\textsc{($\wn$D)}}
\UnaryInfC{$\oc B_1 \vdash \wn (\oc B_1 \oplus \oc B_2)$}
\LeftLabel{\textsc{($\wn$L)}}
\UnaryInfC{$\wn \oc B_1 \vdash \wn (\oc B_1 \oplus \oc B_2)$}
\AxiomC{}
\LeftLabel{\textsc{(Id)}}
\UnaryInfC{$\oc B_2 \vdash \oc B_2$}
\LeftLabel{\textsc{($\oplus$R)}}
\UnaryInfC{$\oc B_2 \vdash \oc B_1 \oplus \oc B_2$}
\LeftLabel{\textsc{($\wn$D)}}
\UnaryInfC{$\oc B_2 \vdash \wn (\oc B_1 \oplus \oc B_2)$}
\LeftLabel{\textsc{($\wn$L)}}
\UnaryInfC{$\wn \oc B_2 \vdash \wn (\oc B_1 \oplus \oc B_2)$}
\LeftLabel{\textsc{($\invamp$L)}}
\BinaryInfC{$\wn \oc B_1 \invamp \wn \oc B_2 \vdash \wn (\oc B_1 \oplus \oc B_2), \wn (\oc B_1 \oplus \oc B_2)$}
\LeftLabel{\textsc{($\wn$C)}}
\UnaryInfC{$\wn \oc B_1 \invamp \wn \oc B_2 \vdash \wn (\oc B_1 \oplus \oc B_2)$}
\LeftLabel{\textsc{(Cut)}}
\BinaryInfC{$\oc \Delta \vdash \wn (\oc B_1 \oplus \oc B_2), \wn \Gamma$}
\DisplayProof
\end{equation*}
\fi

Next, $\Rightarrow$L$^\wn$ in \textsf{INC} is translated into \textsf{ILC$_\iota$} by
\begin{mathpar}
\AxiomC{}
\LeftLabel{\textsc{(Id)}}
\UnaryInfC{$\oc A \vdash \oc A$}
\LeftLabel{\textsc{($\neg$L)}}
\UnaryInfC{$\oc A, \neg \oc A \vdash$}
\AxiomC{}
\LeftLabel{\textsc{(Id)}}
\UnaryInfC{$B \vdash B$}
\LeftLabel{\textsc{($\invamp$L)}}
\BinaryInfC{$\oc A, \oc A \rightarrowtriangle B \vdash B$}
\LeftLabel{\textsc{($\oc$D)}}
\UnaryInfC{$\oc A, \oc (\oc A \rightarrowtriangle B) \vdash B$}
\LeftLabel{\textsc{($\oc$R$^{\oc\wn}$)}}
\UnaryInfC{$\oc A, \oc (\oc A \rightarrowtriangle B) \vdash \oc B$}
\LeftLabel{\textsc{(XL)}}
\UnaryInfC{$\oc (\oc A \rightarrowtriangle B), \oc A \vdash \oc B$}
\LeftLabel{\textsc{($\neg$R)}}
\UnaryInfC{$\oc (\oc A \rightarrowtriangle B) \vdash \neg \oc A, \oc B$}
\LeftLabel{\textsc{($\invamp$R)}}
\UnaryInfC{$\oc (\oc A \rightarrowtriangle B) \vdash \oc A \rightarrowtriangle \oc B$}
\LeftLabel{\textsc{($\oc$R$^{\oc\wn}$)}}
\UnaryInfC{$\oc (\oc A \rightarrowtriangle B) \vdash \oc (\oc A \rightarrowtriangle \oc B)$}
\AxiomC{$\oc \Theta \vdash A, \wn \Xi$}
\LeftLabel{\textsc{($\oc$R$^{\oc\wn}$)}}
\UnaryInfC{$\oc \Theta \vdash \oc A, \wn \Xi$}
\LeftLabel{\textsc{($\neg$L)}}
\UnaryInfC{$\oc \Theta, \neg \oc A \vdash \wn \Xi$}
\LeftLabel{\textsc{($\invamp$L)}}
\AxiomC{$\oc \Delta, \oc B \vdash \Gamma$}
\BinaryInfC{$\oc \Theta, \oc \Delta, \oc A \rightarrowtriangle \oc B \vdash \wn \Xi, \Gamma$}
\LeftLabel{\textsc{($\oc$D)}}
\UnaryInfC{$\oc \Theta, \oc \Delta, \oc (\oc A \rightarrowtriangle \oc B) \vdash \wn \Xi, \Gamma$}
\LeftLabel{\textsc{(XL$^\ast$)}}
\doubleLine
\UnaryInfC{$\oc \Delta, \oc \Theta, \oc (\oc A \rightarrowtriangle \oc B) \vdash \wn \Xi, \Gamma$}
\LeftLabel{\textsc{(XR$^\ast$)}}
\doubleLine
\UnaryInfC{$\oc \Delta, \oc \Theta, \oc (\oc A \rightarrowtriangle \oc B) \vdash \Gamma, \wn \Xi$}
\LeftLabel{\textsc{(Cut)}}
\BinaryInfC{$\oc (\oc A \rightarrowtriangle B), \oc \Delta, \oc \Theta \vdash \Gamma, \wn \Xi$}
\LeftLabel{\textsc{(XL$^\ast$)}}
\doubleLine
\UnaryInfC{$\oc \Delta, \oc \Theta, \oc (\oc A \rightarrowtriangle B) \vdash \Gamma, \wn \Xi$}
\DisplayProof
\end{mathpar}
and $\Rightarrow$R$^\wn$ by
\begin{mathpar}
\AxiomC{$\oc \Delta, \oc A \vdash B, \wn \Gamma$}
\LeftLabel{\textsc{($\neg$R)}}
\UnaryInfC{$\oc \Delta \vdash \neg \oc A, B, \wn \Gamma$}
\LeftLabel{\textsc{($\invamp$R)}}
\UnaryInfC{$\oc \Delta \vdash \oc A \rightarrowtriangle B, \wn \Gamma$}
\DisplayProof
\end{mathpar}

Now, $\wn$L$^\wn$ in \textsf{INC} is translated into \textsf{ILC$_\iota$} simply by
\begin{mathpar}
\AxiomC{$\oc \Delta, \oc A \vdash \wn \Gamma$}
\LeftLabel{\textsc{($\oc\wn$L$^{\oc\wn}$)}}
\UnaryInfC{$\oc \Delta, \oc \wn A \vdash \wn \Gamma$}
\DisplayProof
\end{mathpar}
and Cut$^\wn$ in \textsf{INC} by
\begin{mathpar}
\AxiomC{$\oc \Delta, \vdash \wn B, \wn \Gamma$}
\LeftLabel{\textsc{($\oc$R$^{\oc\wn}$)}}
\UnaryInfC{$\oc \Delta, \vdash \oc \wn B, \wn \Gamma$}
\AxiomC{$\oc \Delta', \oc B \vdash \wn \Gamma'$}
\LeftLabel{\textsc{($\oc\wn$L$^{\oc\wn}$)}}
\UnaryInfC{$\oc \Delta', \oc \wn B \vdash \wn \Gamma'$}
\LeftLabel{\textsc{(Cut)}}
\BinaryInfC{$\oc \Delta, \oc \Delta' \vdash \wn \Gamma, \wn \Gamma'$}
\DisplayProof
\end{mathpar}

As in the case of the proof of Theorem~\ref{LemTranslationOfLKIntoILK_mu}, given a formal proof $p$ in \textsf{INC}, we obtain the required formal proof $\mathscr{T}_{\oc}(p)$ in \textsf{ILC$_\iota$} out of $p$ by applying the translations given above to the axioms and the rules occurring in $p$.
Again, it is easy to see by induction on $p$ that $\mathscr{T}_{\oc}(p)$ is well-defined. 
\if0
Finally, note that the last sentence of the theorem is a consequence of the following stronger statement: If $\oc \mathscr{T}_\oc^\ast(\Delta), [\mathscr{T}_\oc(A)] \vdash [\mathscr{T}_\oc(B)], \wn \mathscr{T}_\oc^\ast(\Gamma)$ is provable in \textsf{ILC$_\iota$}, and $\Delta, [A], [B], \wn \Gamma$ has only formulas of IL$^{\text{e}}$, then $\Delta, [A] \vdash [B], \wn \Gamma$ is provable in \textsf{INC}.
This stronger statement is verified by a method similar to the one employed in the last part of the proof of Theorem~\ref{LemTranslationOfLKIntoILK_mu}, completing the proof.
\fi
\end{proof}

Since this translation $\mathscr{T}_\oc$ of \textsf{INC} into \textsf{ILC$_\iota$} utilises the rule $\oc\wn$L$^{\oc\wn}$, it is not possible into \textsf{ILC}.
This is the main point of the extension of \textsf{ILC} to \textsf{ILC$_\iota$}.

However, unlike the translation $\mathscr{T}_\wn$ given in Lemma~\ref{LemTranslationOfLKIntoILK_mu}, we cannot show that this translation $\mathscr{T}_\oc$ is conservative as \textsf{ILC$_\iota$} does not enjoy cut-elimination. 
We fix this problem by replacing \textsf{ILC$_\iota$} with its substructural calculus in \S\ref{ConservativeTranslations}. 

\begin{remark}
Dually to the translation of the rule Cut$^\wn$ as given in the proof of Theorem~\ref{LemTranslationOfILK_muIntoILLK_mu}, we could instead translate Cut$^\wn$ in \textsf{ILC$_\iota$} by
\begin{mathpar}
\AxiomC{$\oc \Delta, \vdash \wn B, \wn \Gamma$}
\LeftLabel{\textsc{($\wn\oc$R$^{\oc\wn}$)}}
\UnaryInfC{$\oc \Delta, \vdash \wn \oc B, \wn \Gamma$}
\AxiomC{$\oc \Delta', \oc B \vdash \wn \Gamma'$}
\LeftLabel{\textsc{($\wn$L$^{\oc\wn}$)}}
\UnaryInfC{$\oc \Delta', \wn \oc B \vdash \wn \Gamma'$}
\LeftLabel{\textsc{(Cut)}}
\BinaryInfC{$\oc \Delta, \oc \Delta' \vdash \wn \Gamma, \wn \Gamma'$}
\DisplayProof
\end{mathpar}
\end{remark}

Finally, by composing the translations $\mathscr{T}_\wn : \textsf{LK} \rightarrow \textsf{INC}$ and $\mathscr{T}_\oc : \textsf{INC} \rightarrow \textsf{ILC$_\iota$}$, we get a translation $\mathscr{T}_{\oc\wn} \colonequals \mathscr{T}_\oc \circ \mathscr{T}_\wn : \textsf{LK} \rightarrow \textsf{ILC$_\iota$}$:
\begin{corollary}[Translation $\mathscr{T}_{\oc\wn}$ of \textsf{LK} into $\textsf{ILC$_{\boldsymbol{\iota}}$}$]
\label{CoroTranslationOfLKintoILLK_mu}
The composition $\mathscr{T}_{\oc\wn} \colonequals \mathscr{T}_\oc \circ \mathscr{T}_\wn$ assigns, to each formal proof $p$ of a sequent $\Delta \vdash \Gamma$ in \textsf{LK}, a formal proof $\mathscr{T}_{\oc\wn}(p)$ of the sequent $\oc \mathscr{T}_{\oc\wn}^\ast(\Delta) \vdash \wn \mathscr{T}_{\oc\wn}^\ast(\Gamma)$ in \textsf{ILC$_\iota$}, where 
\begin{mathpar}
\mathscr{T}_{\oc\wn}(\mathrm{tt}) \colonequals \wn \top
\and
\mathscr{T}_{\oc\wn}(\mathrm{ff}) \colonequals \oc \bot
\and
\mathscr{T}_{\oc\wn}(A \wedge B) \colonequals \wn \mathscr{T}_{\oc\wn}(A) \mathbin{\&} \wn \mathscr{T}_{\oc\wn}(B)
\and
\mathscr{T}_{\oc\wn} (A \vee B) \colonequals \oc \mathscr{T}_{\oc\wn} (A) \oplus \oc \mathscr{T}_{\oc\wn} (B)
\and
\mathscr{T}_{\oc\wn}(A \Rrightarrow B) \colonequals \oc \mathscr{T}_{\oc\wn}(A) \rightarrowtriangle \wn \mathscr{T}_{\oc\wn}(B).
\end{mathpar}
\end{corollary}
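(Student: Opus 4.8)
The plan is to read the corollary off the two translations already in hand, $\mathscr{T}_\wn$ from Lemma~\ref{LemTranslationOfLKIntoILK_mu} and $\mathscr{T}_\oc$ from Lemma~\ref{LemTranslationOfILK_muIntoILLK_mu}, by straightforward function composition on both proofs and formulas. First I would take a formal proof $p$ of $\Delta \vdash \Gamma$ in \textsf{LK}; Lemma~\ref{LemTranslationOfLKIntoILK_mu} yields a formal proof $\mathscr{T}_\wn(p)$ of $\mathscr{T}_\wn^\ast(\Delta) \vdash \wn \mathscr{T}_\wn^\ast(\Gamma)$ in \textsf{INC}. Before feeding this into $\mathscr{T}_\oc$, I would check that every formula occurring in this sequent is a genuine formula of IL$^{\text{e}}$ (Definition~\ref{DefFormulasOfILE}), which is immediate from the five clauses defining $\mathscr{T}_\wn$, since each produces only $\top$, $\mathrm{ff}$, $\&$, $\vee$, $\Rightarrow$ and $\wn$, so that Lemma~\ref{LemTranslationOfILK_muIntoILLK_mu} applies. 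Applying it gives a formal proof $\mathscr{T}_\oc(\mathscr{T}_\wn(p))$ of $\oc \mathscr{T}_\oc^\ast(\mathscr{T}_\wn^\ast(\Delta)) \vdash \mathscr{T}_\oc^\ast(\wn \mathscr{T}_\wn^\ast(\Gamma))$ in \textsf{ILC$_\iota$}, and I would set $\mathscr{T}_{\oc\wn}(p) \colonequals \mathscr{T}_\oc(\mathscr{T}_\wn(p))$.

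The one structural point that needs watching is that the composite sequent retains the canonical shape $\oc(\dots) \vdash \wn(\dots)$ demanded by the unlinearisation--classicalisation picture. This hinges entirely on the clause $\mathscr{T}_\oc(\wn A) = \wn \mathscr{T}_\oc(A)$, i.e. that $\mathscr{T}_\oc$ commutes with why-not; using it, $\mathscr{T}_\oc^\ast(\wn \mathscr{T}_\wn^\ast(\Gamma)) = \wn \mathscr{T}_\oc^\ast(\mathscr{T}_\wn^\ast(\Gamma))$. Since $(\_)^\ast$ is just the pointwise extension of a formula map to sequences, we have $\mathscr{T}_\oc^\ast \circ \mathscr{T}_\wn^\ast = (\mathscr{T}_\oc \circ \mathscr{T}_\wn)^\ast = \mathscr{T}_{\oc\wn}^\ast$, so the target sequent is literally $\oc \mathscr{T}_{\oc\wn}^\ast(\Delta) \vdash \wn \mathscr{T}_{\oc\wn}^\ast(\Gamma)$, as asserted.

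Finally I would confirm the five formula clauses by direct substitution of the $\mathscr{T}_\wn$-clauses into the $\mathscr{T}_\oc$-clauses, which is purely mechanical. For example, $\mathscr{T}_{\oc\wn}(A \Rrightarrow B) = \mathscr{T}_\oc(\mathscr{T}_\wn(A) \Rightarrow \wn \mathscr{T}_\wn(B)) = \oc \mathscr{T}_\oc(\mathscr{T}_\wn(A)) \rightarrowtriangle \mathscr{T}_\oc(\wn \mathscr{T}_\wn(B)) = \oc \mathscr{T}_{\oc\wn}(A) \rightarrowtriangle \wn \mathscr{T}_{\oc\wn}(B)$, invoking the $\Rightarrow$-clause of $\mathscr{T}_\oc$ and once more the why-not commutation; the cases of $\mathrm{tt}$ (giving $\wn \top$), $\mathrm{ff}$ (giving $\oc \bot$), $A \wedge B$ (giving $\wn \mathscr{T}_{\oc\wn}(A) \mathbin{\&} \wn \mathscr{T}_{\oc\wn}(B)$) and $A \vee B$ (giving $\oc \mathscr{T}_{\oc\wn}(A) \oplus \oc \mathscr{T}_{\oc\wn}(B)$) are identical in spirit. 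I do not expect a genuine obstacle here: all the real work, namely constructing the derived rules and, for $\mathscr{T}_\wn$, establishing conservativity, was already discharged in the two lemmas, so the corollary is a bookkeeping composition, and the only place demanding attention is the commutation with $\wn$ that keeps the output in the form $\oc(\dots) \vdash \wn(\dots)$.
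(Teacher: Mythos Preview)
Your proposal is correct and follows exactly the paper's approach: the paper's proof is the one-liner ``By Lemmata~\ref{LemTranslationOfLKIntoILK_mu} and \ref{LemTranslationOfILK_muIntoILLK_mu},'' and you have simply spelled out the composition in detail, including the check that $\mathscr{T}_\oc$ commutes with $\wn$ so the target sequent has the required shape.
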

\begin{proof}
By Lemmata~\ref{LemTranslationOfLKIntoILK_mu} and \ref{LemTranslationOfILK_muIntoILLK_mu}.
\end{proof}


\subsection{Classical linear logic negative}
\label{ClassicalLinearLogicNegative}
Let us proceed to decompose again the translation $\mathscr{T}_{\oc\wn}$ into the more primitive ones $\mathscr{T}_{\oc}$ and $\mathscr{T}_{\wn}$ yet in the reverse order this time.
Accordingly, an intermediate logic between ILL$^{\text{e}}_\iota$ and CL for this new decomposition is the dual of IL$^{\text{e}}$:
\begin{definition}[Formulas of CLL$^{\boldsymbol{-}}$]
\label{DefFormulasOfCLL$^-$}
Formulas $A, B$ of \emph{classical linear logic negative (CLL$^-$)} are defined by
\begin{mathpar}
A, B \colonequals X \mid \mathrm{tt} \mid \bot \mid A \wedge B \mid A \oplus B \mid A \looparrowright B \mid \oc A
\end{mathpar}
where $X$ ranges over propositional variables, and $A_\star \colonequals A \looparrowright \bot$.

We call $\looparrowright$ \emph{(classical linear) implication}, and $(\_)_\star$ \emph{(classical linear) negation}.
\end{definition}

\if0
\begin{remark}
We shall see shortly that linear disjunction and plus coincide as the notation $\oplus$ indicates. 
\end{remark}
\fi

\begin{definition}[\textsf{CLC} for CLL$^{\boldsymbol{-}}$]
\label{DefCLLK_mu}
The sequent calculus \emph{\textsf{CLC}} for CLL$^-$ consists of the axioms and the rules displayed in Figure~\ref{FigCLLK_mu}. 
\begin{figure}
\begin{mathpar}
\if0
\AxiomC{$[\Delta^{\boldsymbol{\delta}}, \wn \oc A^\alpha]_d \vdash \Gamma$}
\LeftLabel{\textsc{($\oc\wn$L)}}
\UnaryInfC{$[\Delta^{\boldsymbol{\delta}}, \oc \wn A^\alpha]_d \vdash \Gamma$}
\DisplayProof \and
\AxiomC{$[\Delta^{\boldsymbol{\delta}}]_d \vdash \oc \wn B, \Gamma$}
\LeftLabel{\textsc{($\oc\wn$R)}}
\UnaryInfC{$[\Delta^{\boldsymbol{\delta}}]_d \vdash \wn \oc B, \Gamma$}
\DisplayProof \\
\fi
\AxiomC{$\Delta, A, A', \Delta' \vdash \Gamma$}
\LeftLabel{\textsc{(XL)}}
\UnaryInfC{$\Delta, A', A, \Delta' \vdash \Gamma$}
\DisplayProof \and
\AxiomC{$\Delta \vdash \Gamma, B, B', \Gamma'$}
\LeftLabel{\textsc{(XR)}}
\UnaryInfC{$\Delta \vdash \Gamma, B', B, \Gamma'$}
\DisplayProof \and
\AxiomC{$\Delta \vdash \Gamma$}
\LeftLabel{\textsc{($\oc$W)}}
\UnaryInfC{$\Delta, \oc A \vdash \Gamma$}
\DisplayProof \and
\AxiomC{$\Delta\vdash \Gamma$}
\LeftLabel{\textsc{(WR)}}
\UnaryInfC{$\Delta \vdash B, \Gamma$}
\DisplayProof \and
\AxiomC{$\Delta, \oc A, \oc A \vdash \Gamma$}
\LeftLabel{\textsc{($\oc$C)}}
\UnaryInfC{$\Delta, \oc A \vdash \Gamma$}
\DisplayProof \and
\AxiomC{$\Delta \vdash B, B, \Gamma$}
\LeftLabel{\textsc{(CR)}}
\UnaryInfC{$\Delta \vdash B, \Gamma$}
\DisplayProof \and
\AxiomC{$\oc \Delta, A \vdash \Gamma$}
\LeftLabel{\textsc{($\oc$D)}}
\UnaryInfC{$\oc \Delta, \oc A \vdash \Gamma$}
\DisplayProof
\and
\AxiomC{$\oc \Delta \vdash B, \Gamma$}
\LeftLabel{\textsc{($\oc$R$^\oc$)}}
\UnaryInfC{$\oc \Delta \vdash \oc B, \Gamma$}
\DisplayProof \\
\AxiomC{}
\LeftLabel{\textsc{(Id)}}
\UnaryInfC{$A \vdash A$}
\DisplayProof \and
\AxiomC{$\oc \Delta \vdash B, \Gamma$}
	\AxiomC{$\oc \Delta', \oc B \vdash \Gamma'$}
	\LeftLabel{\textsc{(Cut$^\oc$)}}
\BinaryInfC{$\oc \Delta, \oc \Delta' \vdash \Gamma, \Gamma'$}
\DisplayProof 
\\
\if0
\AxiomC{}
\LeftLabel{\textsc{($1$R$^\oc$)}}
\UnaryInfC{$\oc \Delta \vdash 1, \Gamma$}
\DisplayProof 
\and
\AxiomC{}
\LeftLabel{\textsc{($0$L$^\oc$)}}
\UnaryInfC{$\oc \Delta, 0 \vdash \Gamma$}
\DisplayProof  
\\
\fi
\AxiomC{$\oc \Delta \vdash \Gamma$}
\LeftLabel{\textsc{($\mathrm{tt}$L$^\oc$)}}
\UnaryInfC{$\oc \Delta, \mathrm{tt} \vdash \Gamma$}
\DisplayProof \and
\AxiomC{}
\LeftLabel{\textsc{($\mathrm{tt}$R)}}
\UnaryInfC{$\vdash \mathrm{tt}$}
\DisplayProof \and
\AxiomC{}
\LeftLabel{\textsc{($\bot$L)}}
\UnaryInfC{$\bot \vdash$}
\DisplayProof \and
\AxiomC{$\Delta \vdash \Gamma$}
\LeftLabel{\textsc{($\bot$R)}}
\UnaryInfC{$\Delta \vdash \bot, \Gamma$}
\DisplayProof \\
\if0
\AxiomC{$\Delta, A_1, A_2 \vdash \Gamma$}
\LeftLabel{\textsc{($\otimes$L)}}
\UnaryInfC{$\Delta, A_1 \otimes A_2 \vdash \Gamma$}
\DisplayProof \and
\AxiomC{$\Delta_1 \vdash B_1, \Gamma_1$}
		\AxiomC{$\Delta_2 \vdash B_2, \Gamma_2$}
	\LeftLabel{\textsc{($\otimes$R)}}
\BinaryInfC{$\Delta_1, \Delta_2 \vdash B_1 \otimes B_2, \Gamma_1, \Gamma_2$}
\DisplayProof \\ 
\fi
\AxiomC{$\oc \Delta, A_i \vdash \Gamma$}
	\LeftLabel{\textsc{($\wedge$L$^\oc$)}}
	\RightLabel{($i \in \overline{2}$)}
\UnaryInfC{$\oc \Delta, A_1 \wedge A_2 \vdash \Gamma$}
\DisplayProof \and
\AxiomC{$\Delta \vdash B_1, \Gamma$}
		\AxiomC{$\Delta \vdash B_2, \Gamma$}
	\LeftLabel{\textsc{($\wedge$R)}}
\BinaryInfC{$\Delta \vdash B_1 \wedge B_2, \Gamma$}
\DisplayProof \\
\if0
\AxiomC{$\Delta_1, A_1 \vdash \Gamma_1$}
		\AxiomC{$\Delta_2, A_2 \vdash \Gamma_2$}
	\LeftLabel{\textsc{($\invamp$L)}}
\BinaryInfC{$\Delta_1, \Delta_2, A_1 \invamp A_2 \vdash \Gamma_1, \Gamma_2$}
\DisplayProof \and
\AxiomC{$\Delta \vdash B_1, B_2, \Gamma$}
\LeftLabel{\textsc{($\invamp$R)}}
\UnaryInfC{$\Delta \vdash B_1 \invamp B_2, \Gamma$}
\DisplayProof \\
\fi
\AxiomC{$\oc \Delta, A_1 \vdash \Gamma$}
		\AxiomC{$\oc \Delta, A_2 \vdash \Gamma$}
	\LeftLabel{\textsc{($\oplus$L$^\oc$)}}
\BinaryInfC{$\oc \Delta, A_1 \oplus A_2 \vdash \Gamma$}
\DisplayProof \and
\AxiomC{$\Delta \vdash B_i, \Gamma$}
	\LeftLabel{\textsc{($\oplus$R)}}
	\RightLabel{($i \in \overline{2}$)}
\UnaryInfC{$\Delta \vdash B_1 \oplus B_2, \Gamma$}
\DisplayProof \\
\AxiomC{$\oc \Delta, B \vdash \Gamma$}
\AxiomC{$\Theta \vdash A, \Xi$}
\LeftLabel{\textsc{($\looparrowright$L$^\oc$)}}
\BinaryInfC{$\oc \Delta, \Theta, \oc (A \looparrowright B) \vdash \Gamma, \Xi$}
\DisplayProof \and
\AxiomC{$\oc \Delta, A \vdash B, \Gamma$}
\LeftLabel{\textsc{($\looparrowright$R$^\oc$)}}
\UnaryInfC{$\oc \Delta \vdash A \looparrowright B, \Gamma$}
\DisplayProof
\end{mathpar}
\caption{Sequent calculus \textsf{CLC} for CLL$^-$}
\label{FigCLLK_mu}
\end{figure}
\end{definition}

\begin{lemma}[Cut-elimination for \textsf{CLC}]
\label{PropCutEliminationForCLC}
Given a formal proof of a sequent in \textsf{CLC}, there is a formal proof of this sequent in \textsf{CLC} without Cut$^\oc$.
\end{lemma}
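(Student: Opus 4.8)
The plan is to prove this by the same Gentzen--Bierman cut-reduction used for \textsf{ILC} (Theorem~\ref{ThmCutEliminationForILC}) and \textsf{INC} (Lemma~\ref{LemCutEliminationForINC_mu}), rather than by transporting cut-elimination across the informal duality with \textsf{INC}: that duality is only structural (the exchange, weakening, contraction, unit and additive rules mirror each other with $\oc$ and $\wn$ interchanged, and Cut$^\oc$ mirrors Cut$^\wn$), but $\looparrowright$L$^\oc$, $\looparrowright$R$^\oc$ keep the same shape as $\Rightarrow$L$^\wn$, $\Rightarrow$R$^\wn$ instead of being flipped, and $\oc$D is confined to an $\oc\Delta$-context, so there is no proof-level bijection to exploit. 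Instead I would note that \textsf{CLC} is exactly \textsf{ILC}'s exponential discipline with the right-hand why-nots made \emph{implicit}: its WR, CR and $\oc$R$^\oc$ play the roles of $\wn$W, $\wn$C and $\oc$R$^{\oc\wn}$ in \textsf{ILC}, a right-hand formula being morally a $\wn$-formula. Hence the \textsf{ILC} procedure of \cite{bierman1994intuitionistic} applies almost verbatim. As the one technical device I would work in \textsf{CLC} augmented with a general two-sided cut, inside which Cut$^\oc$ is the composite of $\oc$R$^\oc$ followed by a general cut on $\oc B$; eliminating every general cut therefore eliminates Cut$^\oc$, and a cut-free derivation of the augmented system is literally a Cut$^\oc$-free derivation of \textsf{CLC}.

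I would then prove the usual cut-reduction lemma --- a derivation ending in a single general cut with cut-free premises can be rewritten cut-free --- by double induction on the pair (degree of the cut formula, sum of the heights of the two premises), and strip cuts from the topmost one downwards. The case analysis is on the last rules of the two premises: an \textsc{(Id)} premise makes the cut vacuous; if the cut formula is a side formula of the last rule of either premise I commute the cut upward, strictly decreasing the inner measure; and if it is principal in both premises I apply the logical reduction for its head connective ($\mathrm{tt}, \bot, \wedge, \oplus, \looparrowright$, or $\oc$), replacing the cut by cuts on immediate subformulas of strictly smaller degree. The only non-routine principal reduction among the propositional connectives is $\looparrowright$, where a $\looparrowright$R$^\oc$ meets a $\looparrowright$L$^\oc$ and splits into two smaller cuts exactly as for $\Rightarrow$ in \textsf{INC}; the exponential principal reduction pits an $\oc$R$^\oc$ on the left against an $\oc$D on the right and lowers the cut formula from $\oc B$ to $B$.

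The hard part will be, as always for linear logic, the exponential commutations. The contraction rule $\oc$C duplicates the left derivation and creates cuts on the same formula, keeping the degree fixed; I would control this by the same device as in the \textsf{ILC} proof of \cite{bierman1994intuitionistic} (cutting simultaneously against all contracted copies), together with the $\oc$W case, which simply discards the left premise. The genuinely delicate case is commuting a cut whose cut formula is a \emph{side} formula of a promotion $\oc$R$^\oc$: because \textsf{CLC} is multiple-conclusion, re-applying $\oc$R$^\oc$ after the commutation demands that the merged left context still be an $\oc$-context, and this is where the implicit-$\wn$ reading of the right-hand side is essential --- it makes the free right context of $\oc$R$^\oc$ behave like the $\wn$-restricted context of \textsf{ILC}'s $\oc$R$^{\oc\wn}$, so the commutation is legitimated by precisely the argument already carried out for \textsf{ILC}. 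Verifying that the $\oc\Delta$-side conditions of $\oc$D, $\oc$R$^\oc$, $\looparrowright$L$^\oc$ and Cut$^\oc$ survive every commutation is the remaining bookkeeping, which I would relegate to \S\ref{CutElimination} in parallel with the treatment of \textsf{ILC}.
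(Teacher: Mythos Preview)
Your proposal is correct and takes essentially the same approach as the paper: the paper's proof is the one-line remark that the argument is similar to the cut-elimination for \textsf{ILC} (Theorem~\ref{ThmCutEliminationForILC}), and you flesh out precisely that Bierman-style strategy. The auxiliary general cut you introduce is a sensible device for handling the asymmetric shape of Cut$^\oc$ (whose principal reduction against $\oc$D yields a cut on $B$ rather than on $\oc B$), and the paper leaves all such implementation details implicit.
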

\begin{proof}
Again, similar to the case of Theorem~\ref{ThmCutEliminationForILC}.
\end{proof}

\if0
Similarly to \textsf{INC}, there is a non-canonical choice in designing cut-elimination for \textsf{CLC}.
Hence, we call \textsf{CLC} \emph{undirected} and place the subscript $(\_)_\iota$ on it too.
\fi

By the evident analogue (or dual) of the explanation for the design of IL$^{\text{e}}$ and \textsf{INC} given at the beginning of \S\ref{ConservativeExtensionOfIntuitionisticLogic}, we can similarly explain the design of CLC$^-$ and \textsf{CLC}. 
Let us leave the details to the reader.

There is a formal proof of LEM with respect to plus $\oplus$ in \textsf{CLC}, which is analogous to the formal proof of LEM with respect to non-linear disjunction $\vee$ in \textsf{LK} given in \S\ref{SequentCalculiForClassicalAndIntuitionisticLogics}. 
Hence, CLL$^-$ is \emph{classical} in this conventional sense.

Also, CLL$^-$ is \emph{linear} in the sense that the structural rules on the left-hand side of sequents in \textsf{CLC} coincide with those on linear logics (\S\ref{LinearLogics}). 
However, this point does not hold on the right-hand side of sequents in \textsf{CLC}, which departs from the linearity in the sense of CLL (Remark~\ref{RemarkOnLinearity}).
Our proposal on linearity and classicality is motivated by game semantics (\S\ref{MainResults}); see Remark~\ref{RemarkOnBacktracks}.

On the other hand, CLL$^-$ dispenses with linear negation $(\_)^\bot$ unlike CLL, by which the game semantics mentioned in \S\ref{OurContributionsAndRelatedWork} interprets \textsf{CLC} by \emph{negative} games only. 
Accordingly, we call CLL$^-$ \emph{classical}, \emph{linear} and \emph{negative}.

Let us then decompose the translation $\mathscr{T}_{\oc\wn}$ by the following two lemmata:
\begin{lemma}[Translation $\mathscr{T}_{\oc}$ of \textsf{LK} into $\textsf{CLC}$]
\label{LemTranslationOfLKintoCLLK}
There is a map $\mathscr{T}_{\oc}$ that assigns, to each formal proof $p$ of a sequent $\Delta \vdash \Gamma$ in \textsf{LK}, a formal proof $\mathscr{T}_{\oc}(p)$ of the sequent $\oc \mathscr{T}_{\oc}^\ast(\Delta) \vdash \mathscr{T}_{\oc}^\ast(\Gamma)$ in \textsf{CLC}, where 
\begin{mathpar}
\mathscr{T}_\oc(\mathrm{tt}) \colonequals \mathrm{tt}
\and
\mathscr{T}_\oc(\mathrm{ff}) \colonequals \oc \bot
\and
\mathscr{T}_\oc(A \wedge B) \colonequals \mathscr{T}_\oc(A) \wedge \mathscr{T}_\oc(B)
\and
\mathscr{T}_\oc(A \vee B) \colonequals \oc \mathscr{T}_\oc(A) \oplus \oc \mathscr{T}_\oc(B)
\and
\mathscr{T}_\oc(A \Rrightarrow B) \colonequals \oc \mathscr{T}_\oc(A) \looparrowright \mathscr{T}_\oc(B).
\end{mathpar}

Moreover, it is conservative: If $\Delta, \Gamma$ has only formulas of CL, and $\oc \mathscr{T}_\oc^\ast(\Delta) \vdash \mathscr{T}_\oc^\ast(\Gamma)$ is provable in \textsf{CLC}, then $\Delta \vdash \Gamma$ is provable in \textsf{LK}.
\end{lemma}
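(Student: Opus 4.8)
The plan is to establish the two halves — the existence of the proof translation $\mathscr{T}_{\oc}$ and its conservativity — by the method already used for the dual translation $\mathscr{T}_{\wn}$ in Lemma~\ref{LemTranslationOfLKIntoILK_mu}, transported to the classical-linear side (\textsf{CLC} in place of \textsf{INC}, and of-course $\oc$ on the left in place of why-not $\wn$ on the right). For the first half I would translate each axiom and rule of \textsf{LK} into a derived rule of \textsf{CLC} on the $\oc$-loaded sequents $\oc\mathscr{T}_{\oc}^\ast(\Delta) \vdash \mathscr{T}_{\oc}^\ast(\Gamma)$, and then define $\mathscr{T}_{\oc}(p)$ by replacing every inference of $p$ with its translation, verifying well-definedness by induction on $p$.

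Concretely, the left structural rules XL, WL, CL become XL, $\oc$W, $\oc$C (each left formula now carries an $\oc$), the right structural rules XR, WR, CR become their \textsf{CLC} namesakes (the right-hand side is unconstrained, \textsf{CLC} being classical), Id becomes Id followed by $\oc$D, and Cut becomes Cut$^{\oc}$, which is shaped precisely so that the cut formula occurs bare on the right of the left premise and as $\oc B$ on the left of the right premise, matching the two translated premises. The units are routine, using $\oc$R$^{\oc}$ after $\bot$R for $\mathrm{ff}$R and iterated $\oc$D after $\bot$L for $\mathrm{ff}$L. Among the binary connectives, $\wedge$R, $\oplus$R and $\looparrowright$R$^{\oc}$ translate almost directly (one $\oc$R$^{\oc}$ before $\oplus$R produces the needed $\oc\mathscr{T}_{\oc}(B_i)$, and the translated antecedent $\oc\mathscr{T}_{\oc}(A)$ is read as the bare antecedent of $\looparrowright$R$^{\oc}$), and $\vee$L matches $\oplus$L$^{\oc}$ with its side formulas instantiated to $\oc\mathscr{T}_{\oc}(A_i)$ followed by one $\oc$D. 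The only cases needing a detour are $\wedge$L and $\Rrightarrow$L, where the \textsf{CLC} connective rule acts on a bare formula while the translated premise carries an $\oc$: for $\wedge$L I would derive $\oc(\mathscr{T}_{\oc}(A_1)\wedge\mathscr{T}_{\oc}(A_2)) \vdash \mathscr{T}_{\oc}(A_i)$ from Id by $\wedge$L$^{\oc}$ and $\oc$D and cut it against the premise by Cut$^{\oc}$; for $\Rrightarrow$L I would first build $\oc\Delta, \oc(\oc\mathscr{T}_{\oc}(A)\looparrowright\mathscr{T}_{\oc}(B)) \vdash \mathscr{T}_{\oc}(B), \Gamma$ by $\looparrowright$L$^{\oc}$ (left premise an axiom, right premise the first \textsf{LK} premise after $\oc$R$^{\oc}$) and cut it against the second premise, closing with $\oc$C and CR to merge the duplicated contexts.

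For conservativity I would argue dually to the conservativity half of Lemma~\ref{LemTranslationOfLKIntoILK_mu}, now exploiting that \textsf{CLC} enjoys cut-elimination (Lemma~\ref{PropCutEliminationForCLC}) — exactly the ingredient that was unavailable for $\mathscr{T}_{\oc} : \textsf{INC} \rightarrow \textsf{ILC$_\iota$}$ and that makes the present translation conservative. Given a \textsf{CLC}-proof of $\oc\mathscr{T}_{\oc}^\ast(\Delta) \vdash \mathscr{T}_{\oc}^\ast(\Gamma)$ with $\Delta,\Gamma$ over CL, I take a cut-free proof $q'$ and normalize it to a $q''$ in which each dereliction $\oc$D and promotion $\oc$R$^{\oc}$ sits in a canonical position, grouped with the inference it serves and with the introduction of $\mathscr{T}_{\oc}(\mathrm{ff}) = \oc\bot$ restricted to adjacent $\bot$R, $\oc$R$^{\oc}$ (resp. $\bot$L, $\oc$D), mirroring the treatment of $\wn\top$ there. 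The subformula property granted by Lemma~\ref{PropCutEliminationForCLC} then shows that every formula of $q''$ is $\mathscr{T}_{\oc}(C)$, $\oc\mathscr{T}_{\oc}(C)$ or $\bot$ for a CL subformula $C$, so the de-translation that erases the outermost $\oc$ (if present) and inverts $\mathscr{T}_{\oc}$ carries every sequent of $q''$ to an \textsf{LK} sequent; an induction on $q''$ then sends each \textsf{CLC} inference to a derivable \textsf{LK} inference ($\oc$D, $\oc$R$^{\oc}$, $\oc$W, $\oc$C become \textsf{LK} structural steps or no-ops, while $\wedge$L$^{\oc}$, $\oplus$L$^{\oc}$, $\looparrowright$L$^{\oc}$ become $\wedge$L, $\vee$L, $\Rrightarrow$L). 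Since $\Delta \vdash \Gamma$ is itself $\oc$-free, this yields an \textsf{LK}-proof of $\Delta \vdash \Gamma$.

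I expect conservativity to be the main obstacle, for two reasons. First, the sequent-shape invariant must be stable under the left-logical rules $\wedge$L$^{\oc}$ and $\oplus$L$^{\oc}$, which momentarily expose a bare formula on the left before it is re-boxed by $\oc$D, so the de-translation must send both a bare $\mathscr{T}_{\oc}(C)$ and its box $\oc\mathscr{T}_{\oc}(C)$ on the left to the same \textsf{LK} formula $C$; checking that this is well defined rests on verifying that every $\oc$ in a translated formula occurs precisely as the outer connective of some $\oc\mathscr{T}_{\oc}(C)$. Second, \textsf{CLC}'s context-splitting rules — notably $\looparrowright$L$^{\oc}$, whose right premise carries an independent context — de-translate to multiplicative versions of the \textsf{LK} rules, so the read-off must re-merge the two $\oc$-loaded left contexts by $\oc$C and the two right contexts by CR, and one must confirm these merges are available at each step. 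Making the normalization of $\oc$D and $\oc$R$^{\oc}$ commute past these rules without leaving residual boxes is, as in Lemma~\ref{LemTranslationOfLKIntoILK_mu}, where the argument will demand the most care.
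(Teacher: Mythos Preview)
Your proposal is correct and follows essentially the same approach as the paper: a rule-by-rule translation of \textsf{LK} into derived \textsf{CLC} rules (with the nontrivial cases $\wedge$L and $\Rrightarrow$L handled via Cut$^\oc$, the latter slightly more directly than the paper's intermediate lemma $\oc(\oc A\looparrowright B)\vdash \oc A\looparrowright\oc B$), and conservativity via the dual of the argument for Lemma~\ref{LemTranslationOfLKIntoILK_mu} using cut-elimination for \textsf{CLC}. The obstacles you anticipate are exactly those absorbed by that dual argument.
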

\begin{proof}
Let us first translate the axioms and the rules of \textsf{LK} into derived ones in \textsf{CLC}.
Let us leave it to the reader to translate XL, XR, WL, WR, CL, CR, Id, Cut, $\mathrm{tt}$L, $\mathrm{tt}$R, $\mathrm{ff}$L, $\mathrm{ff}$R, $\wedge$R and $\Rrightarrow$R in \textsf{LK} into \textsf{CLC} since it is straightforward.
\if0
Cut in \textsf{LK} is translated into \textsf{CLC} simply by
\begin{mathpar}
\AxiomC{$\oc \Delta \vdash B, \Gamma$}
		\AxiomC{$\oc \Delta', \oc B \vdash \Gamma'$}
		\LeftLabel{\textsc{(Cut$^\oc$)}}
	\BinaryInfC{$\oc \Delta, \oc \Delta' \vdash \Gamma, \Gamma'$}
	\DisplayProof
\end{mathpar}
\fi

\if0
$\wedge$R in \textsf{LK} is translated into \textsf{CLC} simply by
\begin{mathpar}
\AxiomC{$\oc \Delta \vdash B_1, \Gamma$}
		\AxiomC{$\oc \Delta \vdash B_2, \Gamma$}
	\LeftLabel{\textsc{($\wedge$R)}}
\BinaryInfC{$\oc \Delta \vdash B_1 \wedge B_2, \Gamma$}
\DisplayProof
\end{mathpar}
\fi

$\wedge$L in \textsf{LK} is translated into \textsf{CLC} by
\begin{mathpar}
\AxiomC{}
\LeftLabel{\textsc{(Id)}}
\RightLabel{($i \in \overline{2}$)}
\UnaryInfC{$A_i \vdash A_i$}
\LeftLabel{\textsc{($\wedge$L$^\oc$)}}
\UnaryInfC{$A_1 \wedge A_2 \vdash A_i$}
\LeftLabel{\textsc{($\oc$D)}}
\UnaryInfC{$\oc (A_1 \wedge A_2) \vdash A_i$}
\AxiomC{$\oc \Delta, \oc A_i \vdash \Gamma$}
\LeftLabel{\textsc{(Cut$^\oc$)}}
\BinaryInfC{$\oc (A_1 \wedge A_2), \oc \Delta \vdash \Gamma$}
\LeftLabel{\textsc{(XL$^\ast$)}}
\doubleLine
\UnaryInfC{$\oc \Delta, \oc (A_1 \wedge A_2) \vdash \Gamma$}
\DisplayProof
\end{mathpar}

$\vee$L in \textsf{LK} is translated into \textsf{CLC} by
\begin{mathpar}
\AxiomC{$\oc \Delta, \oc A_1 \vdash \Gamma$}
\AxiomC{$\oc \Delta, \oc A_2 \vdash \Gamma$}
\LeftLabel{\textsc{($\oplus$L$^\oc$)}}
\BinaryInfC{$\oc \Delta, \oc A_1 \oplus \oc A_2 \vdash \Gamma$}
\LeftLabel{\textsc{($\oc$D)}}
\UnaryInfC{$\oc \Delta, \oc (\oc A_1 \oplus \oc A_2) \vdash \Gamma$}
\DisplayProof
\end{mathpar}
and $\vee$R by
\begin{mathpar}
\AxiomC{$\oc \Delta \vdash B_i, \Gamma$}
\LeftLabel{\textsc{($\oc$R$^\oc$)}}
\UnaryInfC{$\oc \Delta \vdash \oc B_i, \Gamma$}
\LeftLabel{\textsc{($\oplus$R)}}
\UnaryInfC{$\oc \Delta \vdash \oc B_1 \oplus \oc B_2, \Gamma$}
\DisplayProof
\end{mathpar}

\if0
Next, $\Rrightarrow$R in \textsf{LK} is translated in \textsf{CLC} by
\begin{mathpar}
\AxiomC{$\oc \Delta, \oc A \vdash B, \Gamma$}
\LeftLabel{\textsc{($\looparrowright$R$^\oc$)}}
\UnaryInfC{$\oc \Delta \vdash \oc A \looparrowright B, \Gamma$}
\DisplayProof
\end{mathpar}
\fi

Finally, $\Rrightarrow$L in \textsf{LK} is translated into \textsf{CLC} by
\begin{mathpar}
\AxiomC{}
\LeftLabel{\textsc{(Id)}}
\UnaryInfC{$B \vdash B$}
\AxiomC{}
\LeftLabel{\textsc{(Id)}}
\UnaryInfC{$\oc A \vdash \oc A$}
\LeftLabel{\textsc{($\looparrowright$L$^\oc$)}}
\BinaryInfC{$\oc A, \oc (\oc A \looparrowright B) \vdash B$}
\LeftLabel{\textsc{($\oc$R$^\oc$)}}
\UnaryInfC{$\oc A, \oc (\oc A \looparrowright B) \vdash \oc B$}
\LeftLabel{\textsc{(XL)}}
\UnaryInfC{$\oc (\oc A \looparrowright B), \oc A \vdash \oc B$}
\LeftLabel{\textsc{($\looparrowright$R$^\oc$)}}
\UnaryInfC{$\oc (\oc A \looparrowright B) \vdash \oc A \looparrowright \oc B$}
\AxiomC{$\oc \Delta, \oc B \vdash \Gamma$}
\AxiomC{$\oc \Theta \vdash A, \Xi$}
\LeftLabel{\textsc{($\oc$R$^\oc$)}}
\UnaryInfC{$\oc \Theta \vdash \oc A, \Xi$}
\LeftLabel{\textsc{($\looparrowright$L$^\oc$)}}
\BinaryInfC{$\oc \Delta, \oc \Theta, \oc (\oc A \looparrowright \oc B) \vdash \Gamma, \Xi$}
\LeftLabel{\textsc{(Cut$^\oc$)}}
\BinaryInfC{$\oc (\oc A \looparrowright B), \oc \Delta, \oc \Theta \vdash \Gamma, \Xi$}
\LeftLabel{\textsc{(XL$^\ast$)}}
\doubleLine
\UnaryInfC{$\oc \Delta, \oc \Theta, \oc (\oc A \looparrowright B) \vdash \Gamma, \Xi$}
\DisplayProof
\end{mathpar}

As in the case of the proof of Theorem~\ref{LemTranslationOfLKIntoILK_mu}, these translations of axioms and rules induce the required translation $\mathscr{T}_{\oc}$ on formal proofs. 

Finally, we show that this translation $\mathscr{T}_{\oc}$ is conservative by the method symmetric to the one given in the last part of the proof of Lemma~\ref{LemTranslationOfLKIntoILK_mu}.
\end{proof}

\begin{lemma}[Translation $\mathscr{T}_{\wn}$ of \textsf{CLC} into \textsf{ILC$_{\boldsymbol{\iota}}$}]
\label{LemTranslationOfCLLKintoILLK}
There is a map $\mathscr{T}_{\wn}$ that assigns, to each formal proof $p$ of a sequent $\Delta \vdash \Gamma$ in \textsf{CLC}, a formal proof $\mathscr{T}_{\wn}(p)$ of the sequent $\mathscr{T}_{\wn}^\ast(\Delta) \vdash \wn \mathscr{T}_{\wn}^\ast(\Gamma)$ in \textsf{ILC$_\iota$}, where 
\begin{mathpar}
\mathscr{T}_\wn(\mathrm{tt}) \colonequals \wn \top
\and
\mathscr{T}_\wn(\bot) \colonequals \bot
\\
\mathscr{T}_\wn(A \wedge B) \colonequals \wn \mathscr{T}_\wn(A) \mathbin{\&} \wn \mathscr{T}_\wn(B)
\and
\mathscr{T}_\wn(A \oplus B) \colonequals \mathscr{T}_\wn(A) \oplus \mathscr{T}_\wn(B)
\and
\mathscr{T}_\wn(A \looparrowright B) \colonequals \mathscr{T}_\wn(A) \rightarrowtriangle \wn \mathscr{T}_\wn(B)
\and
\mathscr{T}_\wn(\oc A) \colonequals \oc \mathscr{T}_\wn(A).
\end{mathpar}
\end{lemma}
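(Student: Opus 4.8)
The plan is to follow the same template as the preceding translation lemmas (Lemmata~\ref{LemTranslationOfLKIntoILK_mu}, \ref{LemTranslationOfILK_muIntoILLK_mu} and \ref{LemTranslationOfLKintoCLLK}): first translate every axiom and rule of \textsf{CLC} into a derived rule of \textsf{ILC$_\iota$} whose conclusion has the shape $\mathscr{T}_\wn^\ast(\Delta) \vdash \wn \mathscr{T}_\wn^\ast(\Gamma)$, then build $\mathscr{T}_\wn(p)$ by applying these translations to the axioms and rules occurring in a given formal proof $p$ in \textsf{CLC}, and verify well-definedness by induction on $p$. As for $\mathscr{T}_\oc$ in Lemma~\ref{LemTranslationOfILK_muIntoILLK_mu}, and unlike $\mathscr{T}_\wn$ in Lemma~\ref{LemTranslationOfLKIntoILK_mu}, I do \emph{not} claim conservativity here, precisely because \textsf{ILC$_\iota$} does not enjoy cut-elimination; only the existence of the forward translation is asserted.

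The structural and most logical rules translate routinely, because $\mathscr{T}_\wn(\oc A) = \oc \mathscr{T}_\wn(A)$ preserves of-course on the left while the uniform $\wn$ on the right absorbs the right structural rules. Concretely, $\oc$W, $\oc$C, $\oc$D and $\oplus$L$^\oc$ map to their \textsf{ILC} namesakes; WR and CR map to $\wn$W and $\wn$C; Id maps to Id followed by $\wn$D; and $\mathrm{tt}$L$^\oc$, $\mathrm{tt}$R, $\bot$L, $\bot$R become short derivations from $\top$L (then $\wn$L$^{\oc\wn}$), $\top$R (then $\wn$D), $\bot$L, and $\bot$R (then $\wn$D), respectively. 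The left rule $\wedge$L$^\oc$ mirrors $\wedge$L of Lemma~\ref{LemTranslationOfLKIntoILK_mu}, inserting $\wn$L$^{\oc\wn}$ before $\&$L to $\wn$-guard the active formula. The right rules $\wedge$R, $\oplus$R and $\looparrowright$R$^\oc$ mirror $\wedge$R, $\vee$R and $\Rrightarrow$R of that same lemma: $\wedge$R uses $\&$R then $\wn$D; $\oplus$R uses an Id/$\oplus$R/$\wn$D/$\wn$L$^{\oc\wn}$ helper cut against the premise; and $\looparrowright$R$^\oc$, after unfolding $A \looparrowright B$ as $\neg A \invamp \wn B$ (so that $\mathscr{T}_\wn(A \looparrowright B) = \neg \mathscr{T}_\wn(A) \invamp \wn \mathscr{T}_\wn(B)$), uses $\neg$R, $\invamp$R and $\wn$D.

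The two rules for which the \emph{weakly distributive} rule $\wn\oc$R$^{\oc\wn}$ of \textsf{ILC$_\iota$} (rather than \textsf{ILC}) is indispensable are promotion $\oc$R$^\oc$ and Cut$^\oc$: the former becomes exactly $\wn\oc$R$^{\oc\wn}$, since $\wn \mathscr{T}_\wn(\oc B) = \wn \oc \mathscr{T}_\wn(B)$; the latter, dually to the Cut$^\wn$ clause of Lemma~\ref{LemTranslationOfILK_muIntoILLK_mu}, is obtained by applying $\wn\oc$R$^{\oc\wn}$ to the translated first premise and $\wn$L$^{\oc\wn}$ to the translated second premise, followed by a standard Cut on $\wn \oc \mathscr{T}_\wn(B)$. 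This is the structural reason why \textsf{ILC} does not suffice, paralleling the role of $\oc\wn$L$^{\oc\wn}$ in the earlier direction.

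I expect the genuine obstacle to be the left implication rule $\looparrowright$L$^\oc$, because its right premise $\Theta \vdash A, \Xi$ translates to $\mathscr{T}_\wn^\ast(\Theta) \vdash \wn \mathscr{T}_\wn(A), \wn \mathscr{T}_\wn^\ast(\Xi)$, so the antecedent arrives \emph{$\wn$-guarded} as $\wn \mathscr{T}_\wn(A)$, whereas $\neg \mathscr{T}_\wn(A) \invamp \wn \mathscr{T}_\wn(B)$ expects it bare. To resolve this I would build a ``modus ponens'' helper: from Id on $\mathscr{T}_\wn(A)$ and $\neg$L I form $\mathscr{T}_\wn(A), \neg \mathscr{T}_\wn(A) \vdash$, combine it by $\invamp$L with the axiom $\wn \mathscr{T}_\wn(B) \vdash \wn \mathscr{T}_\wn(B)$, apply $\oc$D, and then---crucially---apply $\wn$L$^{\oc\wn}$ to turn the bare $\mathscr{T}_\wn(A)$ on the left into $\wn \mathscr{T}_\wn(A)$, yielding $\wn \mathscr{T}_\wn(A), \oc \mathscr{T}_\wn(A \looparrowright B) \vdash \wn \mathscr{T}_\wn(B)$. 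Cutting this against the translated right premise on $\wn \mathscr{T}_\wn(A)$, then cutting the result against the ($\wn$L$^{\oc\wn}$-guarded) translated left premise on $\wn \mathscr{T}_\wn(B)$, and finally reordering by XL and XR, produces the required conclusion. The delicate bookkeeping is to check that the cut formulas line up and that every intermediate sequent has a left-hand side consisting of of-course formulas plus the single active antecedent and a right-hand side consisting entirely of why-not formulas, so that $\wn$L$^{\oc\wn}$ is indeed applicable at each step; once the helper is in place this is routine.
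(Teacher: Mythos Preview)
Your proposal is correct and follows the same template as the paper's proof. The handling of the structural rules, $\wedge$L$^\oc$, $\wedge$R, $\oplus$L$^\oc$, $\oplus$R, and $\looparrowright$R$^\oc$ matches the paper essentially line for line, and your treatment of $\looparrowright$L$^\oc$ via a small modus-ponens helper and two cuts is equivalent to (and arguably tidier than) the paper's version, which packages the helper as $\oc(\mathscr{T}_\wn(A) \rightarrowtriangle \wn \mathscr{T}_\wn(B)) \vdash \wn \mathscr{T}_\wn(A) \rightarrowtriangle \wn \mathscr{T}_\wn(B)$ and then uses a single Cut against a $\invamp$L combination of both translated premises.

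The one substantive divergence is your choice of weakly distributive rule. You translate $\oc$R$^\oc$ directly as $\wn\oc$R$^{\oc\wn}$ and translate Cut$^\oc$ via $\wn\oc$R$^{\oc\wn}$ on the left premise, $\wn$L$^{\oc\wn}$ on the right, and Cut on $\wn\oc\mathscr{T}_\wn(B)$. The paper instead routes both through $\oc\wn$L$^{\oc\wn}$: it translates $\oc$R$^\oc$ by $\oc$R$^{\oc\wn}$ followed by a Cut against the helper $\oc\wn B \vdash \wn\oc B$ (built from Id, $\wn$D, $\oc\wn$L$^{\oc\wn}$), and translates Cut$^\oc$ by $\oc$R$^{\oc\wn}$ on the left premise, $\oc\wn$L$^{\oc\wn}$ on the right, and Cut on $\oc\wn\mathscr{T}_\wn(B)$. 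Your route is shorter and perfectly adequate for the lemma as stated. The paper explicitly remarks that it adopts $\oc\wn$L$^{\oc\wn}$ rather than $\wn\oc$R$^{\oc\wn}$ ``even when it makes the translation more complex, mainly for \S\ref{ConservativeTranslations}'': the refined calculus \textsf{ILC}$_\rho$ introduced there forbids $\wn\oc$R$^{\oc\wn}$ outright (Definition~\ref{DefILC_Delta}), so the paper's detour ensures the translation already lands in \textsf{ILC}$_\rho$ and can be reused verbatim for Corollary~\ref{CorMainCorollary}. Your version would need to be reworked at that point.
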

\begin{proof}
Let us first translate the axioms and the rules of \textsf{CLC} into derived ones in \textsf{ILC$_\iota$}.
It is trivial to translate XL, XR, $\oc$W, WR, $\oc$C, CR, $\oc$D, Id, $\mathrm{tt}$L$^\oc$, $\mathrm{tt}$R, $\bot$L and $\bot$R; let us leave the details to the reader.

$\wedge$L$^\oc$ in \textsf{CLC} is translated into \textsf{ILC$_\iota$} by
\begin{mathpar}
\AxiomC{$\oc \Delta, A_i \vdash \wn \Gamma$}
\RightLabel{\textsc{($i \in \overline{2}$)}}
\LeftLabel{\textsc{($\wn$L$^{\oc\wn}$)}}
\UnaryInfC{$\oc \Delta, \wn A_i \vdash \wn \Gamma$}
\LeftLabel{\textsc{($\&$L)}}
\UnaryInfC{$\oc \Delta, \wn A_1 \mathbin{\&} \wn A_2 \vdash \wn \Gamma$}
\DisplayProof
\end{mathpar}
and $\wedge$R by
\begin{mathpar}
\AxiomC{$\Delta \vdash \wn B_1, \wn \Gamma$}
\AxiomC{$\Delta \vdash \wn B_2, \wn \Gamma$}
\LeftLabel{\textsc{($\&$R)}}
\BinaryInfC{$\Delta \vdash \wn B_1 \mathbin{\&} \wn B_2, \wn \Gamma$}
\LeftLabel{\textsc{($\wn$D)}}
\UnaryInfC{$\Delta \vdash \wn (\wn B_1 \mathbin{\&} \wn B_2), \wn \Gamma$}
\DisplayProof
\end{mathpar}

Dually, $\oplus$L$^\oc$ in \textsf{CLC} is translated into \textsf{ILC$_\iota$} simply by
\begin{mathpar}
\AxiomC{$\oc \Delta, A_1 \vdash \wn \Gamma$}
\AxiomC{$\oc \Delta, A_2 \vdash \wn \Gamma$}
\LeftLabel{\textsc{($\oplus$L)}}
\BinaryInfC{$\oc \Delta, A_1 \oplus A_2 \vdash \wn \Gamma$}
\DisplayProof
\end{mathpar}
and $\oplus$R by
\begin{mathpar}
\AxiomC{$\Delta \vdash \wn B_i, \wn \Gamma$}
\AxiomC{}
\LeftLabel{\textsc{(Id)}}
\RightLabel{\textsc{($i \in \overline{2}$)}}
\UnaryInfC{$B_i \vdash B_i$}
\LeftLabel{\textsc{($\oplus$R)}}
\UnaryInfC{$B_i \vdash B_1 \oplus B_2$}
\LeftLabel{\textsc{($\wn$D)}}
\UnaryInfC{$B_i \vdash \wn (B_1 \oplus B_2)$}
\LeftLabel{\textsc{($\wn$L$^{\oc\wn}$)}}
\UnaryInfC{$\wn B_i \vdash \wn (B_1 \oplus B_2)$}
\LeftLabel{\textsc{(Cut)}}
\BinaryInfC{$\Delta \vdash \wn \Gamma, \wn (B_1 \oplus B_2)$}
\LeftLabel{\textsc{(XR$^\ast$)}}
\doubleLine
\UnaryInfC{$\Delta \vdash \wn (B_1 \oplus B_2), \wn \Gamma$}
\DisplayProof
\end{mathpar}

Next, $\looparrowright$L$^\oc$ in \textsf{CLC} is translated into \textsf{ILC$_\iota$} by
\begin{mathpar}
\AxiomC{}
\LeftLabel{\textsc{(Id)}}
\UnaryInfC{$A \vdash A$}
\LeftLabel{\textsc{($\neg$L)}}
\UnaryInfC{$A, \neg A \vdash$}
\AxiomC{}
\LeftLabel{\textsc{(Id)}}
\UnaryInfC{$\wn B \vdash \wn B$}
\LeftLabel{\textsc{($\invamp$L)}}
\BinaryInfC{$A, A \rightarrowtriangle \wn B \vdash \wn B$}
\LeftLabel{\textsc{($\oc$D)}}
\UnaryInfC{$A, \oc (A \rightarrowtriangle \wn B) \vdash \wn B$}
\LeftLabel{\textsc{(XL)}}
\UnaryInfC{$\oc (A \rightarrowtriangle \wn B), A \vdash \wn B$}
\LeftLabel{\textsc{($\wn$L$^{\oc\wn}$)}}
\UnaryInfC{$\oc (A \rightarrowtriangle \wn B), \wn A \vdash \wn B$}
\LeftLabel{\textsc{($\neg$R)}}
\UnaryInfC{$\oc (A \rightarrowtriangle \wn B) \vdash \neg \wn A, \wn B$}
\LeftLabel{\textsc{($\invamp$R)}}
\UnaryInfC{$\oc (A \rightarrowtriangle \wn B) \vdash \wn A \rightarrowtriangle \wn B$}
\AxiomC{$\Theta \vdash \wn A, \wn \Xi$}
\LeftLabel{\textsc{($\neg$L)}}
\UnaryInfC{$\Theta, \neg \wn A \vdash \wn \Xi$}
\AxiomC{$\oc \Delta, B \vdash \wn \Gamma$}
\LeftLabel{\textsc{($\wn$L$^{\oc\wn}$)}}
\UnaryInfC{$\oc \Delta, \wn B \vdash \wn \Gamma$}
\LeftLabel{\textsc{($\invamp$L)}}
\BinaryInfC{$\Theta, \oc \Delta, \wn A \rightarrowtriangle \wn B \vdash \wn \Xi, \wn \Gamma$}
\LeftLabel{\textsc{(Cut)}}
\BinaryInfC{$\oc (A \rightarrowtriangle \wn B), \Theta, \oc \Delta \vdash \wn \Xi, \wn \Gamma$}
\LeftLabel{\textsc{(XL$^\ast$)}}
\doubleLine
\UnaryInfC{$\oc \Delta, \Theta, \oc (A \rightarrowtriangle \wn B) \vdash \wn \Xi, \wn \Gamma$}
\DisplayProof
\end{mathpar}
where of-course $\oc$ on $A \looparrowright B$ in $\looparrowright$L$^\oc$ is vital for this translation, and $\looparrowright$R$^\oc$ by
\begin{mathpar}
\AxiomC{$\oc \Delta, A \vdash \wn B, \wn \Gamma$}
\LeftLabel{\textsc{($\neg$R)}}
\UnaryInfC{$\oc \Delta \vdash \neg A, \wn B, \wn \Gamma$}
\LeftLabel{\textsc{($\invamp$R)}}
\UnaryInfC{$\oc \Delta \vdash A \rightarrowtriangle \wn B, \wn \Gamma$}
\LeftLabel{\textsc{($\wn$D)}}
\UnaryInfC{$\oc \Delta \vdash \wn (A \rightarrowtriangle \wn B), \wn \Gamma$}
\DisplayProof
\end{mathpar}

Finally, $\oc$R$^\oc$ in \textsf{CLC} is translated into \textsf{ILC$_\iota$} by
\begin{mathpar}
\AxiomC{$\oc \Delta \vdash \wn B, \wn \Gamma$}
\LeftLabel{\textsc{($\oc$R$^{\oc\wn}$)}}
\UnaryInfC{$\oc \Delta \vdash \oc \wn B, \wn \Gamma$}
\AxiomC{}
\UnaryInfC{$\oc B \vdash \oc B$}
\LeftLabel{\textsc{($\wn$D)}}
\UnaryInfC{$\oc B \vdash \wn \oc B$}
\LeftLabel{\textsc{($\oc\wn$L$^{\oc\wn}$)}}
\UnaryInfC{$\oc \wn B \vdash \wn \oc B$}
\LeftLabel{\textsc{(Cut)}}
\BinaryInfC{$\oc \Delta \vdash \wn \Gamma, \wn \oc B$}
\LeftLabel{\textsc{(XR$^\ast$)}}
\doubleLine
\UnaryInfC{$\oc \Delta \vdash \wn \oc B, \wn \Gamma$}
\DisplayProof
\end{mathpar}
and Cut$^\oc$ in \textsf{CLC} by
\begin{mathpar}
\AxiomC{$\oc \Delta, \vdash \wn B, \wn \Gamma$}
\LeftLabel{\textsc{($\oc$R$^{\oc\wn}$)}}
\UnaryInfC{$\oc \Delta, \vdash \oc \wn B, \wn \Gamma$}
\AxiomC{$\oc \Delta', \oc B \vdash \wn \Gamma'$}
\LeftLabel{\textsc{($\oc\wn$L$^{\oc\wn}$)}}
\UnaryInfC{$\oc \Delta', \oc \wn B \vdash \wn \Gamma'$}
\LeftLabel{\textsc{(Cut)}}
\BinaryInfC{$\oc \Delta, \oc \Delta' \vdash \wn \Gamma, \wn \Gamma'$}
\DisplayProof
\end{mathpar}

As in the case of the proof of Lemma~\ref{LemTranslationOfILK_muIntoILLK_mu}, these translations of axioms and rules induce the required translation $\mathscr{T}_{\wn}$ of formal proofs. 
\end{proof}

\begin{remark}
As in the case of the translation $\mathscr{T}_\oc$ of \textsf{INC} by \textsf{ILC$_\iota$} (Lemma~\ref{LemTranslationOfILK_muIntoILLK_mu}), we adopt $\oc\wn$L$^{\wn\oc}$ rather than $\wn\oc$R$^{\oc\wn}$ in the translation $\mathscr{T}_\wn$ of \textsf{CLC} into \textsf{ILC$_\iota$}, even when it makes the translation more complex, mainly for \S\ref{ConservativeTranslations} (as we shall see).
\end{remark}

\begin{corollary}[Translation $\mathscr{T}_{\wn\oc}$ of \textsf{LK} into \textsf{ILC}$_{\boldsymbol{\iota}}$]
\label{CoroAnotherTranslationOfLKintoILLK_mu}
The composition $\mathscr{T}_{\wn\oc} \colonequals \mathscr{T}_\wn \circ \mathscr{T}_\oc$ assigns, to each formal proof $p$ of a sequent $\Delta \vdash \Gamma$ in \textsf{LK}, a formal proof $\mathscr{T}_{\wn\oc}(p)$ of the sequent $\oc \mathscr{T}_{\wn\oc}^\ast(\Delta) \vdash \wn \mathscr{T}_{\wn\oc}^\ast(\Gamma)$ in \textsf{ILC$_\iota$}, where 
\begin{mathpar}
\mathscr{T}_{\wn\oc}(\mathrm{tt}) \colonequals \wn \top
\and
\mathscr{T}_{\wn\oc}(\mathrm{ff}) \colonequals \oc \bot
\and
\mathscr{T}_{\wn\oc}(A \wedge B) \colonequals \wn \mathscr{T}_{\wn\oc}(A) \mathbin{\&} \wn \mathscr{T}_{\wn\oc}(B)
\and 
\mathscr{T}_{\wn\oc} (A \vee B) \colonequals \oc \mathscr{T}_{\wn\oc} (A) \oplus \oc \mathscr{T}_{\wn\oc}(B)
\and 
\mathscr{T}_{\wn\oc}(A \Rrightarrow B) \colonequals \oc \mathscr{T}_{\wn\oc}(A) \rightarrowtriangle \wn \mathscr{T}_{\wn\oc}(B).
\end{mathpar}
\end{corollary}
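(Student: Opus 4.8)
The plan is to obtain $\mathscr{T}_{\wn\oc}$ as the literal composite of the two translations already constructed in this subsection, namely $\mathscr{T}_\oc : \textsf{LK} \to \textsf{CLC}$ of Lemma~\ref{LemTranslationOfLKintoCLLK} and $\mathscr{T}_\wn : \textsf{CLC} \to \textsf{ILC}_\iota$ of Lemma~\ref{LemTranslationOfCLLKintoILLK}. This is exactly dual to the way Corollary~\ref{CoroTranslationOfLKintoILLK_mu} was deduced from Lemmata~\ref{LemTranslationOfLKIntoILK_mu} and~\ref{LemTranslationOfILK_muIntoILLK_mu}, only with the two intermediate passes performed in the opposite order and routed through CLL$^-$ rather than IL$^{\text{e}}$. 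Concretely, given a formal proof $p$ of $\Delta \vdash \Gamma$ in \textsf{LK}, I would first apply $\mathscr{T}_\oc$ to obtain a formal proof $\mathscr{T}_\oc(p)$ of $\oc \mathscr{T}_\oc^\ast(\Delta) \vdash \mathscr{T}_\oc^\ast(\Gamma)$ in \textsf{CLC}, and then feed $\mathscr{T}_\oc(p)$ into $\mathscr{T}_\wn$. Setting $\mathscr{T}_{\wn\oc}(p) \colonequals \mathscr{T}_\wn(\mathscr{T}_\oc(p))$ yields the required proof, with well-definedness inherited directly from the two lemmata.

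The second step is to confirm that the resulting root is the sequent claimed. By Lemma~\ref{LemTranslationOfCLLKintoILLK} the pass $\mathscr{T}_\wn$ sends the \textsf{CLC}-sequent $\oc \mathscr{T}_\oc^\ast(\Delta) \vdash \mathscr{T}_\oc^\ast(\Gamma)$ to $\mathscr{T}_\wn^\ast(\oc \mathscr{T}_\oc^\ast(\Delta)) \vdash \wn \mathscr{T}_\wn^\ast(\mathscr{T}_\oc^\ast(\Gamma))$. Once $\mathscr{T}_{\wn\oc} \colonequals \mathscr{T}_\wn \circ \mathscr{T}_\oc$ is adopted on formulas, the right-hand side is already $\wn \mathscr{T}_{\wn\oc}^\ast(\Gamma)$. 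For the left-hand side, the crucial clause $\mathscr{T}_\wn(\oc A) = \oc \mathscr{T}_\wn(A)$ lets the leading of-course commute outward, so that $\mathscr{T}_\wn^\ast(\oc \mathscr{T}_\oc^\ast(\Delta)) = \oc \mathscr{T}_{\wn\oc}^\ast(\Delta)$; hence the root is $\oc \mathscr{T}_{\wn\oc}^\ast(\Delta) \vdash \wn \mathscr{T}_{\wn\oc}^\ast(\Gamma)$, as asserted.

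The remaining step is the bookkeeping verification that the composite formula translation agrees with the table displayed in the corollary. This is a routine induction on CL-formulas using the two clause-lists of Lemmata~\ref{LemTranslationOfLKintoCLLK} and~\ref{LemTranslationOfCLLKintoILLK}; for instance $\mathscr{T}_{\wn\oc}(\mathrm{ff}) = \mathscr{T}_\wn(\oc \bot) = \oc \mathscr{T}_\wn(\bot) = \oc \bot$, and in the one genuinely composite case $\mathscr{T}_{\wn\oc}(A \Rrightarrow B) = \mathscr{T}_\wn(\oc \mathscr{T}_\oc(A) \looparrowright \mathscr{T}_\oc(B)) = \oc \mathscr{T}_\wn(\mathscr{T}_\oc(A)) \rightarrowtriangle \wn \mathscr{T}_\wn(\mathscr{T}_\oc(B)) = \oc \mathscr{T}_{\wn\oc}(A) \rightarrowtriangle \wn \mathscr{T}_{\wn\oc}(B)$, matching the displayed clause; the cases of $\mathrm{tt}$, $\wedge$ and $\vee$ are analogous and use the clauses $\mathscr{T}_\wn(A \oplus B) = \mathscr{T}_\wn(A) \oplus \mathscr{T}_\wn(B)$ and $\mathscr{T}_\wn(A \wedge B) = \wn \mathscr{T}_\wn(A) \mathbin{\&} \wn \mathscr{T}_\wn(B)$.

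I do not expect a genuine obstacle here, since all the content lives in the two preceding lemmata; the only point that warrants care is purely notational, namely that $\mathscr{T}_\oc$ and $\mathscr{T}_\wn$ are here overloaded to denote the maps of Lemmata~\ref{LemTranslationOfLKintoCLLK} and~\ref{LemTranslationOfCLLKintoILLK}, whose domains and codomains ($\textsf{LK} \to \textsf{CLC}$ and $\textsf{CLC} \to \textsf{ILC}_\iota$) differ from those of the identically named maps used in the first decomposition $\mathscr{T}_{\oc\wn}$. Unlike Lemma~\ref{LemTranslationOfLKintoCLLK}, no conservativity is asserted, and none should be expected to follow automatically, because the second factor $\mathscr{T}_\wn : \textsf{CLC} \to \textsf{ILC}_\iota$ lands in a calculus lacking cut-elimination; the corollary therefore records only the existence of $\mathscr{T}_{\wn\oc}$ together with its action on formulas, both of which the above composition supplies.
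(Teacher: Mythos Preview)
Your proposal is correct and follows exactly the paper's approach: the paper's proof is the single line ``By Lemmata~\ref{LemTranslationOfLKintoCLLK} and \ref{LemTranslationOfCLLKintoILLK},'' and you have simply spelled out the composition and the routine verification of the formula clauses that this line entails.
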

\begin{proof}
By Lemmata~\ref{LemTranslationOfLKintoCLLK} and \ref{LemTranslationOfCLLKintoILLK}.
\end{proof}

\subsection{Commutative unity of logic}
\label{SubsectionCommutativeUnityOfLogic}
In the previous two subsections, we have presented two translations of CL into ILL$^{\text{e}}_\iota$, where one is through IL$^{\text{e}}$, and the other through CLL$^-$.
Let us then establish the theorem proposed in \S\ref{MainResults}, i.e., these two routes \emph{commute}:
\begin{theorem}[Commutative unity of logic]
\label{ThmCommutativeUnityOfLogic}
Given a formal proof $p$ of a sequent $\Delta \vdash \Gamma$ in \textsf{LK}, the formal proofs $\mathscr{T}_{\oc\wn}(p)$ and $\mathscr{T}_{\wn\oc}(p)$ of the sequent $\oc \mathscr{T}_{\oc\wn}^\ast(\Delta) \vdash \wn \mathscr{T}_{\oc\wn}^\ast(\Gamma)$ in \textsf{ILC$_\iota$}, where $\mathscr{T}_{\oc\wn}^\ast(\Delta) = \mathscr{T}_{\wn\oc}^\ast(\Delta)$ and $\mathscr{T}_{\oc\wn}^\ast(\Gamma) = \mathscr{T}_{\wn\oc}^\ast(\Gamma)$, coincide modulo permuting axioms and rules in formal proofs in \textsf{ILC$_\iota$}. 
\end{theorem}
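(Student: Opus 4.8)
The plan is to reduce the global statement to a finite, rule-by-rule verification and then to exhibit the required permutations one rule at a time. The first observation is that the two composite translations already agree \emph{on formulas}: comparing the recursive clauses of $\mathscr{T}_{\oc\wn}$ in Corollary~\ref{CoroTranslationOfLKintoILLK_mu} with those of $\mathscr{T}_{\wn\oc}$ in Corollary~\ref{CoroAnotherTranslationOfLKintoILLK_mu}, one sees they are literally the same recursion (e.g.\ both send $A \Rrightarrow B$ to $\oc \mathscr{T}(A) \rightarrowtriangle \wn \mathscr{T}(B)$), so $\mathscr{T}_{\oc\wn}^\ast = \mathscr{T}_{\wn\oc}^\ast$ on all sequents and the common target $\oc \mathscr{T}_{\oc\wn}^\ast(\Delta) \vdash \wn \mathscr{T}_{\oc\wn}^\ast(\Gamma)$ is well defined. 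The second, structural observation is that both translations are defined \emph{locally and compositionally}: by Lemmata~\ref{LemTranslationOfLKIntoILK_mu}, \ref{LemTranslationOfILK_muIntoILLK_mu}, \ref{LemTranslationOfLKintoCLLK} and~\ref{LemTranslationOfCLLKintoILLK}, each translates a proof by replacing every instance of an \textsf{LK}-axiom or rule with a fixed derived block in the relevant intermediate calculus and then gluing these blocks along their shared sequents. Since a formal proof $p$ is the tree freely generated by the \textsf{LK}-rules and both routes respect this generation, it suffices to prove that for each axiom or rule $R$ of \textsf{LK} the two derived \textsf{ILC$_\iota$}-blocks $\mathscr{T}_\oc(\mathscr{T}_\wn(R))$ and $\mathscr{T}_\wn(\mathscr{T}_\oc(R))$ have the same premises and conclusion and coincide modulo permuting rules; because both routes send each \textsf{LK}-sequent to the \emph{same} \textsf{ILC$_\iota$}-sequent, the interfaces between blocks match, so the blockwise agreement glues up to $\mathscr{T}_{\oc\wn}(p)$ and $\mathscr{T}_{\wn\oc}(p)$.

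The bulk of the easy cases is then dispatched quickly. Here ``modulo permuting axioms and rules'' is the standard congruence on formal proofs that identifies two derivations differing only by transposing adjacent, independent rule applications (those acting on disjoint formula occurrences of a sequent), i.e.\ the equivalence made invisible by the category-theoretic reading of proofs. For the structural rules XL, XR, WL, WR, CL, CR and the simple rules Id, $\mathrm{tt}$R, $\mathrm{ff}$L, $\wedge$R, $\vee$L, both routes unfold $R$ into the evident structural/identity block in which of-course $\oc$ is introduced on the left and why-not $\wn$ on the right of sequents; since the left-hand exponential manipulations ($\oc\mathrm{D}$, $\oc\mathrm{C}$, and left exchanges) and the right-hand ones ($\wn\mathrm{D}$, $\wn\mathrm{C}$, and right exchanges) act on disjoint sides of the sequent, they commute freely, and the two blocks are identified by exactly such transpositions.

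The genuine content, and the main obstacle, lies in the rules that manipulate both exponentials at once, namely $\Rrightarrow$L, $\Rrightarrow$R, $\mathrm{Cut}$ and the units $\mathrm{tt}$L, $\mathrm{ff}$R. For these, one route introduces the of-course on the antecedent before the why-not on the succedent while the other does so in the opposite order, so the two derived blocks differ precisely in the relative ordering of the exponential rules ($\oc\mathrm{R}^{\oc\wn}$, $\oc\mathrm{D}$, $\wn\mathrm{D}$), the weakly distributive rules ($\oc\wn\mathrm{L}^{\oc\wn}$, $\wn\oc\mathrm{R}^{\oc\wn}$) and the accompanying exchanges and contractions. The hard part will be to check, for each such rule, that these reorderings are honest permutations of independent rules rather than essential reshufflings. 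The most delicate case is $\Rrightarrow$L: here the implication is encoded through a $\neg\mathrm{L}$/$\neg\mathrm{R}$ and $\invamp\mathrm{L}$/$\invamp\mathrm{R}$ scaffolding around the exponential introductions, and one must verify that this scaffolding slides past the $\oc$- and $\wn$-introductions identically along both routes; for $\mathrm{Cut}$ one must further confirm that both derivations use the \emph{same} weakly distributive rule (the translations of the cut rules in Lemmata~\ref{LemTranslationOfILK_muIntoILLK_mu} and~\ref{LemTranslationOfCLLKintoILLK} both go through $\oc\wn\mathrm{L}^{\oc\wn}$), so that the two translations of $\mathrm{Cut}$ agree up to exchange rather than requiring one of the non-canonical dual choices. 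Once these finitely many rule-diagrams are checked, the compositional gluing of the first paragraph yields the theorem.
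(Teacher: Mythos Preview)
Your proposal is correct and takes essentially the same approach as the paper: the paper's proof is simply ``By induction on $p$,'' which amounts precisely to the rule-by-rule verification you outline. Your elaboration of which cases are routine and which require checking permutations of exponential introductions matches the paper's remark immediately following the theorem that the permutations are ``exclusively on the order of introducing of-course $\oc$ and why-not $\wn$ on sequents.''
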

\begin{proof}
By induction on $p$.
\end{proof}

The permutation of axioms and rules stated in Theorem~\ref{ThmCommutativeUnityOfLogic} is exclusively on the order of introducing of-course $\oc$ and why-not $\wn$ on sequents, e.g., the order of applying the rules $\oc$D and $\wn$D on a sequent.
This trivial permutation is completely ignored by the categorical and the game semantics mentioned in \S\ref{OurContributionsAndRelatedWork}. 
In other words, the compromise of the commutativity modulo permutation is only due to the inessential syntactic details of sequent calculi.


\subsection{Conservative translations}
\label{ConservativeTranslations}
The remaining problem is the difficulty in showing the conservativity of the translations of \textsf{INC} into \textsf{ILC$_\iota$} (Lemma~\ref{LemTranslationOfILK_muIntoILLK_mu}) and \textsf{CLC} into \textsf{ILC$_\iota$} (Lemma~\ref{LemTranslationOfCLLKintoILLK}).
Thus, in this last subsection of \S\ref{CommutativeUnityOfLogic}, we prove the corollary articulated in \S\ref{MainResults}, i.e., refine Theorem~\ref{ThmCommutativeUnityOfLogic} in such a way that the translations become all \emph{conservative}. 
Since the conservativity proofs of Lemmata~\ref{LemTranslationOfLKIntoILK_mu} and \ref{LemTranslationOfLKintoCLLK} rely on the cut-elimination of the codomain sequent calculi, our  idea on the corollary is to carve out a substructural sequent calculus of \textsf{ILC$_\iota$} that enjoys cut-elimination yet powerful enough to accommodate our unity of logic (Theorem~\ref{ThmCommutativeUnityOfLogic}).

Concretely, we define a substructural logic of ILL$^{\text{e}}_\iota$, which let us call \emph{intuitionistic linear logic $\rho$-extended (ILL$^{\text{e}}_\rho$)}, and a sequent calculus \emph{\textsf{ILC$_\rho$}} for ILL$^{\text{e}}_\rho$ that \emph{enjoys cut-elimination}.
We also carve out substructural sequent calculi \emph{\textsf{LK$_\rho$}} of \textsf{LK}, \emph{\textsf{INC$_\rho$}} of \textsf{INC}, and \emph{\textsf{CLC$_\rho$}} of \textsf{CLC}, respectively, for CL, IL$^{\text{e}}$ and CLL$^-$, so that the corresponding restrictions of the translations given so far induce conservative translations of \textsf{LK$_\rho$} into \textsf{INC$_\rho$}, \textsf{INC$_\rho$} into \textsf{ILC$_\rho$}, \textsf{LK$_\rho$} into \textsf{CLC$_\rho$}, and \textsf{CLC$_\rho$} into \textsf{ILC$_\rho$}.
In particular, we can show that the translations of \textsf{INC$_\rho$} into \textsf{ILC$_\rho$}, and \textsf{CLC$_\rho$} into \textsf{ILC$_\rho$} are \emph{conservative} thanks to the cut-elimination of \textsf{ILC$_\rho$}, which overcomes the deficiency of Theorem~\ref{ThmCommutativeUnityOfLogic}.

\if0
Dually, we define another substructural logic of ILL$^{\text{e}}_\iota$, called \emph{intuitionistic linear logic $\ell$-extended (ILL$^{\text{e}}_\ell$)}, and a sequent calculus \emph{\textsf{ILC$_\ell$}} for ILL$^{\text{e}}_\ell$ that \emph{enjoys cut-elimination leftward}, as well as sequent calculi \emph{\textsf{LK$_\ell$}} for CL, \emph{\textsf{INC$_\ell$}} for IL$^{\text{e}}$ and \emph{\textsf{CLC$_\ell$}} for CLL$^-$, out of \textsf{LK}, \textsf{INC} and \textsf{CLC}, respectively.
Again, by restricting the previous translations, we obtain \emph{conservative} translations of \textsf{LK$_\ell$} into \textsf{INC$_\ell$}, \textsf{INC$_\ell$} into \textsf{ILC$_\ell$}, \textsf{LK$_\ell$} into \textsf{CLC$_\ell$}, and \textsf{CLC$_\ell$} into \textsf{ILC$_\ell$}.
\fi

Consequently, only the sequent calculus \textsf{ILC$_\rho$} suffices to capture ILL$^{\text{e}}_\rho$, IL$^{\text{e}}$, CLL$^-$ and CL precisely at the level of provability, preserving the commutativity of Theorem~\ref{ThmCommutativeUnityOfLogic}. 
This result is summarised in Figure~\ref{FigConservativeTranslationsForCommutativeUnityOfLogic}.

\begin{figure}
\begin{mathpar}
\begin{tikzcd}
\arrow[d, hook, "\text{conservative extension}"'] \text{ILL} \arrow[rrrr, "\text{Girard's translation}"] &&&& \text{IL} \arrow[d, hook, "\text{conservative extension}"] \\
\arrow[d, "\text{classicalisation $(\_)_\wn$}"'] \text{ILL$^{\text{e}}_\rho$ (\textsf{ILC$_\rho$})} \arrow[rrrr, "\text{unlinearisation $(\_)_\oc$}"'] &&&& \arrow[d, "\text{classicalisation $(\_)_\wn$}"] \text{IL$^{\text{e}}$ (\textsf{INC}$_\rho$)} \\
\text{CLL$^-$ (\textsf{CLC}$_\rho$)} \arrow[rrrr, "\text{unlinearisation $(\_)_\oc$}"'] &&&& \text{CL (\textsf{LK}$_\rho$)}
\end{tikzcd}
\end{mathpar}
\caption{Conservative translations for a commutative unity of logic}
\label{FigConservativeTranslationsForCommutativeUnityOfLogic}
\end{figure}

Towards this result, we first define ILL$^{\text{e}}_\rho$ and \textsf{ILC}$_\rho$.
Let us emphasise that the novel calculus \textsf{ILC$_\rho$}, which is powerful enough to embody ILL$^{\text{e}}_\rho$, IL$^{\text{e}}$, CLL$^-$ and CL, yet enjoys cut-elimination, is a technical highlight of this work.

\begin{definition}[Pure occurrences in \textsf{ILC}$_{\boldsymbol{\iota}}$]
\label{DefPureOccurrences}
An occurrence of a formula $A$ in a sequent $\Delta \vdash \Gamma$ in a formal proof $p$ in \textsf{ILC}$_\iota$ is \emph{(hereditarily) pure} if each sequent $\Delta' \vdash \Gamma'$ occurring in the subtree of $p$ whose root is $\Delta \vdash \Gamma$ satisfies

\begin{enumerate}

\item There is no application of $\oc$W, $\oc$C, $\wn$W or $\wn$C on subformulas of $A$ occurring in $\Delta' \vdash \Gamma'$ that constitute the occurrence $A$ in $\Delta \vdash \Gamma$;

\item An occurrence of each subformula $A'$ of $A$ in $\Delta'$ or $\Gamma'$ is at most one (i.e., if $A'$ occurs in $\Delta'$ (resp. $\Gamma'$), then there is no other $A'$ in $\Delta'$ (resp. $\Gamma'$)).

\end{enumerate}
\end{definition}

One may think of the purity condition as the \emph{two-sided generalisation} of the intuitionistic restriction since every element on the right-hand side of a sequent in the intuitionistic restriction of a sequent calculus is pure. 
Because every subformula occurrence of a pure occurrence in a formal proof is also pure, we sometimes emphasise this property by adding the adverb \emph{hereditarily}.

\begin{definition}[\textsf{ILC$_{\boldsymbol{\rho}}$} for ILL$^{\text{e}}_{\boldsymbol{\rho}}$]
\label{DefILC_Delta}
A formal proof $p$ in the sequent calculus \textsf{ILC}$_\iota$ is \emph{tractable} if it satisfies 
\begin{enumerate}

\item There is no occurrence of the rule $\wn\oc$R$^{\oc\wn}$ in $p$;

\item For each occurrence of the rule
\begin{mathpar}
\AxiomC{$q$}
\noLine
\UnaryInfC{$\Delta \vdash A, \Gamma$}
\AxiomC{$q'$}
\noLine
\UnaryInfC{$\Delta', A \vdash \Gamma'$}
\LeftLabel{\textsc{(Cut)}}
\BinaryInfC{$\Delta, \Delta' \vdash \Gamma, \Gamma'$}
\DisplayProof
\end{mathpar}
in $p$, the last occurrence $A$ in $q$ or the last occurrence $A$ in $q'$ is pure.

\end{enumerate}

\emph{\textsf{ILC}$_\rho$} is the sequent calculus obtained from \textsf{ILC}$_\iota$ by restricting formal proofs to tractable ones, and \emph{intuitionistic linear logic $\rho$-extended (ILL$^{\text{e}}_\rho$)} is the substructural logic of ILL$^{\text{e}}_\iota$ embodied by \textsf{ILC}$_\rho$, where the formal language of ILL$^{\text{e}}_\iota$ remains unchanged. 
\end{definition}

\begin{remark}
We can drop the first axiom of Definition~\ref{DefILC_Delta} without any problem, but this axiom makes the rest of this article simpler. 
\end{remark}

By Theorem~\ref{ThmCutEliminationForILC}, \textsf{ILC}$_\rho$ is at least as powerful as \textsf{ILC}. 
And crucially, we can restrict ILL$^{\text{e}}_\iota$ to ILL$^{\text{e}}_\rho$ for the translations of the cut-free fragments of \textsf{INC} and \textsf{CLC} given by Lemmata~\ref{LemTranslationOfILK_muIntoILLK_mu} and \ref{LemTranslationOfCLLKintoILLK}, respectively.

The way of restricting the cut-rule in \textsf{ILC}$_\rho$ is similar to the case of \textsf{LU} \cite[Figure~1]{girard1993unity}. 
This restriction enables us to prove the main point of \textsf{ILC}$_\rho$:
\begin{theorem}[Cut-elimination for \textsf{ILC}$_{\boldsymbol{\rho}}$]
\label{ThmCutEliminationForILC_Delta}
Given a formal proof of a sequent in \textsf{ILC}$_\rho$, there is a formal proof of this sequent in \textsf{ILC}$_\rho$ without Cut.
\end{theorem}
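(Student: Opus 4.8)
The plan is to adapt the standard cut-elimination argument for intuitionistic linear logic (the procedure of \cite{bierman1994intuitionistic} underlying Theorem~\ref{ThmCutEliminationForILC}) to the enlarged system, invoking the tractability constraints of Definition~\ref{DefILC_Delta} exactly where the naive argument breaks down. I would first prove a single-cut reduction lemma: if $q$ and $q'$ are cut-free, tractable proofs of $\Delta \vdash A, \Gamma$ and $\Delta', A \vdash \Gamma'$ whose combining Cut is tractable, then there is a cut-free (hence tractable) proof of $\Delta, \Delta' \vdash \Gamma, \Gamma'$. Granting this lemma, the theorem follows by a structural induction: to eliminate cuts in a tractable proof $p$, recurse on the immediate subproofs of the last rule and, when that rule is Cut, apply the lemma to the two cut-free premises produced by the recursion. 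The single-cut lemma itself I would prove by a well-founded induction on the lexicographic pair consisting of the logical complexity of the cut formula $A$ and the sum of heights $|q|+|q'|$, exactly as for \textsf{ILC}, distinguishing \emph{principal} cuts (where $A$ is the active formula of the last rule of both $q$ and $q'$) from \emph{commutative} cuts.

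The multiplicative and additive principal cases ($\otimes$/$\invamp$, $\&$/$\oplus$, $\neg$, the units) and the commutative cases are handled as in \cite{bierman1994intuitionistic}, each reduction lowering either the complexity of the cut formula or the total height. The genuinely new work is in the exponential cases and the weakly distributive rule $\oc\wn$L$^{\oc\wn}$, and this is where tractability enters. The reduction is \emph{directional}, driving the cut towards the premise, say $q'$, in which $A$ is pure; condition~(2) of Definition~\ref{DefILC_Delta} guarantees such a premise exists. Hereditary purity (Definition~\ref{DefPureOccurrences}) means that, as the cut is permuted upward through $q'$, the traced occurrence of $A$ is never the active formula of $\oc$W, $\oc$C, $\wn$W or $\wn$C and is never duplicated, so no copy of $q$ is ever created and the cut reaches the rule introducing $A$ on the pure side without the blow-up that defeats \textsf{ILC$_\iota$}. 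When $A = \oc B$ is pure on the left of $q'$, its introduction there must be $\oc$D (the structural possibilities being barred by purity), so the principal step replaces the cut by a cut on $B$. The critical configuration is $A = \oc\wn C$ introduced on the right by $\oc\wn$L$^{\oc\wn}$ from $\oc C$: purity forces the residual $\oc C$ up to a dereliction $\oc$D on $C$, while on the left the matching introduction $\oc$R$^{\oc\wn}$ exposes $\wn C$, which a $\wn$D uncovers, so the cut descends to a cut on $C$ of strictly smaller complexity. Precisely this reduction is impossible in the counterexample to cut-elimination for \textsf{ILC$_\iota$}, where the intervening contractions $\oc$C and $\wn$C destroy purity on both sides, rendering that cut non-tractable and hence absent from \textsf{ILC$_\rho$}.

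Throughout, I must check that every reduction keeps the proof inside \textsf{ILC$_\rho$}. Condition~(1) is easy: the reductions only permute, delete, or (on the non-pure side) duplicate rules already present, and since the source proof contains no $\wn\oc$R$^{\oc\wn}$, none is ever created; in particular the final cut-free proof satisfies condition~(2) vacuously. The substance is showing that residual cuts produced by a reduction are again tractable, i.e.\ that purity propagates: the subformula cuts arising from a principal step inherit purity from the hereditary purity of $A$ on the chosen side, and the cuts arising from a commutative permutation involve the same cut formula over a strictly smaller subtree, on which the pure witness for $A$ is the restriction of the original one. The ban on $\wn\oc$R$^{\oc\wn}$ is not logically indispensable here, as the remark after Definition~\ref{DefILC_Delta} notes, but it removes one family of right exponential rules from the analysis and so streamlines these checks.

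The main obstacle is this propagation-of-tractability bookkeeping, and in particular controlling the non-pure side. Because $A$ need not be pure in $q$, permuting the cut past the last rule of $q$ may duplicate a subproof (when $A$ sits in the context of a promotion $\oc$R$^{\oc\wn}$ that a later contraction copies), and I must confirm that such duplication neither manufactures a $\wn\oc$R$^{\oc\wn}$ nor spoils the purity of $A$ on the $q'$ side that the reduction relies on. The correct discipline is to always expend the induction on the pure side first, reducing to the situation where $A$ is principal in $q'$, and only then to match against $q$; the hereditary, two-sided formulation of purity is exactly what makes this discipline self-sustaining and is the load-bearing hypothesis of the whole argument.
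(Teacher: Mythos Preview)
Your high-level strategy is close to the paper's, but the critical reduction contains a genuine gap. After the principal step on $A=\oc\wn C$ (left premise ending in $\oc$R$^{\oc\wn}$, right in $\oc\wn$L$^{\oc\wn}$), the residuals are $\wn C$ on the right of the left subproof and $\oc C$ on the left of the right subproof. These do not form an ordinary cut, and the paper handles this by introducing an explicit auxiliary rule $\text{Cut}_{\wn\oc}$ (together with multi-cut variants $\text{CutL}^n_{\wn\oc}$, $\text{CutR}^n_{\wn\oc}$) and proving its admissibility. You bypass that device and assert that on the non-pure side $\wn C$ is ``uncovered'' by $\wn$D. That is unjustified: nothing prevents $\wn C$ there from being the principal formula of $\wn$W or, worse, $\wn$C. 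The weakening case is harmless, but the contraction case forces you to duplicate the pure-side subproof and perform two residual mismatched cuts, which your single-cut lemma with the measure $(|A|,\,|q|+|q'|)$ does not accommodate; this is exactly the obstruction that multi-cut is there to solve, and you never set it up.

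The paper's argument in fact runs in the \emph{opposite} direction from yours. It pushes $\text{Cut}_{\wn\oc}$ up through the \emph{non-pure} side, absorbing every $\oc$W/$\oc$C (dually $\wn$W/$\wn$C) it meets into the multi-cut, and only invokes purity at the terminal step: when dereliction is finally reached on the non-pure side, purity of the \emph{other} side guarantees that its exponential occurs exactly once and was introduced by dereliction, so the cut drops to a genuine cut on the smaller formula. In your telling, purity is spent on the permutation path (to avoid duplicating $q$), which leaves no hypothesis to control the matching dereliction on the opposite side; that is why you are driven to the unsupported $\wn$D claim. Swapping the direction---multi-cut through the impure side, purity reserved for the final match---is the missing idea.
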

\begin{proof}
By Theorem~\ref{ThmCutEliminationForILC} and the first axiom of Definition~\ref{DefILC_Delta}, it suffices to extend the cut-elimination for \textsf{ILC} (\S\ref{CutElimination}) to $\oc\wn$L$^{\oc\wn}$, and show that the resulting, extended cut-elimination preserves tractable formal proofs in \textsf{ILC$_\iota$}.  

Before going into details, note that the cut-elimination for \textsf{ILC} eliminates occurrences of Cut in a formal proof in the order from the topmost occurrences with respect to their \emph{depths} among the ones with the highest \emph{rank}. 
Roughly, the depth of an occurrence of Cut in a formal proof is the length of the longest branch from the occurrence to a leaf, and the rank of the occurrence is the syntactic complexity of its cut formula.
See \S\ref{CutElimination} for their precise definitions.

Then, for extending the cut-elimination for \textsf{ILC} to $\oc\wn$L$^{\oc\wn}$, the only nontrivial case is an occurrence of Cut in the following form (since the other cases can be handled as in the case of the cut-elimination for \textsf{ILC} given in \S\ref{RightMinorCuts}--\ref{IdentityCuts}): 
\begin{mathpar}
\AxiomC{$p$}
\noLine
\UnaryInfC{$\oc \Delta \vdash \wn A, \wn \Gamma$}
\LeftLabel{\textsc{($\oc$R$^{\oc\wn}$)}}
\UnaryInfC{$\oc \Delta \vdash \oc \wn A, \wn \Gamma$}
\AxiomC{$p'$}
\noLine
\UnaryInfC{$\oc \Delta', \oc \wn A^n, \oc A \vdash \wn \Gamma'$}
\LeftLabel{\textsc{($\oc\wn$L$^{\oc\wn}$)}}
\UnaryInfC{$\oc \Delta', \oc \wn A^{n+1} \vdash \wn \Gamma'$}
\LeftLabel{\textsc{(CutL$^{n+1}$)}}
\RightLabel{\textsc{($n \in \mathbb{N}$)}}
\BinaryInfC{$\oc \Delta^{n+1}, \oc \Delta' \vdash \wn \Gamma^{n+1}, \wn \Gamma'$}
\DisplayProof
\end{mathpar}
where let us write $f$ for the entire formal proof, and CutL$^{m}$ for each $m \in \mathbb{N}$ is an auxiliary rule
\begin{mathpar}
\AxiomC{$\Sigma \vdash B, \Pi$}
\AxiomC{$\Sigma', B^{m} \vdash \Pi'$}
\LeftLabel{\textsc{(CutL$^{m}$)}}
\BinaryInfC{$\Sigma^{m}, \Sigma' \vdash \Pi^{m}, \Pi'$}
\DisplayProof
\end{mathpar}
which is derivable in \textsf{ILC}$_\rho$ by consecutive $m$ times applications of Cut (n.b., we `do nothing' on the right hypothesis if $m = 0$).
This integration of multiple occurrences of Cut into CutL$^{n+1}$ is a standard technique for the elimination of Cut whose cut formula is the principal formula of contraction \cite{gentzen1935untersuchungen,troelstra2000basic}.
\if0
Dually, we can also define another auxiliary rule
\begin{mathpar}
\AxiomC{$\Sigma \vdash B, \Pi$}
\AxiomC{$\Sigma', B^{m} \vdash \Pi'$}
\LeftLabel{\textsc{(CutL$^{m}$)}}
\BinaryInfC{$\Sigma^{m}, \Sigma' \vdash \Pi^{m}, \Pi'$}
\DisplayProof
\end{mathpar}
which is again derivable in \textsf{ILC}.
\fi

We proceed by a case analysis on if the last occurrence $\oc \wn A$ in $p$ is pure.
Let us first assume that the occurrence $\oc \wn A$ is pure.
In this case, we transform $f$ into an auxiliary derivation
\begin{mathpar}
\AxiomC{$p$}
\noLine
\UnaryInfC{$\oc \Delta \vdash \wn A, \wn \Gamma$}
\AxiomC{$p$}
\noLine
\UnaryInfC{$\oc \Delta \vdash \wn A, \wn \Gamma$}
\LeftLabel{\textsc{($\oc$R$^{\oc\wn}$)}}
\UnaryInfC{$\oc \Delta \vdash \oc \wn A, \wn \Gamma$}
\AxiomC{$p'$}
\noLine
\UnaryInfC{$\oc \Delta', \oc \wn A^{n}, \oc A \vdash \wn \Gamma'$}
\LeftLabel{\textsc{(CutL$^n$)}}
\BinaryInfC{$\oc \Delta^n, \oc \Delta', \oc A \vdash \wn \Gamma^n, \wn \Gamma'$}
\LeftLabel{\textsc{(Cut$_{\wn\oc}$)}}
\BinaryInfC{$\oc \Delta^{n+1}, \oc \Delta' \vdash \wn \Gamma^{n+1}, \wn \Gamma'$}
\DisplayProof
\end{mathpar}
which let us write $\tilde{f}$, where Cut$_{\wn\oc}$ is an auxiliary rule 
\begin{mathpar}
\AxiomC{$\Sigma \vdash \wn B, \Pi$}
\AxiomC{$\Sigma', \oc B \vdash \Pi'$}
\LeftLabel{\textsc{(Cut$_{\wn\oc}$)}}
\BinaryInfC{$\Sigma, \Sigma' \vdash \Pi, \Pi'$}
\DisplayProof
\end{mathpar}

We regard this rule as Cut whose rank is that of $\wn B$, which is equal to that of $\oc B$ (\S\ref{CutElimination}), so that we extend the cut-elimination procedure for \textsf{ILC} to this rule accordingly.
By the order of occurrences of Cut that the cut-elimination procedure eliminates sketched above, this extended cut-elimination procedure next proceeds to the right hypothesis of the last occurrence of Cut$_{\wn\oc}$ in $\tilde{f}$ (since $\oc \wn A$ is more complex than $\wn A$, i.e., the rank of $\oc \wn A$ is higher than that of $\wn A$). 

By the inductive argument of the cut-elimination for \textsf{ILC} (\S\ref{CutElimination}), we may assume that the cut-elimination on the right hypothesis has eliminated the last occurrence of CutL$^n$, and the cut formula $\oc A$ on the right-hand side of the occurrence of Cut$_{\wn\oc}$ is now the principal formula.
We then eliminate the occurrence of Cut$_{\wn\oc}$ by the following case analysis on the principal formula $\oc A$:
\begin{itemize}

\item If the principal formula $\oc A$ is given by the rule $\oc$D, then, because the last occurrence $\oc \wn A$ in $p$ is pure, $\tilde{f}$ is reduced to the only nontrivial case
\begin{mathpar}
\AxiomC{$\Theta \vdash A, \Xi$}
\LeftLabel{\textsc{($\wn$D)}}
\UnaryInfC{$\Theta \vdash \wn A, \Xi$}
\AxiomC{$\Theta', \oc A^n, A \vdash \Xi'$}
\LeftLabel{\textsc{($\oc$D)}}
\UnaryInfC{$\Theta', \oc A^{n+1} \vdash \Xi'$}
\LeftLabel{\textsc{(CutL$_{\wn\oc}^{n+1}$)}}
\BinaryInfC{$\Theta^{n+1}, \Theta' \vdash \Xi^{n+1}, \Xi'$}
\DisplayProof
\end{mathpar}
where CutL$_{\wn\oc}^{m}$ is analogous to CutL$^{m}$ ($m \in \mathbb{N}$), which we transform into
\begin{mathpar}
\AxiomC{$\Theta \vdash A, \Xi$}
\AxiomC{$\Theta \vdash A, \Xi$}
\LeftLabel{\textsc{($\wn$D)}}
\UnaryInfC{$\Theta \vdash \wn A, \Xi$}
\AxiomC{$\Theta', \oc A^n, A \vdash \Xi'$}
\LeftLabel{\textsc{(CutL$_{\wn\oc}^{n}$)}}
\BinaryInfC{$\Theta^n, \Theta', A \vdash \Xi^n, \Xi'$}
\LeftLabel{\textsc{(Cut)}}
\BinaryInfC{$\Theta^{n+1}, \Theta' \vdash \Xi^{n+1}, \Xi'$}
\DisplayProof
\end{mathpar}
Note that this derived rule for $n = 0$ is
\begin{mathpar}
\AxiomC{$\Theta \vdash A, \Xi$}
\AxiomC{$\Theta', A \vdash \Xi'$}
\LeftLabel{\textsc{(Cut)}}
\BinaryInfC{$\Theta, \Theta' \vdash \Xi, \Xi'$}
\DisplayProof
\end{mathpar}

\item If the principal formula $\oc A$ is given by the rule $\oc$W, then $\tilde{f}$ is reduced to the only nontrivial case
\begin{mathpar}
\AxiomC{$\oc \Delta \vdash \wn A, \wn \Gamma$}
\AxiomC{$\Theta', \oc A^n \vdash \Xi'$}
\LeftLabel{\textsc{($\oc$W)}}
\UnaryInfC{$\Theta', \oc A^{n+1} \vdash \Xi'$}
\LeftLabel{\textsc{(CutL$_{\wn\oc}^{n+1}$)}}
\BinaryInfC{$\oc \Delta^{n+1}, \Theta' \vdash \wn \Gamma^{n+1}, \Xi'$}
\DisplayProof
\end{mathpar}
which we transform into
\begin{mathpar}
\AxiomC{$\oc \Delta \vdash \wn A, \wn \Gamma$}
\AxiomC{$\Theta', \oc A^n \vdash \Xi'$}
\LeftLabel{\textsc{(CutL$_{\wn\oc}^{n}$)}}
\BinaryInfC{$\oc \Delta^{n}, \Theta' \vdash \wn \Gamma^n, \Xi'$}
\LeftLabel{\textsc{($\oc$W$^\ast$, XL$^\ast$)}}
\doubleLine
\UnaryInfC{$\oc \Delta^{n+1}, \Theta' \vdash \wn \Gamma^n, \Xi'$}
\LeftLabel{\textsc{($\wn$W$^\ast$)}}
\doubleLine
\UnaryInfC{$\oc \Delta^{n+1}, \Theta' \vdash \wn \Gamma^{n+1}, \Xi'$}
\DisplayProof
\end{mathpar}

\item If the principal formula $\oc A$ is given by the rule $\oc$C, then $\tilde{f}$ is reduced to the only nontrivial case
\begin{mathpar}
\AxiomC{$\oc \Delta \vdash \wn A, \wn \Gamma$}
\AxiomC{$\Theta', \oc A^n, \oc A, \oc A \vdash \Xi'$}
\LeftLabel{\textsc{($\oc$C)}}
\UnaryInfC{$\Theta', \oc A^{n+1} \vdash \Xi'$}
\LeftLabel{\textsc{(CutL$_{\wn\oc}^{n+1}$)}}
\BinaryInfC{$\oc \Delta^{n+1}, \Theta' \vdash \wn \Gamma^{n+1}, \Xi'$}
\DisplayProof
\end{mathpar}
which we transform into
\begin{mathpar}
\AxiomC{$\oc \Delta \vdash \wn A, \wn \Gamma$}
\AxiomC{$\Theta', \oc A^n, \oc A, \oc A \vdash \Xi'$}
\LeftLabel{\textsc{(CutL$_{\wn\oc}^{n+2}$)}}
\BinaryInfC{$\oc \Delta^{n+2}, \Theta' \vdash \wn \Gamma^{n+2}, \Xi'$}
\LeftLabel{\textsc{($\oc$C$^\ast$, XL$^\ast$)}}
\doubleLine
\UnaryInfC{$\oc \Delta^{n+1}, \Theta' \vdash \wn \Gamma^{n+2}, \Xi'$}
\LeftLabel{\textsc{($\wn$C$^\ast$)}}
\doubleLine
\UnaryInfC{$\oc \Delta^{n+1}, \Theta' \vdash \wn \Gamma^{n+1}, \Xi'$}
\DisplayProof
\end{mathpar}

\end{itemize}

In this way, the additional rule Cut$_{\wn\oc}$ is incorporated into the cut-elimination for \textsf{ILC} just like another instance of Cut, and the same inductive argument as that of \S\ref{CutElimination} is applicable to this extended cut-elimination procedure. 
We have completed the case where the last occurrence $\oc\wn A$ in $p$ is pure.

Next, let us consider the other case, i.e., when the last occurrence $\oc \wn A$ in $p$ is not pure. 
In this case, $f$ must be of the form
\begin{mathpar}
\AxiomC{$p$}
\noLine
\UnaryInfC{$\oc \Delta \vdash \wn A, \wn \Gamma$}
\LeftLabel{\textsc{($\oc$R$^{\oc\wn}$)}}
\UnaryInfC{$\oc \Delta \vdash \oc \wn A, \wn \Gamma$}
\AxiomC{$p'$}
\noLine
\UnaryInfC{$\oc \Delta', \oc A \vdash \wn \Gamma'$}
\LeftLabel{\textsc{($\oc\wn$L$^{\oc\wn}$)}}
\UnaryInfC{$\oc \Delta', \oc \wn A \vdash \wn \Gamma'$}
\LeftLabel{\textsc{(Cut)}}
\BinaryInfC{$\oc \Delta \vdash \wn \Gamma, \wn \Gamma'$}
\DisplayProof
\end{mathpar}
where the last occurrence $\oc \wn A$ in $p'$ is pure, by the second axiom of Definition~\ref{DefILC_Delta}.
Then, we transform $f$ into
\begin{mathpar}
\AxiomC{$p$}
\noLine
\UnaryInfC{$\oc \Delta \vdash \wn A, \wn \Gamma$}
\AxiomC{$p'$}
\noLine
\UnaryInfC{$\oc \Delta', \oc A \vdash \wn \Gamma'$}
\LeftLabel{\textsc{(Cut$_{\wn\oc}$)}}
\BinaryInfC{$\oc \Delta, \oc \Delta' \vdash \wn \Gamma, \wn \Gamma'$}
\DisplayProof
\end{mathpar}
By the argument symmetric to the elimination of CutL$^n$ and CutL$^n_{\wn\oc}$ described above, we eliminate this occurrence of Cut$_{\wn\oc}$ with the help of the auxiliary rules to be denoted by CutR$^n$ and CutR$^n_{\wn\oc}$. 
Dually, it is crucial that the last occurrence $\oc A$ in $p'$ is pure, so that this cut elimination completes too.

Finally, the resulting, extended cut-elimination procedure clearly preserves tractable formal proofs, completing the proof.
\end{proof}

Note that the second axiom of Definition~\ref{DefILC_Delta} plays crucial roles in the proof of Theorem~\ref{ThmCutEliminationForILC_Delta}.
For instance, the formal proof of the sequent $\oc (X \invamp X) \vdash \wn (X \otimes X)$ in \textsf{ILC}$_\iota$ given in \S\ref{ConservativeExtensionOfIntuitionisticLinearLogic}, for which cut-elimination is impossible, is not tractable or valid in \textsf{ILC}$_\rho$ because it does not satisfy the second axiom. 

Next, let us similarly carve out a substructural sequent calculus of \textsf{INC}, which we can translate into \textsf{ILC$_\rho$}:
\begin{definition}[Pure occurrences in \textsf{INC}]
\label{DefPureOccurrencesII}
An occurrence of a formula $A$ in a sequent $\Delta \vdash \Gamma$ in a formal proof $p$ in \textsf{INC} is \emph{(hereditarily) pure} if each sequent $\Delta' \vdash \Gamma'$ occurring in the subtree of $p$ whose root is $\Delta \vdash \Gamma$ satisfies

\begin{enumerate}

\item There is no application of WL, CL, $\wn$W or $\wn$C on subformulas of $A$ occurring in $\Delta' \vdash \Gamma'$ that constitute the occurrence $A$ in $\Delta \vdash \Gamma$;

\item An occurrence of each subformula $A'$ of $A$ in $\Delta'$ or $\Gamma'$ is at most one (i.e., if $A'$ occurs in $\Delta'$ (resp. $\Gamma'$), then there is no other $A'$ in $\Delta'$ (resp. $\Gamma'$)).

\end{enumerate}
\end{definition}

\begin{definition}[\textsf{INC$_{\boldsymbol{\rho}}$} for IL$^{\text{e}}$]
\label{DefINC_Gamma}
A formal proof $p$ in the sequent calculus \textsf{INC} is \emph{tractable} if, for each occurrence of the rule
\begin{mathpar}
\AxiomC{$q$}
\noLine
\UnaryInfC{$\Delta \vdash \wn A, \wn \Gamma$}
\AxiomC{$q'$}
\noLine
\UnaryInfC{$\Delta', A \vdash \wn \Gamma'$}
\LeftLabel{\textsc{(Cut$^\wn$)}}
\BinaryInfC{$\Delta, \Delta' \vdash \wn \Gamma, \wn \Gamma'$}
\DisplayProof
\end{mathpar}
in $p$, the last occurrence $\wn A$ in $q$ or the last occurrence $A$ in $q'$ is pure.

\emph{\textsf{INC}$_\rho$} is the sequent calculus obtained out of \textsf{INC} by restricting formal proofs to tractable ones.
\end{definition}

\textsf{INC}$_\rho$ embodies IL$^{\text{e}}$ since \textsf{INC} enjoys cut-elimination (Lemma~\ref{LemCutEliminationForINC_mu}).
Also, \textsf{INC}$_\rho$ enjoys cut-elimination since the cut-elimination procedure for \textsf{INC} in the proofs of Lemma~\ref{LemCutEliminationForINC_mu} preserves tractable formal proofs.

Moreover, the restriction of the translation $\mathscr{T}_\oc : \textsf{INC} \rightarrow \textsf{ILC}_\iota$ (Lemma~\ref{LemTranslationOfILK_muIntoILLK_mu}) to \textsf{INC}$_\rho$ defines a \emph{conservative} translation $\textsf{INC}_\rho \rightarrow \textsf{ILC}_\rho$ verified by a technique analogous to the one used for Lemma~\ref{LemTranslationOfLKIntoILK_mu} thanks to Theorem~\ref{ThmCutEliminationForILC_Delta} (see the proof of Corollary~\ref{CorMainCorollary}). 
We write $\mathscr{T}_\oc : \textsf{INC}_\rho \rightarrow \textsf{ILC}_\rho$ for this translation.

Similarly, we carve out a substructural sequent calculus of \textsf{CLC}, which we can translate into \textsf{ILC$_\rho$}:
\begin{definition}[Pure occurrences in \textsf{CLC}]
\label{DefPureOccurrencesIII}
An occurrence of a formula $A$ in a sequent $\Delta \vdash \Gamma$ in a formal proof $p$ in \textsf{CLC} is \emph{(hereditarily) pure} if each sequent $\Delta' \vdash \Gamma'$ occurring in the subtree of $p$ whose root is $\Delta \vdash \Gamma$ satisfies

\begin{enumerate}

\item There is no application of $\oc$W, $\oc$C, WR or CR on subformulas of $A$ occurring in $\Delta' \vdash \Gamma'$ that constitute the occurrence $A$ in $\Delta \vdash \Gamma$;

\item An occurrence of each subformula $A'$ of $A$ in $\Delta'$ or $\Gamma'$ is at most one (i.e., if $A'$ occurs in $\Delta'$ (resp. $\Gamma'$), then there is no other $A'$ in $\Delta'$ (resp. $\Gamma'$)).

\end{enumerate}
\end{definition}

\begin{definition}[\textsf{CLC$_{\boldsymbol{\rho}}$} for CLL$^{\boldsymbol{-}}$]
\label{DefLK_Gamma}
A formal proof $p$ in the sequent calculus \textsf{CLC} is \emph{tractable} if, for each occurrence of the rule
\begin{mathpar}
\AxiomC{$q$}
\noLine
\UnaryInfC{$\oc \Delta \vdash A, \Gamma$}
\AxiomC{$q'$}
\noLine
\UnaryInfC{$\oc \Delta', \oc A \vdash \Gamma'$}
\LeftLabel{\textsc{(Cut$^\oc$)}}
\BinaryInfC{$\oc \Delta, \oc \Delta' \vdash \Gamma, \Gamma'$}
\DisplayProof
\end{mathpar}
in $p$, the last occurrence $A$ in $q$ or the last occurrence $\oc A$ in $q'$ is pure.

\emph{\textsf{CLC}$_\rho$} is the sequent calculus obtained out of \textsf{CLC} by restricting formal proofs to tractable ones.
\end{definition}

Similarly to \textsf{INC}$_\rho$, \textsf{CLC}$_\rho$ embodies CLL$^-$ and enjoys cut-elimination.
Also, the restriction of the translation $\mathscr{T}_\wn : \textsf{CLC} \rightarrow \textsf{ILC}_\iota$ (Lemma~\ref{LemTranslationOfCLLKintoILLK}) to \textsf{CLC}$_\rho$ defines a \emph{conservative} translation of \textsf{CLC}$_\rho$ into \textsf{ILC}$_\rho$, which we also write $\mathscr{T}_\wn$.

Finally, we similarly carve out a substructural sequent calculus of \textsf{LK}, which we can translate into \textsf{INC$_\rho$} and \textsf{CLC}$_\rho$:
\begin{definition}[Pure occurrences in \textsf{LK}]
\label{DefPureOccurrencesIII}
An occurrence of a formula $A$ in a sequent $\Delta \vdash \Gamma$ in a formal proof $p$ in \textsf{LK} is \emph{(hereditarily) pure} if each sequent $\Delta' \vdash \Gamma'$ occurring in the subtree of $p$ whose root is $\Delta \vdash \Gamma$ satisfies

\begin{enumerate}

\item There is no application of WL, CL, WR or CR on subformulas of $A$ occurring in $\Delta' \vdash \Gamma'$ that constitute the occurrence $A$ in $\Delta \vdash \Gamma$;

\item An occurrence of each subformula $A'$ of $A$ in $\Delta'$ or $\Gamma'$ is at most one (i.e., if $A'$ occurs in $\Delta'$ (resp. $\Gamma'$), then there is no other $A'$ in $\Delta'$ (resp. $\Gamma'$)).

\end{enumerate}
\end{definition}

\begin{definition}[\textsf{LK$_{\boldsymbol{\rho}}$} for CL]
\label{DefLK_Gamma}
A formal proof $p$ in the sequent calculus \textsf{LK} is \emph{tractable} if, for each occurrence of the rule
\begin{mathpar}
\AxiomC{$q$}
\noLine
\UnaryInfC{$\Delta \vdash A, \Gamma$}
\AxiomC{$q'$}
\noLine
\UnaryInfC{$\Delta', A \vdash \Gamma'$}
\LeftLabel{\textsc{(Cut)}}
\BinaryInfC{$\Delta, \Delta' \vdash \Gamma, \Gamma'$}
\DisplayProof
\end{mathpar}
in $p$, the last occurrence $A$ in $q$ or the last occurrence $A$ in $q'$ is pure.

\emph{\textsf{LK}$_\rho$} is the sequent calculus obtained out of \textsf{LK} by restricting formal proofs to tractable ones.
\end{definition}

Again, \textsf{LK}$_\rho$ embodies CL and enjoys cut-elimination.
Further, the restriction of the conservative translation $\mathscr{T}_\wn : \textsf{LK} \rightarrow \textsf{INC}$ given in Lemma~\ref{LemTranslationOfLKIntoILK_mu} (resp. $\mathscr{T}_\oc : \textsf{LK} \rightarrow \textsf{CLC}$ given in Lemma~\ref{LemTranslationOfLKintoCLLK}) to \textsf{LK}$_\rho$ defines a conservative translation of \textsf{LK}$_\rho$ into \textsf{INC}$_\rho$ (resp. \textsf{LK}$_\rho$ into \textsf{CLC}$_\rho$), which we also write $\mathscr{T}_\wn$ (resp. $\mathscr{T}_\oc$).

Summarising the results obtained so far in this last subsection, we have finally established the corollary articulated in \S\ref{MainResults}:
\begin{corollary}[Conservative translations]
\label{CorMainCorollary}
The following holds:
\begin{enumerate}

\item The restrictions of the translations $\textsf{LK} \stackrel{\mathscr{T}_\wn}{\rightarrow} \textsf{INC} \stackrel{\mathscr{T}_\oc}{\rightarrow} \textsf{ILC}_\iota$ to \textsf{LK}$_\rho$, \textsf{INC}$_\rho$ and \textsf{ILC}$_\rho$ give rise to conservative translations $\textsf{LK}_\rho \stackrel{\mathscr{T}_\wn}{\rightarrow} \textsf{INC}_\rho \stackrel{\mathscr{T}_\oc}{\rightarrow} \textsf{ILC}_\rho$;

\item The restrictions of the translations $\textsf{LK} \stackrel{\mathscr{T}_\oc}{\rightarrow} \textsf{CLC} \stackrel{\mathscr{T}_\wn}{\rightarrow} \textsf{ILC}_\iota$ to \textsf{LK}$_\rho$, \textsf{CLC}$_\rho$ and \textsf{ILC}$_\rho$ give rise to conservative translations $\textsf{LK}_\rho \stackrel{\mathscr{T}_\oc}{\rightarrow} \textsf{CLC}_\rho \stackrel{\mathscr{T}_\wn}{\rightarrow} \textsf{ILC}_\rho$;

\item Given a formal proof $p$ of a sequent $\Delta \vdash \Gamma$ in \textsf{LK}$_\rho$, the formal proofs $\mathscr{T}_{\oc\wn}(p)$ and $\mathscr{T}_{\wn\oc}(p)$ of the sequent $\oc \mathscr{T}_{\oc\wn}^\ast(\Delta) \vdash \wn \mathscr{T}_{\oc\wn}^\ast(\Gamma)$ in \textsf{ILC$_\rho$} coincide modulo permuting axioms and rules in formal proofs in \textsf{ILC$_\rho$}. 

\end{enumerate}
\end{corollary}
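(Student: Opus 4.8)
The plan is to dispatch the three clauses in order, observing that clauses~1 and~2 are exact left--right duals and that clause~3 is a bookkeeping refinement of Theorem~\ref{ThmCommutativeUnityOfLogic}. For each translation arrow I must verify two things: \emph{well-definedness}, that the restriction of the already-constructed translation sends tractable proofs to tractable proofs, and \emph{conservativity}, that provability of the image in the target calculus reflects provability of the source sequent in the restricted source calculus. The two arrows emanating from \textsf{LK$_\rho$}, namely $\mathscr{T}_\wn : \textsf{LK$_\rho$} \to \textsf{INC$_\rho$}$ and $\mathscr{T}_\oc : \textsf{LK$_\rho$} \to \textsf{CLC$_\rho$}$, I would obtain by merely restricting the conservative translations of Lemmata~\ref{LemTranslationOfLKIntoILK_mu} and \ref{LemTranslationOfLKintoCLLK}; since the purity conditions of Definitions~\ref{DefPureOccurrences} and \ref{DefPureOccurrencesII} were designed to mirror one another, a tractable \textsf{LK}-proof is carried to a tractable \textsf{INC}- (resp.\ \textsf{CLC}-) proof, and the back-translations in those lemmata return tractable proofs, so conservativity survives the restriction. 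The genuinely new content is therefore the conservativity of $\mathscr{T}_\oc : \textsf{INC$_\rho$} \to \textsf{ILC$_\rho$}$ and, dually, of $\mathscr{T}_\wn : \textsf{CLC$_\rho$} \to \textsf{ILC$_\rho$}$, which were unavailable at the level of \textsf{ILC$_\iota$} precisely because that calculus lacks cut-elimination.

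I would treat $\mathscr{T}_\oc$ in full and then obtain $\mathscr{T}_\wn$ by the evident dualisation (swapping $\oc$ with $\wn$ and the two sides of sequents), noting that the cut-elimination Theorem~\ref{ThmCutEliminationForILC_Delta} already handles both the ``pure on the left premise'' and ``pure on the right premise'' cases via its auxiliary rules. For well-definedness I would revisit each derived rule exhibited in the proof of Lemma~\ref{LemTranslationOfILK_muIntoILLK_mu} and check that it remains tractable. The only delicate case is the image of Cut$^\wn$, which is a \textsf{Cut} whose cut formula $\oc \wn B$ is produced by $\oc$R$^{\oc\wn}$ on the left premise and by $\oc\wn$L$^{\oc\wn}$ on the right: tractability of the source Cut$^\wn$ furnishes a pure occurrence of $\wn B$ in $q$ or of $B$ in $q'$, and I would transport that purity through the translation to exhibit a pure occurrence of $\oc \wn B$ in one of the two \textsf{ILC$_\iota$}-premises, verifying the second clause of Definition~\ref{DefILC_Delta}; the first clause is immediate since $\mathscr{T}_\oc$ never introduces $\wn\oc$R$^{\oc\wn}$.

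For conservativity I would follow the second half of Lemma~\ref{LemTranslationOfLKIntoILK_mu}, now with Theorem~\ref{ThmCutEliminationForILC_Delta} available. Assuming $\oc \mathscr{T}_\oc^\ast(\Delta) \vdash \mathscr{T}_\oc^\ast(\Gamma)$ is provable in \textsf{ILC$_\rho$} with only IL$^{\text{e}}$-formulas in $\Delta, \Gamma$, I would take a cut-free tractable proof by Theorem~\ref{ThmCutEliminationForILC_Delta}, then normalise it by advancing the applications of $\oc$D so that of-course sits implicitly on the whole left-hand side, and by advancing $\wn$D while delaying $\oc\wn$L$^{\oc\wn}$ as far as possible, in exact parallel with the $\wn$D\,/\,$\wn$L$^\wn$ manipulation of Lemma~\ref{LemTranslationOfLKIntoILK_mu}. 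The subformula property supplied by cut-elimination then forces every sequent of the normalised proof into the shape $\oc \Sigma \vdash [C], \wn \Pi$ lying in the image of $\mathscr{T}_\oc^\ast$, whence each rule back-translates to a rule of \textsf{INC} and yields a proof of $\Delta \vdash \Gamma$; a final check, again matching Definitions~\ref{DefPureOccurrences} and \ref{DefPureOccurrencesII}, shows this proof is tractable and hence lies in \textsf{INC$_\rho$}.

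Finally, clause~3 should descend from Theorem~\ref{ThmCommutativeUnityOfLogic} together with the well-definedness established above. Because $\mathscr{T}_{\oc\wn} = \mathscr{T}_\oc \circ \mathscr{T}_\wn$ and $\mathscr{T}_{\wn\oc} = \mathscr{T}_\wn \circ \mathscr{T}_\oc$ both carry a tractable \textsf{LK$_\rho$}-proof into \textsf{ILC$_\rho$}, and because the permutations witnessing commutativity merely reorder introductions of $\oc$ and $\wn$ -- operations touching neither \textsf{Cut} nor $\wn\oc$R$^{\oc\wn}$, hence preserving tractability -- the coincidence modulo permutation already proved in \textsf{ILC$_\iota$} transfers verbatim to \textsf{ILC$_\rho$}. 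I expect the main obstacle throughout to be the two-directional transport of purity across the translations: getting the several purity notions to line up so that tractable cuts map to tractable cuts and, conversely, the back-translated proofs remain tractable, is the one place where the matching of the purity definitions must be verified with care rather than merely invoked.
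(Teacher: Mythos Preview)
Your proposal is correct and follows essentially the same route as the paper: focus on the conservativity of $\mathscr{T}_\oc : \textsf{INC}_\rho \to \textsf{ILC}_\rho$ (with the dual case symmetric), obtain a cut-free proof via Theorem~\ref{ThmCutEliminationForILC_Delta}, and back-translate using the normalisation technique of Lemma~\ref{LemTranslationOfLKIntoILK_mu}, the key point being that the right-hand side of each sequent is forced into the shape $[B], \wn\Gamma'$. You are more explicit than the paper about well-definedness (tractable $\mapsto$ tractable) and the transport of purity across the translations, but the overall strategy and the crucial observations coincide.
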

\begin{proof}
We focus on the conservativity of $\mathscr{T}_\oc : \textsf{INC}_\rho \rightarrow \textsf{ILC}_\rho$ since the case of $\mathscr{T}_\wn : \textsf{CLC}_\rho \rightarrow \textsf{ILC}_\rho$ is symmetric, and the other points of the corollary follow from our preceding results.
By Theorem~\ref{ThmCutEliminationForILC_Delta}, we can show the conservativity of $\mathscr{T}_\oc$ by the method similar to the last part of the proof of Lemma~\ref{LemTranslationOfLKIntoILK_mu}.
Here, it is crucial to observe that if a sequent $\oc \mathscr{T}_\oc^\ast(\Delta), [\mathscr{T}_\oc(A)] \vdash \mathscr{T}_\oc^\ast(\Gamma)$ is provable in $\textsf{ILC}_\rho$, and only formulas in IL$^{\text{e}}$ occur in $\Delta, [A], \Gamma$, then $\mathscr{T}_\oc^\ast(\Gamma)$ must be of the form $[B], \wn \Gamma'$ since in this case an increment of the number of elements on the right-hand side of sequents in $\textsf{ILC}_\rho$ is possible only by $\wn$W or ($\bot$R, $\oc$R$^{\oc\wn}$).
\end{proof}

The last part of the proof of Corollary~\ref{CorMainCorollary} clarifies how the sequent calculus $(\textsf{ILC}_\rho)_\oc$ \emph{automatically satisfies} the form $[B], \wn \Gamma'$ required on the right-hand side of sequents in the sequent calculi \textsf{INC} and \textsf{INC}$_{\rho}$ (\S\ref{ConservativeExtensionOfIntuitionisticLogic}) if we focus on the formulas of IL$^{\text{e}}$.
As announced in Remark~\ref{RemarkOnIntuitionisity}, this point explains the general mechanism underlying the standard yet \emph{ad-hoc} intuitionistic restriction on CL to obtain IL (Definition~\ref{DefLJ}): Why-not $\wn$ does not occur in the formal language of IL, and hence $[B], \wn \Gamma'$ must be $[B]$ if we focus on the formulas of IL.

\begin{remark}
Our sequent calculi introduced in this section are \emph{undirected} (on cut-elimination), but we can design their \emph{directed} variants as follows.
First, observe that for the present work Cut occurring in formal proofs in \textsf{ILC}$_\rho$ such that the cut-formula on the left hypothesis is not pure is only necessary for the translation of the rule $\oplus$R in \text{CLC} into \textsf{ILC}$_\rho$ in the proof of Lemma~\ref{LemTranslationOfCLLKintoILLK}. 
Next, Corollary~\ref{CorMainCorollary} can dispense with this kind of Cut by advancing occurrences of the right rule on disjunction over those of structural rules of \textsf{LK}$_\rho$, \textsf{CLC}$_\rho$, \textsf{INC}$_\rho$ and \textsf{ILC}$_\rho$.
It follows from these two points that we can focus on Cut whose cut-formula on the left hypothesis is pure, so that we only need the \emph{rightward} cut-elimination, or the \emph{leftward} one by symmetry. 
However, the resulting sequent calculi, in which occurrences of the right rule on disjunction in formal proofs are always advanced over those of structural rules, would be involved. 
Hence, we leave it as future work to pursue this idea more appropriately in terms of \emph{term calculi}, perhaps in a style similar to Wadler's \emph{dual calculus} \cite{wadler2003call}.
\end{remark}

\bibliographystyle{spmpsci}
\bibliography{RecursionTheory,HoTT,GamesAndStrategies,LinearLogic,CategoricalLogic}

\appendix
\section{Proof of Corollary~\ref{ILCAsConservativeExtensionOfLLJ}}
\label{ProofOnMu}
In this appendix, we show in detail how to extend the conservativity of \textsf{ILC} over \textsf{LLJ} to \textsf{ILC$_\iota$}.
For this proof, it is convenient to introduce the following axiom:
\begin{mathpar}
\AxiomC{}
\LeftLabel{\textsc{(Dist)}}
\UnaryInfC{$\oc \wn A \vdash \wn \oc A$}
\DisplayProof
\end{mathpar}

Note that this axiom Dist is \emph{derivable} in \textsf{ILC$_\iota$} in two ways:
\begin{mathpar}
\AxiomC{}
\LeftLabel{\textsc{(Id)}}
\UnaryInfC{$A \vdash A$}
\LeftLabel{\textsc{($\oc$D)}}
\UnaryInfC{$\oc A \vdash A$}
\LeftLabel{\textsc{($\oc$R$^{\oc\wn}$)}}
\UnaryInfC{$\oc A \vdash \oc A$}
\LeftLabel{\textsc{($\wn$D)}}
\UnaryInfC{$\oc A \vdash \wn \oc A$}
\LeftLabel{\textsc{($\oc\wn$L$^{\oc\wn}$)}}
\UnaryInfC{$\oc \wn A \vdash \wn \oc A$}
\DisplayProof
\and
\AxiomC{}
\LeftLabel{\textsc{(Id)}}
\UnaryInfC{$A \vdash A$}
\LeftLabel{\textsc{($\wn$D)}}
\UnaryInfC{$A \vdash \wn A$}
\LeftLabel{\textsc{($\wn$L$^{\oc\wn}$)}}
\UnaryInfC{$\wn A \vdash \wn A$}
\LeftLabel{\textsc{($\oc$D)}}
\UnaryInfC{$\oc \wn A \vdash \wn A$}
\LeftLabel{\textsc{($\wn\oc$R$^{\oc\wn}$)}}
\UnaryInfC{$\oc \wn A \vdash \wn \oc A$}
\DisplayProof
\end{mathpar}

Conversely, the weakly distributive rules $\oc\wn$L$^{\oc\wn}$ and $\wn\oc$R$^{\oc\wn}$ are derivable in \textsf{ILC} augmented with the axiom Dist:
\begin{mathpar}
\AxiomC{}
\LeftLabel{\textsc{(Dist)}}
\UnaryInfC{$\oc \wn A \vdash \wn \oc A$}
\AxiomC{$\oc \Delta, \oc A \vdash \wn \Gamma$}
\LeftLabel{\textsc{($\wn$L$^{\oc\wn}$)}}
\UnaryInfC{$\oc \Delta, \wn \oc A \vdash \wn \Gamma$}
\LeftLabel{\textsc{(Cut)}}
\BinaryInfC{$\oc \wn A, \oc \Delta \vdash \wn \Gamma$}
\LeftLabel{\textsc{(XL$^\ast$)}}
\doubleLine
\UnaryInfC{$\oc \Delta, \oc \wn A \vdash \wn \Gamma$}
\DisplayProof
\and
\AxiomC{$\oc \Delta \vdash \wn A, \wn \Gamma$}
\LeftLabel{\textsc{($\oc$R$^{\oc\wn}$)}}
\UnaryInfC{$\oc \Delta \vdash \oc \wn A, \wn \Gamma$}
\AxiomC{}
\LeftLabel{\textsc{(Dist)}}
\UnaryInfC{$\oc \wn A \vdash \wn \oc A$}
\LeftLabel{\textsc{(Cut)}}
\BinaryInfC{$\oc \Delta \vdash \wn \Gamma, \wn \oc A$}
\LeftLabel{\textsc{(XR$^\ast$)}}
\doubleLine
\UnaryInfC{$\oc \Delta \vdash \wn \oc A, \wn \Gamma$}
\DisplayProof
\end{mathpar}

Hence, at the level of provability, we may replace \textsf{ILC$_\iota$} with \textsf{ILC} augmented with the axiom Dist, which let us call \emph{\textsf{ILC$_\delta$}}.
\textsf{ILC$_\delta$} is more suited than \textsf{ILC$_\rho$} to the following argument. 

Now, assume that a sequent $\Delta \vdash \Gamma$ has a proof $p$ in \textsf{ILC$_\delta$}, and only formulas of ILL occur in $\Delta, \Gamma$.
It is easy to see by induction on formal proofs in \textsf{ILC$_\delta$} that once a formula of the form $\wn B$ occurs on the right-hand side of a sequent in $p$, $\Gamma$ must contain $\wn$.
Note, however, that $\Gamma$ does not contain $\wn$ by the assumption, and thus there is no sequent in $p$ that contains a formula of the form $\wn B$ on the right-hand side.
Hence, there is no application of Dist in $p$, i.e., $p$ is a formal proof in \textsf{ILC}, which completes the proof of Corollary~\ref{ILCAsConservativeExtensionOfLLJ}.

\section{Proof of Theorem~\ref{ThmCutEliminationForILC}}
\label{CutElimination}
In this appendix, we define a cut-elimination procedure for \textsf{ILC} (Definition~\ref{DefILC}), following the method for \textsf{LLJ} (Definition~\ref{DefLLJ}) given in \cite{bierman1994intuitionistic}.
This cut-elimination procedure removes applications of Cut in a formal proof that are topmost with respect to the \emph{depth} of each application of Cut among those with the highest \emph{rank} in the formal proof.

\begin{definition}[Ranks \cite{bierman1994intuitionistic}]
\label{DefRanks}
The \emph{rank} $|A| \in \mathbb{N}$ of each formula $A$ in ILL$^{\text{e}}$ is defined by the following induction on $A$:
\begin{mathpar}
|X| \colonequals | \top | \colonequals | \bot | \colonequals |1| \colonequals |0| \colonequals 0
\and
|\neg A| \colonequals |\oc A| \colonequals |\wn A| \colonequals |A| + 1
\and
|A \otimes B| \colonequals |A \mathbin{\&} B| \colonequals |A \invamp B| \colonequals |A \oplus B| \colonequals |A| + |B| + 1.
\end{mathpar}
\end{definition}

\begin{definition}[Depths \cite{bierman1994intuitionistic}]
\label{DefDepths}
The \emph{depth} $d(p) \in \mathbb{N}$ of a given formal proof $p$ in \textsf{ILC} is defined by the following induction on $p$:
\begin{equation*}
d(p) \colonequals \begin{cases} 0 &\text{if the last rule occurring in $p$ is Id;} \\ d(p')+1 &\text{if $p$ consists of a formal proof $p'$ followed by an unary rule;} \\ \max\{ d(p'_1), d(p'_2) \} + 1 &\text{if $p$ consists of formal proofs $p_1'$ and $p'_2$ followed by a binary rule.} \end{cases}
\end{equation*}
\end{definition}

We may basically follow the proof given in \cite{bierman1994intuitionistic} to show that our cut-elimination procedure  eliminates all the applications of Cut in any formal proof in \textsf{ILC}.
Hence, in the following, we only describe how the cut-elimination procedure locally transforms each application of Cut.

Strictly speaking, the cut-elimination procedure actually deals with a multiple, consecutive applications of Cut at a time, regarding them as the following single rule:
\begin{definition}[Multiple cuts]
\emph{Left multiple Cut} is the rule
\begin{small}
\begin{equation*}
\AxiomC{$\Delta \vdash A, \Gamma$}
\AxiomC{$\Delta', A^n \vdash \Gamma'$}
\LeftLabel{\textsc{(CutL$^{n}$)}}
\RightLabel{\textsc{($n \in \mathbb{N}$)}}
\BinaryInfC{$\Delta^n, \Delta' \vdash \Gamma^n, \Gamma'$}
\DisplayProof
\end{equation*}
\end{small}and symmetrically, \emph{right multiple Cut} is the rule
\begin{small}
\begin{equation*}
\AxiomC{$\Delta' \vdash A^n, \Gamma'$}
\AxiomC{$\Delta, A \vdash \Gamma$}
\LeftLabel{\textsc{(CutR$^{n}$)}}
\RightLabel{\textsc{($n \in \mathbb{N}$)}}
\BinaryInfC{$\Delta', \Delta^n \vdash \Gamma', \Gamma^n$}
\DisplayProof
\end{equation*}
\end{small}
\end{definition}

The left and the right multiple Cut rules are \emph{derivable} in \textsf{ILC} by consecutive applications of Cut from left and right, respectively, in the evident manner.
The cut-elimination procedure has to take multiple Cut as a single rule in order to handle applications of Cut whose cut formula is the principal formula of a contraction rule; see \cite[\S 4.1.9]{troelstra2000basic} for the details.

Let us now list all the cases of an application of Cut in \textsf{ILC}, where the cases are divided into four patterns, and describe how the cut-elimination procedure transforms them.

\subsection{Principal cuts}
\label{PrincipalCuts}
The first pattern is an application of Cut such that the principal formulas of the rules at the end of the two hypotheses of the application of Cut are both the cut formula. 

The cases of the first pattern are the following:
\begin{itemize}

\item ($\top$R, $\top$L)-Cut. A formal proof of the form
\begin{scriptsize}
\begin{equation*}
\AxiomC{}
\LeftLabel{\textsc{($\top$R)}}
\UnaryInfC{$\vdash \top$}
\AxiomC{$p$}
\noLine
\UnaryInfC{$\top^n, \Delta \vdash \Gamma$}
\LeftLabel{\textsc{($\top$L)}}
\UnaryInfC{$\top^{n+1}, \Delta \vdash \Gamma$}
\LeftLabel{\textsc{(CutL$^{n+1}$)}}
\BinaryInfC{$\Delta \vdash \Gamma$}
\DisplayProof
\end{equation*}
\end{scriptsize}is transformed into
\begin{scriptsize}
\begin{equation*}
\AxiomC{}
\LeftLabel{\textsc{($\top$R)}}
\UnaryInfC{$\vdash \top$}
\AxiomC{$p$}
\noLine
\UnaryInfC{$\top^n, \Delta \vdash \Gamma$}
\LeftLabel{\textsc{(CutL$^{n}$)}}
\BinaryInfC{$\Delta \vdash \Gamma$}
\DisplayProof
\end{equation*}
\end{scriptsize}

\item ($\bot$R, $\bot$L)-Cut. A formal proof of the form
\begin{scriptsize}
\begin{equation*}
\AxiomC{$p$}
\noLine
\UnaryInfC{$\Delta \vdash \bot^n, \Gamma$}
\LeftLabel{\textsc{($\bot$R)}}
\UnaryInfC{$\Delta \vdash \bot^{n+1}, \Gamma$}
\AxiomC{}
\LeftLabel{\textsc{($\bot$L)}}
\UnaryInfC{$\bot \vdash$}
\LeftLabel{\textsc{(CutR$^{n+1}$)}}
\BinaryInfC{$\Delta \vdash \Gamma$}
\DisplayProof
\end{equation*}
\end{scriptsize}is transformed into
\begin{scriptsize}
\begin{equation*}
\AxiomC{$p$}
\noLine
\UnaryInfC{$\Delta \vdash \bot^n, \Gamma$}
\AxiomC{}
\LeftLabel{\textsc{($\bot$L)}}
\UnaryInfC{$\bot \vdash$}
\LeftLabel{\textsc{(CutR$^{n}$)}}
\BinaryInfC{$\Delta \vdash \Gamma$}
\DisplayProof
\end{equation*}
\end{scriptsize}

\item ($\neg$R, $\neg$L)-Cut. A formal proof of the form
\begin{scriptsize}
\begin{equation*}
\AxiomC{$p_1$}
\noLine
\UnaryInfC{$\Delta_1, A \vdash \Gamma_1$}
\LeftLabel{\textsc{($\neg$R)}}
\UnaryInfC{$\Delta_1 \vdash \neg A, \Gamma_1$}
\AxiomC{$p_2$}
\noLine
\UnaryInfC{$\Delta_2, \neg A^n \vdash A, \Gamma_2$}
\LeftLabel{\textsc{($\neg$L)}}
\UnaryInfC{$\Delta_2, \neg A^{n+1} \vdash \Gamma_2$}
\LeftLabel{\textsc{(CutL$^{n+1}$)}}
\BinaryInfC{$\Delta_1^{n+1}, \Delta_2 \vdash \Gamma_1^{n+1}, \Gamma_2$}
\DisplayProof
\end{equation*}
\end{scriptsize}is transformed into
\begin{scriptsize}
\begin{equation*}
\AxiomC{$p_1$}
\noLine
\UnaryInfC{$\Delta_1, A \vdash \Gamma_1$}
\LeftLabel{\textsc{($\neg$R)}}
\UnaryInfC{$\Delta_1 \vdash \neg A, \Gamma_1$}
\AxiomC{$p_2$}
\noLine
\UnaryInfC{$\Delta_2, \neg A^n \vdash A, \Gamma_2$}
\LeftLabel{\textsc{(CutL$^{n}$)}}
\BinaryInfC{$\Delta_1^n, \Delta_2 \vdash A, \Gamma_1^n, \Gamma_2$}
\AxiomC{$p_1$}
\noLine
\UnaryInfC{$\Delta_1, A \vdash \Gamma_1$}
\LeftLabel{\textsc{(Cut)}}
\BinaryInfC{$\Delta_1^{n}, \Delta_2, \Delta_1 \vdash \Gamma_1^{n}, \Gamma_2, \Gamma_1$}
\LeftLabel{\textsc{(XL$^\ast$, XR$^\ast$)}}
\doubleLine
\UnaryInfC{$\Delta_1^{n+1}, \Delta_2 \vdash \Gamma_1^{n+1}, \Gamma_2$}
\DisplayProof
\end{equation*}
\end{scriptsize}The case of the right multiple Cut is just symmetric, and thus we omit it.

\item ($\otimes$R, $\otimes$L)-Cut. A formal proof of the form
\begin{scriptsize}
\begin{equation*}
\AxiomC{$p_1$}
\noLine
\UnaryInfC{$\Delta_1 \vdash A_1, \Gamma_1$}
\AxiomC{$p_2$}
\noLine
\UnaryInfC{$\Delta_2 \vdash A_2, \Gamma_2$}
\LeftLabel{\textsc{($\otimes$R)}}
\BinaryInfC{$\Delta_1, \Delta_2 \vdash A_1 \otimes A_2, \Gamma_1, \Gamma_2$}
\AxiomC{$p_3$}
\noLine
\UnaryInfC{$\Delta_3, (A_1 \otimes A_2)^n, A_1, A_2 \vdash \Gamma_3$}
\LeftLabel{\textsc{($\otimes$L)}}
\UnaryInfC{$\Delta_3, (A_1 \otimes A_2)^{n+1} \vdash \Gamma_3$}
\LeftLabel{\textsc{(CutL$^{n+1}$)}}
\BinaryInfC{$\Delta_1^{n+1}, \Delta_2^{n+1}, \Delta_3 \vdash \Gamma_1^{n+1}, \Gamma_2^{n+1}, \Gamma_3$}
\DisplayProof
\end{equation*}
\end{scriptsize}is transformed into
\begin{scriptsize}
\begin{equation*}
\AxiomC{$p_1$}
\noLine
\UnaryInfC{$\Delta_1 \vdash A_1, \Gamma_1$}
\AxiomC{$p_2$}
\noLine
\UnaryInfC{$\Delta_2 \vdash A_2, \Gamma_2$}
\AxiomC{$p_1$}
\noLine
\UnaryInfC{$\Delta_1 \vdash A_1, \Gamma_1$}
\AxiomC{$p_2$}
\noLine
\UnaryInfC{$\Delta_2 \vdash A_2, \Gamma_2$}
\LeftLabel{\textsc{($\otimes$R)}}
\BinaryInfC{$\Delta_1, \Delta_2 \vdash A_1 \otimes A_2, \Gamma_1, \Gamma_2$}
\AxiomC{$p_3$}
\noLine
\UnaryInfC{$\Delta_3, (A_1 \otimes A_2)^n, A_1, A_2 \vdash \Gamma_3$}
\LeftLabel{\textsc{(CutL$^{n}$)}}
\BinaryInfC{$\Delta_1^{n}, \Delta_2^{n}, \Delta_3, A_1, A_2 \vdash \Gamma_1^n, \Gamma_2^n, \Gamma_3$}
\LeftLabel{\textsc{(Cut)}}
\BinaryInfC{$\Delta_2, \Delta_1^{n}, \Delta_2^{n}, \Delta_3, A_1 \vdash \Gamma_2, \Gamma_1^n, \Gamma_2^{n}, \Gamma_3$}
\LeftLabel{\textsc{(XL$^\ast$, XR$^\ast$)}}
\doubleLine
\UnaryInfC{$\Delta_1^{n}, \Delta_2^{n+1}, \Delta_3, A_1 \vdash \Gamma_1^n, \Gamma_2^{n+1}, \Gamma_3$}
\LeftLabel{\textsc{(Cut)}}
\BinaryInfC{$\Delta_1^{n+1}, \Delta_2^{n+1}, \Delta_3 \vdash \Gamma_1^{n+1}, \Gamma_2^{n+1}, \Gamma_3$}
\DisplayProof
\end{equation*}
\end{scriptsize}The case of the right multiple Cut is just symmetric, and thus we omit it.

\if0
\item ($\otimes\invamp$R, $\otimes$L)-Cut. A formal proof of the form
\begin{equation*}
\AxiomC{$p_1$}
\noLine
\UnaryInfC{$\Delta_1 \vdash A \otimes (B \invamp C), \Gamma_1$}
\LeftLabel{\textsc{($\otimes\invamp$R)}}
\UnaryInfC{$\Delta_1 \vdash A \otimes B, C, \Gamma_1$}
\AxiomC{$p_2$}
\noLine
\UnaryInfC{$\Delta_2, (A \otimes B)^n, A, B \vdash \Gamma_2$}
\LeftLabel{\textsc{($\otimes$L)}}
\UnaryInfC{$\Delta_2, (A \otimes B)^{n+1} \vdash \Gamma_2$}
\LeftLabel{\textsc{(Cut$^{n+1}$)}}
\BinaryInfC{$\Delta_1^{n+1}, \Delta_2 \vdash \Gamma_1^{n+1}, C^{n+1}, \Gamma_2$}
\DisplayProof
\end{equation*}
is transformed into
\begin{equation*}
\AxiomC{$p_1$}
\noLine
\UnaryInfC{$\Delta_1 \vdash A \otimes (B \invamp C), \Gamma_1$}
\LeftLabel{\textsc{($\otimes\invamp$R)}}
\UnaryInfC{$\Delta_1 \vdash A \otimes B, C, \Gamma_1$}
\AxiomC{$p_1$}
\noLine
\UnaryInfC{$\Delta_1 \vdash A \otimes (B \invamp C), \Gamma_1$}
\LeftLabel{\textsc{($\otimes\invamp$R)}}
\UnaryInfC{$\Delta_1 \vdash A \otimes B, C, \Gamma_1$}
\AxiomC{$p_2$}
\noLine
\UnaryInfC{$\Delta_2, (A \otimes B)^n, A, B \vdash \Gamma_2$}
\LeftLabel{\textsc{(Cut$^{n}$)}}
\BinaryInfC{$\Delta_1^{n}, \Delta_2, A, B \vdash \Gamma_1^{n}, C^{n}, \Gamma_2$}
\LeftLabel{\textsc{($\otimes$L)}}
\UnaryInfC{$\Delta_1^{n}, \Delta_2, A \otimes B \vdash \Gamma_1^{n}, C^{n}, \Gamma_2$}
\LeftLabel{\textsc{(Cut)}}
\BinaryInfC{$\Delta_1^{n+1}, \Delta_2 \vdash \Gamma_1^{n+1}, C^{n+1}, \Gamma_2$}
\DisplayProof
\end{equation*}
and a formal proof of the form
\begin{equation*}
\AxiomC{$p_1$}
\noLine
\UnaryInfC{$\Delta_1 \vdash (A \otimes B)^n, A \otimes (B \invamp C), \Gamma_1$}
\LeftLabel{\textsc{($\otimes\invamp$R)}}
\UnaryInfC{$\Delta_1 \vdash (A \otimes B)^{n+1}, C, \Gamma_1$}
\AxiomC{$p_2$}
\noLine
\UnaryInfC{$\Delta_2, A, B \vdash \Gamma_2$}
\LeftLabel{\textsc{($\otimes$L)}}
\UnaryInfC{$\Delta_2, A \otimes B \vdash \Gamma_2$}
\LeftLabel{\textsc{(Cut$^{n+1}$)}}
\BinaryInfC{$\Delta_1, \Delta_2^{n+1} \vdash \Gamma_1, C, \Gamma_2^{n+1}$}
\DisplayProof
\end{equation*}
is transformed into
\fi

\item ($\invamp$R, $\invamp$L)-Cut. A formal proof of the form
\begin{scriptsize}
\begin{equation*}
\AxiomC{$p$}
\noLine
\UnaryInfC{$\Delta \vdash A_1, A_2, (A_1 \invamp A_2)^n, \Gamma$}
\LeftLabel{\textsc{($\invamp$R)}}
\UnaryInfC{$\Delta \vdash (A_1 \invamp A_2)^{n+1}, \Gamma$}
\AxiomC{$p_1$}
\noLine
\UnaryInfC{$\Delta_1, A_1 \vdash \Gamma_1$}
\AxiomC{$p_2$}
\noLine
\UnaryInfC{$\Delta_2, A_2 \vdash \Gamma_2$}
\LeftLabel{\textsc{($\invamp$L)}}
\BinaryInfC{$\Delta_1, \Delta_2, A_1 \invamp A_2 \vdash \Gamma_1, \Gamma_2$}
\LeftLabel{\textsc{(CutR$^{n+1}$)}}
\BinaryInfC{$\Delta, \Delta_1^{n+1}, \Delta_2^{n+1} \vdash \Gamma, \Gamma_1^{n+1}, \Gamma_2^{n+1}$}
\DisplayProof
\end{equation*}
\end{scriptsize}is transformed into
\begin{scriptsize}
\begin{equation*}
\AxiomC{$p$}
\noLine
\UnaryInfC{$\Delta \vdash A_1, A_2, (A_1 \invamp A_2)^n, \Gamma$}
\AxiomC{$p_1$}
\noLine
\UnaryInfC{$\Delta_1, A_1 \vdash \Gamma_1$}
\AxiomC{$p_2$}
\noLine
\UnaryInfC{$\Delta_2, A_2 \vdash \Gamma_2$}
\LeftLabel{\textsc{($\invamp$L)}}
\BinaryInfC{$\Delta_1, \Delta_2, A_1 \invamp A_2 \vdash \Gamma_1, \Gamma_2$}
\LeftLabel{\textsc{(CutR$^{n}$)}}
\BinaryInfC{$\Delta, \Delta_1^n, \Delta_2^n \vdash A_1, A_2, \Gamma, \Gamma_1^{n}, \Gamma_2^{n}$}
\AxiomC{$p_1$}
\noLine
\UnaryInfC{$\Delta_1, A_1 \vdash \Gamma_1$}
\LeftLabel{\textsc{(Cut)}}
\BinaryInfC{$\Delta, \Delta_1^{n}, \Delta_2^n, \Delta_1 \vdash A_2, \Gamma, \Gamma_1^{n}, \Gamma_2^{n}, \Gamma_1$}
\LeftLabel{\textsc{(XL$^\ast$, XR$^\ast$)}}
\doubleLine
\UnaryInfC{$\Delta, \Delta_1^{n+1}, \Delta_2^n \vdash A_2, \Gamma, \Gamma_1^{n+1}, \Gamma_2^{n}$}
\AxiomC{$p_2$}
\noLine
\UnaryInfC{$\Delta_2, A_2 \vdash \Gamma_2$}
\LeftLabel{\textsc{(Cut)}}
\BinaryInfC{$\Delta, \Delta_1^{n+1}, \Delta_2^{n+1} \vdash \Gamma, \Gamma_1^{n+1}, \Gamma_2^{n+1}$}
\DisplayProof
\end{equation*}
\end{scriptsize}The case of the left multiple Cut is just symmetric, and thus we omit it.

\item ($\&$R, $\&$L)-Cut. A formal proof of the form
\begin{scriptsize}
\begin{equation*}
\AxiomC{$p_1$}
\noLine
\UnaryInfC{$\Delta \vdash A_1, \Gamma$}
\AxiomC{$p_2$}
\noLine
\UnaryInfC{$\Delta \vdash A_2, \Gamma$}
\LeftLabel{\textsc{($\&$R)}}
\BinaryInfC{$\Delta \vdash A_1 \& A_2, \Gamma$}
\AxiomC{$p'$}
\noLine
\UnaryInfC{$\Delta', (A_1 \& A_2)^n, A_i \vdash \Gamma'$}
\LeftLabel{\textsc{($\&$L)}}
\RightLabel{($i \in \overline{2}$)}
\UnaryInfC{$\Delta', (A_1 \& A_2)^{n+1} \vdash \Gamma'$}
\LeftLabel{\textsc{(CutL$^{n+1}$)}}
\BinaryInfC{$\Delta^{n+1}, \Delta' \vdash \Gamma^{n+1}, \Gamma'$}
\DisplayProof
\end{equation*}
\end{scriptsize}is transformed into
\begin{scriptsize}
\begin{equation*}
\AxiomC{$p_i$}
\noLine
\UnaryInfC{$\Delta \vdash A_i, \Gamma$}
\AxiomC{$p_1$}
\noLine
\UnaryInfC{$\Delta \vdash A_1, \Gamma$}
\AxiomC{$p_2$}
\noLine
\UnaryInfC{$\Delta \vdash A_2, \Gamma$}
\LeftLabel{\textsc{($\&$R)}}
\BinaryInfC{$\Delta \vdash A_1 \& A_2, \Gamma$}
\AxiomC{$p'$}
\noLine
\UnaryInfC{$\Delta', (A_1 \& A_2)^n, A_i \vdash \Gamma'$}
\LeftLabel{\textsc{(CutL$^{n}$)}}
\BinaryInfC{$\Delta^{n}, \Delta', A_i \vdash \Gamma^n, \Gamma'$}
\LeftLabel{\textsc{(Cut)}}
\BinaryInfC{$\Delta^{n+1}, \Delta' \vdash \Gamma^{n+1}, \Gamma'$}
\DisplayProof
\end{equation*}
\end{scriptsize}The case of the right multiple Cut is just symmetric, and thus we omit it.

\item ($\oplus$R, $\oplus$L)-Cut. A formal proof of the form
\begin{scriptsize}
\begin{equation*}
\AxiomC{$p'$}
\noLine
\UnaryInfC{$\Delta' \vdash A_i, (A_1 \oplus A_2)^n, \Gamma'$}
\LeftLabel{\textsc{($\oplus$R)}}
\RightLabel{($i \in \overline{2}$)}
\UnaryInfC{$\Delta' \vdash (A_1 \oplus A_2)^{n+1}, \Gamma'$}
\AxiomC{$p_1$}
\noLine
\UnaryInfC{$\Delta ,A_1 \vdash \Gamma$}
\AxiomC{$p_2$}
\noLine
\UnaryInfC{$\Delta, A_2 \vdash \Gamma$}
\LeftLabel{\textsc{($\oplus$L)}}
\BinaryInfC{$\Delta, A_1 \oplus A_2 \vdash \Gamma$}
\LeftLabel{\textsc{(CutR$^{n+1}$)}}
\BinaryInfC{$\Delta', \Delta^{n+1} \vdash \Gamma', \Gamma^{n+1}$}
\DisplayProof
\end{equation*}
\end{scriptsize}is transformed into
\begin{scriptsize}
\begin{equation*}
\AxiomC{$p'$}
\noLine
\UnaryInfC{$\Delta' \vdash A_i, (A_1 \oplus A_2)^n, \Gamma'$}
\AxiomC{$p_1$}
\noLine
\UnaryInfC{$\Delta, A_1 \vdash \Gamma$}
\AxiomC{$p_2$}
\noLine
\UnaryInfC{$\Delta, A_2 \vdash \Gamma$}
\LeftLabel{\textsc{($\oplus$L)}}
\BinaryInfC{$\Delta, A_1 \oplus A_2 \vdash \Gamma$}
\LeftLabel{\textsc{(CutR$^{n}$)}}
\BinaryInfC{$\Delta', \Delta^n \vdash A_i, \Gamma', \Gamma^{n}$}
\AxiomC{$p_i$}
\noLine
\UnaryInfC{$\Delta, A_i \vdash \Gamma$}
\LeftLabel{\textsc{(Cut)}}
\BinaryInfC{$\Delta', \Delta^{n+1} \vdash \Gamma', \Gamma^{n+1}$}
\DisplayProof
\end{equation*}
\end{scriptsize}The case of the left multiple cut is just symmetric, and thus we omit it.

\item ($\oc$R$^{\oc\wn}$, $\oc$D)-Cut. A formal proof of the form
\begin{scriptsize}
\begin{equation*}
\AxiomC{$p$}
\noLine
\UnaryInfC{$\oc \Delta \vdash A, \wn \Gamma$}
\LeftLabel{\textsc{($\oc$R$^{\oc\wn}$)}}
\UnaryInfC{$\oc \Delta \vdash \oc A, \wn \Gamma$}
\AxiomC{$p'$}
\noLine
\UnaryInfC{$\Delta', \oc A^n, A \vdash \Gamma'$}
\LeftLabel{\textsc{($\oc$D)}}
\UnaryInfC{$\Delta', \oc A^{n+1} \vdash \Gamma'$}
\LeftLabel{\textsc{(CutL$^{n+1}$)}}
\BinaryInfC{$\oc \Delta^{n+1}, \Delta' \vdash \wn \Gamma^{n+1}, \Gamma'$}
\DisplayProof
\end{equation*}
\end{scriptsize}is transformed into
\begin{scriptsize}
\begin{equation*}
\AxiomC{$p$}
\noLine
\UnaryInfC{$\oc \Delta \vdash A, \wn \Gamma$}
\AxiomC{$p$}
\noLine
\UnaryInfC{$\oc \Delta \vdash A, \wn \Gamma$}
\LeftLabel{\textsc{($\oc$R$^{\oc\wn}$)}}
\UnaryInfC{$\oc \Delta \vdash \oc A, \wn \Gamma$}
\AxiomC{$p'$}
\noLine
\UnaryInfC{$\Delta', \oc A^n, A \vdash \Gamma'$}
\LeftLabel{\textsc{(CutL$^{n}$)}}
\BinaryInfC{$\oc \Delta^{n}, \Delta', A \vdash \wn \Gamma^{n}, \Gamma'$}
\LeftLabel{\textsc{(Cut)}}
\BinaryInfC{$\oc \Delta^{n+1}, \Delta' \vdash \wn \Gamma^{n+1}, \Gamma'$}
\DisplayProof
\end{equation*}
\end{scriptsize}

\begin{remark}
\label{RemarkOC}
The case of the right multiple Cut, i.e., a formal proof of the form
\begin{scriptsize}
\begin{equation*}
\AxiomC{$p'$}
\noLine
\UnaryInfC{$\oc \Delta' \vdash A, \oc A^n, \wn \Gamma'$}
\LeftLabel{\textsc{($\oc$R$^{\oc\wn}$)}}
\UnaryInfC{$\oc \Delta' \vdash \oc A^{n+1}, \wn \Gamma'$}
\AxiomC{$p$}
\noLine
\UnaryInfC{$\Delta, A \vdash \Gamma$}
\LeftLabel{\textsc{($\oc$D)}}
\UnaryInfC{$\Delta, \oc A \vdash \Gamma$}
\LeftLabel{\textsc{(CutR$^{n+1}$)}}
\BinaryInfC{$\oc \Delta', \Delta^{n+1} \vdash \wn \Gamma', \Gamma^{n+1}$}
\DisplayProof
\end{equation*}
\end{scriptsize}for $n > 0$, cannot occur since otherwise the application of the rule $\oc$R$^{\oc\wn}$ would be invalid.
This remark is applied to the following two cases as well.
\end{remark}

\item ($\oc$R$^{\oc\wn}$, $\oc$W)-Cut. A formal proof of the form
\begin{scriptsize}
\begin{equation*}
\AxiomC{$p$}
\noLine
\UnaryInfC{$\oc \Delta \vdash A, \wn \Gamma$}
\LeftLabel{\textsc{($\oc$R$^{\oc\wn}$)}}
\UnaryInfC{$\oc \Delta \vdash \oc A, \wn \Gamma$}
\AxiomC{$p'$}
\noLine
\UnaryInfC{$\Delta', \oc A^n \vdash \Gamma'$}
\LeftLabel{\textsc{($\oc$W)}}
\UnaryInfC{$\Delta', \oc A^{n+1} \vdash \Gamma'$}
\LeftLabel{\textsc{(CutL$^{n+1}$)}}
\BinaryInfC{$\oc \Delta^{n+1}, \Delta' \vdash \wn \Gamma^{n+1}, \Gamma'$}
\DisplayProof
\end{equation*}
\end{scriptsize}is transformed into
\begin{scriptsize}
\begin{equation*}
\AxiomC{$p$}
\noLine
\UnaryInfC{$\oc \Delta \vdash A, \wn \Gamma$}
\LeftLabel{\textsc{($\oc$R$^{\oc\wn}$)}}
\UnaryInfC{$\oc \Delta \vdash \oc A, \wn \Gamma$}
\AxiomC{$p'$}
\noLine
\UnaryInfC{$\Delta', \oc A^n \vdash \Gamma'$}
\LeftLabel{\textsc{(CutL$^{n}$)}}
\BinaryInfC{$\oc \Delta^{n}, \Delta' \vdash \wn \Gamma^{n}, \Gamma'$}
\doubleLine
\LeftLabel{\textsc{($\oc$W$^\ast$, XL$^\ast$)}}
\UnaryInfC{$\oc \Delta^{n+1}, \Delta' \vdash \wn \Gamma^{n}, \Gamma'$}
\LeftLabel{\textsc{($\wn$W$^\ast$)}}
\doubleLine
\UnaryInfC{$\oc \Delta^{n+1}, \Delta' \vdash \wn \Gamma^{n+1}, \Gamma'$}
\DisplayProof
\end{equation*}
\end{scriptsize}

\item ($\oc$R$^{\oc\wn}$, $\oc$C)-Cut. A formal proof of the form
\begin{scriptsize}
\begin{equation*}
\AxiomC{$p$}
\noLine
\UnaryInfC{$\oc \Delta \vdash A, \wn \Gamma$}
\LeftLabel{\textsc{($\oc$R$^{\oc\wn}$)}}
\UnaryInfC{$\oc \Delta \vdash \oc A, \wn \Gamma$}
\AxiomC{$p'$}
\noLine
\UnaryInfC{$\Delta', \oc A^n, \oc A, \oc A \vdash \Gamma'$}
\LeftLabel{\textsc{($\oc$C)}}
\UnaryInfC{$\Delta', \oc A^{n+1} \vdash \Gamma'$}
\LeftLabel{\textsc{(CutL$^{n+1}$)}}
\BinaryInfC{$\oc \Delta^{n+1}, \Delta' \vdash \wn \Gamma^{n+1}, \Gamma'$}
\DisplayProof
\end{equation*}
\end{scriptsize}is transformed into
\begin{scriptsize}
\begin{equation*}
\AxiomC{$p$}
\noLine
\UnaryInfC{$\oc \Delta \vdash A, \wn \Gamma$}
\LeftLabel{\textsc{($\oc$R$^{\oc\wn}$)}}
\UnaryInfC{$\oc \Delta \vdash \oc A, \wn \Gamma$}
\AxiomC{$p'$}
\noLine
\UnaryInfC{$\Delta', \oc A^n, \oc A, \oc A \vdash \Gamma'$}
\LeftLabel{\textsc{(CutL$^{n+2}$)}}
\BinaryInfC{$\oc \Delta^{n}, \oc \Delta, \oc \Delta, \Delta' \vdash \wn \Gamma^{n+2}, \Gamma'$}
\LeftLabel{\textsc{($\oc$C$^\ast$, XL$^\ast$)}}
\doubleLine
\UnaryInfC{$\oc \Delta^{n+1}, \Delta' \vdash \wn \Gamma^{n+2}, \Gamma'$}
\LeftLabel{\textsc{($\wn$C$^\ast$)}}
\doubleLine
\UnaryInfC{$\oc \Delta^{n+1}, \Delta' \vdash \wn \Gamma^{n}, \Gamma'$}
\DisplayProof
\end{equation*}
\end{scriptsize}

\item ($\wn$D, $\wn$L$^{\oc\wn}$)-Cut. A formal proof of the form
\begin{scriptsize}
\begin{equation*}
\AxiomC{$p'$}
\noLine
\UnaryInfC{$\Delta' \vdash A, \wn A^n, \Gamma'$}
\LeftLabel{\textsc{($\wn$D)}}
\UnaryInfC{$\Delta' \vdash \wn A^{n+1}, \Gamma'$}
\AxiomC{$p$}
\noLine
\UnaryInfC{$\oc \Delta, A \vdash \wn \Gamma$}
\LeftLabel{\textsc{($\wn$L$^{\oc\wn}$)}}
\UnaryInfC{$\oc \Delta, \wn A \vdash \wn \Gamma$}
\LeftLabel{\textsc{(CutR$^{n+1}$)}}
\BinaryInfC{$\Delta', \oc \Delta^{n+1} \vdash \Gamma', \wn \Gamma^{n+1}$}
\DisplayProof
\end{equation*}
\end{scriptsize}is transformed into
\begin{scriptsize}
\begin{equation*}
\AxiomC{$p'$}
\noLine
\UnaryInfC{$\Delta' \vdash A, \wn A^n, \Gamma'$}
\AxiomC{$p$}
\noLine
\UnaryInfC{$\oc \Delta, A \vdash \wn \Gamma$}
\LeftLabel{\textsc{($\wn$L$^{\oc\wn}$)}}
\UnaryInfC{$\oc \Delta, \wn A \vdash \wn \Gamma$}
\LeftLabel{\textsc{(CutR$^{n}$)}}
\BinaryInfC{$\Delta', \oc \Delta^{n} \vdash A, \Gamma', \wn \Gamma^{n}$}
\AxiomC{$p$}
\noLine
\UnaryInfC{$\oc \Delta, A \vdash \wn \Gamma$}
\LeftLabel{\textsc{(Cut)}}
\BinaryInfC{$\Delta', \oc \Delta^{n+1} \vdash \Gamma', \wn \Gamma^{n+1}$}
\DisplayProof
\end{equation*}
\end{scriptsize}

\begin{remark}
Dually to Remark~\ref{RemarkOC}, the case of the left multiple Cut cannot occur for the application of the rule $\wn$L$^{\oc\wn}$.
This remark is applied to the following two cases too.
\end{remark}

\item ($\wn$W, $\wn$L$^{\oc\wn}$)-Cut. A formal proof of the form
\begin{scriptsize}
\begin{equation*}
\AxiomC{$p'$}
\noLine
\UnaryInfC{$\Delta' \vdash \wn A^n, \Gamma'$}
\LeftLabel{\textsc{($\wn$W)}}
\UnaryInfC{$\Delta' \vdash \wn A^{n+1}, \Gamma'$}
\AxiomC{$p$}
\noLine
\UnaryInfC{$\oc \Delta, A \vdash \wn \Gamma$}
\LeftLabel{\textsc{($\wn$L$^{\oc\wn}$)}}
\UnaryInfC{$\oc \Delta, \wn A \vdash \wn \Gamma$}
\LeftLabel{\textsc{(CutR$^{n+1}$)}}
\BinaryInfC{$\Delta', \oc \Delta^{n+1} \vdash \Gamma', \wn \Gamma^{n+1}$}
\DisplayProof
\end{equation*}
\end{scriptsize}is transformed into
\begin{scriptsize}
\begin{equation*}
\AxiomC{$p'$}
\noLine
\UnaryInfC{$\Delta' \vdash \wn A^n, \Gamma'$}
\AxiomC{$p$}
\noLine
\UnaryInfC{$\oc \Delta, A \vdash \wn \Gamma$}
\LeftLabel{\textsc{($\wn$L$^{\oc\wn}$)}}
\UnaryInfC{$\oc \Delta, \wn A \vdash \wn \Gamma$}
\LeftLabel{\textsc{(CutR$^n$)}}
\BinaryInfC{$\Delta', \oc \Delta^n \vdash \Gamma', \wn \Gamma^n$}
\LeftLabel{\textsc{($\oc$W$^\ast$)}}
\doubleLine
\UnaryInfC{$\Delta', \oc \Delta^{n+1} \vdash \Gamma', \wn \Gamma^n$}
\LeftLabel{\textsc{($\wn$W$^\ast$, XR$^\ast$)}}
\doubleLine
\UnaryInfC{$\Delta', \oc \Delta^{n+1} \vdash \Gamma', \wn \Gamma^{n+1}$}
\DisplayProof
\end{equation*}
\end{scriptsize}

\item ($\wn$C, $\wn$L$^{\oc\wn}$)-Cut. A formal proof of the form
\begin{scriptsize}
\begin{equation*}
\AxiomC{$p'$}
\noLine
\UnaryInfC{$\Delta' \vdash \wn A, \wn A, \wn A^n, \Gamma'$}
\LeftLabel{\textsc{($\wn$C)}}
\UnaryInfC{$\Delta' \vdash \wn A^{n+1}, \Gamma'$}
\AxiomC{$p$}
\noLine
\UnaryInfC{$\oc \Delta, A \vdash \wn \Gamma$}
\LeftLabel{\textsc{($\wn$L$^{\oc\wn}$)}}
\UnaryInfC{$\oc \Delta, \wn A \vdash \wn \Gamma$}
\LeftLabel{\textsc{(CutR$^{n+1}$)}}
\BinaryInfC{$\Delta', \oc \Delta^{n+1} \vdash \Gamma', \wn \Gamma^{n+1}$}
\DisplayProof
\end{equation*}
\end{scriptsize}is transformed into
\begin{scriptsize}
\begin{equation*}
\AxiomC{$p'$}
\noLine
\UnaryInfC{$\Delta' \vdash \wn A, \wn A, \wn A^n, \Gamma'$}
\AxiomC{$p$}
\noLine
\UnaryInfC{$\oc \Delta, A \vdash \wn \Gamma$}
\LeftLabel{\textsc{($\wn$L$^{\oc\wn}$)}}
\UnaryInfC{$\oc \Delta, \wn A \vdash \wn \Gamma$}
\LeftLabel{\textsc{(CutR$^{n+2}$)}}
\BinaryInfC{$\Delta', \oc \Delta^{n}, \oc \Delta, \oc \Delta \vdash \Gamma', \wn \Gamma^{n}, \wn \Gamma, \wn \Gamma$}
\LeftLabel{\textsc{($\oc$C$^\ast$)}}
\doubleLine
\UnaryInfC{$\Delta', \oc \Delta^{n+1} \vdash \Gamma', \wn \Gamma^n, \wn \Gamma, \wn \Gamma$}
\LeftLabel{\textsc{($\wn$C$^\ast$, XR$^\ast$)}}
\doubleLine
\UnaryInfC{$\oc \Delta^{n+1}, \Delta' \vdash \wn \Gamma^{n+1}, \Gamma'$}
\DisplayProof
\end{equation*}
\end{scriptsize}

\subsection{Right minor cuts}
\label{RightMinorCuts}
The second pattern is an application of Cut such that the principal formula of the rule at the end of the right hypothesis of the application of Cut is not the cut formula. 

The cases of the second pattern are the following:
\item Right-minor $1$R-Cut. A formal proof of the form
\begin{scriptsize}
\begin{equation*}
\AxiomC{$p$}
\noLine
\UnaryInfC{$\Delta \vdash A, \Gamma$}
\AxiomC{}
\LeftLabel{\textsc{($1$L)}}
\UnaryInfC{$\Delta', A^n \vdash 1, \Gamma'$}
\LeftLabel{\textsc{(CutL$^{n}$)}}
\BinaryInfC{$\Delta^n, \Delta' \vdash \Gamma^n, 1, \Gamma'$}
\DisplayProof
\end{equation*}
\end{scriptsize}
is transformed into
\begin{scriptsize}
\begin{equation*}
\AxiomC{}
\LeftLabel{\textsc{($1$L)}}
\UnaryInfC{$\Delta^n, \Delta' \vdash 1, \Gamma^n, \Gamma'$}
\LeftLabel{\textsc{(XR$^\ast$)}}
\doubleLine
\UnaryInfC{$\Delta^n, \Delta' \vdash \Gamma^n, 1, \Gamma'$}
\DisplayProof
\end{equation*}
\end{scriptsize}The case of the right multiple Cut is similar, and thus we omit it.

\item Right-minor $0$L-Cut. 
A formal proof of the form
\begin{scriptsize}
\begin{equation*}
\AxiomC{$p$}
\noLine
\UnaryInfC{$\Delta \vdash A, \Gamma$}
\AxiomC{}
\LeftLabel{\textsc{($0$L)}}
\UnaryInfC{$\Delta', A^n, 0 \vdash \Gamma'$}
\LeftLabel{\textsc{(CutL$^{n}$)}}
\BinaryInfC{$\Delta^n, \Delta', 0 \vdash \Gamma^n, \Gamma'$}
\DisplayProof
\end{equation*}
\end{scriptsize}is transformed into
\begin{scriptsize}
\begin{equation*}
\AxiomC{}
\LeftLabel{\textsc{($0$L)}}
\UnaryInfC{$\Delta^n, \Delta', 0 \vdash \Gamma^n, \Gamma'$}
\DisplayProof
\end{equation*}
\end{scriptsize}The case of the right multiple Cut is similar, and thus we omit it.

\item Right-minor $\top$L-Cut. A formal proof of the form
\begin{scriptsize}
\begin{equation*}
\AxiomC{$p_1$}
\noLine
\UnaryInfC{$\Delta_1 \vdash A, \Gamma_1$}
\AxiomC{$p_2$}
\noLine
\UnaryInfC{$\Delta_2, A^n \vdash \Gamma_2$}
\LeftLabel{\textsc{($\top$L)}}
\UnaryInfC{$\Delta_2, A^n, \top \vdash \Gamma_2$}
\LeftLabel{\textsc{(CutL$^{n}$)}}
\BinaryInfC{$\Delta_1^n, \Delta_2, \top \vdash \Gamma_1^n, \Gamma_2$}
\DisplayProof
\end{equation*}
\end{scriptsize}is transformed into
\begin{scriptsize}
\begin{equation*}
\AxiomC{$p_1$}
\noLine
\UnaryInfC{$\Delta_1 \vdash A, \Gamma_1$}
\AxiomC{$p_2$}
\noLine
\UnaryInfC{$\Delta_2, A^n \vdash \Gamma_2$}
\LeftLabel{\textsc{(CutL$^{n}$)}}
\BinaryInfC{$\Delta_1^n, \Delta_2 \vdash \Gamma_1^n, \Gamma_2$}
\LeftLabel{\textsc{($\top$L)}}
\UnaryInfC{$\Delta_1^n, \Delta_2, \top \vdash \Gamma_1^n, \Gamma_2$}
\DisplayProof
\end{equation*}
\end{scriptsize}The case of the right multiple Cut is similar, and thus we omit it.

\item Right-minor $\top$R-Cut. This case is impossible as there is no occurrence of a formula on the left-hand side of the conclusion of $\top$R.

\item Right-minor $\bot$L-Cut. This case is impossible as there is only one occurrence of bottom $\bot$ on the left-hand side of the conclusion of $\bot$L.

\item Right-minor $\bot$R-Cut. A formal proof of the form
\begin{scriptsize}
\begin{equation*}
\AxiomC{$p_1$}
\noLine
\UnaryInfC{$\Delta_1 \vdash A, \Gamma_1$}
\AxiomC{$p_2$}
\noLine
\UnaryInfC{$\Delta_2, A^n \vdash \Gamma_2$}
\LeftLabel{\textsc{($\bot$R)}}
\UnaryInfC{$\Delta_2, A^n \vdash \bot, \Gamma_2$}
\LeftLabel{\textsc{(CutL$^{n}$)}}
\BinaryInfC{$\Delta_1^n, \Delta_2 \vdash \Gamma_1^n, \bot, \Gamma_2$}
\DisplayProof
\end{equation*}
\end{scriptsize}is transformed into
\begin{scriptsize}
\begin{equation*}
\AxiomC{$p_1$}
\noLine
\UnaryInfC{$\Delta_1 \vdash A, \Gamma_1$}
\AxiomC{$p_2$}
\noLine
\UnaryInfC{$\Delta_2, A^n \vdash \Gamma_2$}
\LeftLabel{\textsc{(CutL$^{n}$)}}
\BinaryInfC{$\Delta_1^n, \Delta_2 \vdash \Gamma_1^n, \Gamma_2$}
\LeftLabel{\textsc{($\bot$R)}}
\UnaryInfC{$\Delta_1^n, \Delta_2 \vdash \bot, \Gamma_1^n, \Gamma_2$}
\LeftLabel{\textsc{(XR$^\ast$)}}
\doubleLine
\UnaryInfC{$\Delta_1^n, \Delta_2 \vdash \Gamma_1^n, \bot, \Gamma_2$}
\DisplayProof
\end{equation*}
\end{scriptsize}The case of the right multiple Cut is similar, and thus we omit it.

\item Right-minor $\otimes$L-Cut. A formal proof of the form
\begin{scriptsize}
\begin{equation*}
\AxiomC{$p_1$}
\noLine
\UnaryInfC{$\Delta_1 \vdash A, \Gamma_1$}
\AxiomC{$p_2$}
\noLine
\UnaryInfC{$\Delta_2, A^n, B, C \vdash \Gamma_2$}
\LeftLabel{\textsc{($\otimes$L)}}
\UnaryInfC{$\Delta_2, A^n, B \otimes C \vdash \Gamma_2$}
\LeftLabel{\textsc{(CutL$^{n}$)}}
\BinaryInfC{$\Delta_1^n, \Delta_2, B \otimes C \vdash \Gamma_1^n, \Gamma_2$}
\DisplayProof
\end{equation*}
\end{scriptsize}is transformed into
\begin{scriptsize}
\begin{equation*}
\AxiomC{$p_1$}
\noLine
\UnaryInfC{$\Delta_1 \vdash A, \Gamma_1$}
\AxiomC{$p_2$}
\noLine
\UnaryInfC{$\Delta_2, A^n, B, C \vdash \Gamma_2$}
\LeftLabel{\textsc{(CutL$^{n}$)}}
\BinaryInfC{$\Delta_1^n, \Delta_2, B, C \vdash \Gamma_1^n, \Gamma_2$}
\LeftLabel{\textsc{($\otimes$L)}}
\UnaryInfC{$\Delta_1^n, \Delta_2, B \otimes C \vdash \Gamma_1^n, \Gamma_2$}
\DisplayProof
\end{equation*}
\end{scriptsize}The case of the right multiple Cut is similar, and thus we omit it.

\item Right-minor $\otimes$R-Cut. A formal proof of the form
\begin{scriptsize}
\begin{equation*}
\AxiomC{$p$}
\noLine
\UnaryInfC{$\Delta \vdash A, \Gamma$}
\AxiomC{$p_1$}
\noLine
\UnaryInfC{$\Delta_1, A^{n_1} \vdash B_1, \Gamma_1$}
\AxiomC{$p_2$}
\noLine
\UnaryInfC{$\Delta_2, A^{n_2} \vdash B_2, \Gamma_2$}
\LeftLabel{\textsc{($\otimes$R)}}
\BinaryInfC{$\Delta_1, \Delta_2, A^{n_1 + n_2} \vdash B_1 \otimes B_2, \Gamma_1, \Gamma_2$}
\LeftLabel{\textsc{(CutL$^{n_1 + n_2}$)}}
\BinaryInfC{$\Delta^{n_1 + n_2}, \Delta_1, \Delta_2 \vdash \Gamma^{n_1 + n_2}, B_1 \otimes B_2, \Gamma_1, \Gamma_2$}
\DisplayProof
\end{equation*}
\end{scriptsize}is transformed into
\begin{scriptsize}
\begin{equation*}
\AxiomC{$p$}
\noLine
\UnaryInfC{$\Delta \vdash A, \Gamma$}
\AxiomC{$p_1$}
\noLine
\UnaryInfC{$\Delta_1, A^{n_1} \vdash B_1, \Gamma_1$}
\LeftLabel{\textsc{(CutL$^{n_1}$)}}
\BinaryInfC{$\Delta^{n_1}, \Delta_1 \vdash \Gamma^{n_1}, B_1, \Gamma_1$}
\LeftLabel{\textsc{(XR$^\ast$)}}
\doubleLine
\UnaryInfC{$\Delta^{n_1}, \Delta_1 \vdash B_1, \Gamma^{n_1}, \Gamma_1$}
\AxiomC{$p$}
\noLine
\UnaryInfC{$\Delta \vdash A, \Gamma$}
\AxiomC{$p_2$}
\noLine
\UnaryInfC{$\Delta_2, A^{n_2} \vdash B_2, \Gamma_2$}
\LeftLabel{\textsc{(CutL$^{n_2}$)}}
\BinaryInfC{$\Delta^{n_2}, \Delta_2 \vdash \Gamma^{n_2}, B_2, \Gamma_2$}
\LeftLabel{\textsc{(XR$^\ast$)}}
\doubleLine
\UnaryInfC{$\Delta^{n_2}, \Delta_2 \vdash B_2, \Gamma^{n_2}, \Gamma_2$}
\LeftLabel{\textsc{($\otimes$R)}}
\BinaryInfC{$\Delta^{n_1}, \Delta_1, \Delta^{n_2}, \Delta_2 \vdash B_1 \otimes B_2, \Gamma^{n_1}, \Gamma_1, \Gamma^{n_2}, \Gamma_2$}
\LeftLabel{\textsc{(XL$^\ast$)}}
\doubleLine
\UnaryInfC{$\Delta^{n_1 + n_2}, \Delta_1, \Delta_2 \vdash B_1 \otimes B_2, \Gamma^{n_1}, \Gamma_1, \Gamma^{n_2}, \Gamma_2$}
\LeftLabel{\textsc{(XR$^\ast$)}}
\doubleLine
\UnaryInfC{$\Delta^{n_1 + n_2}, \Delta_1, \Delta_2 \vdash \Gamma^{n_1 + n_2}, B_1 \otimes B_2, \Gamma_1, \Gamma_2$}
\DisplayProof
\end{equation*}
\end{scriptsize}The case of the right multiple Cut is similar, and thus we omit it.

\item Right-minor $\invamp$L-Cut. A formal proof of the form
\begin{scriptsize}
\begin{equation*}
\AxiomC{$p$}
\noLine
\UnaryInfC{$\Delta \vdash A, \Gamma$}
\AxiomC{$p_1$}
\noLine
\UnaryInfC{$\Delta_1, A^{n_1}, B_1 \vdash \Gamma_1$}
\AxiomC{$p_2$}
\noLine
\UnaryInfC{$\Delta_2, A^{n_2}, B_2 \vdash \Gamma_2$}
\LeftLabel{\textsc{($\otimes$R)}}
\BinaryInfC{$\Delta_1, \Delta_2, A^{n_1 + n_2}, B_1 \invamp B_2 \vdash \Gamma_1, \Gamma_2$}
\LeftLabel{\textsc{(CutL$^{n_1 + n_2}$)}}
\BinaryInfC{$\Delta^{n_1 + n_2}, \Delta_1, \Delta_2, B_1 \invamp B_2 \vdash \Gamma^{n_1 + n_2}, \Gamma_1, \Gamma_2$}
\DisplayProof
\end{equation*}
\end{scriptsize}is transformed into
\begin{scriptsize}
\begin{equation*}
\AxiomC{$p$}
\noLine
\UnaryInfC{$\Delta \vdash A, \Gamma$}
\AxiomC{$p_1$}
\noLine
\UnaryInfC{$\Delta_1, A^{n_1}, B_1 \vdash \Gamma_1$}
\LeftLabel{\textsc{(CutL$^{n_1}$)}}
\BinaryInfC{$\Delta^{n_1}, \Delta_1, B_1 \vdash \Gamma^{n_1}, \Gamma_1$}
\AxiomC{$p$}
\noLine
\UnaryInfC{$\Delta \vdash A, \Gamma$}
\AxiomC{$p_2$}
\noLine
\UnaryInfC{$\Delta_2, A^{n_2}, B_2 \vdash \Gamma_2$}
\LeftLabel{\textsc{(CutL$^{n_2}$)}}
\BinaryInfC{$\Delta^{n_2}, \Delta_2, B_2 \vdash \Gamma^{n_2}, \Gamma_2$}
\LeftLabel{\textsc{($\invamp$L)}}
\BinaryInfC{$\Delta^{n_1}, \Delta_1, \Delta^{n_2}, \Delta_2, B_1 \invamp B_2 \vdash \Gamma^{n_1}, \Gamma_1, \Gamma^{n_2}, \Gamma_2$}
\LeftLabel{\textsc{(XL$^\ast$)}}
\doubleLine
\UnaryInfC{$\Delta^{n_1 + n_2}, \Delta_1, \Delta_2, B_1 \invamp B_2 \vdash \Gamma^{n_1}, \Gamma_1, \Gamma^{n_2}, \Gamma_2$}
\LeftLabel{\textsc{(XR$^\ast$)}}
\doubleLine
\UnaryInfC{$\Delta^{n_1 + n_2}, \Delta_1, \Delta_2, B_1 \invamp B_2 \vdash \Gamma^{n_1 + n_2}, \Gamma_1, \Gamma_2$}
\DisplayProof
\end{equation*}
\end{scriptsize}The case of the right multiple Cut is similar, and thus we omit it.

\item Right-minor $\invamp$R-Cut. A formal proof of the form
\begin{scriptsize}
\begin{equation*}
\AxiomC{$p$}
\noLine
\UnaryInfC{$\Delta \vdash A, \Gamma$}
\AxiomC{$p'$}
\noLine
\UnaryInfC{$\Delta', A^{n} \vdash B_1, B_2, \Gamma'$}
\LeftLabel{\textsc{($\invamp$R)}}
\UnaryInfC{$\Delta', A^{n} \vdash B_1 \invamp B_2, \Gamma'$}
\LeftLabel{\textsc{(CutL$^{n}$)}}
\BinaryInfC{$\Delta^{n}, \Delta' \vdash \Gamma^n, B_1 \invamp B_2, \Gamma'$}
\DisplayProof
\end{equation*}
\end{scriptsize}is transformed into
\begin{scriptsize}
\begin{equation*}
\AxiomC{$p$}
\noLine
\UnaryInfC{$\Delta \vdash A, \Gamma$}
\AxiomC{$p'$}
\noLine
\UnaryInfC{$\Delta', A^{n} \vdash B_1, B_2, \Gamma'$}
\LeftLabel{\textsc{(CutL$^{n}$)}}
\BinaryInfC{$\Delta^{n}, \Delta' \vdash \Gamma^n, B_1, B_2, \Gamma'$}
\LeftLabel{\textsc{($\invamp$R, XR$^\ast$)}}
\doubleLine
\UnaryInfC{$\Delta^{n}, \Delta' \vdash \Gamma^n, B_1 \invamp B_2, \Gamma'$}
\DisplayProof
\end{equation*}
\end{scriptsize}The case of the right multiple Cut is similar, and thus we omit it.

\item Right-minor $\&$L-Cut. A formal proof of the form
\begin{scriptsize}
\begin{equation*}
\AxiomC{$p$}
\noLine
\UnaryInfC{$\Delta \vdash A, \Gamma$}
\AxiomC{$p'$}
\noLine
\UnaryInfC{$\Delta', A^n, B_i \vdash \Gamma'$}
\RightLabel{($i \in \overline{2}$)}
\LeftLabel{\textsc{($\&$L)}}
\UnaryInfC{$\Delta', A^n, B_1 \& B_2 \vdash \Gamma'$}
\LeftLabel{\textsc{(CutL$^{n}$)}}
\BinaryInfC{$\Delta^n, \Delta', B_1 \& B_2 \vdash \Gamma^n, \Gamma'$}
\DisplayProof
\end{equation*}
\end{scriptsize}is transformed into
\begin{scriptsize}
\begin{equation*}
\AxiomC{$p$}
\noLine
\UnaryInfC{$\Delta \vdash A, \Gamma$}
\AxiomC{$p'$}
\noLine
\UnaryInfC{$\Delta', A^n, B_i \vdash \Gamma'$}
\LeftLabel{\textsc{(CutL$^{n}$)}}
\BinaryInfC{$\Delta^n, \Delta', B_i \vdash \Gamma^n, \Gamma'$}
\LeftLabel{\textsc{($\&$L)}}
\UnaryInfC{$\Delta^n, \Delta', B_1 \& B_2 \vdash \Gamma^n, \Gamma'$}
\DisplayProof
\end{equation*}
\end{scriptsize}The case of the right multiple Cut is similar, and thus we omit it.

\item Right-minor $\&$R-Cut. A formal proof of the form
\begin{scriptsize}
\begin{equation*}
\AxiomC{$p$}
\noLine
\UnaryInfC{$\Delta \vdash A, \Gamma$}
\AxiomC{$p'_1$}
\noLine
\UnaryInfC{$\Delta', A^n \vdash B_1, \Gamma'$}
\AxiomC{$p'_2$}
\noLine
\UnaryInfC{$\Delta', A^n \vdash B_1, \Gamma'$}
\LeftLabel{\textsc{($\&$R)}}
\BinaryInfC{$\Delta', A^n \vdash B_1 \& B_2, \Gamma'$}
\LeftLabel{\textsc{(CutL$^{n}$)}}
\BinaryInfC{$\Delta^n, \Delta' \vdash \Gamma^n, B_1 \& B_2, \Gamma'$}
\DisplayProof
\end{equation*}
\end{scriptsize}is transformed into
\begin{scriptsize}
\begin{equation*}
\AxiomC{$p$}
\noLine
\UnaryInfC{$\Delta \vdash A, \Gamma$}
\AxiomC{$p'_1$}
\noLine
\UnaryInfC{$\Delta', A^n \vdash B_1, \Gamma'$}
\LeftLabel{\textsc{(CutL$^{n}$)}}
\BinaryInfC{$\Delta^n, \Delta' \vdash \Gamma^n, B_1, \Gamma'$}
\LeftLabel{\textsc{(XR$^\ast$)}}
\doubleLine
\UnaryInfC{$\Delta^n, \Delta' \vdash B_1, \Gamma^n, \Gamma'$}
\AxiomC{$p$}
\noLine
\UnaryInfC{$\Delta \vdash A, \Gamma$}
\AxiomC{$p'_2$}
\noLine
\UnaryInfC{$\Delta', A^n \vdash B_2, \Gamma'$}
\LeftLabel{\textsc{(Cut$^{n}$)}}
\BinaryInfC{$\Delta^n, \Delta' \vdash \Gamma^n, B_2, \Gamma'$}
\LeftLabel{\textsc{(XR$^\ast$)}}
\doubleLine
\UnaryInfC{$\Delta^n, \Delta' \vdash B_2, \Gamma^n, \Gamma'$}
\LeftLabel{\textsc{($\&$R)}}
\BinaryInfC{$\Delta^n, \Delta' \vdash B_1 \& B_2, \Gamma^n, \Gamma'$}
\LeftLabel{\textsc{(XR$^\ast$)}}
\doubleLine
\UnaryInfC{$\Delta^n, \Delta' \vdash \Gamma^n, B_1 \& B_2, \Gamma'$}
\DisplayProof
\end{equation*}
\end{scriptsize}The case of the right multiple Cut is similar, and thus we omit it.

\item Right-minor $\oplus$L-Cut. A formal proof of the form
\begin{scriptsize}
\begin{equation*}
\AxiomC{$p$}
\noLine
\UnaryInfC{$\Delta \vdash A, \Gamma$}
\AxiomC{$p'_1$}
\noLine
\UnaryInfC{$\Delta', A^n, B_1 \vdash \Gamma'$}
\AxiomC{$p'_2$}
\noLine
\UnaryInfC{$\Delta', A^n, B_2 \vdash \Gamma'$}
\LeftLabel{\textsc{($\&$R)}}
\BinaryInfC{$\Delta', A^n, B_1 \oplus B_2 \vdash \Gamma'$}
\LeftLabel{\textsc{(CutL$^{n}$)}}
\BinaryInfC{$\Delta^n, \Delta', B_1 \oplus B_2 \vdash \Gamma^n, \Gamma'$}
\DisplayProof
\end{equation*}
\end{scriptsize}
is transformed into
\begin{scriptsize}
\begin{equation*}
\AxiomC{$p$}
\noLine
\UnaryInfC{$\Delta \vdash A, \Gamma$}
\AxiomC{$p'_1$}
\noLine
\UnaryInfC{$\Delta', A^n, B_1 \vdash \Gamma'$}
\LeftLabel{\textsc{(CutL$^{n}$)}}
\BinaryInfC{$\Delta^n, \Delta', B_1 \vdash \Gamma^n, \Gamma'$}
\AxiomC{$p$}
\noLine
\UnaryInfC{$\Delta \vdash A, \Gamma$}
\AxiomC{$p'_2$}
\noLine
\UnaryInfC{$\Delta', A^n, B_2 \vdash \Gamma'$}
\LeftLabel{\textsc{(Cut$^{n}$)}}
\BinaryInfC{$\Delta^n, \Delta', B_2 \vdash \Gamma^n, \Gamma'$}
\LeftLabel{\textsc{($\oplus$L)}}
\BinaryInfC{$\Delta^n, \Delta', B_1 \oplus B_2 \vdash \Gamma^n, \Gamma'$}
\DisplayProof
\end{equation*}
\end{scriptsize}The case of the right multiple Cut is similar, and thus we omit it.

\item Right-minor $\oplus$R-Cut. A formal proof of the form
\begin{scriptsize}
\begin{equation*}
\AxiomC{$p$}
\noLine
\UnaryInfC{$\Delta \vdash A, \Gamma$}
\AxiomC{$p'_i$}
\noLine
\UnaryInfC{$\Delta', A^n \vdash B_i, \Gamma'$}
\LeftLabel{\textsc{($\oplus$R)}}
\RightLabel{$(i \in \overline{2})$}
\UnaryInfC{$\Delta', A^n \vdash B_1 \oplus B_2, \Gamma'$}
\LeftLabel{\textsc{(CutL$^{n}$)}}
\BinaryInfC{$\Delta^n, \Delta' \vdash \Gamma^n, B_1 \oplus B_2, \Gamma'$}
\DisplayProof
\end{equation*}
\end{scriptsize}is transformed into
\begin{scriptsize}
\begin{equation*}
\AxiomC{$p$}
\noLine
\UnaryInfC{$\Delta \vdash A, \Gamma$}
\AxiomC{$p'_i$}
\noLine
\UnaryInfC{$\Delta', A^n \vdash B_i, \Gamma'$}
\LeftLabel{\textsc{(CutL$^{n}$)}}
\BinaryInfC{$\Delta^n, \Delta' \vdash \Gamma^n, B_i, \Gamma'$}
\LeftLabel{\textsc{($\oplus$R, XR$^\ast$)}}
\doubleLine
\RightLabel{$(i \in \overline{2})$}
\UnaryInfC{$\Delta^n, \Delta' \vdash \Gamma^n, B_1 \oplus B_2, \Gamma'$}
\DisplayProof
\end{equation*}
\end{scriptsize}The case of the right multiple Cut is similar, and thus we omit it.

\item Right-minor $\neg$L-Cut. A formal proof of the form
\begin{scriptsize}
\begin{equation*}
\AxiomC{$p$}
\noLine
\UnaryInfC{$\Delta \vdash A, \Gamma$}
\AxiomC{$p'$}
\noLine
\UnaryInfC{$\Delta', A^n \vdash B, \Gamma'$}
\LeftLabel{\textsc{($\neg$L)}}
\UnaryInfC{$\Delta', A^n, \neg B \vdash \Gamma'$}
\LeftLabel{\textsc{(CutL$^{n}$)}}
\BinaryInfC{$\Delta^n, \Delta', \neg B \vdash \Gamma^n, \Gamma'$}
\DisplayProof
\end{equation*}
\end{scriptsize}
is transformed into
\begin{scriptsize}
\begin{equation*}
\AxiomC{$p$}
\noLine
\UnaryInfC{$\Delta \vdash A, \Gamma$}
\AxiomC{$p'$}
\noLine
\UnaryInfC{$\Delta', A^n \vdash B, \Gamma'$}
\LeftLabel{\textsc{(CutL$^{n}$)}}
\BinaryInfC{$\Delta^n, \Delta' \vdash \Gamma^n, B, \Gamma'$}
\LeftLabel{\textsc{($\neg$L, XR$^\ast$)}}
\doubleLine
\UnaryInfC{$\Delta^n, \Delta', \neg B \vdash \Gamma^n, \Gamma'$}
\DisplayProof
\end{equation*}
\end{scriptsize}The case of the right multiple Cut is similar, and thus we omit it.

\item Right-minor $\neg$R-Cut. A formal proof of the form
\begin{scriptsize}
\begin{equation*}
\AxiomC{$p$}
\noLine
\UnaryInfC{$\Delta \vdash A, \Gamma$}
\AxiomC{$p'$}
\noLine
\UnaryInfC{$\Delta', A^n, B \vdash \Gamma'$}
\LeftLabel{\textsc{($\neg$R)}}
\UnaryInfC{$\Delta', A^n \vdash \neg B, \Gamma'$}
\LeftLabel{\textsc{(CutL$^{n}$)}}
\BinaryInfC{$\Delta^n, \Delta' \vdash \Gamma^n, \neg B, \Gamma'$}
\DisplayProof
\end{equation*}
\end{scriptsize}is transformed into
\begin{scriptsize}
\begin{equation*}
\AxiomC{$p$}
\noLine
\UnaryInfC{$\Delta \vdash A, \Gamma$}
\AxiomC{$p'$}
\noLine
\UnaryInfC{$\Delta', A^n, B \vdash \Gamma'$}
\LeftLabel{\textsc{(CutL$^{n}$)}}
\BinaryInfC{$\Delta^n, \Delta', B \vdash \Gamma^n, \Gamma'$}
\LeftLabel{\textsc{($\neg$R, XR$^\ast$)}}
\doubleLine
\UnaryInfC{$\Delta^n, \Delta' \vdash \Gamma^n, \neg B, \Gamma'$}
\DisplayProof
\end{equation*}
\end{scriptsize}The case of the right multiple Cut is similar, and thus we omit it.

\item Right-minor $\oc$D-Cut. A formal proof of the form
\begin{scriptsize}
\begin{equation*}
\AxiomC{$p$}
\noLine
\UnaryInfC{$\Delta \vdash A, \Gamma$}
\AxiomC{$p'$}
\noLine
\UnaryInfC{$\Delta', A^n, B \vdash \Gamma'$}
\LeftLabel{\textsc{($\oc$D)}}
\UnaryInfC{$\Delta', A^n, \oc B \vdash \Gamma'$}
\LeftLabel{\textsc{(CutL$^{n}$)}}
\BinaryInfC{$\Delta^n, \Delta', \oc B \vdash \Gamma^n, \Gamma'$}
\DisplayProof
\end{equation*}
\end{scriptsize}is transformed into
\begin{scriptsize}
\begin{equation*}
\AxiomC{$p$}
\noLine
\UnaryInfC{$\Delta \vdash A, \Gamma$}
\AxiomC{$p'$}
\noLine
\UnaryInfC{$\Delta', A^n, B \vdash \Gamma'$}
\LeftLabel{\textsc{(CutL$^{n}$)}}
\BinaryInfC{$\Delta^n, \Delta', B \vdash \Gamma^n, \Gamma'$}
\LeftLabel{\textsc{($\oc$D)}}
\UnaryInfC{$\Delta^n, \Delta', \oc B \vdash \Gamma^n, \Gamma'$}
\DisplayProof
\end{equation*}
\end{scriptsize}The case of the right multiple Cut is similar, and thus we omit it.

\item Right-minor $\oc$W-Cut. A formal proof of the form
\begin{scriptsize}
\begin{equation*}
\AxiomC{$p$}
\noLine
\UnaryInfC{$\Delta \vdash A, \Gamma$}
\AxiomC{$p'$}
\noLine
\UnaryInfC{$\Delta', A^n \vdash \Gamma'$}
\LeftLabel{\textsc{($\oc$W)}}
\UnaryInfC{$\Delta', A^n, \oc B \vdash \Gamma'$}
\LeftLabel{\textsc{(CutL$^{n}$)}}
\BinaryInfC{$\Delta^n, \Delta', \oc B \vdash \Gamma^n, \Gamma'$}
\DisplayProof
\end{equation*}
\end{scriptsize}is transformed into
\begin{scriptsize}
\begin{equation*}
\AxiomC{$p$}
\noLine
\UnaryInfC{$\Delta \vdash A, \Gamma$}
\AxiomC{$p'$}
\noLine
\UnaryInfC{$\Delta', A^n \vdash \Gamma'$}
\LeftLabel{\textsc{(CutL$^{n}$)}}
\BinaryInfC{$\Delta^n, \Delta' \vdash \Gamma^n, \Gamma'$}
\LeftLabel{\textsc{($\oc$W)}}
\UnaryInfC{$\Delta^n, \Delta', \oc B \vdash \Gamma^n, \Gamma'$}
\DisplayProof
\end{equation*}
\end{scriptsize}The case of the right multiple Cut is similar, and thus we omit it.

\item Right-minor $\oc$C-Cut. A formal proof of the form
\begin{scriptsize}
\begin{equation*}
\AxiomC{$p$}
\noLine
\UnaryInfC{$\Delta \vdash A, \Gamma$}
\AxiomC{$p'$}
\noLine
\UnaryInfC{$\Delta', A^n, \oc B, \oc B \vdash \Gamma'$}
\LeftLabel{\textsc{($\oc$C)}}
\UnaryInfC{$\Delta', A^n, \oc B \vdash \Gamma'$}
\LeftLabel{\textsc{(CutL$^{n}$)}}
\BinaryInfC{$\Delta^n, \Delta', \oc B \vdash \Gamma^n, \Gamma'$}
\DisplayProof
\end{equation*}
\end{scriptsize}is transformed into
\begin{scriptsize}
\begin{equation*}
\AxiomC{$p$}
\noLine
\UnaryInfC{$\Delta \vdash A, \Gamma$}
\AxiomC{$p'$}
\noLine
\UnaryInfC{$\Delta', A^n, \oc B, \oc B \vdash \Gamma'$}
\LeftLabel{\textsc{(CutL$^{n}$)}}
\BinaryInfC{$\Delta^n, \Delta', \oc B, \oc B \vdash \Gamma^n, \Gamma'$}
\LeftLabel{\textsc{($\oc$C)}}
\UnaryInfC{$\Delta^n, \Delta', \oc B \vdash \Gamma^n, \Gamma'$}
\DisplayProof
\end{equation*}
\end{scriptsize}The case of the right multiple Cut is similar, and thus we omit it.

\item Right-minor $\oc$R$^{\oc\wn}$-Cut. A formal proof of the form
\begin{scriptsize}
\begin{equation*}
\AxiomC{$p$}
\noLine
\UnaryInfC{$\oc \Delta \vdash \oc A, \wn \Gamma$}
\AxiomC{$p'$}
\noLine
\UnaryInfC{$\oc \Delta', \oc A^n \vdash B, \wn \Gamma'$}
\LeftLabel{\textsc{($\oc$R$^{\oc\wn}$)}}
\UnaryInfC{$\oc \Delta', \oc A^n \vdash \oc B, \wn \Gamma'$}
\LeftLabel{\textsc{(CutL$^{n}$)}}
\BinaryInfC{$\oc \Delta^n, \oc \Delta' \vdash \wn \Gamma^n, \oc B, \wn \Gamma'$}
\DisplayProof
\end{equation*}
\end{scriptsize}is transformed into
\begin{scriptsize}
\begin{equation*}
\AxiomC{$p$}
\noLine
\UnaryInfC{$\oc \Delta \vdash \oc A, \wn \Gamma$}
\AxiomC{$p'$}
\noLine
\UnaryInfC{$\oc \Delta', \oc A^n \vdash B, \wn \Gamma'$}
\LeftLabel{\textsc{(CutL$^{n}$)}}
\BinaryInfC{$\oc \Delta^n, \oc \Delta' \vdash \wn \Gamma^n, B, \wn \Gamma'$}
\LeftLabel{\textsc{($\oc$R$^{\oc\wn}$, XR$^\ast$)}}
\doubleLine
\UnaryInfC{$\oc \Delta^n, \Delta' \vdash \wn \Gamma^n, \oc B, \Gamma'$}
\DisplayProof
\end{equation*}
\end{scriptsize}

\begin{remark}
Similarly to Remark~\ref{RemarkOC}, the case of the right multiple Cut cannot occur. 
\end{remark}

\item Right-minor $\wn$D-Cut. A formal proof of the form
\begin{scriptsize}
\begin{equation*}
\AxiomC{$p$}
\noLine
\UnaryInfC{$\Delta \vdash A, \Gamma$}
\AxiomC{$p'$}
\noLine
\UnaryInfC{$\Delta', A^n \vdash B, \Gamma'$}
\LeftLabel{\textsc{($\wn$D)}}
\UnaryInfC{$\Delta', A^n \vdash \wn B, \Gamma'$}
\LeftLabel{\textsc{(CutL$^{n}$)}}
\BinaryInfC{$\Delta^n, \Delta' \vdash \Gamma^n, \wn B, \Gamma'$}
\DisplayProof
\end{equation*}
\end{scriptsize}is transformed into
\begin{scriptsize}
\begin{equation*}
\AxiomC{$p$}
\noLine
\UnaryInfC{$\Delta \vdash A, \Gamma$}
\AxiomC{$p'$}
\noLine
\UnaryInfC{$\Delta', A^n \vdash B, \Gamma'$}
\LeftLabel{\textsc{(CutL$^{n}$)}}
\BinaryInfC{$\Delta^n, \Delta' \vdash \Gamma^n, B, \Gamma'$}
\LeftLabel{\textsc{($\wn$D, XR$^\ast$)}}
\doubleLine
\UnaryInfC{$\Delta^n, \Delta' \vdash \Gamma^n, \wn B, \Gamma'$}
\DisplayProof
\end{equation*}
\end{scriptsize}The case of the right multiple Cut is similar, and thus we omit it.

\item Right-minor $\wn$W-Cut. A formal proof of the form
\begin{scriptsize}
\begin{equation*}
\AxiomC{$p$}
\noLine
\UnaryInfC{$\Delta \vdash A, \Gamma$}
\AxiomC{$p'$}
\noLine
\UnaryInfC{$\Delta', A^n \vdash \Gamma'$}
\LeftLabel{\textsc{($\wn$W)}}
\UnaryInfC{$\Delta', A^n \vdash \wn B, \Gamma'$}
\LeftLabel{\textsc{(CutL$^{n}$)}}
\BinaryInfC{$\Delta^n, \Delta' \vdash \Gamma^n, \wn B, \Gamma'$}
\DisplayProof
\end{equation*}
\end{scriptsize}is transformed into
\begin{scriptsize}
\begin{equation*}
\AxiomC{$p$}
\noLine
\UnaryInfC{$\Delta \vdash A, \Gamma$}
\AxiomC{$p'$}
\noLine
\UnaryInfC{$\Delta', A^n \vdash \Gamma'$}
\LeftLabel{\textsc{(CutL$^{n}$)}}
\BinaryInfC{$\Delta^n, \Delta' \vdash \Gamma^n, \Gamma'$}
\LeftLabel{\textsc{($\wn$W, XR$^\ast$)}}
\doubleLine
\UnaryInfC{$\Delta^n, \Delta' \vdash \Gamma^n, \wn B, \Gamma'$}
\DisplayProof
\end{equation*}
\end{scriptsize}The case of the right multiple Cut is similar, and thus we omit it.

\item Right-minor $\wn$C-Cut. A formal proof of the form
\begin{scriptsize}
\begin{equation*}
\AxiomC{$p$}
\noLine
\UnaryInfC{$\Delta \vdash A, \Gamma$}
\AxiomC{$p'$}
\noLine
\UnaryInfC{$\Delta', A^n \vdash \wn B, \wn B, \Gamma'$}
\LeftLabel{\textsc{($\wn$C)}}
\UnaryInfC{$\Delta', A^n \vdash \wn B, \Gamma'$}
\LeftLabel{\textsc{(CutL$^{n}$)}}
\BinaryInfC{$\Delta^n, \Delta' \vdash \Gamma^n, \wn B, \Gamma'$}
\DisplayProof
\end{equation*}
\end{scriptsize}is transformed into
\begin{scriptsize}
\begin{equation*}
\AxiomC{$p$}
\noLine
\UnaryInfC{$\Delta \vdash A, \Gamma$}
\AxiomC{$p'$}
\noLine
\UnaryInfC{$\Delta', A^n \vdash \wn B, \wn B, \Gamma'$}
\LeftLabel{\textsc{(CutL$^{n}$)}}
\BinaryInfC{$\Delta^n, \Delta' \vdash \Gamma^n, \wn B, \wn B, \Gamma'$}
\LeftLabel{\textsc{($\wn$C, XR$^\ast$)}}
\doubleLine
\UnaryInfC{$\Delta^n, \Delta' \vdash \Gamma^n, \wn B, \Gamma'$}
\DisplayProof
\end{equation*}
\end{scriptsize}The case of the right multiple Cut is similar, and thus we omit it.

\item Right-minor $\wn$L$^{\oc\wn}$-Cut. A formal proof of the form
\begin{scriptsize}
\begin{equation*}
\AxiomC{$p$}
\noLine
\UnaryInfC{$\oc \Delta \vdash \oc A, \wn \Gamma$}
\AxiomC{$p'$}
\noLine
\UnaryInfC{$\oc \Delta', \oc A^n, B \vdash \wn \Gamma'$}
\LeftLabel{\textsc{($\wn$L$^{\oc\wn}$)}}
\UnaryInfC{$\oc \Delta', \oc A^n, \wn B \vdash \wn \Gamma'$}
\LeftLabel{\textsc{(CutL$^{n}$)}}
\BinaryInfC{$\oc \Delta^n, \oc \Delta', \wn B \vdash \wn \Gamma^n, \wn \Gamma'$}
\DisplayProof
\end{equation*}
\end{scriptsize}is transformed into
\begin{scriptsize}
\begin{equation*}
\AxiomC{$p$}
\noLine
\UnaryInfC{$\oc \Delta \vdash \oc A, \wn \Gamma$}
\AxiomC{$p'$}
\noLine
\UnaryInfC{$\oc \Delta', \oc A^n, B \vdash \wn \Gamma'$}
\LeftLabel{\textsc{(CutL$^{n}$)}}
\BinaryInfC{$\oc \Delta^n, \oc \Delta', B \vdash \wn \Gamma^n, \wn \Gamma'$}
\LeftLabel{\textsc{($\wn$L$^{\oc\wn}$)}}
\UnaryInfC{$\oc \Delta^n, \oc \Delta', \wn B \vdash \wn \Gamma^n, \wn \Gamma'$}
\DisplayProof
\end{equation*}
\end{scriptsize}

\begin{remark}
Again, the case of the right multiple Cut cannot occur. 
\end{remark}

\if0
\item Right-minor $\wn\oc$R-Cut. A formal proof of the form
\begin{equation*}
\AxiomC{$p$}
\noLine
\UnaryInfC{$\Delta \vdash \oc A, \Gamma$}
\AxiomC{$p'$}
\noLine
\UnaryInfC{$\oc \Delta', \oc A^n \vdash \oc \wn B, \wn \Gamma'$}
\LeftLabel{\textsc{($\oc\wn$R)}}
\UnaryInfC{$\oc \Delta', \oc A^n \vdash \wn \oc B, \wn \Gamma'$}
\LeftLabel{\textsc{(Cut$^{n}$)}}
\BinaryInfC{$\Delta^n, \oc \Delta' \vdash \wn \oc B, \Gamma^n, \wn \Gamma'$}
\DisplayProof
\end{equation*}
is transformed into
\begin{equation*}
\AxiomC{$p$}
\noLine
\UnaryInfC{$\Delta \vdash \oc A, \Gamma$}
\AxiomC{$p'$}
\noLine
\UnaryInfC{$\oc \Delta', \oc A^n \vdash \oc \wn B, \wn \Gamma'$}
\LeftLabel{\textsc{(Cut$^{n}$)}}
\BinaryInfC{$\Delta^n, \oc \Delta' \vdash \oc \wn B, \Gamma^n, \wn \Gamma'$}
\LeftLabel{\textsc{($\oc\wn$R)}}
\UnaryInfC{$\Delta^n, \oc \Delta' \vdash \wn \oc B, \Gamma^n, \wn \Gamma'$}
\DisplayProof
\end{equation*}
\fi

\subsection{Left minor cuts}
\label{LeftMinorCuts}
The third pattern is an application of Cut such that the principal formula of the rule at the end of the left hypothesis of the application of Cut is not the cut formula. 
Because it is just symmetric to the second pattern, we omit the cases of the third pattern.

\subsection{Identity cuts}
\label{IdentityCuts}
Finally, the fourth pattern is an application of Cut such that at least one of the two hypotheses is a singleton formal proof of the axiom Id.

The cases of the fourth pattern are the following:

\item Left Id-Cut. A formal proof of the form
\begin{scriptsize}
\begin{equation*}
\AxiomC{}
\LeftLabel{\textsc{(Id)}}
\UnaryInfC{$A \vdash A$}
\AxiomC{$p$}
\noLine
\UnaryInfC{$\Delta, A^n \vdash \Gamma$}
\LeftLabel{\textsc{(CutL$^{n}$)}}
\BinaryInfC{$A^n, \Delta \vdash \Gamma$}
\DisplayProof
\end{equation*}
\end{scriptsize}is transformed into
\begin{scriptsize}
\begin{equation*}
\AxiomC{$p$}
\noLine
\UnaryInfC{$\Delta, A^n \vdash \Gamma$}
\LeftLabel{\textsc{(XL$^\ast$)}}
\doubleLine
\UnaryInfC{$A^n, \Delta \vdash \Gamma$}
\DisplayProof
\end{equation*}
\end{scriptsize}

\item Right Id-Cut. A formal proof of the form
\begin{scriptsize}
\begin{equation*}
\AxiomC{$p$}
\noLine
\UnaryInfC{$\Delta \vdash A^n, \Gamma$}
\AxiomC{}
\LeftLabel{\textsc{(Id)}}
\UnaryInfC{$A \vdash A$}
\LeftLabel{\textsc{(CutR$^{n}$)}}
\BinaryInfC{$\Delta \vdash \Gamma, A^n$}
\DisplayProof
\end{equation*}
\end{scriptsize}is transformed into
\begin{scriptsize}
\begin{equation*}
\AxiomC{$p$}
\noLine
\UnaryInfC{$\Delta \vdash A^n, \Gamma$}
\LeftLabel{\textsc{(XR$^\ast$)}}
\doubleLine
\UnaryInfC{$\Delta \vdash \Gamma, A^n$}
\DisplayProof
\end{equation*}
\end{scriptsize}

\end{itemize}

We have considered all the cases and thus completed the description of our cut-elimination procedure for the sequent calculus \textsf{ILC}.

\end{document}